\newtheorem{theorem}{Theorem}[section]
\newtheorem{prop}[theorem]{Proposition}
\newtheorem{corollary}[theorem]{Corollary}
\newenvironment{question}{\preqn\rm}{\endpreqn}
\newenvironment{proof}{\prepf\rm}{\endprepf}
\newcommand{\qed}{\hfill$\Box$}
\newcommand{\Aut}{\mathop{\mathrm{Aut}}\nolimits}
\newcommand{\Com}{\mathop{\mathrm{Com}}\nolimits}
\newcommand{\DCom}{\mathop{\mathrm{DCom}}\nolimits}
\newcommand{\Pow}{\mathop{\mathrm{Pow}}\nolimits}
\newcommand{\DPow}{\mathop{\mathrm{DPow}}\nolimits}
\newcommand{\EPow}{\mathop{\mathrm{EPow}}\nolimits}
\newcommand{\Gen}{\mathop{\mathrm{Gen}}\nolimits}
\newcommand{\NGen}{\mathop{\mathrm{NGen}}\nolimits}
\newcommand{\Nilp}{\mathop{\mathrm{Nilp}}\nolimits}
\newcommand{\Sol}{\mathop{\mathrm{Sol}}\nolimits}
\newcommand{\Cay}{\mathop{\mathrm{Cay}}\nolimits}
\newcommand{\DEP}{\mathop{\mathrm{DEP}}\nolimits}
\newcommand{\Forb}{\mathop{\mathrm{Forb}}\nolimits}
\begin{document}
\title{Graphs defined on groups}
\author{Peter J. Cameron\\University of St Andrews\\
\texttt{pjc20@st-andrews.ac.uk}}
\date{}
\maketitle

\begin{abstract}
These notes concern aspects of various graphs whose vertex set is a group $G$
and whose edges reflect group structure in some way (so that, in particular,
they are invariant under the action of the automorphism group of $G$). The
particular graphs I will chiefly discuss are the power graph, enhanced power
graph, deep commuting graph, commuting graph, and non-generating graph.

My main concern is not with properties of these graphs individually, but 
rather with comparisons between them. The graphs mentioned, together
with the null and complete graphs, form a hierarchy (as long as $G$ is
non-abelian), in the sense that the edge set of any one is contained in that
of the next; interesting questions involve when two graphs in the hierarchy
are equal, or what properties the difference between them has. I also 
consider various properties such as universality and forbidden subgraphs,
comparing how these properties play out in the different graphs. (There are
so many papers even on the power graph that a complete survey is scarcely
possible.)

I have also included some results on intersection graphs of subgroups of
various types, which are often in a ``dual'' relation to one of the other
graphs considered. Another actor is the Gruenberg--Kegel graph, or prime graph,
of a group: this very small graph has a surprising influence over various
graphs defined on the group. I say little about Cayley graphs, since (except
in special cases) these are not invariant under the automorphism group of $G$.

Other graphs which have been proposed, such as the nilpotence, solvability, 
and Engel graphs, will be touched on rather more briefly. My emphasis is on
finite groups but there is a short section on results for infinite groups.

Proofs, or proof sketches, of known results have been included where possible.
Also, many open questions are stated, in the hope of stimulating further
investigation.

The graphs I chiefly discuss all have the property that they contain
\emph{twins}, pairs of vertices with the same neighbours (save possibly one
another). Being equal or twins is an equivalence relation, and the
automorphism group of the graph has a normal subgroup inducing the symmetric
group on each equivalence class. For some purposes, we can merge twin vertices
and get a smaller graph. Continuing until no further twins occur, the result
is unique independent of the reduction, and is the trivial $1$-vertex graph if
and only if the original graph is a \emph{cograph}.  So I devote a section to
cographs and twin reduction, and another to the consequences for automorphism
groups. In addition, I discuss the question of deciding, for each type of
graph, for which groups is it a cograph. Even for the groups
$\mathrm{PSL}(2,q)$, this leads to difficult number-theoretic questions.

There are briefer discussions of general $\Aut(G)$-invariant graphs,
and structures other than groups (such as semigroups and rings).
\end{abstract}

\tableofcontents

\section{Introduction}

There are a number of graphs whose vertex set is a group $G$ and whose edges
reflect the structure of $G$ in some way, so that the automorphism group of
$G$ acts as automorphisms of the graph. These include the commuting graph
(first studied in 1955), the generating graph (from 1996), the power graph
(from 2000), and the enhanced power graph (from 2007), all of which have a
considerable and growing literature. A relative newcomer, not published
yet, is the deep commuting graph.

This paper does not aim to be a survey of all these areas, which would be
far too ambitious a task. Rather, I am interested in comparisons among the
different graphs. In particular, there is a hierarchy containing the
null graph, power graph, enhanced power graph, deep commuting graph, 
commuting graph, non-generating graph (if the group is non-abelian), and
complete graph: the edge set of each is contained in that of the next.

These graphs have some similarities: for example, the enhanced power graphs,
commuting graphs, deep commuting graphs, and generating graphs of finite
groups all form universal families (that is, every finite graph is embeddable
in one of these graphs for some group $G$). However, the proofs of this
require rather different techniques for the different graphs.

Another question, about which relatively little is currently known, concerns
the differences between graphs in the hierarchy. Even rather basic questions
such as connectedness are unstudied for most of these, although Saul Freedman
and coauthors have results on the difference between the non-generating
graph and the commuting graph (and, at top and bottom, the difference between
the complete graph and the non-generating graph is the generating graph, while
the difference between the power graph and the null graph is the power graph,
both of which have an extensive literature).

For some of these graphs, either the complementary graph was defined first,
or the graph and its complement were studied independently. For example,
the generating graph of a finite group preceded the non-generating graph;
and Neumann's theorem, that if the non-commuting graph of an infinite group
contains no infinite clique then it has finite clique number, clearly has
an equivalent formulation in terms of the coclique number of the commuting
graph. Also, several of the questions I consider have easy translations for
the complement graph; for example, the universality results just mentioned
immediately show that the complementary graphs are universal too. Twin
reduction and the related concept of cograph do not distinguish between a
graph and its complement. I have chosen to focus on those graphs which form
a hierarchy; if the complements were preferred, the hierarchy would reverse.

A curious feature is the appearence of the Gruenberg--Kegel graph, which
determines (or almost determines) various features of the commuting graph
and the power graph. The vertex set of this graph is not the group, but the
much smaller set of prime divisors of the group order. For example, if $G$
has trivial centre, its reduced commuting graph (with the identity removed)
is connected if and only if its Gruenberg--Kegel graph is. Conversely, for all
graph types in the hierarchy except possibly the non-generating graph, the 
corresponding graph on $G$ determines the Gruenberg--Kegel graph of $G$.

Authors who have studied these have used a variety of notations for them. I
have tried to use a consistent and helpful notation, for example, 
$\Pow(G)$ and $\Com(G)$ for the power graph and commuting graph, respectively,
of~$G$.

The final, brief sections concern more general graphs defined on groups and
invariant under group automorphisms; the graphs of the hierarchy on
infinite groups; and extensions to other algebraic structures such as
semigroups and rings.

Since much is not known, I have tried to emphasise open problems throughout.

Computations reported here were performed using \textsf{GAP}~\cite{GAP}, with
the packages GRAPE~\cite{grape} for handling graphs, HAP~\cite{hap}
for computing Schur and Bogomolov multipliers, and LOOPS~\cite{loops}
for Moufang loops. Generators for specific
groups were taken from the On-Line Atlas of Finite Groups~\cite{atlas}.

I'm grateful to several people, especially Alireza Abdollahi, Saul Freedman,
Michael Giudici, Michael Kinyon, Bojan Kuzma and Natalia Maslova, for helpful
comments on a previous version.

\subsection{Notation}

I will denote a typical graph by $\Gamma$, with vertex set $V(\Gamma)$ and
edge set $E(\Gamma)$. If $A$ is a subset of $V(\Gamma)$, then the
\emph{induced subgraph} of $\Gamma$ on $A$ is the graph with vertex set $A$
whose edges are those of $\Gamma$ contained in $A$. A \emph{complete graph}
is one in which all pairs of vertices are joined; a \emph{null graph} is one
with no edges. A \emph{clique} (resp.\ \emph{coclique}) is a set of vertices
on which the induced subgraph is complete (resp.\ null).

For a graph $\Gamma$,
\begin{itemize}\itemsep0pt
\item the \emph{clique number} $\omega(\Gamma)$ is the size of the largest
clique;
\item the \emph{clique cover number} $\theta(\Gamma)$ is the minimum number
of cliques whose union is the vertex set of $\Gamma$;
\item the \emph{independence number} or \emph{coclique number} $\alpha(\Gamma)$
is the size of the largest coclique;
\item the \emph{chromatic number} $\chi(\Gamma)$ is the smallest number of
independent sets whose union is the vertex set of $\Gamma$ (so-called because
it is the smallest number of colours needed to colour the vertices so that
adjacent vertices get different colours).
\end{itemize}
Note that the independence number and clique cover number of $\Gamma$ are
equal to the clique number and chromatic number of the complement of $\Gamma$
(the graph with the same vertex set, whose edges are the non-edges of $\Gamma$).

It is clear that $\omega(\Gamma)\leqslant\chi(\Gamma)$ and
$\alpha(\Gamma)\leqslant\theta(\Gamma)$. The graph $\Gamma$ is called
\emph{perfect} if every induced subgraph has clique number equal to chromatic
number. The \emph{Weak Perfect Graph Theorem} of Lov\'asz~\cite{lovasz}
asserts that, if $\Gamma$ is perfect, then so is its complement; the
\emph{Strong Perfect Graph Theorem} of Chudnovsky \emph{et~al.}~\cite{crst}
asserts that $\Gamma$ is perfect if and only if it does not contain a cycle
of odd length greater than $3$ or the complement of one as an induced
subgraph.

The \emph{comparability graph} of a partial order $P=(A,{\leqslant})$ is the
graph with vertex set $A$, in which $a$ and $b$ are joined if either
$a\leqslant b$ or $b\leqslant a$. \emph{Dilworth's Theorem}~\cite{dilworth}
 asserts that the comparability graph of a partial order and its complement
are both perfect. (The first part is easy, the second less so but follows from
the first using the Weak Perfect Graph Theorem.)

Groups will almost always be finite here; my notation for finite groups is
standard. (The reason for calling a graph $\Gamma$ is to avoid conflict of
notation, since $G$ will be a typical group.)

\subsection{Cayley graphs}
\label{s:cayley}

One topic I will not consider, except in this section, concerns Cayley graphs.
A \emph{Cayley graph} for the group $G$ is a graph on the vertex set $G$
which is invariant under right translation by elements of $G$. (Some authors
use left translation; the two concepts are equivalent, and the inversion map
on $G$ converts one into the other.) Equivalently, if $S$ is an inverse-closed
subset of $G\setminus\{1\}$, then the Cayley graph $\Cay(G,S)$ is the graph
with vertex set $G$ in which $g$ and $h$ are adjacent whenever $gh^{-1}\in S$.

One reason for not considering these is that they have a huge literature, far
more than I can survey here. I have heard the view expressed that algebraic
graph theory is the study of Cayley graphs of finite groups (in fact, it is
broader than this, but Cayley graphs are an important topic); while
it is certainly arguable that geometric group theory is the study of Cayley
graphs of finitely generated infinite groups.

The other is that Cayley graphs are not in general preserved by the 
automorphism group of $G$. I will say a few words about this.

Suppose that the set $S$ is a \emph{normal subset} of $G$, that is,
closed under conjugation. Then $\Cay(G,S)$ is invariant under both left and
right translation. Such a graph is sometimes called a \emph{normal Cayley
graph}, see for example \cite{hhc,llt}. However, the reader is warned that
more recently this term has been used in a completely different sense: a
Cayley graph $\Gamma=\Cay(G,S)$ is normal if the group of right translations of
$\Gamma$ is a normal subgroup of $\Aut(\Gamma)$, see for example \cite{xu}.

To avoid confusion, I propose to call a normal Cayley graph in the first sense
above an \emph{inner-automorphic} Cayley graph. Note that this condition is
equivalent to saying that the graph is invariant under both left and right
translation. (For the composition of right translation by $g$ and left
translation by $g^{-1}$ is conjugation by $g$, and so a graph invariant under
two of these maps is invariant under the third also.)

\begin{prop}
The Cayley graph $\Cay(G,S)$ is inner-automorphic if and only if $S$ is a
union of conjugacy classes in $G$.
\end{prop}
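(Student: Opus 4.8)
The plan is to reduce the statement to a routine conjugation calculation, using the remark just before the proposition. That remark tells us that $\Cay(G,S)$ is inner-automorphic precisely when it is invariant under both left and right translation. Now right translation is automatically an automorphism of any Cayley graph: if $\rho_a\colon x\mapsto xa$, then $(xa)(ya)^{-1}=x(aa^{-1})y^{-1}=xy^{-1}$, so $\rho_a$ preserves adjacency for every $a\in G$. Hence inner-automorphy is equivalent to invariance under all left translations, and this is the condition I would analyse.

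So, fix $a\in G$ and let $\lambda_a\colon x\mapsto ax$ be the corresponding left translation. The key computation is to see how $\lambda_a$ acts on adjacency: the images $ax$ and $ay$ are joined exactly when $(ax)(ay)^{-1}=a(xy^{-1})a^{-1}\in S$. Comparing this with the original condition $xy^{-1}\in S$, I conclude that $\lambda_a$ is an automorphism if and only if, for all $x,y\in G$, we have $xy^{-1}\in S\Leftrightarrow a(xy^{-1})a^{-1}\in S$. The one point requiring a word of care is that the quantity $xy^{-1}$ sweeps out all of $G$ as $x,y$ range over $G$ (take $y=1$); so this biconditional is equivalent to $s\in S\Leftrightarrow asa^{-1}\in S$ for every $s\in G$, that is, to $aSa^{-1}=S$. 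In other words, $\lambda_a$ is an automorphism if and only if $S$ is fixed setwise by conjugation by $a$.

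Finally I would quantify over $a$. Invariance under every left translation is exactly the statement that $aSa^{-1}=S$ for all $a\in G$, i.e.\ that $S$ is closed under conjugation; and a conjugation-closed set is precisely a union of conjugacy classes. This gives both directions at once. There is no real obstacle here: the whole argument rests on the identity $(ax)(ay)^{-1}=a(xy^{-1})a^{-1}$ together with the elementary observation that differences $xy^{-1}$ realise every group element. The only place to be slightly careful is to note that conjugation by a \emph{fixed} $a$ being an automorphism corresponds to $S$ being invariant under that single inner automorphism, so that ranging over all $a$ is what upgrades this to the full normality condition.
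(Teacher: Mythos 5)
Your proof is correct, and it is exactly the argument the paper intends: the proposition is stated without proof as an immediate consequence of the preceding remark (inner-automorphic $\Leftrightarrow$ invariance under both left and right translations), and your computation $(ax)(ay)^{-1}=a(xy^{-1})a^{-1}$, together with the observation that $xy^{-1}$ ranges over all of $G$, is precisely the routine verification being left to the reader. No gaps; the care you take over the biconditional and over quantifying over all $a$ is exactly right.
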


Note also that the minimal (non-null) inner-automorphic Cayley graphs for
$G$ are the relations of the \emph{conjugacy class association scheme} on $G$,
see~\cite{cap,godsil,woldar} (the last of these references calls this structure
the \emph{group scheme} of $G$).

I shall call the Cayley graph $\Cay(G,S)$ \emph{automorphic} if it is
invariant under the whole of $\Aut(G)$, that is, if $S$ is a union of orbits
of $\Aut(G)$ acting on $G$. (Thus, if $\Cay(G,S)$ is automorphic, then its
automorphism group contains the \emph{holomorph} of $G$.) These graphs would
fall under the rubric  considered here, although I shall not be discussing
them further.

\section{Dramatis personae}

This section introduces the specific graphs on a group that I will be mainly
concerned with.

\subsection{The commuting graph}

Let $G$ be a finite group. The \emph{commuting graph} of $G$ is the graph
with vertex set $G$ in which two vertices $x$ and $y$ are joined if $xy=yx$.
This graph was introduced by Brauer and Fowler in their seminal paper~\cite{bf}
showing that only finitely many groups of even order can have a prescribed
centraliser. (Brauer and Fowler do not use the word ``graph'', but define
the graph metric in the induced subgraph of the commuting graph on
$G\setminus\{1\}$ and use this. The argument begins by showing that, if there
are at least two conjugacy classes of involutions, then any two involutions
have distance at most~$3$ by a path in the commuting graph which avoids the
identity.)

The commuting graph has had further applications in group theory. Vertices in
$Z(G)$ are joined to everything, and for investigating questions such as
connectedness these are often removed; this makes no difference here.

Also the definition puts a loop at every vertex. There is good reason for 
doing this. It follows from results of Jerrum~\cite{jerrum} on the
``Burnside process'' that the limiting distribution of the random walk on the
commuting graph with loops is uniform on conjugacy classes -- that is, the
limiting probability of being at a vertex is inversely proportional to the
size of its conjugacy class. This is useful in finding representatives of
very small conjugacy classes in large groups.

Closely related is the fact that the \emph{commuting ratio} of a group $G$,
the probability that two randomly-chosen elements commute, is the ratio of the
number of ordered edges of the commuting graph (including loops) to $|G|^2$:
see for example \cite{guro,eberhard}.

But for my purposes here, I will imagine that the loops have been silently
removed.

As with all of these graphs, we can ask: Which groups are characterised by
their commuting graphs? For example, abelian groups of the same order have
isomorphic commuting graphs, as do the dihedral and quaternion groups of
order~$8$. Figure~\ref{f:dq} the commuting graphs of the two groups
$D_8=\langle a,b:a^4=1, b^2=1, b^{-1}ab=a^{-1}\rangle$ and
$Q_8=\langle a,b:a^4=1, b^2=a^2, b^{-1}ab=a^{-1}\rangle$.

\begin{figure}[htb]
\begin{center}
\setlength{\unitlength}{1mm}
\begin{picture}(20,35)
\multiput(0,10)(0,10){2}{\circle*{2}}
\multiput(10,10)(0,10){2}{\circle*{2}}
\multiput(0,10)(10,0){2}{\line(0,1){10}}
\multiput(0,10)(0,10){2}{\line(1,0){10}}
\put(0,10){\line(1,1){10}}
\put(0,20){\line(1,-1){10}}
\multiput(10,10)(0,10){2}{\line(4,-5){8}}
\multiput(18,0)(0,10){2}{\circle*{2}}
\put(18,0){\line(0,1){10}}
\put(10,10){\line(1,0){8}}
\put(10,20){\line(2,-5){8}}
\multiput(10,10)(0,10){2}{\line(4,5){8}}
\multiput(18,20)(0,10){2}{\circle*{2}}
\put(10,20){\line(1,0){8}}
\put(10,10){\line(2,5){8}}
\put(18,20){\line(0,1){10}}
\put(-4,9){$a^3$}
\put(-4,19){$a$}
\put(8,6){$1$}
\put(8,22){$a^2$}
\put(20,-1){$b$}
\put(20,9){$a^2b$}
\put(20,19){$ab$}
\put(20,29){$a^3b$}
\end{picture}
\end{center}
\caption{\label{f:dq}Commuting graph of $D_8$ or $Q_8$}
\end{figure}
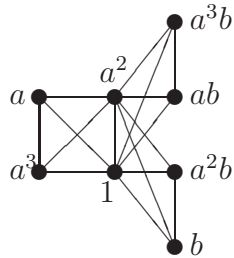

It was conjectured in \cite{aam} and proved in \cite{as,hcg,sw} that any
non-abelian finite simple group is characterised by its commuting graph. 

I note in passing an application of the commuting graphs of finite groups to
the structure of finite quotients of the multiplicative group of a
finite-dimensional division algebra by Segev~\cite{segev}.

To conclude this section, a simple observation about the commuting graph:

\begin{prop}\label{p:maxclcom}
A maximal clique in the commuting graph of $G$ is a maximal abelian subgroup
of~$G$.
\end{prop}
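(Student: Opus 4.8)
The plan is to prove the identification of maximal cliques with maximal abelian subgroups, the bridge being the elementary fact that a set of pairwise-commuting elements generates an abelian subgroup. First I would establish this bridging lemma. Let $K$ be any clique in $\Com(G)$, that is, a set of pairwise-commuting elements, and put $H=\langle K\rangle$. For each $x\in K$ the centralizer $C_G(x)$ is a subgroup of $G$ which, by the commuting hypothesis, contains all of $K$; hence it contains $H=\langle K\rangle$. Letting $x$ range over $K$ shows that every element of $K$ centralizes every element of $H$, so $K$ lies in the centre $Z(H)$. Since $K$ generates $H$ and $Z(H)$ is a subgroup, we get $H\leqslant Z(H)$, so $H$ is abelian.

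Next, for a maximal clique $K$: the abelian subgroup $H=\langle K\rangle$ is itself a clique, since its elements pairwise commute, and it contains $K$, so maximality of the clique forces $K=H$; thus $K$ is an abelian subgroup. To see that it is maximal among abelian subgroups, I note that any abelian subgroup $A\supseteq K$ is again a clique containing $K$, whence $A=K$ by maximality of the clique. This yields the stated conclusion.

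For completeness I would record the converse, which uses the same lemma: if $A$ is a maximal abelian subgroup then it is a clique, and any clique $K\supseteq A$ generates an abelian subgroup $\langle K\rangle\supseteq A$, forcing $\langle K\rangle=A$ and hence $K\subseteq A$; so $A$ is a maximal clique. The whole argument is routine, and the only point that requires a moment's care — the step I would treat as the crux — is the bridging lemma, specifically the observation that, because centralizers are subgroups, commuting among the generators already propagates to the entire generated subgroup. Once that is in hand the identification is immediate; in particular the silent removal of loops and of the identity causes no trouble, since $\langle K\rangle$ automatically contains $1$.
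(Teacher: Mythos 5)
Your proof is correct; the paper states this proposition as a ``simple observation'' and gives no proof at all, and your argument --- the bridging lemma that pairwise-commuting elements generate an abelian subgroup (via centralizers being subgroups), followed by the two maximality comparisons in each direction --- is precisely the routine argument the paper leaves to the reader. It is worth noting that your bridging lemma is the exact abelian analogue of the fact the paper does cite for the enhanced power graph case (Proposition~\ref{p:maxclepow}), namely that a set of elements pairwise generating cyclic subgroups generates a cyclic subgroup, so your write-up matches the intended structure of both observations.
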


\subsection{The deep commuting graph}

The \emph{deep commuting graph} of a finite group $G$ was introduced very
recently~\cite{ck}. Two elements of $G$ are joined in the deep commuting
graph if and only if their preimages in every central extension of $G$
(that is, every group $H$ with a central subgroup $Z$ such that $H/Z\cong G$)
commute. More specifically, take the commuting graph of a \emph{Schur cover}
\cite{schur} of $G$ (this is a central extension $H$ of largest order such that
$Z$ is contained in the derived group of $H$), and take the induced subgraph of
the commuting graph of $H$ on a transversal to $Z$. It can be shown that the
resulting graph is independent of the choice of Schur cover.

For example, $D_8$ and $Q_8$ are Schur covers of the Klein group $V_4$;
Figure~\ref{f:dq} shows that the deep commuting graph of the Klein group is the star $K_{1,3}$, though its commuting graph is the complete graph $K_4$.

We note in particular that the deep commuting graph is equal to the commuting
graph if the \emph{Schur multiplier} of $G$ (the central subgroup $Z$ in a
Schur cover) is trivial. The converse is false, as we will see in
Proposition~\ref{p:dc}.

\begin{question}
Is it true that a non-abelian finite simple group is characterised by its
deep commuting graph?
\end{question}

\subsection{The power graph}

The \emph{directed power graph} of $G$ is the directed graph with vertex set
$G$, with an arc $x\to y$ if $y=x^m$ for some integer $m$. The \emph{power
graph} of $G$ is the graph obtained by ignoring directions and double arcs;
in other words, $x$ is joined to $y$ if one of $x$ and $y$ is a power of the
other. It is clearly a spanning subgraph of the commuting graph. The power
graph was introduced by Kelarev and Quinn~\cite{kq}.

The directed power graph is a \emph{partial preorder}, that is, a reflexive
and transitive relation on $G$; and the power graph is its
\emph{comparability graph} (two vertices joined if and only if they are related
in the preorder). Comparability graphs of partial preorders and
partial orders form the same class. For clearly every partial order is a
partial preorder. For the converse, if $(A,{\leqslant})$ is a partial preorder,
then the relation $\equiv$ defined by $a\equiv b$ if $a\leqslant b\leqslant a$
is an equivalence relation; putting a total order on each equivalence class
gives a partial order with the same comparability graph as $(A,{\leqslant})$.
It follows from Dilworth's theorem that the power graph of a finite group is
perfect. This was first proved by Feng \emph{et~al.}~\cite{fmw}; see also
\cite{aetal}.

The power graph does not uniquely determine the directed power graph; for
example, if $G$ is the cyclic group of order~$6$, then the identity and the
two generators are indistinguishable in the power graph (they are joined to
all other vertices), but one is a sink and the other two are sources in the
directed power graph. However, the following is shown in~\cite{cameron}:

\begin{theorem}
If two finite groups have isomorphic power graphs, then they have isomorphic
directed power graphs.
\end{theorem}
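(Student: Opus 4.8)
The plan is to recover the directed power graph from the undirected power graph by analysing the local structure around each vertex. The key difficulty is that the undirected power graph forgets which of two adjacent vertices is a power of the other, and as the cyclic group of order~$6$ example shows, certain vertices (the identity, and generators of the cyclic group) become indistinguishable. So the strategy is first to identify, purely from the undirected graph, the closed neighbourhoods and the twin classes, and then to use the structure of the directed power graph within a cyclic group to orient the edges.

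First I would recall that $x$ and $y$ are \emph{twins} in $\Pow(G)$ if they have the same closed neighbourhood; this is an equivalence relation definable from the undirected graph alone. The crucial algebraic fact I would establish is that elements of $G$ lying in the same cyclic subgroup and generating it are twins, and more generally that the twin classes correspond to the sets of generators of the cyclic subgroups $\langle x\rangle$ together with identifications forced at the identity. The point is that the directed power graph restricted to a cyclic group $C_n$ is completely understood: $x\to y$ iff $\langle y\rangle\subseteq\langle x\rangle$, i.e.\ the order of $y$ divides the order of $x$, with ties (equal orders generating the same subgroup) being exactly the twins. So within each maximal cyclic subgroup the only ambiguity in orientation is among mutual twins, and there the directed structure is a complete symmetric relation anyway.

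Next I would argue that the whole directed power graph is assembled from these cyclic pieces: an arc $x\to y$ exists in $\vec{\Pow}(G)$ precisely when $y\in\langle x\rangle$, so every arc lives inside a cyclic subgroup, and the directed power graph is the union of the directed power graphs of all cyclic subgroups of $G$. Hence it suffices to reconstruct, from $\Pow(G)$, the family of cyclic subgroups and, for each, its internal divisibility order. The cyclic subgroups can be detected as maximal sets of pairwise-adjacent vertices each of whose closed neighbourhoods is linearly ordered by inclusion after collapsing twins; equivalently, a cyclic subgroup of order $n$ shows up as a specific perfect-graph pattern whose comparability structure matches $\vec{\mathbb{Z}}_n$. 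Comparing closed neighbourhoods (which \emph{is} available in $\Pow(G)$) lets me orient $x\to y$ whenever $N[y]\subsetneq N[x]$, since $y\in\langle x\rangle$ with $\langle y\rangle\ne\langle x\rangle$ forces strict containment of closed neighbourhoods inside that cyclic group.

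The main obstacle will be the vertices where closed-neighbourhood containment fails to decide the orientation, namely mutual twins and in particular the anomaly at the identity and at generators illustrated by $C_6$. Here the reconstruction must use a more global count: for twins I would compare how the twin class sits relative to the rest of the graph, using that the number and sizes of the maximal cliques through a vertex encode the orders of the cyclic subgroups containing it, and that Euler's totient relations among these orders pin down which class is the set of generators (a source in the cyclic piece) and which is the identity (a universal sink). I expect the delicate part to be showing that these local numerical invariants—neighbourhood inclusions plus clique-size data—suffice to determine the directed power graph \emph{up to isomorphism} even when no graph isomorphism fixes the undirected graph pointwise; the argument of~\cite{cameron} carries this out, and I would follow its reduction to the cyclic case and the totient-based disambiguation of the twin classes.
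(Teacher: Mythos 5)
First, a point of orientation: the survey states this theorem without proof, quoting it from \cite{cameron}, so your proposal has to be measured against Cameron's published argument --- which you yourself invoke at the end. The fatal problem is your orientation rule: ``orient $x\to y$ whenever $N[y]\subsetneq N[x]$''. This is false, and false in the direction that matters. Take $G=Q_8$, $x=i$, $y=-1=i^2$. The true arc is $i\to-1$, but $N[i]=\{1,-1,i,-i\}$ while $N[-1]=Q_8$, since the central involution is a power of \emph{every} element of order $4$; so your rule orients this edge backwards. Your justification fails as well: inside $\langle i\rangle\cong C_4$ the power graph is complete (as it is for every cyclic group of prime-power order), so there is no ``strict containment of closed neighbourhoods inside that cyclic group''; and even where such internal containment does hold, it does not imply the global containment $N[y]\subsetneq N[x]$ that your rule actually uses. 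The moral is that a vertex with a large closed neighbourhood is typically a \emph{sink} of the directed power graph (an element that is a power of many things), not a source; neighbourhood comparison simply does not track the direction of the power relation. The invariant that does work, and around which the proof in \cite{cameron} is built, is the element order: if $x\sim y$ and $o(y)<o(x)$, then $x\in\langle y\rangle$ is impossible, so $y\in\langle x\rangle$ and the arc is $x\to y$; hence the real task is to recover orders from the abstract graph, which Cameron does via the sizes of the twin classes (these are totients $\phi(o(x))$) and counting, not via neighbourhood inclusions.

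Second, two structural claims you lean on are also false, and they fail exactly at the crux of the theorem. (i) You assert that the twin classes are the generator-sets of cyclic subgroups and that among mutual twins ``the directed structure is a complete symmetric relation anyway''. But the twin class of the identity can properly contain $\{1\}$: in $C_6$ it is $\{1,g,g^5\}$ and in $Q_8$ it is $\{1,-1\}$, and there the arcs are \emph{not} symmetric ($g\to1$ but not $1\to g$; $-1\to1$ but not $1\to-1$). This anomalous class --- which occurs precisely when $G$ is cyclic or generalized quaternion --- is the whole difficulty of the theorem; it must be identified and handled case by case, not assumed away. (ii) You propose to detect cyclic subgroups as maximal sets of pairwise-adjacent vertices, but cyclic subgroups are generally not cliques of $\Pow(G)$: in $C_6$, $g^2\not\sim g^3$. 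Maximal cliques of a power graph are maximal chains of the divisibility preorder; it is in the \emph{enhanced} power graph that maximal cliques are maximal cyclic subgroups, and your sketch conflates the two graphs. Since you defer ``the delicate part'' to \cite{cameron} anyway, what remains of your argument, once the faulty orientation rule and the faulty twin/clique claims are removed, is a restatement of the problem rather than a proof of it.
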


This is false for infinite groups; see~\cite{cg}.

For a more extensive survey of power graphs, I refer to \cite{akc}.

\subsection{The enhanced power graph}

In 2007, Abdollahi and Hassanabadi~\cite{ah1,ah2} studied a graph they called
the \emph{noncyclic graph} of a group $G$, in which two vertices $x$ and $y$
are joined if $\langle x,y\rangle$ is not cyclic. (For technical reasons 
they excluded the set of isolated vertices, the so-called
\emph{cyclicizer} of $G$.) Later in this paper, some of their results will be
discussed.

For comparison with the other graphs considered in this paper, I will take
the complement of their graph, which was independently defined in the
paper~\cite{aetal} under the name \emph{enhanced power graph} of $G$.
Also, I will not initially assume that vertices joined to all others are
excluded; we will examine such vertices later.

Thus, the enhanced power graph of a group $G$ has vertex set $G$,
with $x$ and $y$ joined if and only if $\langle x,y\rangle$ is cyclic.
Equivalently, $x$ and $y$ are joined if there is an element $z\in G$ such that
each of $x$ and $y$ is a power of $z$. This graph was introduced to interpolate
between the power graph and the commuting graph, but has now been studied in
its own right, especially by Samir Zahirovi\'c and coauthors~\cite{zahir,zbm},
and is in some respects easier to handle than the power graph.

The enhanced power graph can be obtained from the directed power graph by
joining two vertices if both lie in the closed out-neighbourhood of some vertex.
Thus, if two groups have isomorphic power graphs, then they have isomorphic
enhanced power graphs. The converse is also true, see~\cite{zbm}:

\begin{theorem}
For a pair of finite groups, the following are equivalent:
\begin{enumerate}\itemsep0pt
\item the power graphs are isomorphic;
\item the directed power graphs are isomorphic;
\item the enhanced power graphs are isomorphic.
\end{enumerate}
\end{theorem}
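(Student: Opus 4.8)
The implications split into an easy part, which I would assemble from what is already available, and one genuinely new statement carrying all the content. Indeed, (b) implies (a) trivially, since the power graph is obtained from the directed power graph by forgetting orientations; (a) implies (b) is the preceding theorem of \cite{cameron}; and (b) implies (c) is immediate from the description given just above, namely that $\EPow(G)$ is obtained from $\DPow(G)$ by joining $x$ and $y$ whenever both lie in the closed out-neighbourhood of a common vertex. Since any isomorphism of directed power graphs preserves closed out-neighbourhoods, it carries this construction across. Thus it would suffice to prove (c) implies (a): an isomorphism of enhanced power graphs forces the power graphs to be isomorphic. Chaining (a) $\Rightarrow$ (b) $\Rightarrow$ (c) $\Rightarrow$ (a) then yields the full equivalence.

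My plan for (c) implies (a) is to reconstruct $\Pow(G)$ from $\EPow(G)$ up to isomorphism. The power graph is a spanning subgraph of the enhanced power graph, the extra edges being the pairs $x,y$ generating a cyclic group with neither a power of the other (incomparable orders inside a common cyclic subgroup). The first tool is the closed-neighbourhood quasi-order: writing $N[x]$ for the closed neighbourhood in $\EPow(G)$, one checks that $x\in\langle y\rangle$ implies $N[y]\subseteq N[x]$, so power-relatedness between vertices that the enhanced power graph can already tell apart is visible as comparability of closed neighbourhoods. I would then pass to \emph{twins}: $x$ and $y$ have equal closed neighbourhood in $\EPow(G)$ exactly when they lie in the same maximal cyclic subgroups, and each twin class is a clique. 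The reduced graph on twin classes, equipped with the neighbourhood order, should recover the containment poset of (classes of) cyclic subgroups; the \textbf{sizes} of the twin classes should then recover element orders through the Euler-$\varphi$ counts of the numbers of generators of each cyclic subgroup. Reassembling the poset with these orders reconstructs $\DPow(G)$, and hence $\Pow(G)$, up to isomorphism.

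The main obstacle is precisely that the enhanced power graph merges cyclic subgroups: inside a cyclic group of non-prime-power order $\EPow$ is complete while $\Pow$ is not (as with $\mathbb{Z}_6$, where $2$ and $3$ are adjacent in $\EPow(\mathbb{Z}_6)$ but not in $\Pow(\mathbb{Z}_6)$), so closed neighbourhoods cannot by themselves separate the distinct power-graph relationships packed into one twin class. Consequently an isomorphism of enhanced power graphs need \emph{not} respect power-graph edges, and the reconstruction can only be up to isomorphism: after matching twin classes and the between-class structure, one must still adjust the isomorphism within each twin class. The substance of this step is to show that the power graph induced on a twin class is determined, up to isomorphism, by data visible in $\EPow(G)$ --- in effect by the multiset of element orders occurring in the class, which the $\varphi$-counting is designed to pin down. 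I expect this counting argument, together with verifying that the between-class power-edge pattern is genuinely an isomorphism invariant, to be where the real work lies; the base case of cyclic $G$, where $\EPow(G)$ complete already forces $G$ cyclic of the correct order, serves as a useful sanity check.
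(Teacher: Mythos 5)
Your handling of the easy implications coincides with the paper's: (b)~$\Rightarrow$~(a) by forgetting orientations, (a)~$\Rightarrow$~(b) by the preceding theorem of \cite{cameron}, and (b)~$\Rightarrow$~(c) because $\EPow(G)$ is obtained from $\DPow(G)$ by joining vertices lying in a common closed out-neighbourhood. Bear in mind, though, that the paper does not prove the remaining implication at all: it writes ``The converse is also true, see \cite{zbm}''. So the value of your proposal rests entirely on whether your sketch of (c)~$\Rightarrow$~(a) amounts to a proof, and it does not.

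Two concrete problems. First, your intermediate claim that the twin-reduced graph with the closed-neighbourhood order ``recovers the containment poset of (classes of) cyclic subgroups'' is false as stated, and your own example shows why: in $C_6$ the whole group is a single closed-twin class of $\EPow(G)$, yet it houses cyclic subgroups of orders $1$, $2$, $3$ and $6$, so the twin-class poset is strictly coarser than the cyclic-subgroup poset. What you actually have is this coarser poset together with the class sizes, and any recovery must be arithmetic: each class size must be decomposed as a sum of totients $\phi(d)$, consistently across all classes and compatibly with the adjacencies between them. Second, and decisively, that reconstruction step is exactly what you defer (``where the real work lies''), and it is not routine. A twin class of size $6$ may be all of a maximal $C_6$, or the $\phi(7)=6$ generators of a maximal $C_7$ (as in the dihedral group of order $14$), or the $\phi(9)=6$ generators of a maximal $C_9$ (as in $C_9\times C_3$); these carry pairwise different multisets of element orders, the first even has a non-complete induced power graph while the other two are complete, and telling the last two apart requires examining how the class sits over the rest of the digraph. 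Showing that such ambiguities can always be resolved coherently, so that $\DPow(G)$ is pinned down up to isomorphism, is a global induction over the maximal-clique structure of $\EPow(G)$; it is the main theorem of \cite{zbm}, which the paper cites precisely because it is the substantive content of the equivalence. Your reduction of the problem to (c)~$\Rightarrow$~(a), and your structural observations about $\EPow(G)$ (the neighbourhood quasi-order, the identification of closed twins with ``lying in the same maximal cyclic subgroups''), are correct and are in the spirit of the actual argument, but the proposal leaves the theorem's core unproved.
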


It is, however, not true that the power graph, directed power graph, and
enhanced power graph of $G$ have the same automorphism group. For $G=C_6$,
all three automorphism groups are different.

\begin{question}
Is there a simple algorithm for constructing the directed power graph or the
enhanced power graph from the power graph, or the directed power graph from
the enhanced power graph?
\end{question}

Note that the enhanced power graph of $G$ is a union of complete subgraphs on
the maximal cyclic subgroups of $G$. Similarly, the commuting graph is a union
of complete subgraphs on the maximal abelian subgroups. In fact, more is true:

\begin{prop}\label{p:maxclepow}
A maximal clique in the enhanced power graph of $G$ is a maximal cyclic subgroup
of $G$.
\end{prop}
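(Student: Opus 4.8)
The plan is to show that maximal cliques and maximal cyclic subgroups coincide, the non-trivial direction being that every clique is contained in a cyclic subgroup. The starting observation, already recorded before the statement, is that the elements of any cyclic subgroup form a clique: if $x,y\in\langle z\rangle$ then both are powers of $z$, so $\langle x,y\rangle$ is cyclic. Hence every maximal cyclic subgroup is a clique, and it suffices to prove the converse, that a maximal clique is a cyclic subgroup; maximality among cyclic subgroups is then automatic, since any cyclic subgroup properly containing it would be a strictly larger clique.

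The key lemma is: if $C$ is a clique in the enhanced power graph, then $\langle C\rangle$ is cyclic. First note that an edge $xy$ forces $\langle x,y\rangle$ to be cyclic, hence abelian, so the elements of $C$ pairwise commute and $\langle C\rangle$ is a finite abelian group. A finite abelian group is cyclic precisely when all its Sylow subgroups are, so I would argue prime by prime. For $x\in C$ and a prime $p$, write $x_p$ for the $p$-part of $x$; since $x_p$ is a power of $x$ (by the Chinese Remainder Theorem), it lies in every cyclic subgroup containing $x$.

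Now fix a prime $p$ and two elements $x,y\in C$. Their $p$-parts $x_p,y_p$ lie in the Sylow $p$-subgroup of the cyclic group $\langle x,y\rangle$, which is a cyclic $p$-group; since the subgroups of a cyclic $p$-group are totally ordered by inclusion, $\langle x_p\rangle$ and $\langle y_p\rangle$ are comparable. Thus the cyclic groups $\{\langle x_p\rangle : x\in C\}$ are pairwise comparable, and by finiteness one of them, say $\langle w_p\rangle$, contains all the others. Each $x\in C$ is the product of its $p$-parts over the primes $p$, each lying in the corresponding $\langle w_p\rangle$; hence $\langle C\rangle=\langle w_p : p\rangle$. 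The generators $w_p$ have pairwise coprime orders and commute, so they generate a cyclic group, which proves the lemma.

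Finally, a maximal clique $C$ is contained in $\langle C\rangle$, which by the lemma is cyclic and hence itself a clique; maximality gives $C=\langle C\rangle$, a cyclic subgroup, necessarily maximal as such. The main obstacle is the lemma, and within it the reduction to $p$-parts: the point that must not be skipped is that pairwise cyclicity only controls pairs, so one genuinely needs the total ordering of subgroups of a cyclic $p$-group to upgrade ``pairwise compatible'' to ``jointly contained in a common cyclic group''.
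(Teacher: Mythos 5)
Your proposal is correct, and its overall structure is exactly that of the paper: both reduce the proposition to the key lemma that a set of elements in which any two generate a cyclic subgroup itself generates a cyclic subgroup, and then conclude by maximality (a maximal clique equals the cyclic group it generates, and maximality among cyclic subgroups is automatic since any larger cyclic subgroup would be a larger clique). The difference is that the paper does not prove this lemma at all --- it simply cites it as Lemma~32 of Aalipour \emph{et al.} --- whereas you supply a self-contained proof: pairwise commutativity makes $\langle C\rangle$ a finite abelian group, the $p$-part of each element is a power of that element, the $p$-parts of any two clique members lie in the cyclic Sylow $p$-subgroup of the cyclic group they generate and hence generate comparable subgroups, so by finiteness there is a single $p$-part $w_p$ dominating all others, and the commuting elements $w_p$ of pairwise coprime prime-power orders generate a cyclic group containing $\langle C\rangle$. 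This argument is sound (the one step you state without proof, the reverse inclusion $\langle w_p : p\rangle\subseteq\langle C\rangle$, is immediate since each $w_p$ is a power of a clique element, and in any case only the forward inclusion is needed to conclude $\langle C\rangle$ is cyclic). Your identification of the genuine content --- that pairwise cyclicity must be upgraded to joint cyclicity via the total ordering of subgroups of a cyclic $p$-group --- is precisely what the cited lemma encapsulates, so your write-up has the merit of making the paper's proof self-contained.
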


This follows from the fact that if a set of elements of $G$ have the property
that any two generate a cyclic subgroup, then the whole set generates a cyclic
subgroup: see \cite[Lemma 32]{aetal}.

\medskip

At this point I mention another graph which I will not discuss in so much
detail. It has been studied under the name \emph{intersection graph}
(see for example~\cite{cs,hi}), but
I wish to reserve this term for a different concept. Since, as we will see,
it is dual (in a certain vague sense) to the enhanced power graph, I will call
it the \emph{dual enhanced power graph} of $G$, and denote it $\DEP(G)$.
Since the identity is always isolated in this graph (unlike the other graphs
discussed so far), it is natural to remove it and define the vertex set of
$\DEP(G)$ to be $G\setminus\{1\}$.

Two non-identity elements $x$ and $y$ are joined in the dual enhanced power
graph of $G$ if $\langle x\rangle\cap\langle y\rangle$ is not the identity.
Recall that two vertices $x,y$ are joined in the enhanced power graph if they
have a common in-neighbour in the directed power graph; we see that two
vertices $x$ and $y$ are joined in $\DEP(G)$ if they have a common
out-neighbour different from the identity.

In particular, we see that the directed power graph determines the dual
enhanced power graph. But, unlike the case with the power graph and enhanced
graph, it does not work in the reverse direction. If $G$ and $H$ are the
cyclic group and quaternion group of order $8$, then $\DEP(G)$ and $\DEP(H)$
are complete graphs on $7$ vertices, but the directed power graphs of these
two groups are not isomorphic.

\subsection{The generating graph}

The \emph{generating graph} of a finite group $G$ has vertex set
$G$, with $x$ and $y$ joined if and only if $\langle x,y\rangle=G$. If the
minimum number of generators of $G$ is greater than~$2$, then the generating
graph is the null graph. If $G$ is cyclic, then its generating graph has
loops; we will not be too much interested in this case. Note that, by the
Classification of Finite Simple Groups, every non-abelian finite simple group
is $2$-generated. 

The generating graph was introduced in~\cite{ls}, and studied further 
in~\cite{bgk}. The results are often phrased in terms of the \emph{spread},
a graph-theoretic parameter defined as follows: $\Gamma$ has spread (at least)
$k$ if every set of $k$ vertices has a common neighbour. Thus a graph has
spread $1$ if it has no isolated vertex, while spread~$2$ is stronger
than having diameter at most $2$.

In \cite{bgk} it was shown that the generating graph of a non-abelian finite
simple group has positive \emph{spread}. A substantial strengthening has
recently been proved by Burness~\emph{et al.}~\cite{bgh}:

\begin{theorem}\label{t:bgh}
For a finite group $G$ with reduced generating graph $\Gamma$, the following
three conditions are equivalent:
\begin{enumerate}\itemsep0pt
\item any non-identity vertex has a neighbour in the generating graph (that
is, $\Gamma$ has spread~$1$);
\item any two non-identity vertices have a common neighbour in the generating
graph (that is, $\Gamma$ has spread~$2$);
\item any proper quotient of $G$ is cyclic.
\end{enumerate}
\end{theorem}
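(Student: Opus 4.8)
$(c) \Rightarrow (b) \Rightarrow (a)$, which are the comparatively accessible directions, and then $(a) \Rightarrow (c)$, which I expect to be the hard part and where the classification of finite simple groups will be needed. The implication $(b) \Rightarrow (a)$ is immediate: if every pair of non-identity vertices has a common neighbour, then in particular every single non-identity vertex has a neighbour (take the pair $\{x,x\}$, or pair $x$ with any other non-identity element). So the real content lies in $(c) \Rightarrow (b)$ and $(a) \Rightarrow (c)$.

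For $(c) \Rightarrow (b)$, I would first dispose of the trivial and cyclic cases and assume $G$ is $2$-generated with all proper quotients cyclic. The key observation is that this hypothesis is equivalent to saying every proper quotient of $G$ is cyclic, which forces a rigid structure: any two distinct maximal subgroups $M_1, M_2$ cannot both be normal with non-cyclic quotient. The natural strategy is to reduce to the case where $G$ has a unique minimal normal subgroup $N$ (since if $G/N$ is cyclic for every minimal normal $N$, and there were two distinct minimal normal subgroups, the group would embed in a product of cyclic groups and hence be abelian, handled separately). Then I would split into cases according to whether $N$ is abelian (so $G$ is an affine-type or soluble-like group) or non-abelian (so $N$ is a product of isomorphic non-abelian simple groups). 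For the non-abelian case, I would invoke the positive-spread result of \cite{bgk} for simple groups, lifting it through the structure of $G$; this is where the serious simple-group machinery enters. For a pair $x, y$ I must produce $z$ with $\langle x, z \rangle = \langle y, z \rangle = G$, which means $z$ must avoid every maximal subgroup containing $x$ and every maximal subgroup containing $y$ simultaneously.

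The hard direction $(a) \Rightarrow (c)$ is best proved by contraposition: assuming $G$ has a non-cyclic proper quotient, I would exhibit a non-identity vertex with no neighbour in the generating graph. If $G/N$ is non-cyclic for some proper normal $N$, then either $G/N$ is non-cyclic abelian (so needs at least two generators and I can find an element of $G$ whose image is a ``bad'' generator candidate) or $G/N$ requires more than two generators. The idea is that if $G/N$ is non-cyclic, one can find a coset whose elements, together with any partner, fail to generate $G$ because they all project into a proper subgroup of $G/N$; more carefully, if $d(G/N) \geq 2$ there is an element $\bar{x} \in G/N$ lying in every maximal subgroup of $G/N$ that is forced by the non-cyclic structure, and I would lift this to an isolated vertex. \emph{The main obstacle} is handling the case where $G/N$ is non-cyclic abelian but still $2$-generated: here I must argue that some element $x$ with $\langle x, y\rangle = G$ impossible for all $y$, which requires showing the generating graph genuinely has an isolated vertex rather than merely a disconnected structure.

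I expect the genuinely difficult ingredient throughout to be the non-abelian composition-factor case in $(c) \Rightarrow (b)$, where one needs uniform spread results across all finite simple groups; this is precisely the strengthening that \cite{bgh} contributes, and it rests on detailed knowledge of maximal subgroup structure and probabilistic generation estimates that are not elementary. The soluble and abelian cases, by contrast, should succumb to direct subgroup-lattice arguments.
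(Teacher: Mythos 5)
You should first be aware that the paper contains no proof of Theorem~\ref{t:bgh}: it is quoted as a result of Burness, Guralnick and Harper \cite{bgh}, where it is the main theorem (it resolves a conjecture of Breuer, Guralnick and Kantor \cite{bgk}). So the real question is whether your outline stands on its own, and it does not. Your cyclic scheme $(c)\Rightarrow(b)\Rightarrow(a)\Rightarrow(c)$, and your identification of $(c)\Rightarrow(b)$ as the hard implication, are both correct. But for that implication your proposal is circular: after the (correct) reduction to a group with a unique minimal normal subgroup $N$, you propose to handle the case $N$ non-abelian by invoking the positive-spread theorem of \cite{bgk} for simple groups and ``lifting it through the structure of $G$'', while conceding that the needed input ``is precisely the strengthening that \cite{bgh} contributes''. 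Citing the theorem being proved is not a proof, and no elementary lifting step exists: when $N=T_1\times\cdots\times T_k$ is a product of non-abelian simple groups and $G/N$ is cyclic, producing a single element $z$ with $\langle x,z\rangle=\langle y,z\rangle=G$ for two prescribed elements $x,y$ is exactly the problem that occupies essentially all of \cite{bgh}, via uniform domination, Shintani descent, and probabilistic and computational analysis of almost simple groups; nothing in \cite{bgk} reduces it to the simple case. So the central implication remains entirely unproved in your proposal.

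Second, the one implication that is genuinely elementary, $(a)\Rightarrow(c)$, you leave hedged where no hedge is needed, and the ``main obstacle'' you flag does not exist. The contrapositive takes two lines: suppose $N$ is a non-trivial normal subgroup with $G/N$ non-cyclic, and take any $x\in N$ with $x\ne1$. For every $y\in G$, the image of $\langle x,y\rangle$ in $G/N$ is $\langle yN\rangle$, which is cyclic and hence a proper subgroup of $G/N$; therefore $\langle x,y\rangle\ne G$, and $x$ is an isolated vertex of the reduced generating graph. This argument is uniform in $G/N$ --- abelian or not, $2$-generated or not --- so the case you single out as problematic (a non-cyclic abelian $2$-generated quotient) requires no separate treatment. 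Your alternative idea, lifting an element whose image lies in every maximal subgroup of $G/N$, is a correct but needlessly elaborate variant of the same argument (the elements of $N$ itself map to the identity, which certainly lies in the Frattini subgroup of $G/N$); as written you neither complete it nor notice that it already covers the case you were worried about.
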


In particular, the conditions hold for a non-abelian finite simple group. 
However, it is false for infinite groups: even (a) can fail for $2$-generator
infinite groups with all proper quotients cyclic~\cite{cox}.

In the sequel, I will often consider the \emph{non-generating graph},
the complement of the generating graph.

\subsection{The hierarchy}

These graphs are given \emph{ad hoc} names in the literature, but since I will
be talking about all of them here, I prefer to give them names which help to
distinguish them. Thus, the commuting graph of $G$ will be $\Com(G)$; the
deep commuting graph $\DCom(G)$; the power graph $\Pow(G)$; the directed power
graph $\DPow(G)$; the enhanced power graph $\EPow(G)$; the generating graph
$\Gen(G)$; and the non-generating graph $\NGen(G)$. In each case, the vertex
set is the group $G$. Sometimes I refer to the \emph{reduced graph} of one of
the above types, and denote it by a superscript $-$; this usually means that
the identity element is deleted from the vertex set.

There are inclusions between these graphs, as follows. Here $E(\Gamma)$ denotes
the edge set of a graph $\Gamma$; thus $E(\Gamma_1)\subseteq E(\Gamma_2)$
means that $\Gamma_1$ is a \emph{spanning subgraph} of $\Gamma_2$ (a subgraph
using all of the vertices and some of the edges).

\begin{prop}
Let $G$ be a finite group.
\begin{enumerate}\itemsep0pt
\item $E(\Pow(G))\subseteq E(\EPow(G))\subseteq E(\DCom(G))\subseteq 
E(\Com(G))$.
\item If $G$ is non-abelian or not 2-generated, then $E(\Com(G)\subseteq
E(\NGen(G))$.
\end{enumerate}
\end{prop}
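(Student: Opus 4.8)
The plan is to verify each of the four edge-inclusions in part (a) directly from the definitions, moving up the hierarchy one link at a time, and then to handle part (b) by relating non-generation to the preceding graphs.

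\medskip

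For part (a), I would first recall the defining adjacency condition for each graph so that each inclusion reduces to a short implication. For $E(\Pow(G))\subseteq E(\EPow(G))$: if $x$ and $y$ are joined in the power graph, then one is a power of the other, say $y=x^m$; then both $x$ and $y$ are powers of $x$, so $\langle x,y\rangle=\langle x\rangle$ is cyclic, giving an edge in the enhanced power graph. (This inclusion was in fact already noted implicitly in the text, since the enhanced power graph interpolates between the power and commuting graphs.) For $E(\EPow(G))\subseteq E(\DCom(G))$: suppose $\langle x,y\rangle$ is cyclic. I would argue that in \emph{any} central extension $H$ of $G$ with kernel $Z$, the preimages of $x$ and $y$ commute. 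The key point is that if $\langle x,y\rangle=\langle z\rangle$ is cyclic, then $x=z^a$ and $y=z^b$ for suitable $a,b$; choosing any preimage $\tilde z$ of $z$ in $H$, the elements $\tilde z^a$ and $\tilde z^b$ are preimages of $x$ and $y$ that commute (being powers of a single element), and any other preimages differ from these by central elements, which commute with everything. Hence the preimages commute in every central extension, so $x$ and $y$ are joined in $\DCom(G)$. Finally, $E(\DCom(G))\subseteq E(\Com(G))$ is immediate: the trivial central extension $H=G$ (with $Z=1$) is one of the extensions whose commuting condition must hold, so deep-commuting forces ordinary commuting. These three implications chain together to give the full inclusion.

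\medskip

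For part (b), I would show $E(\Com(G))\subseteq E(\NGen(G))$ under the stated hypothesis. Since the non-generating graph is the complement of the generating graph, an edge $\{x,y\}$ of $\Com(G)$ lies in $\NGen(G)$ precisely when $\langle x,y\rangle\neq G$. So it suffices to prove that commuting elements cannot generate $G$ when $G$ is non-abelian, and separately handle the case when $G$ is not $2$-generated. If $x$ and $y$ commute, then $\langle x,y\rangle$ is abelian; if this equalled $G$, then $G$ would be abelian, contradicting the non-abelian hypothesis. On the other hand, if $G$ is not $2$-generated, then no pair of elements generates $G$ at all, so $\langle x,y\rangle\neq G$ for every edge, and the inclusion is immediate (indeed in that case $\Gen(G)$ is null and $\NGen(G)$ is complete). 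Either hypothesis alone gives the conclusion.

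\medskip

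I do not expect any genuine obstacle here; the statement is a routine unwinding of definitions, and the only step requiring a moment's care is the middle inclusion $E(\EPow(G))\subseteq E(\DCom(G))$, where one must check the commuting condition across \emph{all} central extensions rather than just one. The clean way to handle this is the observation above that cyclicity of $\langle x,y\rangle$ lets one choose preimages that are powers of a common element, together with the fact that the ambiguity in lifting lies in the central kernel $Z$; I would make sure to state this lifting argument explicitly rather than leaving it implicit.
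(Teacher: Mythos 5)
Your proposal is correct and follows essentially the same route as the paper: the only non-trivial link is $E(\EPow(G))\subseteq E(\DCom(G))$, which the paper handles by noting that the full preimage of a cyclic subgroup in a central extension is central-by-cyclic and hence abelian, and your explicit computation (lifting a generator $z$ and observing that all preimages of $x$ and $y$ are powers of $\tilde z$ times central elements) is just that same fact proved inline. Part (b) matches the paper's argument verbatim in substance.
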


\begin{proof} (a) All is obvious except possibly the inclusion of $E(\EPow(G))$
in $E(\DCom(G))$. So suppose that $\langle x,y\rangle$ is a cyclic subgroup
of $G$, and let $H$ be a central extension of $G$, with $H/Z\cong G$. The
lift of $\langle x,y\rangle$ is the extension of a central subgroup $Z$ by
a cyclic group, and hence is abelian; so the lifts of $x$ and $y$ commute
in $H$.

\medskip
(b) If $G$ is not $2$-generated, then $\NGen(G)$ is the complete graph, and
the result is clear. If $G$ is non-abelian, it cannot be generated by two
commuting elements.  \qed
\end{proof}

Because of this, I will refer to the null graph, power graph, enhanced power
graph, deep commuting graph, commuting graph, non-generating graph (in the 
case that $G$ is non-abelian) and complete graph on $G$ as the \emph{graph
hierarchy}, or just \emph{hierarchy}, of $G$.

\medskip

To conclude this section, I note that the dual enhanced power graph does not
fit very well into the hierarchy. Recall that a superscript $-$ means that
the identity is removed; the identity is isolated in $\DEP(G)$ so it is
natural to remove it from this graph also.

\begin{prop}
\begin{enumerate}\itemsep0pt
\item For any finite group $G$, $E(\Pow^-(G))\subseteq E(\DEP(G))$.
\item If $Z(G)=1$, then $E(\DEP(G))\subseteq E(\NGen^-(G))$.
\item In general $\DEP(G)$ is incomparable with the other graphs in the
hierarchy.
\end{enumerate}
\end{prop}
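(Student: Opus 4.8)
The plan is to prove the three parts in order, establishing the two inclusions first and then exhibiting the incomparability by concrete examples.

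\medskip

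For part (a), I would start from the definition of the dual enhanced power graph. Suppose $x$ and $y$ are joined in $\Pow^-(G)$, so (without loss of generality) $y = x^m$ for some integer $m$, with $x,y \neq 1$. I want to show $\langle x\rangle \cap \langle y\rangle \neq 1$. The point is that $\langle y\rangle = \langle x^m\rangle \subseteq \langle x\rangle$, so the intersection is just $\langle y\rangle$ itself, which is nontrivial since $y \neq 1$. Hence $x$ and $y$ are joined in $\DEP(G)$. This part is essentially immediate from unwinding the definitions.

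\medskip

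For part (b), assume $Z(G) = 1$ and suppose $x,y$ are joined in $\DEP(G)$, so $\langle x\rangle \cap \langle y\rangle$ contains a nonidentity element $z$. I must show $x$ and $y$ are joined in $\NGen^-(G)$, i.e.\ that $\langle x, y\rangle \neq G$. Since $z$ is a power of both $x$ and $y$, it lies in $\langle x, y\rangle$, and moreover $z$ commutes with both $x$ and $y$ (each is within a cyclic, hence abelian, group containing $z$), so $z$ is central in $\langle x,y\rangle$. If we had $\langle x,y\rangle = G$, then $z \in Z(G) = 1$, contradicting $z \neq 1$. Therefore $\langle x,y\rangle$ is a proper subgroup and the edge lies in $\NGen^-(G)$. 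The hypothesis $Z(G)=1$ is exactly what forces the common central power out of the whole group.

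\medskip

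For part (c), the claim is that in general $\DEP(G)$ is incomparable with each graph in the hierarchy, so I would exhibit, for a suitable witness group, both an edge of $\DEP(G)$ missing from the hierarchy graph and an edge of that hierarchy graph missing from $\DEP(G)$. The easy direction is finding hierarchy edges absent from $\DEP(G)$: two distinct involutions $x,y$ (elements of order $2$) that commute are joined in $\Pow^-$ only if one is a power of the other, which fails, yet $\langle x\rangle \cap \langle y\rangle = 1$, so they are non-adjacent in $\DEP(G)$ while being adjacent in $\EPow$, $\DCom$, and $\Com$ whenever they commute; this already breaks comparability with those graphs. The harder direction, and the main obstacle, is producing an edge of $\DEP(G)$ that is absent from the weakest relevant hierarchy graph above it, namely $\NGen$ (and thus from $\Com$, $\DCom$, $\EPow$); here I would look for a group with nontrivial centre where two elements share a nontrivial cyclic intersection yet generate all of $G$. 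The natural candidate is a small group such as $Q_8$ or a group with a central element of prime order that appears as a power of two generators: in $Q_8$, for instance, the central element $-1$ is a power of every nonidentity element, so $\DEP(Q_8)$ is complete, while two generators $i,j$ satisfy $\langle i,j\rangle = Q_8$, giving an edge in $\DEP$ that is a non-edge in $\NGen$. Verifying such a witness simultaneously defeats comparability in both directions, and checking that the same group (or a pair of groups) handles all the hierarchy levels at once is the part requiring care.
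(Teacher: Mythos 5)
Parts (a) and (b) of your proposal are correct and essentially identical to the paper's proof: in (b) you phrase the argument via the centre of $\langle x,y\rangle$ while the paper notes $x,y\in C_G(z)<G$, but these are the same observation.

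Part (c) contains a genuine gap. You assert that two distinct commuting involutions $x,y$ are adjacent in $\EPow(G)$, $\DCom(G)$ and $\Com(G)$. This is false for the enhanced power graph: such a pair generates $\langle x,y\rangle\cong C_2\times C_2$, which is \emph{not} cyclic, so $\{x,y\}$ is never an edge of $\EPow(G)$. The claim for $\DCom(G)$ is likewise unjustified (indeed the paper points out that the deep commuting graph of the Klein group $V_4$ is the star $K_{1,3}$, precisely because lifts of two generating involutions fail to commute in the Schur covers $D_8$ and $Q_8$). Consequently your witness only proves $E(\Com^-(G))\not\subseteq E(\DEP(G))$; it produces no edge of $\EPow$ or $\DCom$ lying outside $\DEP$, so incomparability with those two graphs is not established by your argument. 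The paper fills exactly this hole with $G=C_6$: an element $x$ of order $2$ and an element $y$ of order $3$ satisfy $\langle x,y\rangle=C_6$, which is cyclic, so $\{x,y\}$ is an edge of $\EPow^-(C_6)$ (hence of $\DCom^-$ and $\Com^-$), yet $\langle x\rangle\cap\langle y\rangle=\{1\}$, so it is not an edge of $\DEP(C_6)$. Your other witness, $Q_8$, is correct and is the same one the paper uses: $\DEP(Q_8)$ is complete because $-1$ lies in every non-trivial cyclic subgroup, while $\{i,j\}$ is a generating pair and hence a non-edge of $\NGen^-(Q_8)$, and therefore of $\Com^-$, $\DCom^-$ and $\EPow^-$ as well. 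With $C_6$ substituted for the faulty involution example, the proof is complete.
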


\begin{proof}
(a) Suppose that $\{x,y\}$ is an edge of $\Pow(G)$ with $x,y\ne1$. Without
loss of generality, $x$ is a power of $y$; then
$\langle x\rangle\cap\langle y\rangle=\langle x\rangle$.

\smallskip

(b) Suppose that $Z(G)=1$, and that $\{x,y\}$ is an edge of $\DEP(G)$. Let
$\langle x\rangle\cap\langle y\rangle=\langle z\rangle$, with $z\ne1$. Then
$x,y\in C_G(z)<G$, so $\langle x,y\rangle\ne G$.

\smallskip

(c) If $G=C_6$, then $\EPow^-(G)$, and all higher graphs in the hierarchy,
are complete, but $\DEP(G)$ is not. If $G=Q_8$, then $\Com^-(G)$, and all lower
graphs in the hierarchy, are incomplete, but $\DEP(G)$ is complete. \qed
\end{proof}

\begin{question}
For which groups $G$, and for which types $\mathrm{X}$ of graph in the
hierarchy, does $\DEP(G)=\mathrm{X}^-(G)$ hold?
\end{question}

\subsection{The Gruenberg--Kegel graph}

A related graph will play a role in the investigation in several places.
The \emph{Gruenberg--Kegel graph}, also known as the \emph{prime graph},
of a finite group $G$ has vertex set the set of prime divisors of the order
of $G$; vertices $p$ and $q$ are joined by an edge if and only if $G$ contains
an element of order $pq$. 

The graph was introduced in an unpublished manuscript by Gruenberg and Kegel
to study the integral group ring of a finite group, and in particular the
decomposability of the augmentation ideal: see~\cite{gr}. The main structural
result was published by Williams (a student of Gruenberg) in~\cite{williams}.
It asserts that groups whose Gruenberg--Kegel graph is disconnected have a
very restricted structure.

\begin{theorem}\label{t:gk}
Let $G$ be a finite group whose Gruenberg--Kegel graph is disconnected. Then
one of the following holds:
\begin{enumerate}\itemsep0pt
\item $G$ is a Frobenius or $2$-Frobenius group; 
\item $G$ is an extension of a nilpotent $\pi$-group by a simple group by a
$\pi$-group, where $\pi$ is the set of primes in the connected component
containing $2$.
\end{enumerate}
\end{theorem}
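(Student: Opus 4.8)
The plan is to split into the solvable and non-solvable cases, the common engine being a lemma on centralizers. Write the components of the Gruenberg--Kegel graph as $\pi_1,\dots,\pi_t$ with $t\geqslant 2$, and note first that a group of odd order is solvable (Feit--Thompson), so whenever $G$ is non-solvable we have $2\mid|G|$ and the component $\pi=\pi_1$ containing $2$ is well defined. The key observation is: \emph{if $x\in G$ has prime order $r\in\pi_i$, then $C_G(x)$ is a $\pi_i$-group}. Indeed, if a prime $s$ divided $|C_G(x)|$, then by Cauchy's theorem $C_G(x)$ would contain an element $y$ of order $s$ commuting with $x$; either $s=r\in\pi_i$, or $xy$ has order $rs$, whence $r$ and $s$ are joined and $s\in\pi_i$. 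Thus each component is ``isolated'' in a strong sense, and this is the constraint driving both cases.

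For the solvable case, let $F=F(G)$ be the Fitting subgroup, so that $C_G(F)\leqslant F$. Since $F$ is nilpotent, elements of coprime order in $F$ commute; were $F$ to involve primes from two different components we would obtain an element whose order is a product of primes lying in distinct components, which is impossible. Hence $F$ is a $\pi_i$-group for a single component. A $\pi_i'$-element then acts coprimely on $F$, and the centralizer lemma forces this action to be fixed-point-free on each Sylow subgroup of $F$. Feeding this into the classical theory of Frobenius groups (Hall's theorems on solvable groups, and Thompson's theorem that a fixed-point-free automorphism of prime order has nilpotent kernel) yields that $G$ has exactly two components and is either Frobenius, with kernel $F$, or $2$-Frobenius; I would treat the two subcases according to whether $C_G(F)=F$ is itself a Hall subgroup or an intermediate normal layer is present.

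For the non-solvable case, let $S=\mathrm{Rad}(G)$ be the solvable radical and pass to $\bar G=G/S$. Since $\mathrm{Rad}(\bar G)=1$, the socle $\mathrm{Soc}(\bar G)=T_1\times\cdots\times T_k$ is a direct product of non-abelian simple groups with $C_{\bar G}(\mathrm{Soc}(\bar G))=1$. Using the centralizer lemma against the coprime action of a Sylow $2$-subgroup, I would first show that no prime of a component $\pi_j$ with $j\geqslant 2$ divides $|S|$ and that the Fitting series of $S$ collapses, so that $S$ is a nilpotent $\pi$-group. Next, every non-abelian simple group has even order, so each $T_i$ involves the prime $2\in\pi$; hence if one factor $T_1$ involved a prime $r$ from some $\pi_j$ with $j\geqslant 2$ and $k\geqslant 2$, an involution in $T_2$ would commute with an element of order $r$ and produce an element of order $2r$, which is forbidden. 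Since the primes of the components $\pi_j$ ($j\geqslant 2$) divide $|G|$ but not $|S|$, at least one $T_i$ must carry such a prime, and the preceding remark then forces $k=1$, so $\mathrm{Soc}(\bar G)=T$ is a single simple group. Taking $N=S$ and letting $M$ be the preimage of $T$ gives a normal series $1\trianglelefteq N\trianglelefteq M\trianglelefteq G$ with $N$ a nilpotent $\pi$-group and $M/N\cong T$ simple, while $G/M\cong\bar G/\mathrm{Soc}(\bar G)$ embeds in $\Aut(T)/\mathrm{Inn}(T)=\mathrm{Out}(T)$.

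The main obstacle is the non-solvable case, concentrated in two points. The first is pinning down that $S$ is a nilpotent $\pi$-group and that $\mathrm{Soc}(\bar G)$ is a single simple group carrying every non-$\pi$ prime; this rests on the incompatibility between the commuting elements supplied by the radical and by distinct direct factors and the disconnectedness of the graph, and requires careful coprime-action and fixed-point-free arguments to exclude stray non-$\pi$ primes. The second is the top layer: to conclude that $G/M$ is a $\pi$-group one uses that $\mathrm{Out}(T)$ is solvable for every non-abelian simple $T$ (the Schreier property), so that $G/M$ is solvable and its primes, being joined to those of $\pi$ through $T$, lie in $\pi$. The Schreier property, and the eventual determination of exactly which simple groups $T$ and which component patterns occur, is where the classification of finite simple groups enters; the structural dichotomy itself, however, needs only Schreier's conjecture beyond the elementary lemmas above.
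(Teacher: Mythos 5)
The paper itself contains no proof of this statement: it is the Gruenberg--Kegel theorem as published by Williams \cite{williams}, quoted from the literature. So your proposal can only be measured against the known proof. Your scaffolding (the centralizer lemma, Feit--Thompson, fixed-point-free actions on the Fitting subgroup, Schreier's conjecture at the top) does name the right tools, and the solvable half of your sketch, though compressed, follows the standard route.

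The genuine flaw is your organizing dichotomy: you assume conclusion (a) is the solvable case and conclusion (b) is the non-solvable case. This is false, because a Frobenius complement need not be solvable. Take $G=C_{11}^2\rtimes\mathrm{SL}(2,5)$, with $\mathrm{SL}(2,5)$ embedded in $\mathrm{SL}(2,11)$ acting on the natural module: every non-identity element of the complement is semisimple with eigenvalues $\lambda,\lambda^{-1}\neq 1$, so the action is fixed-point-free and $G$ is a non-solvable Frobenius group. Its element orders are $1,2,3,4,5,6,10,11$, so its Gruenberg--Kegel graph has components $\{2,3,5\}$ and $\{11\}$; thus $G$ satisfies the hypothesis and falls under case (a), not case (b) (its only proper non-trivial normal subgroups are $V=C_{11}^2$ and $V\langle -I\rangle$, so no normal series $H\leq K\leq G$ with $H$ nilpotent and $K/H$ simple exists). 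Now compare with what your non-solvable argument sets out to prove: here the solvable radical is $S=V\langle -I\rangle$, of order $2\cdot 11^2$, which is neither a $\pi$-group ($11\notin\pi=\{2,3,5\}$, so a prime of a component $\pi_j$ with $j\geq 2$ \emph{does} divide $|S|$) nor nilpotent (the central involution of the complement inverts $V$). So your step ``no prime of $\pi_j$ with $j\geq 2$ divides $|S|$, and $S$ is a nilpotent $\pi$-group'' is not a gap awaiting more careful coprime-action arguments; it is an attempt to prove a false statement. Any correct proof must first split off the Frobenius and $2$-Frobenius configurations---which cut across the solvable/non-solvable divide---before the radical/socle analysis can begin; this is exactly how Gruenberg--Kegel and Williams proceed.

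A secondary gap would remain even after that repair: solvability of $\mathrm{Out}(T)$ (Schreier) gives solvability of $G/M$, but your phrase ``its primes, being joined to those of $\pi$ through $T$, lie in $\pi$'' is unsupported. For a prime $r\in\pi_j$ dividing $|G/M|$ with $r\nmid|T|$, a coprime-action/Thompson argument does yield a non-trivial fixed point and hence an edge from $r$ to \emph{some} prime of $T$, but a priori that prime could itself lie in $\pi_j$ rather than in $\pi$; excluding this possibility requires a further argument, and it is one of the places where the actual proof does real work.
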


A \emph{$2$-Frobenius group} is a group $G$ with normal subgroups $H$ and $K$
with $H\le K$ such that
\begin{itemize}\itemsep0pt
\item $K$ is a Frobenius group with Frobenius kernel $H$;
\item $G/H$ is a Frobenius group with Frobenius kernel $K/H$.
\end{itemize}
A typical example is the group $G=S_4$, with $K=A_4$, $H=V_4$ (the Klein group),
and $G/K\cong S_3$.

Williams went on to examine the known finite simple groups to determine which
ones could occur in conclusion~(b) of the Theorem. He could not handle the 
groups of Lie type in characteristic~$2$; this was completed by Kondrat'ev in
1989, and some errors corrected by Kondrat'ev and Mazurov in 2000.

\medskip

The next result indicates that the Gruenberg--Kegel graph is closely connected
with our hierarchy of graphs.

\begin{theorem}
Let $G_1$ and $G_2$ be groups whose power graphs, or enhanced power graphs, or
deep commuting graphs, or commuting graphs, are isomorphic. Then the 
Gruenberg--Kegel graphs of $G_1$ and $G_2$ are equal.
\end{theorem}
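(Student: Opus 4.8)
The plan is to show that each of the four graph types, \emph{as an abstract graph}, determines the Gruenberg--Kegel graph of the underlying group; since an isomorphism carries one graph's combinatorial data to the other's, this yields the equality. First I would note that any isomorphism forces $|G_1|=|G_2|$ (equal vertex counts), so $G_1$ and $G_2$ have the same set of prime divisors, which is the common vertex set of the two Gruenberg--Kegel graphs. It then remains to recover the edges, that is, to decide for each pair of distinct primes $p,q$ dividing $|G|$ whether $G$ has an element of order $pq$, using only the isomorphism type of the graph.

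The key device for $\EPow(G)$, $\DCom(G)$, and $\Com(G)$ is the following criterion: $p$ and $q$ are joined in the Gruenberg--Kegel graph if and only if some maximal clique of the graph has size divisible by $pq$. Since the multiset of sizes of maximal cliques is a graph-isomorphism invariant, and (as I argue below) each maximal clique is a subgroup, so that its size equals its order, this criterion expresses the Gruenberg--Kegel graph purely in terms of the abstract graph. To prove the criterion I would use that in each of these three cases a maximal clique is an \emph{abelian} subgroup $C$. If $pq\mid|C|$ then, as $p\ne q$, Cauchy's theorem supplies commuting elements of orders $p$ and $q$, whose product has order $pq$; conversely, if $g$ has order $pq$ then $\langle g\rangle$ is cyclic, hence a clique in $\EPow(G)$ and therefore, by the inclusions $E(\EPow(G))\subseteq E(\DCom(G))\subseteq E(\Com(G))$, a clique in all three graphs, so it lies in some maximal clique $C$ and $pq=|g|$ divides $|C|$.

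For $\Com(G)$ and $\EPow(G)$ the facts that maximal cliques are, respectively, maximal abelian and maximal cyclic subgroups are Propositions~\ref{p:maxclcom} and~\ref{p:maxclepow}. The main obstacle is the deep commuting graph, for which no such statement is yet available, so I would establish it directly. Let $H$ be a Schur cover of $G$ with central kernel $Z$, so that $\DCom(G)$ is the subgraph of $\Com(H)$ induced on a transversal to $Z$. If $S$ is a clique of $\DCom(G)$, then its lifts to $H$ pairwise commute; a set of pairwise commuting elements generates an abelian subgroup $\tilde A\le H$, and since $Z$ is central, $\tilde AZ$ is abelian. Setting $A=\tilde AZ/Z$, any two elements of $A$ have commuting lifts in $\tilde AZ$, so $A$ is a clique of $\DCom(G)$ containing $S$; maximality forces $S=A$, an abelian subgroup. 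Combined with the converse direction of the criterion already noted in the previous paragraph, this settles the deep commuting case.

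Finally, the power graph does \emph{not} satisfy this criterion, since its maximal cliques need not be subgroups (already for $C_6$ they are not), so I would handle it separately. Here I would invoke the cited equivalence that isomorphic power graphs have isomorphic enhanced power graphs~\cite{zbm} (equivalently, isomorphic directed power graphs~\cite{cameron}, from which the orders of elements, and hence the Gruenberg--Kegel graph, can be read off at once). This reduces the power-graph case to the enhanced-power-graph case already treated, completing the proof.
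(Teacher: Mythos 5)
Your proposal is correct and follows essentially the same route as the paper: same vertex-set observation, the same criterion that $p$ and $q$ are GK-adjacent if and only if some maximal clique has size divisible by $pq$, the same Schur-cover analysis showing maximal cliques of $\DCom(G)$ are abelian subgroups, and the same reduction of the power-graph case to the enhanced power graph via the cited equivalence. Your only deviations are cosmetic: you prove the deep-commuting clique lemma in a self-contained way and route the converse direction through the already-established hierarchy inclusions, whereas the paper re-derives the commuting of lifts directly; the underlying ideas are identical.
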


\begin{proof}
The four possible hypotheses each imply that $G_1$ and $G_2$ have the same
order, so their GK graphs have the same set of vertices.

We show that in all cases except the power graph, primes $p$ and $q$ are
adjacent in the GK graph of $G$ if and only if there is a maximal clique in the
graph on $G$ with size divisible by $pq$. This is clear in the cases of the
enhanced power graph and the commuting graph; for, as we observed earlier,
the maximal cliques in these are maximal cyclic subgroups or maximal abelian
subgroups of $G$ respectively (Propositions~\ref{p:maxclcom}
and~\ref{p:maxclepow}), and if their order is divisible by $pq$ then
they contain elements of order $pq$. Conversely an element of order $pq$ is
contained in a maximal cyclic (or abelian) subgroup.

Consider the deep commuting graph of a group $G$. Let $H$ be a Schur cover
of $G$, with $H/Z\cong G$. A maximal clique has the form $A=B/Z$, where $B$
is a maximal abelian subgroup of $H$ (containing $Z$). So $A$ is an abelian
subgroup of $G$, and if $pq$ divides $|A|$ then $A$ contains an element of
order $pq$. Conversely, suppose that $p$ and $q$ are joined in the GK
graph, and let $x$ and $y$ be commuting elements of orders $p$ and $q$ in $G$,
and $a$ and $b$ their lifts in $H$. Then $a$ and $b$ are contained in
$\langle Z,ab\rangle$, which is an extension of a central subgroup by a cyclic
group and hence is abelian; so $a$ and $b$ commute. Choosing a maximal abelian
subgroup of $H$ containing $a$ and $b$ and projecting onto $G$ gives a maximal
clique in $\DCom(G)$ with order divisible by $pq$.

This fails for the power graph. Instead we use the fact that groups with
isomorphic power graphs also have isomorphic enhanced power graphs, and
so have equal GK graphs, by what has already been proved. \qed
\end{proof}

The proof gives a little more. Suppose, for example, that $G$ and $H$ are
groups for which $\Com(G)$ is isomorphic to $\EPow(H)$. Then the
Gruenberg--Kegel graphs of $G$ and $H$ are equal. 

I do not know whether the analogous result holds for the non-generating graph
of a non-abelian $2$-generated group.

\medskip

The Gruenberg--Kegel graph is an active topic of research;
see~\cite{cm} for a survey and some recent results. Some of the research 
concerns the question of whether a group is determined (perhaps up to finitely
many possibilities) by its GK graph. There are two versions of this: two
GK graphs could be equal (as graphs whose vertex set is a finite set of primes)
or merely isomorphic as graphs but with possibly different labels for the
vertices. For an interesting example, the GK graphs of the groups $A_{10}$ 
and $\Aut(\mathrm{J}_2)$ are isomorphic, and both have vertex sets
$\{2,3,5,7\}$, but are not equal: the labels $2$ and $3$ are swapped.

\subsection{Intersection graphs}

Let $G$ be a finite group, not trivial and not a cyclic group of prime order.
The \emph{intersection graph} of $G$ is the graph whose vertices are the
non-trivial proper subgroups of $G$, with two vertices $H_1$ and $H_2$ adjacent
if $H_1\cap H_2\ne\{1\}$.

There are various other intersection graphs: we can restrict to subgroups in
a particular class, or to maximal subgroups.

We will see a connection between some intersection graphs and some of the
graphs in our hierarchy.

\section{Equality and differences}

For a non-abelian finite group $G$, there are seven graphs in the hierarchy, 
and a natural question is: When can two of them be equal? If they are not
equal, what can be said about their difference?

\subsection{Equality}

At the two ends, things are easy:
\begin{prop}
\begin{enumerate}\itemsep=0pt
\item $\Pow(G)$ is equal to the null graph if and only if $G$ is the trivial
group.
\item $\NGen(G)$ is equal to the complete graph if and only if $G$ is not
$2$-generated.
\item $\NGen(G)=\Com(G)$ if and only if $G$ is either abelian and not
$2$-generated, or a minimal non-abelian group.
\end{enumerate}
\end{prop}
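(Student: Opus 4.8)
The plan is to treat the three statements separately, dispatching (a) and (b) quickly and focusing on (c). For (a), one direction is immediate, since the trivial group has a one-vertex power graph with no edges. For the converse I would use the fact that the identity is a power of every element (for instance $1=x^{|x|}$), so for any $x\neq1$ the pair $\{1,x\}$ is an edge of $\Pow(G)$; hence if $\Pow(G)$ is null, $G$ can have no non-identity element. For (b), this is essentially the definition together with a remark already made above: $\NGen(G)$ is the complement of $\Gen(G)$, and $\{x,y\}$ is a non-edge of $\NGen(G)$ precisely when $\langle x,y\rangle=G$. Thus $\NGen(G)$ is complete if and only if no pair of elements generates $G$, that is, $G$ is not $2$-generated.

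For (c) I would split according to whether $G$ is abelian. If $G$ is abelian, then every pair of elements commutes, so $\Com(G)$ is the complete graph; by part (b), $\NGen(G)=\Com(G)$ then holds exactly when $\NGen(G)$ is complete, i.e.\ when $G$ is not $2$-generated. This yields the first alternative.

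Now suppose $G$ is non-abelian. By the earlier inclusion $E(\Com(G))\subseteq E(\NGen(G))$, equality is equivalent to the reverse inclusion $E(\NGen(G))\subseteq E(\Com(G))$, which unwinds to the condition: whenever $x$ and $y$ fail to commute, $\langle x,y\rangle=G$. The key observation is that a non-commuting pair $x,y$ always generates a non-abelian subgroup, so this condition says exactly that $G$ has no proper non-abelian subgroup, that is, $G$ is \emph{minimal non-abelian}. Concretely, if $G$ is minimal non-abelian and $xy\neq yx$, then $\langle x,y\rangle$ is non-abelian and hence cannot be a proper (therefore abelian) subgroup, forcing $\langle x,y\rangle=G$; conversely, if some proper subgroup $H<G$ were non-abelian, a non-commuting pair $x,y\in H$ would satisfy $\langle x,y\rangle\subseteq H<G$, contradicting the condition. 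I do not anticipate a serious obstacle; the only point requiring care is the clean translation of the edge-set inclusion into the group-theoretic condition and its identification with minimal non-abelianness through the remark that non-commuting elements generate a non-abelian subgroup.
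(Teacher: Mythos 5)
Your proof is correct and follows essentially the same route as the paper: both reduce (c) to the observation that, for non-abelian $G$, equality of the two graphs means every non-generating pair commutes, which is precisely the condition that every proper subgroup is abelian. The only difference is cosmetic — you split on $G$ abelian versus non-abelian where the paper splits on whether the common graph is complete, and you spell out the converse direction that the paper leaves implicit.
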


\begin{proof}
Parts (a) and (b) are clear. So suppose that $\NGen(G)=\Com(G)$ and this is
not the complete graph. Then $G$ is non-abelian, but if $x$ and $y$ do not
generate $G$ then they commute; so every proper subgroup of $G$ is abelian.
Thus $G$ is minimal non-abelian. \qed
\end{proof}

The minimal non-abelian groups were determined by Miller and Moreno~\cite{mm}
in 1903. There are two types: the first consists of groups of prime power
order; the second are extensions of an elementary abelian $p$-group by a 
cyclic $q$-group, where $p$ and $q$ are primes.

\medskip

Leaving aside the deep commuting graph from the present, the following was
shown in \cite{aetal}:

\begin{prop}
Let $G$ be a finite group.
\begin{enumerate}\itemsep0pt
\item The power graph of $G$ is equal to the enhanced power graph if and only
if $G$ contains no subgroup isomorphic to $C_p\times C_q$, where $p$ and $q$
are distinct primes; equivalently, the Gruenberg--Kegel graph of $G$ is a
null graph.
\item The enhanced power graph of $G$ is equal to the commuting graph if and
only if $G$ contains no subgroup isomorphic to $C_p\times C_p$, where $p$
is prime; equivalently, the Sylow $p$-subgroups of $G$ are cyclic or 
generalized quaternion groups.
\item The power graph of $G$ is equal to the commuting graph if and only if
$G$ contains no subgroup isomorphic to $C_p\times C_q$, where $p$ and $q$
are primes (equal or distinct).
\end{enumerate}
\end{prop}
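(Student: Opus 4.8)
The plan is to exploit the fact that adjacency in all three graphs is a statement about the two-generated subgroup $\langle x,y\rangle$: the pair $\{x,y\}$ is an edge of $\Com(G)$ precisely when $\langle x,y\rangle$ is abelian, an edge of $\EPow(G)$ precisely when $\langle x,y\rangle$ is cyclic, and an edge of $\Pow(G)$ precisely when one of $\langle x\rangle,\langle y\rangle$ contains the other. Since $E(\Pow(G))\subseteq E(\EPow(G))\subseteq E(\Com(G))$, the equality $\Pow(G)=\Com(G)$ holds if and only if both $\Pow(G)=\EPow(G)$ and $\EPow(G)=\Com(G)$ hold; thus part (c) is an immediate consequence of (a) and (b), and I would prove it last with this one-line observation. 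The real work is to locate, for each of (a) and (b), a \emph{minimal witness} to a gap between consecutive graphs.

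For (a): an edge of $\EPow(G)$ that is not an edge of $\Pow(G)$ is a pair $x,y$ lying in a common cyclic group $\langle z\rangle$ with neither a power of the other. I would first verify the local statement that, inside a cyclic group $\langle z\rangle\cong C_n$, the power graph is complete (hence equal to the enhanced power graph, which is always complete there) if and only if $n$ is a prime power: when $n=p^a$ the cyclic subgroups form a chain, so $\langle x\rangle$ and $\langle y\rangle$ are always comparable, whereas if distinct primes $p,q$ both divide $n$ then the elements of orders $p$ and $q$ are incomparable. Hence $\Pow(G)\ne\EPow(G)$ exactly when $G$ has a cyclic subgroup of non-prime-power order, i.e.\ an element of order $pq$ for distinct primes $p,q$, i.e.\ a subgroup $\langle g\rangle\cong C_{pq}\cong C_p\times C_q$. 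The concrete witness in one direction is $x=g^q$, $y=g^p$; the reverse direction is the local computation just described. Finally, having an element of order $pq$ for some distinct $p,q$ is by definition the existence of an edge in the Gruenberg--Kegel graph, which gives the stated reformulation.

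For (b): an edge of $\Com(G)$ that is not an edge of $\EPow(G)$ is a pair of commuting $x,y$ with $\langle x,y\rangle$ abelian but non-cyclic. The key elementary fact is that a finite abelian group is non-cyclic if and only if it contains $C_p\times C_p$ for some prime $p$ (from the primary decomposition, non-cyclicity means some prime $p$ occurs in at least two invariant factors). So $\EPow(G)\ne\Com(G)$ exactly when $G$ has a subgroup $C_p\times C_p$: one direction takes the two generators of such a subgroup as the witnessing commuting, non-joined pair, and the other extracts $C_p\times C_p$ from the non-cyclic abelian group $\langle x,y\rangle$. The reformulation in terms of Sylow subgroups then uses two standard structural facts: that a $p$-group contains $C_p\times C_p$ if and only if it has more than one subgroup of order $p$, and the classical theorem that a finite $p$-group with a unique subgroup of order $p$ is cyclic (for odd $p$) or cyclic or generalized quaternion (for $p=2$); together with Sylow's theorem, which guarantees that a subgroup $C_p\times C_p$, being a $p$-group, lies in some Sylow $p$-subgroup of $G$.

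I expect the only genuine obstacle to be the \emph{localization} step in each part---the passage from an arbitrary witnessing pair to a witness of the canonical shape $C_{pq}$ or $C_p\times C_p$---rather than the reformulations, which are bookkeeping over known structure theory. In (a) this is the observation that a gap must already occur inside a single cyclic subgroup of non-prime-power order; in (b) it is the extraction of an elementary abelian $p$-section from a two-generated non-cyclic abelian group. Both are short, but they are where the content lies; once they are in place, the three equivalences together with the Sylow and Gruenberg--Kegel translations follow formally.
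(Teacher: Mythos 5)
Your proposal is correct and takes essentially the same route as the paper: minimal witnesses $C_{pq}$ and $C_p\times C_p$ for the gaps between consecutive graphs, Burnside's theorem on $p$-groups with a unique subgroup of order $p$ for the Sylow reformulation, and part (c) deduced formally from (a) and (b) via the inclusions $E(\Pow(G))\subseteq E(\EPow(G))\subseteq E(\Com(G))$. If anything, your localization step in (b) -- extracting $C_p\times C_p$ from the non-cyclic abelian group $\langle x,y\rangle$ via the structure theorem -- is slightly more careful than the paper's, which takes powers of $x$ and $y$ of common prime order $p$ and asserts they generate $C_p\times C_p$; those particular powers can coincide (e.g.\ $x=a$, $y=ab$ in $C_4\times C_2$), though the conclusion of course still holds.
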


\begin{proof}
(a) If $G$ contains commuting elements of orders $p$ and $q$, they are
adjacent in $\EPow(G)$ but not in $\Pow(G)$. Conversely, suppose that $x$
and $y$ are adjacent in $\EPow(G)$ but not in $\Pow(G)$. Then $x$ and $y$
are contained in a cyclic group $C$ but neither is a power of each other;
$C$ must then have order divisible by two distinct primes.

\medskip

(b) If $G$ contains commuting elements of the same prime order $p$ but not in a
cyclic subgroup of order~$p$, they are joined in the commuting graph but not
in the enhanced power graph. Conversely, suppose that $x$ and $y$ are adjacent
in $\Com(G)$ but not in $\EPow(G)$. The orders of $x$ and $y$ must have a
common factor (otherwise they generate a cyclic group); so some powers of
them have prime order $p$ and generate $C_p\times C_p$.

Now a theorem of Burnside (see~\cite[Theorem 12.5.2]{hall}) shows that a
$p$-group containing no subgroup $C_p\times C_p$ is cyclic or generalized
quaternion. 

\medskip

(c) The third part is immediate from the first two. \qed
\end{proof}

Using these results it is possible to classify the groups involved.

\medskip

\begin{enumerate}\itemsep0pt
\item Any group of prime power order has Gruenberg--Kegel graph consisting
of a single vertex, so has power graph equal to enhanced power graph. Any
other group with this property has disconnected Gruenberg--Kegel graph,
and so satisfies the conclusion of Theorem~\ref{t:gk}. These groups are the
ones with the property that every element has prime power order; they are
sometimecs called \emph{EPPO groups}. They, and some generalisations, were
studied intensively in the 1960s and 1970s by Graham Higman and his students
in Oxford (see for example~\cite{higman}). Recently, Cameron and Maslova
\cite{cm} have given a complete list of EPPO groups.
\item A group with all Sylow subgroups cyclic is metacyclic; indeed, if
the primes dividing its order are $p_1,p_2,\ldots,p_r$ in increasing order,
then it has a normal Hall subgroup corresponding to the last $i$ primes in
this list, for $1\le i\le r-1$.

By Glauberman's $Z^*$-theorem~\cite{glaub}, if $G$ has generalized quaternion
Sylow $2$-subgroup, and $O(G)$ is the largest normal subgroup of odd order in
$G$, then $G/O(G)$ has a unique involution; the quotient $\bar G$ by the
subgroup  generated by this involution has dihedral Sylow $2$-subgroup, so
falls into the classification by Gorenstein and Walter~\cite{gw}. Of the groups
in their theorem, we retain only those with cyclic Sylow subgroups for odd
primes, that is, $\bar{G}$ is isomorphic to $\mathrm{PSL}(2,p)$ or
$\mathrm{PGL}(2,p)$ or to a dihedral $2$-group. Conversely, each such group
can be lifted to a unique group with a unique involution. The normal subgroup
$O(G)$ has all its subgroups cyclic, so is metacyclic, as above.
\end{enumerate}

Finally, the deep commuting graph lies between the enhanced power graph and
the commuting graph. In order to investigate equality here, we need another
construction. Recall that the Schur multiplier of $G$ is the largest kernel
$Z$ in a stem extension $H$ of $G$ (with $Z\le Z(H)\cap H'$ and $H/Z\cong G$).
An extension is said to be \emph{commutation-preserving}, or CP, if whenever
two elements $x,y\in G$ commute, their preimages in $H$ also commute. Now
there is a well-defined largest kernel of a CP stem extension of $G$; this
is the \emph{Bogomolov multiplier} of $G$, see \cite{bog,jm}.

The Bogomolov multiplier first arose in connection with the work of Artin and
Mumford on obstructions to Noether’s conjecture on the pure transcendence of
the field of invariants; it is also connected with other topics in number
theory such as the Tate--Shafarevich set, and in group theory such as the
coclass. However, we only need the definition given above.

\begin{prop}\label{p:dc}
Let $G$ be a finite group.
\begin{enumerate}\itemsep0pt
\item $\DCom(G)=\EPow(G)$ if and only if $G$ has the following property: let $H$
be a Schur cover of $G$, with $H/Z=G$. Then for any subgroup $A$ of $G$, with
$B$ the corresponding subgroup of $H$ (so $Z\leqslant B$ and $B/Z=A$), if $B$
is abelian, then $A$ is cyclic.
\item $\DCom(G)=\Com(G)$ if and only if the Bogomolov multiplier of $G$ is
equal to the Schur multiplier.
\end{enumerate}
\end{prop}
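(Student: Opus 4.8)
The plan is to handle the two parts separately, in each case using the inclusions $E(\EPow(G))\subseteq E(\DCom(G))\subseteq E(\Com(G))$ from the hierarchy so that each claimed equality becomes a single containment. Throughout I fix a Schur cover $H$ of $G$ with central kernel $Z$ and $H/Z=G$, and use the basic description that $\{x,y\}$ is an edge of $\DCom(G)$ precisely when lifts $a,b\in H$ of $x,y$ commute; this is well defined, independent of the choice of lifts (since $Z$ is central) and of the choice of Schur cover.

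For part (a), since $E(\EPow(G))\subseteq E(\DCom(G))$ always holds, equality is equivalent to $E(\DCom(G))\subseteq E(\EPow(G))$, and I would prove the two implications of the stated biconditional directly. Assuming the subgroup condition, take an edge $\{x,y\}$ of $\DCom(G)$, so its lifts $a,b$ commute; put $A=\langle x,y\rangle$ and let $B$ be its preimage, so that $B=\langle Z,a,b\rangle$ is abelian because $a,b$ commute and $Z$ is central. The hypothesis then forces $A$ to be cyclic, i.e.\ $\{x,y\}\in E(\EPow(G))$. Conversely, assuming $\DCom(G)=\EPow(G)$, let $A\le G$ have abelian preimage $B$; for any $x,y\in A$ the lifts lie in the abelian group $B$ and so commute, making $\{x,y\}$ an edge of $\DCom(G)=\EPow(G)$, so $\langle x,y\rangle$ is cyclic. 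Since every pair of elements of $A$ generates a cyclic group, $A$ itself is cyclic by the fact underlying Proposition~\ref{p:maxclepow}.

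For part (b), since $E(\DCom(G))\subseteq E(\Com(G))$ always holds, equality is equivalent to every commuting pair of $G$ being a deep-commuting pair; by the description above this says exactly that every pair of commuting elements of $G$ has commuting lifts in $H$, that is, that the Schur cover $H$ is a commutation-preserving (CP) stem extension. It remains to match ``$H$ is CP'' with ``Bogomolov multiplier $=$ Schur multiplier''. The Schur cover realises the Schur multiplier as the largest kernel of any stem extension, whereas the Bogomolov multiplier is by definition the largest kernel of a CP stem extension, so the former always bounds the latter. If $H$ is CP then its kernel $Z$ is the kernel of a CP stem extension of maximal size, forcing equality of the two multipliers; conversely, if the multipliers coincide, then some CP stem extension has kernel of full Schur-multiplier size, hence is itself a Schur cover, and its CP-ness shows (as deep-commuting may be tested in any Schur cover) that every commuting pair deep-commutes, giving $\Com(G)=\DCom(G)$.

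The edge translations and the verification that the relevant preimages are abelian are routine. The step I expect to be the main obstacle is the equivalence in part~(b) between the Schur cover being CP and the coincidence of the two multipliers: the delicate points are that a CP stem extension whose kernel attains the Schur-multiplier bound must itself be a Schur cover, and that CP-ness is independent of the chosen Schur cover. Both rest on the well-definedness of $\DCom(G)$ and on the standard theory of stem extensions and the Bogomolov multiplier (\cite{bog,jm}), and it is this interface between the purely graph-theoretic condition and the cohomological definition of the Bogomolov multiplier that needs the most care.
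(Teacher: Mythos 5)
Your proposal is correct, but note that the paper itself contains no proof of Proposition~\ref{p:dc} to compare against: it simply defers to \cite{ck}. Judged on its own merits, your argument is sound and essentially complete. In part~(a) the two directions work exactly as you say: the preimage of $\langle x,y\rangle$ is $\langle Z,a,b\rangle$, which is abelian when the lifts $a,b$ commute (centrality of $Z$), and the converse direction correctly invokes the fact, stated in the paper after Proposition~\ref{p:maxclepow} (Lemma~32 of \cite{aetal}), that a subgroup in which every pair of elements generates a cyclic group is itself cyclic. In part~(b) your reduction of $\DCom(G)=\Com(G)$ to ``the Schur cover is a CP stem extension'' is right, and the passage to the multipliers uses only the paper's own definitions: the Bogomolov multiplier is the largest kernel of a CP stem extension, every CP stem extension is a stem extension (so its kernel is bounded by the Schur multiplier), and a stem extension whose kernel attains that bound is by definition a Schur cover. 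The two facts you flag as delicate --- well-definedness of $\DCom(G)$ independent of the chosen Schur cover, and the existence of a well-defined largest CP stem kernel --- are both asserted in the paper (with proofs in \cite{ck}), so it is legitimate to quote them; given those, your argument is a complete, self-contained proof of the proposition that the paper leaves to the reference.
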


I refer to \cite{ck} for the proofs.

\medskip

A precise characterisation of the groups attaining either equality is not
known; but examples exist where one bound but not the other is met, or
where neither bound is met (see~\cite{ck}):
\begin{itemize}\itemsep0pt
\item If $G$ is the symmetric or alternating group of degree at least $8$,
then $E(\EPow(G))\subset E(\DCom(G))\subset E(\Com(G))$.
\item If $G$ is a dihedral group of order $2^n$ with $n\ge3$, then 
$E(\EPow(G))=E(\DCom(G))\subset E(\Com(G))$.
\item If $G$ is a certain group of order $64$ (number $182$ in the \textsf{GAP}
library), then $E(\EPow(G))\subset E(\DCom(G))=E(\Com(G))$.
\end{itemize}
Note that
\begin{itemize}\itemsep0pt
\item if the Schur multiplier of $G$ is trivial, then $\DCom(G)=\Com(G)$;
\item in general, the Bogomolov multiplier is much smaller than the Schur
multiplier; for example, if $G$ is a non-abelian finite simple group, then
its Bogomolov multiplier is trivial~\cite{kun}.
\end{itemize}

\begin{question}
\begin{enumerate}\itemsep0pt
\item What can be said about groups $G$ for which $\DCom(G)=\EPow(G)$?
\item What can be said about groups $G$ for which $\DCom(G)=\Com(G)$?
\end{enumerate}
\end{question}

\subsection{Differences}

For any pair of graphs in the hierarchy, if $G$ is a group such that these
two graphs are unequal, we could ask about the graph whose edge set is the
difference. We could denote these by using, for example, $(\Com{-}\Pow)(G)$
for the graph whose edges are those belonging to the commuting graph but not
the power graph, with similar notation in other cases.

At the top, the difference between the complete graph and the non-generating
graph is just the generating graph, which has been extensively studied. At
the next level, the difference between the generating graph and the
commuting graph (the graph $(\NGen{-}\Com)(G)$) has been studied by Saul
Freedman; the results will appear in his thesis. The most complete results
are for nilpotent groups, and are reported in \cite{cfrd}. In particular, if
$G$ is nilpotent and the non-commuting non-generating graph is not null, then 
after deletion of all isolated vertices it is connected, with diameter $2$
or~$3$.

Other differences (apart from the difference between the power graph and the
null graph) have not been studied.

\begin{question}
For each pair of graph types in the hierarchy, what can be said about groups
for which the difference is connected (after removing isolated vertices and
vertices joined to all others)?
\end{question}

In section~\ref{s:conn}, I give a very weak partial result on the graph
$(\Com{-}\Pow)(G)$.

\subsection{Further problems}

We saw the result of \cite{zbm} that two groups have isomorphic power graphs
if and only if they have isomorphic enhanced power graphs.

\begin{question}
Are there any other implications of this kind between pairs of graphs in the
hierarchy?
\end{question}

For a simple negative example, the groups $C_{p^2}$ and $C_p\times C_p$ have
isomorphic commuting graphs but nonisomorphic power graphs, while the group
$C_p\times C_p\times C_p$ and the non-abelian group of order $p^3$ and
exponent $p$ have isomorphic power graphs but nonisomorphic commuting graphs.

\medskip

Do there exist groups $G_1$ and $G_2$ such that, for example, $\Pow(G_1)$
is isomorphic to $\Com(G_2)$? This will be true if $\Pow(G_1)=\Com(G_1)$
and $G_1$ and $G_2$ have isomorphic commuting graphs, or if
$\Pow(G_2)=\Com(G_2)$ and $G_1$ and $G_2$ have isomorphic power graphs.

\begin{question}
Can $\Pow(G_1)$ and $\Com(G_2)$ be isomorphic for groups $G_1$ and $G_2$ which
both have power graph not equal to commuting graph? Similar questions for other
pairs of graphs in the hierarchy.
\end{question}

\section{Cliques and cocliques}
\label{s:cc}

It is clear that, if $\Gamma_1$ and $\Gamma_2$ share a vertex set and 
$E(\Gamma_1)\subseteq E(\Gamma_2)$, then
$\omega(\Gamma_1)\leqslant\omega(\Gamma_2)$,
$\chi(\Gamma_1)\leqslant\chi(\Gamma_2)$,
$\alpha(\Gamma_1)\geqslant\alpha(\Gamma_2)$, and
$\theta(\Gamma_1)\geqslant\theta(\Gamma_2)$. So these four parameters are
monotonic for the graphs in the hierarchy on a given group (non-decreasing
for $\omega$ and $\chi$, non-increasing for $\alpha$ and $\theta$).

\begin{prop}
\begin{enumerate}\itemsep0pt
\item $\omega(\EPow(G))$ is the order of the largest cyclic subgroup of $G$;
\item $\omega(\Com(G))$ is the order of the largest abelian subgroup of $G$;
\item $\omega(\DCom(G))$ is the order of the largest subgroup of $G$ whose
inverse image in any central extension of $G$ is abelian.
\end{enumerate}
\end{prop}

\begin{proof}
(a) and (b) follow from Propositions~\ref{p:maxclcom} and~\ref{p:maxclepow}.
For (c), apply (b) to a Schur cover of $G$. \qed
\end{proof}

The power graph of a group $G$ is perfect, and so has equal clique number and
chromatic number. These numbers do not exceed the clique number of the 
enhanced power graph, which is the largest order of an element of $G$; but
they may be smaller. For example, $\omega(\Pow(C_6))=5$, but $\EPow(C_6)$ is
the complete graph $K_6$.

As noted, $\omega(\Pow(G))\leqslant\omega(\EPow(G))$. There is an inequality
in the other direction:

\begin{theorem}\label{t:bound}
There is a function $f$ on the natural numbers such that, for any finite group
$G$, $\omega(\EPow(G))\leqslant f(\omega(\Pow(G)))$.
\end{theorem}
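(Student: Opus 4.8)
The plan is to reduce everything to the single cyclic subgroup realising $\omega(\EPow(G))$, and then to exploit the fact that a large cyclic group, however ``power-graph-thin'' it may look, still carries a visibly large clique in its power graph. By the proposition immediately preceding the statement, $\omega(\EPow(G))$ equals the order $m$ of a largest cyclic subgroup $C\leqslant G$. So the assertion is equivalent to bounding this number $m$ by a function of $\omega(\Pow(G))$, and the whole problem is now about the interplay between $m$ and the power graph.

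The key step is to produce, inside $\Pow(G)$, a clique whose size tends to infinity with $m$. The cleanest choice is the set of generators of $C$: any two generators $x,y$ of $C$ satisfy $y=x^t$ for some $t$ coprime to $m$, so each is a power of the other, and hence they are pairwise adjacent in $\Pow(G)$. There are $\varphi(m)$ of them, so they form a clique of size $\varphi(m)$, giving
\[
\omega(\Pow(G))\geqslant\varphi(m)=\varphi(\omega(\EPow(G))).
\]
(One could instead take a maximal chain of cyclic subgroups of $C$ to get a larger clique, but for a mere existence statement the bare estimate by $\varphi(m)$ suffices, and it is the conceptually simplest lower bound.)

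To finish, I would invoke the classical fact that $\varphi(m)\to\infty$ as $m\to\infty$ (for instance $\varphi(m)\geqslant\sqrt m$ for all $m>6$), so that for each $N$ the set $\{m:\varphi(m)\leqslant N\}$ is finite. Define $f(N)=\max\{m:\varphi(m)\leqslant N\}$; this is finite and non-decreasing. Since $\varphi(m)\leqslant\omega(\Pow(G))$ by the previous display, monotonicity of $f$ gives $m\leqslant f(\varphi(m))\leqslant f(\omega(\Pow(G)))$, which is exactly the desired bound $\omega(\EPow(G))\leqslant f(\omega(\Pow(G)))$.

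The only real ``obstacle'' is conceptual rather than technical: one must identify a quantity that is simultaneously a \emph{lower} bound for $\omega(\Pow(G))$ (so that it can be controlled by the hypothesis) and \emph{divergent} in $m$ (so that controlling it pins down $m$). The generator clique supplies both at once, and once the correct quantity $\varphi(m)$ is in hand the argument is immediate. I would remark that the resulting $f$ is wildly non-optimal; determining the best possible $f$ would require understanding $\max_{d\mid m}\sum\varphi(d)$ over chains of divisors, i.e.\ the exact clique number of $\Pow(C_m)$, which is a genuinely harder arithmetic question and is not needed here.
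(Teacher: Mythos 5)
Your proof is correct, but it takes a genuinely different route from the paper's. The paper lets $m$ be the largest \emph{prime power} order of an element of $G$: the power graph of a cyclic group of prime power order is complete, so $\omega(\Pow(G))\geqslant m$, while the largest element order of $G$ is at most $\mathrm{lcm}\{1,\ldots,m\}$, so it takes $f(m)=\mathrm{lcm}\{1,\ldots,m\}$ (which is monotone) and is done. You instead work with the full largest cyclic subgroup $C$ of order $m=\omega(\EPow(G))$, exhibit its $\phi(m)$ generators as a clique in $\Pow(G)$, and invert the totient by setting $f(N)=\max\{m:\phi(m)\leqslant N\}$, which is finite because $\phi$ diverges. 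Both arguments turn on exactly the conceptual point you name --- a quantity that is at once a lower bound for $\omega(\Pow(G))$ and divergent in $\omega(\EPow(G))$ --- but your choice of quantity buys a much better bound: since $\phi(m)\geqslant\sqrt{m}$ for $m>6$, your $f$ satisfies $f(N)=O(N^2)$, and the sharper estimate $\phi(m)\gg m/\log\log m$ gives $f(N)=O(N\log\log N)$, whereas the paper's $f(m)=\mathrm{lcm}\{1,\ldots,m\}=\mathrm{e}^{(1+o(1))m}$ is exponential. This is more than a curiosity: the paper remarks right after its proof that its $f$ is exponential but that ``the true value is probably much smaller'', and poses finding the best possible $f$ as an open question; your argument settles the qualitative part of that remark by showing the optimal $f$ grows at most nearly linearly, so your closing self-assessment that your $f$ is ``wildly non-optimal'' undersells what you have done.
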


\begin{proof}
Let $m$ be the largest prime power which is the order of an element of $G$.
The power graph of an $m$-cycle is complete; so $\omega(\Pow(G))\geqslant m$.
On the other hand, the largest order of an element of $G$ is not greater
than the least common multiple of $\{1,\ldots,m\}$, say $f(m)$; and
so $\omega(\EPow(G))\leqslant f(m)$. \qed
\end{proof}

No such result holds for the commuting graph. If $G$ is an elementary abelian
group of order $2^n$, then $\omega(\EPow(G))=2$ but $\omega(\Com(G))=2^n$.

The function $f$ in the theorem is exponential. For there are 
$\pi(m)\leqslant(1+o(1))m/\log m$ primes up to $m$, and $f(m)$ is the
product of the largest power of each prime not exceeding $m$; so
$f(m)\leqslant m^{(1+o(1))m/\log m}=\mathrm{e}^{(1+o(1))m}$. But the true
value is probably much smaller.

\begin{question}
Find the best possible function $f$ in Theorem~\ref{t:bound}.
\end{question}

An upper bound for the chromatic number of the enhanced power graph is given
in \cite[Theorem 12]{aetal}:

\begin{theorem}
Let $G$ be a finite group, and $S$ the set of orders of elements of $G$. Then
\[\chi(\EPow(G))\le\sum_{n\in S}\phi(n),\]
where $\phi$ is Euler's function.
\end{theorem}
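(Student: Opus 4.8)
The plan is to colour the enhanced power graph so that each colour class is a coclique, using one colour for each ``type'' of generator of a cyclic subgroup. Recall from Proposition~\ref{p:maxclepow} that $\EPow(G)$ is the union of complete graphs on the maximal cyclic subgroups of $G$, and that two elements $x,y$ are adjacent precisely when $\langle x,y\rangle$ is cyclic. The key structural observation is that every element $x$ of $G$ is a \emph{generator} of the cyclic group $\langle x\rangle$, and if $|x|=n$ then $\langle x\rangle$ has exactly $\phi(n)$ generators. So the plan is to partition $V(\EPow(G))=G$ according to pairs $(\langle x\rangle, \text{which generator } x \text{ is})$, and then argue that elements sharing a label across different cyclic subgroups are non-adjacent.

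First I would fix, for each cyclic subgroup $C\leqslant G$ of order $n$, an enumeration of its $\phi(n)$ generators; more intrinsically, for an element $x$ of order $n$ I would record the ``coordinate'' $k\in(\mathbb{Z}/n\mathbb{Z})^\times$ expressing $x$ as $z^k$ for a chosen generator $z$ of $\langle x\rangle$. The colour assigned to $x$ would be the pair $(n,k)$ where $n=|x|$ and $k$ ranges over the $\phi(n)$ residues coprime to $n$. The total number of colours is then $\sum_{n\in S}\phi(n)$, which is exactly the claimed bound. It remains to check that each colour class is a coclique, i.e.\ that the colouring is proper.

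The heart of the argument — and the step I expect to require the most care — is showing that two distinct elements receiving the same colour $(n,k)$ are never adjacent in $\EPow(G)$. Suppose $x$ and $y$ both have order $n$ and ``the same generator index $k$'', and suppose for contradiction that they are adjacent, so $\langle x,y\rangle$ is cyclic, say generated by $w$. Then $x$ and $y$ both lie in the single cyclic group $\langle w\rangle$. The delicate point is that the colouring must be set up so that \emph{within one cyclic group} distinct elements of the same order always get distinct colours: a cyclic group of order $m$ has, for each divisor $n\mid m$, exactly $\phi(n)$ elements of order $n$, and these are precisely the $\phi(n)$ generators of its unique subgroup of order $n$. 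Thus the colour $(n,k)$ must be defined so that, restricted to any one cyclic group, the $\phi(n)$ elements of order $n$ receive the $\phi(n)$ distinct labels $(n,k)$ with no repeat. Making the labelling \emph{globally consistent} so that this injectivity holds simultaneously inside every cyclic subgroup is the real obstacle, since an element of order $n$ lies in many cyclic overgroups at once; the cleanest route is to define the label of $x$ intrinsically, for example via the image of $x$ under a fixed isomorphism $\langle x\rangle\cong\mathbb{Z}/n\mathbb{Z}$ chosen coherently along the subgroup lattice, so that the index $k$ of $x$ in $\langle x\rangle$ agrees with its index in any cyclic overgroup's subgroup of order $n$. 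Once such a coherent labelling is in place, two same-coloured elements forced into a common $\langle w\rangle$ would have to coincide, contradicting distinctness; hence every colour class is a coclique and $\chi(\EPow(G))\leqslant\sum_{n\in S}\phi(n)$.
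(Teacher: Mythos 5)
Your proposal is correct and is essentially the paper's own proof: the paper colours the elements of each order $n$ (which induce a disjoint union of complete graphs of size $\phi(n)$, one per cyclic subgroup of order $n$) with $\phi(n)$ colours, using disjoint colour sets for distinct orders --- exactly your $(n,k)$ labelling. One remark: the ``coherence along the subgroup lattice'' you treat as the real obstacle is a non-issue, since any cyclic overgroup of $x$ contains $\langle x\rangle$ itself as its \emph{unique} subgroup of order $n$, so fixing an arbitrary generator of each cyclic subgroup already yields a well-defined proper colouring with no compatibility condition to check.
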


\begin{proof} For each $n\in S$, the set of elements of order $n$ in $G$ is
the disjoint union of complete graphs of size $\phi(n)$, and can be coloured
with $\phi(n)$ colours. If we use disjoint sets of colours for different
orders, no further clashes will occur. \qed
\end{proof}

The bound is met for abelian groups. For if $G$ is abelian, then $S$ is the
set of divisors of the exponent of $G$, and so the sum on the right is the
exponent, which is the largest element order in $G$.

\begin{question}
Find a formula for the clique number of the power graph, or the chromatic
number of the enhanced power graph, of a finite group.
\end{question}

Results about the independence number and clique cover number are less well
developed. Since the power graph is perfect, the Weak Perfect Graph Theorem
of Lov\'asz~\cite{lovasz} asserts that its complement is also perfect, so
\[\alpha(\Pow(G))=\theta(\Pow(G))\]
for any finite group $G$. (Alternatively this follows from Dilworth's Theorem.)

The independence number of the non-generating graph of a finite group has been
investigated by Lucchini and Mar\'oti~\cite{lm}.

\section{Induced subgraphs}

In this section I will consider the question, for each of the graphs in our
hierarchy: For which finite graphs $\Gamma$ does there exist a finite group
$G$ such that $\Gamma$ is isomorphic to an induced subgraph of the group of
that type defined on $G$? (An \emph{induced subgraph} of $\Gamma$ on a subset
$A$ of the vertex set consists of the vertices of $A$ and all edges of $\Gamma$
which are contained in $A$.)

To summarise the results:
\begin{itemize}\itemsep0pt
\item A finite graph $\Gamma$ is isomorphic to an induced subgraph of the
power graph of some finite group $G$ if and only if $\Gamma$ is the
comparability graph of a partial order.
\item For each of the other graphs in the hierarchy, every finite graph is
isomorphic to an induced subgraph of that graph defined on some finite group.
\end{itemize}

Three related questions are:

\begin{question}
\begin{enumerate}\itemsep0pt
\item What is the smallest group for which a given graph is embeddable in the
enhanced power graph/deep commuting graph/commuting graph/non-generating graph?
\item What is the smallest group in which every graph on $n$ vertices can be
embedded in one of these graphs?
\item Which graphs occur if we restrict the group to have a particular property
such as nilpotence or simplicity?
\end{enumerate}
\end{question}

Here is a very rough lower bound for the smallest $N$ such that every
$n$-vertex graph can be embedded in the enhanced power graph, deep commuting
graph, commuting graph, or non-generating graph of some group of order at
most $N$. For our rough calculation, we need only consider groups of order $N$.
It is known that there are at most $2^{c(\log N)^3}$ such groups
(see~\cite{bnv}); each has
at most $N^n$ subsets of size $n$. But there are at least $2^{n(n-1)/2}/n!$
graphs on $n$ vertices up to isomorphism. So we require
\[2^{c(\log N)^3}\cdot N^n\ge 2^{n(n-1)/2}/n!,\]
which implies that $N\ge2^{n^{2/3-\epsilon}}$. So the exponential bound we find
in some cases is not too far from the truth.

Note also that every $n$-vertex graph is embeddable in a Paley graph of order
$q$, where $q$ is a prime power congruent to $1$ (mod~$4$) and $q>n^22^{2n-2}$
(see~\cite{beh,bt}); so, to find a group whose commuting graph, etc., embeds
all graphs of order $n$, we only need to embed this Paley graph.

\subsection{The commuting graph}

\begin{theorem}\label{t:ind_com}
Every finite graph is isomorphic to an induced subgraph of the commuting graph
of a finite group. This group can be taken to be nilpotent of class $2$ and
exponent $4$.
\end{theorem}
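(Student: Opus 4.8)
The plan is to realise an arbitrary finite graph $\Gamma$ (not its complement) as induced commuting graph by exploiting the fact that, over the field $\mathbb{F}_2$, the adjacency matrix of any graph is exactly the Gram matrix of an alternating bilinear form. In a group of nilpotency class $2$ the commutator map $(x,y)\mapsto[x,y]$ is bilinear and central-valued; so if I can build a class-$2$ group whose commutator form, restricted to a chosen set of generators, equals a prescribed alternating form, then the commuting relation among those generators is prescribed at will. This reduces the whole theorem to a single linear-algebra observation plus a cocycle construction.

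Concretely, I would first pass to the complement $\bar\Gamma$, with adjacency matrix $A\in M_n(\mathbb{F}_2)$ (symmetric, zero diagonal). Let $V=\mathbb{F}_2^n$ with basis $e_1,\dots,e_n$, and fix the ``upper-triangular'' bilinear form $\beta$ on $V$ defined by $\beta(e_i,e_j)=A_{ij}$ for $i<j$ and $\beta(e_i,e_j)=0$ for $i\ge j$. Its antisymmetrisation $B(u,w)=\beta(u,w)+\beta(w,u)$ is then the alternating form whose matrix is $A$. Define $G=V\times\mathbb{F}_2$ with multiplication $(u,a)(w,b)=(u+w,\,a+b+\beta(u,w))$; bilinearity of $\beta$ makes this associative (it is a $2$-cocycle), with identity $(0,0)$.

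The group-theoretic properties are then short computations in characteristic $2$. The commutator is $[(u,a),(w,b)]=(0,B(u,w))$, which lies in the central subgroup $\{0\}\times\mathbb{F}_2$, so $G$ is nilpotent of class at most $2$ (exactly $2$ provided $B\ne0$). The square is $(u,a)^2=(0,\beta(u,u))$, and squaring again gives the identity, so every element has order dividing $4$; since $\beta(u,u)$ is not identically zero for non-complete $\Gamma$, there is an element of order $4$ and the exponent is $4$. To force class exactly $2$ and exponent exactly $4$ uniformly (including the trivial case where $\Gamma$ is complete), I can replace $G$ by $G\times Q_8$, which changes none of the commuting relations used below.

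Finally, put $g_i=(e_i,0)$. These are distinct non-identity elements, and $g_i,g_j$ commute if and only if $B(e_i,e_j)=0$, i.e.\ $A_{ij}=0$, i.e.\ $v_i$ and $v_j$ are non-adjacent in $\bar\Gamma$, hence adjacent in $\Gamma$. Thus the induced subgraph of $\Com(G)$ on $\{g_1,\dots,g_n\}$ is isomorphic to $\Gamma$. The only points needing care are the bookkeeping that produces $\Gamma$ rather than $\bar\Gamma$ and the guarantee that the exponent equals $4$; the genuine content is the single observation that every symmetric zero-diagonal $\mathbb{F}_2$-matrix is an alternating form, so that one central involution already suffices to encode an arbitrary adjacency pattern.
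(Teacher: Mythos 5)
Your proposal is correct and is essentially the paper's own proof: the same central extension $V\times\mathbb{F}_2$ with a bilinear form serving as the $2$-cocycle, the same triangular assignment of values on basis pairs encoding the complement of $\Gamma$, and the same embedding of vertices as basis vectors, so that commutation is governed by the symmetry (equivalently, the vanishing of the antisymmetrisation) of the form. The only difference is your extra care, via the factor $Q_8$, to force class exactly $2$ and exponent exactly $4$ when $\Gamma$ is complete --- a corner case the paper glosses over by writing ``exponent (dividing) $4$''.
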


\begin{proof}
Let $F$ be the two-element field, $V$ a vector space over $F$, and $B$ a 
bilinear form on $V$. Define an operation $\circ$ on $V\times F$ by the rule
\[(v_1,a_1)\circ(v_2,a_2)=(v_1+v_2,a_1+a_2+B(v_1,v_2)).\]
It is a straightforward exercise to show that this operation makes $V\times F$
a group. This group is nilpotent of class $2$ and exponent (dividing) $4$,
since $\{0\}\times F$ is a central subgroup with elementary abelian quotient.
Moreover, $(v_1,a_1)$ and $(v_2,a_2)$ commute if and only if 
$B(v_1,v_2)=B(v_2,v_1)$.

Now a bilinear form is uniquely determined by its values on pairs of vectors
taken from a basis for $V$; these values can be assigned arbitrarily. So
let $\Gamma$ be a graph with vertex set $\{1,\ldots,n\}$, and let
$v_1,\ldots,v_n$ be a basis. Assign the values $B(v_i,v_j)=0$ if $i\leqslant j$;
for $i>j$, put $B(v_i,v_j)=0$ if vertices $i$ and $j$ are adjacent, $1$ if not.
Then it is clear that the induced subgraph of the commuting graph on the
set $\{v_1,\ldots,v_n\}$ is isomorphic to $\Gamma$. \qed
\end{proof}

The construction above shows that the smallest group whose commuting graph
contains a given $n$-vertex graph has order at most $2^{n+1}$ if $\Gamma$ has
$n$ vertices. However, it may be very much smaller; for the complete graph
$K_n$, the answer is clearly $n$.

Using the remark before this subsection, we can build a group whose commuting
graph embeds every $n$-vertex graph, by applying this construction to a
sufficiently large Paley graph.

However, there is a more economical way to proceed, using a
group from a familiar class, the \emph{extraspecial $p$-groups}.
The quaternion group $Q_8$ is generated by elements $a,b,z$ satisfying
$a^2=b^2=[a,b]=z$ and $z^2=1$; its centre is generated by $z$.
Let $G_n$ denote the central product of $n$ copies of $Q_8$. Then $G_n$ is
generated by elements $a_i,b_i$ (for $1\le i\le n$ and $z$, such that, for
each $i$, $a_i,b_i,z$ generate $Q_8$, while $a_i$ and $b_i$ commute with
$a_j$ and $b_j$ for $j\ne i$.

The group $G_n$ has order $2^{2n+1}$.

\begin{theorem}
Every $n$-vertex graph is isomorphic to an induced subgraph of the commuting
graph of $G_n$.
\end{theorem}

\begin{proof}
The proof is by induction on $n$; starting the induction is trivial. So
suppose that the theorem is true for $n$, and let $\Gamma$ be an $(n+1)$-vertex
graph. We can suppose that the induced subgraph on $\{v_1,\ldots,v_n\}$ is
already embedded as an induced subgraph of the group $G_n$.

We modify this embedding as follows. We embed $G_n$ in $G_{n+1}$ in the
obvious way, and replace the vertex $v_i$ by $v_ia_{n+1}$ if $v_i$ is not
joined to $v_{n+1}$, leaving it as it is if these vertices are joined. Since
$a_{n+1}$ commutes with all $v_i$, this does not change the induced subgraph.
Now we embed $v_{n+1}$ as $b_{n+1}$. Clearly this gives an embedding of
$\Gamma$.
\end{proof}

This argument is based on a hint from Persi Diaconis, who has used similar
ideas to study conjugacy classes in the Heisenberg group over a finite field
and generalisations. (Recall that the random walk on the commuting graph has
a limit which is uniform on conjugacy classes.) I do not have further details
at present. Similar ideas can be found in the beautiful paper~\cite{ds}.

\subsection{The deep commuting graph}

\begin{theorem}
Every finite graph is isomorphic to an induced subgraph of the deep commuting
graph of a finite group.
\end{theorem}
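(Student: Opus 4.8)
The plan is to reduce to the commuting graph case already established in Theorem~\ref{t:ind_com}, by passing to an overgroup with trivial Schur multiplier. The mechanism is the remark recorded above that $\DCom(K)=\Com(K)$ whenever the Schur multiplier of $K$ is trivial, combined with the elementary fact that commuting is an \emph{intrinsic} property of a pair of elements: for any subgroup $G\le K$, the induced subgraph of $\Com(K)$ on the vertex set $G$ is exactly $\Com(G)$.

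Concretely, given a finite graph $\Gamma$, I would first invoke Theorem~\ref{t:ind_com} to produce a finite group $G$ together with an $n$-element subset $A\subseteq G$ whose induced subgraph in $\Com(G)$ is $\Gamma$. I would then embed $G$ as a subgroup of a finite group $K$ having trivial Schur multiplier. Since $\DCom(K)=\Com(K)$ as graphs on $K$, their induced subgraphs on $A$ coincide; and as $A\subseteq G\le K$, this common induced subgraph is $\Com(G)|_A=\Gamma$. Thus $\Gamma$ is an induced subgraph of $\DCom(K)$, which is exactly what is required.

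The entire argument therefore hinges on one purely group-theoretic assertion, which is the step I expect to be the main obstacle: \emph{every finite group is isomorphic to a subgroup of some finite group with trivial Schur multiplier}. One natural route is representation-theoretic: any finite group embeds in $\mathrm{GL}(d,q)$ for a suitable prime power $q$, and for appropriate choices of $(d,q)$ the Schur multiplier of $\mathrm{GL}(d,q)$ vanishes; one then takes $K=\mathrm{GL}(d,q)$. Alternatively one can appeal to the known fact that every finite group embeds in a finite group with trivial Schur multiplier. Either way, verifying the vanishing of the multiplier for the overgroup is the genuine content here, and the part I would need to pin down carefully or extract from the literature.

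Finally, it is worth recording why the naive approach fails, since this explains the necessity of enlarging the group. If one simply tried to compute $\DCom(G)$ for the explicit class-$2$ group $G=V\times\mathbb{F}_2$ of Theorem~\ref{t:ind_com}, whose commutator map is an alternating form $\beta$ on $V$, one finds that the full covering group $\widetilde V$ of $V$ (with centre $\wedge^2V$) is itself a central extension of $G$. In $\widetilde V$ the lifts of two basis vectors $v_i,v_j$ commute if and only if $v_i\wedge v_j=0$, which fails for all independent pairs; hence no two of the realizing elements are adjacent in $\DCom(G)$, and the deep commuting graph collapses regardless of $\beta$. The deep commuting graph thus ``sees through'' the chosen form to the universal one, and this is precisely why a trivial-multiplier overgroup, rather than the construction of Theorem~\ref{t:ind_com} itself, is needed.
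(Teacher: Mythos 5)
Your proposal is correct and follows essentially the same route as the paper: reduce to Theorem~\ref{t:ind_com}, use the fact that $\DCom(K)=\Com(K)$ when the Schur multiplier of $K$ is trivial together with the fact that commuting graphs restrict to subgroups, and then embed the given group in a finite group with trivial Schur multiplier. The one step you left to be ``pinned down or extracted from the literature'' is exactly what the paper supplies concretely: embed the group in $S_n$ by Cayley's theorem, then embed $S_n$ into $\mathrm{GL}(n,2)$ by permutation matrices, whose Schur multiplier is trivial for $n\neq 3,4$ by Griess's computation.
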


\begin{proof}
Let $\Gamma$ be a finite graph. As we have seen, $\Gamma$ is isomorphic to an
induced subgraph of the commuting graph of some group. So it is enough to show
that this group can be chosen to have trivial Schur multiplier. Since the
induced subgraph on a subgroup $H$ of the commuting graph of $G$ is the
commuting graph of $H$, it suffices to show that every finite group can be
embedded in a finite group with trivial Schur multiplier.

By Cayley's Theorem, every finite group of order $n$ can be embedded in the
symmetric group $S_n$. Unfortunately the symmetric group has Schur multiplier
$C_2$ if $n\geqslant8$. So we embed $S_n$ into the general linear group
$\mathrm{GL}(n,2)$ by permutation matrices of order $n$.
Now the Schur multiplier of $\mathrm{GL}(n,2)$ is trivial except for
$n=3$ or $n=4$~\cite{griess}. \qed
\end{proof}

\subsection{The power graph}

We saw that the power graph of $G$ is the comparability graph of a partial
order; hence any induced subgraph is also a comparability graph. The converse
is also true:

\begin{theorem}
A finite graph is isomorphic to an induced subgraph of the power graph of a
finite group if and only if it is the comparability graph of a partial order.
The group can be taken to be cyclic of squarefree order.
\end{theorem}

\begin{proof}
One way round follows from our preliminary remarks: the power graph is the
comparability graph of a partial order, and the class of such graphs is
closed under taking induced subgraphs.

So suppose that we have a
partial order $\leqslant$ on $X$. For each $x\in X$, let
$[x]=\{y\in x:y\leqslant x\}$. A routine check shows that
\begin{itemize}\itemsep0pt
\item $[y]\subseteq[x]$ if and only if $y\leqslant x$;
\item $[x]=[y]$ if and only if $x=y$.
\end{itemize}
So the given partial order is isomorphic to the set of subsets of $X$ of the
form $[x]$, ordered by inclusion.

Now choose distinct prime numbers $p_x$ for $x\in X$. Let $G$ be the direct
product of cyclic groups $C_{p_x}=\langle a_x\rangle$ of order $p_x$ for
$x\in X$. Map the subset $Y$ of $X$ to the element $g_X=(g_x:x\in X)$ of
the direct product, where
\[g_x=\cases{a_x & if $x\in Y$,\cr 1 & otherwise.\cr}\]
It is readily checked that $g_X$ and $g_Y$ are adjacent in the power graph if
and only if $X$ and $Y$ are adjacent in the comparability graph of the inclusion
order on $X$.

To conclude, we note that $G$ is a cyclic group of squarefree order. \qed
\end{proof}

Note that a graph is the comparability graph of a partial order if and only if
there is a transitive orientation of the edges. There is a list of forbidden
induced subgraphs for comparability graphs~\cite{gallai}, but it is not
straightforward to state.

\subsection{The enhanced power graph}

\begin{theorem}
Every finite graph is isomorphic to an induced subgraph of the enhanced power
graph of some group (which can be taken to be abelian).
\end{theorem}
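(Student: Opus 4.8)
The plan is to mimic the construction used for the commuting graph (Theorem~\ref{t:ind_com}) and the power graph, exploiting the fact that in an \emph{abelian} group the enhanced power graph is determined purely by whether two elements lie in a common cyclic subgroup. Since we are free to use abelian groups, I would start from an arbitrary finite graph $\Gamma$ with vertex set $\{1,\ldots,n\}$ and try to assign to each vertex $i$ an element $x_i$ of a suitable finite abelian group $G$ so that $\langle x_i,x_j\rangle$ is cyclic precisely when $i\sim j$ in $\Gamma$.

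The key idea is to use direct products of cyclic groups of prime-power order and to control cyclicity through the primes involved. Recall that in a finite abelian group $G\cong\prod_p S_p$ (decomposition into Sylow subgroups), a subgroup $\langle x,y\rangle$ is cyclic if and only if its projection into each $S_p$ is cyclic; and within a fixed prime $p$, two elements generate a cyclic subgroup exactly when one lies in the cyclic subgroup generated by the other's "component" — so non-cyclicity at $p$ is forced by having two independent elements of order $p$. The natural plan is therefore: assign a distinct prime $p_e$ to each \emph{non-edge} $e=\{i,j\}$ of $\Gamma$, and arrange the group so that the components of $x_i$ and $x_j$ at $p_e$ are independent (forcing $\langle x_i,x_j\rangle$ to contain $C_{p_e}\times C_{p_e}$, hence non-cyclic), while for every edge all prime components are ``nested'' so that cyclicity is preserved. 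Concretely, for each non-edge $e=\{i,j\}$ take a factor $C_{p_e}\times C_{p_e}$ and place the two independent generators into the slots for $i$ and $j$, leaving that factor trivial in every other coordinate.

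First I would make the element $x_i$ equal, in the $C_{p_e}\times C_{p_e}$ factor attached to a non-edge $e$, to the first generator if $i$ is the smaller endpoint of $e$, the second generator if $i$ is the larger endpoint, and the identity otherwise. Then for a \emph{non-edge} $\{i,j\}$ the pair $x_i,x_j$ projects onto two independent elements of $C_{p_{ij}}\times C_{p_{ij}}$, so $\langle x_i,x_j\rangle$ is not cyclic and $i\not\sim j$ is reproduced. For an \emph{edge} $\{i,j\}$, no factor contains independent contributions from both $i$ and $j$: in each prime factor at most one of $x_i,x_j$ is nontrivial, so the projection of $\langle x_i,x_j\rangle$ is cyclic at every prime, whence $\langle x_i,x_j\rangle$ itself is cyclic and $i\sim j$ is reproduced. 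Thus the induced subgraph of $\EPow(G)$ on $\{x_1,\ldots,x_n\}$ is isomorphic to $\Gamma$, and $G$ is abelian (indeed a product of groups $C_p\times C_p$), completing the proof.

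The main obstacle I anticipate is verifying the edge direction cleanly — that is, making sure that when $\{i,j\}$ \emph{is} an edge, \emph{every} prime factor contributes a cyclic projection. This requires checking that distinct non-edges never force $x_i$ and $x_j$ to land independently in a common factor; this is automatic here because each non-edge gets its \emph{own} private prime $p_e$ and hence its own factor, so a factor $C_{p_e}\times C_{p_e}$ receives nontrivial contributions only from the two endpoints of $e$. One should confirm the elementary fact that an abelian group is cyclic if and only if each of its Sylow subgroups is cyclic, and that a subgroup of $C_p\times C_p$ is cyclic unless it is the whole group — both routine — after which the argument goes through. A minor bookkeeping point is that using one prime per non-edge may be wasteful; an optimisation would reuse primes across non-edges that share no vertex, but this is not needed for the stated existence result.
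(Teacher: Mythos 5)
Your construction is essentially sound, and it is genuinely different from the paper's proof: the paper proceeds by induction on the number of vertices, adjoining one fresh factor $C_p\times C_p$ per \emph{vertex}, whereas yours is a one-shot direct-product construction with one fresh prime per \emph{non-edge} (in the same spirit as the paper's embedding theorem for the dual enhanced power graph $\DEP(G)$, which uses one prime per edge). The two facts you lean on --- that a finite abelian group is cyclic if and only if all its Sylow subgroups are cyclic, and that for a subgroup of a product of coprime factors the Sylow subgroups are exactly the projections --- are correct and do give the edge/non-edge dichotomy you claim.

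However, there is a genuine gap: you never verify that the map $i\mapsto x_i$ is injective, and it need not be. A vertex adjacent to every other vertex lies on no non-edge, so your recipe assigns it the identity element; if $\Gamma$ has two or more such universal vertices, they receive the \emph{same} element of $G$. In the extreme case $\Gamma=K_n$ there are no non-edges at all, your group $G$ is trivial, and every vertex maps to $1$, so the induced subgraph on $\{x_1,\ldots,x_n\}$ is a single vertex, not a copy of $K_n$. The repair is easy and stays inside your framework: choose in addition a distinct prime $q_i$ for each vertex $i$, adjoin a factor $C_{q_i}=\langle c_i\rangle$ to $G$, and replace $x_i$ by $x_ic_i$. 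The elements are then pairwise distinct (they differ in the $C_{q_i}$ coordinates), and none of your verifications is disturbed, since in each new factor at most one element of any pair is nontrivial, so the Sylow-by-Sylow cyclicity analysis for edges and non-edges goes through verbatim. With that amendment your argument is a correct alternative to the paper's; note that the paper's inductive construction avoids the issue automatically, because each vertex receives its own fresh generator $b$ from its own prime.
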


\begin{proof}
The proof is by induction. For a graph with a single vertex, there is no
problem. So let $\Gamma$ be a graph with vertex set $\{1,\ldots,n\}$, and
suppose that $i\mapsto x_i$ (for $i=1,\ldots,n-1$) is an isomorphism to an
induced subgraph of $\EPow(G)$.

Choose a prime $p$ not dividing the order of $G$, and let $H=\langle a,b\rangle$
be an elementary abelian group of order $p^2$. Now in the group $G\times H$,
replace $x_i$ by $x_ia$ if $i$ is not joined to $n$ in $\Gamma$, and leave
it as is if $i$ is joined to $n$. Then map $n$ to $x_n=b$.

Since $p\nmid|G|$, for any $z\in G$ we have
$\langle z,a\rangle=\langle z\rangle\times\langle a\rangle$, which is cyclic.
So the embedding of $\{1,\ldots,n-1\}$ is still an isomorphism to an induced
subgraph. Moreover, $\langle x_i,b\rangle$ is cyclic while
$\langle x_ia,b\rangle$ is not, so we have the correct edges from $b$ to the
other vertices, and the result is proved.

The resulting group is the product of $n$ copies of $C_p\times C_p$ for
distinct primes $p$. \qed
\end{proof}

The dual enhanced power graph is even easier:

\begin{theorem}
Every finite graph is isomorphic to an induced subgraph of the dual enhanced
power graph of some group (which can be taken to be cyclic).
\end{theorem}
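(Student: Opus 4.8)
The claim is that every finite graph $\Gamma$ embeds as an induced subgraph of the dual enhanced power graph $\mathrm{DEP}(G)$ for some cyclic group $G$. Recall that in $\mathrm{DEP}(G)$ (with identity removed), two non-identity elements $x,y$ are joined iff $\langle x\rangle\cap\langle y\rangle\neq\{1\}$.

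For a cyclic group, this is very clean: in $C_n = \langle g\rangle$, the subgroup $\langle g^k\rangle$ depends only on $\gcd(k,n)$, and $\langle x\rangle \cap \langle y\rangle$ is generated by the element of order equal to... I need to recall that in a cyclic group of order $n$, for each divisor $d\mid n$ there's a unique subgroup of order $d$, and $\langle x\rangle\cap\langle y\rangle = \{1\}$ iff the orders of $x$ and $y$ are coprime.

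So in cyclic $C_n$: two elements $x,y$ are adjacent in $\mathrm{DEP}$ iff $\gcd(\mathrm{ord}(x),\mathrm{ord}(y)) > 1$.

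**The construction.**\textbf{Setting up the right description of $\DEP$ for cyclic groups.}
The plan is to exploit how transparent the intersection condition becomes inside a cyclic group. Recall that $x$ and $y$ are joined in $\DEP(G)$ precisely when $\langle x\rangle\cap\langle y\rangle\ne\{1\}$. If $G$ is cyclic of squarefree order $N$, then for each divisor $d$ of $N$ there is a \emph{unique} subgroup of order $d$; hence, for elements $x$ and $y$ of orders $a$ and $b$, the subgroups $\langle x\rangle$ and $\langle y\rangle$ are the unique subgroups of those orders, and their intersection is the unique subgroup of order $\gcd(a,b)$. Consequently $x$ and $y$ are adjacent in $\DEP(G)$ if and only if $\gcd(a,b)>1$, i.e.\ if and only if the orders of $x$ and $y$ share a common prime divisor. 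So the whole problem reduces to: realise the adjacency relation of $\Gamma$ by a ``shared prime factor'' relation on element orders.

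\textbf{The encoding by primes.}
First I would fix a graph $\Gamma$ with vertex set $\{1,\ldots,n\}$. To each edge $e$ of $\Gamma$ I assign a distinct prime $p_e$, and to each vertex $i$ a further distinct prime $q_i$ (all the $p_e$ and $q_i$ being pairwise distinct). I then set
\[
m_i=q_i\prod_{e\ni i}p_e,
\qquad
N=\prod_{i}q_i\;\prod_{e}p_e,
\]
and take $G=C_N=\langle g\rangle$, a cyclic group of squarefree order. For each vertex $i$ I choose $x_i\in G$ to be an element of order $m_i$, which exists and is unique up to the choice of generator of the corresponding subgroup. The point of the private primes $q_i$ is twofold: they guarantee $m_i>1$ (so $x_i\ne1$ and $x_i$ is a genuine vertex of $\DEP(G)$, even when $i$ is isolated in $\Gamma$), and since they are all distinct they never contribute a shared factor between two different $x_i$.

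\textbf{Verifying the induced subgraph is $\Gamma$.}
The remaining step is to check that $i\mapsto x_i$ is an isomorphism onto the induced subgraph of $\DEP(G)$ on $\{x_1,\ldots,x_n\}$. By the cyclic description above, $x_i$ and $x_j$ are adjacent iff $\gcd(m_i,m_j)>1$, i.e.\ iff $m_i$ and $m_j$ have a common prime factor. The only primes that can be shared are edge-primes, and $p_e$ divides both $m_i$ and $m_j$ exactly when $e$ is incident to both $i$ and $j$, that is, when $e=\{i,j\}$ is an edge of $\Gamma$. (The private primes are pairwise distinct and so contribute nothing here.) Hence $x_i$ and $x_j$ are adjacent in $\DEP(G)$ if and only if $\{i,j\}\in E(\Gamma)$, as required, and $G$ is cyclic as claimed. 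There is no real obstacle in this argument; the only points needing a moment's care are the bookkeeping that ensures common prime factors correspond exactly to edges, and the use of the private primes to keep every vertex off the (removed) identity.

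\qed

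\begin{theorem}
Every finite graph is isomorphic to an induced subgraph of the generating graph of some finite group.
\end{theorem}
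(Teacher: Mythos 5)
Your proof is correct and takes essentially the same route as the paper: assign distinct primes to the edges, work in a cyclic group of squarefree order, and represent each vertex by an element whose order is the product of the primes of its incident edges, so that adjacency in $\DEP$ becomes exactly the relation of sharing a prime factor. Your one addition, the private vertex primes $q_i$, is in fact a useful refinement: the paper's own construction, which sends $v$ to $b_v=\prod_{v\in e}a_e$, degenerates for isolated vertices (they map to the excluded identity) and for a component consisting of a single edge (both endpoints map to the same element), whereas your $q_i$ force the $x_i$ to be distinct non-identity elements of pairwise distinct orders. One housekeeping point: the theorem statement about the generating graph at the end of your write-up is a stray copy-paste and should be deleted.
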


\begin{proof}
Let $E$ be the edge set of $\Gamma$; choose distinct primes $p_e$ for each
edge $e\in E$, and let $C_{p_e}=\langle a_e\rangle$ be the cyclic group of
order $p_e$, and $G$ the direct product of all these cyclic groups. Now
represent a vertex $v$ by the element $b_v=\prod_{v\in e}a_e$. Then
\[\langle b_v\rangle\cap\langle b_w\rangle=
\cases{\langle a_e\rangle & if $e=\{v,w\}$,\cr \{1\} & otherwise.\cr}\]
So $\Gamma$ is an induced subgraph of $\DEP(G)$.\qed
\end{proof}

\subsection{The generating graph}
\label{s:indgg}

\begin{theorem}\label{t:gen_univ}
Every finite graph is isomorphic to an induced subgraph of the generating
graph of a finite group.
\end{theorem}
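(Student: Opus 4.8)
The plan is to realise $\Gamma$ as an induced subgraph of the generating graph of a direct product $G=\prod_e T_e$ of finite non-abelian simple groups, one factor $T_e$ for each non-edge $e$ of $\Gamma$, chosen pairwise non-isomorphic. The guiding observation is that non-isomorphic simple factors cannot be linked by any diagonal: if $T$ and $T'$ are non-isomorphic non-abelian simple groups then, by Goursat's lemma together with simplicity, the only subgroup of $T\times T'$ projecting onto both factors is $T\times T'$ itself. Iterating, any subgroup of $G$ that projects onto each factor is all of $G$. Consequently, two elements $g=(g_e)$ and $h=(h_e)$ generate $G$ if and only if $\langle g_e,h_e\rangle=T_e$ in every coordinate $e$. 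This reduces generation in $G$ to a conjunction of independent coordinatewise conditions, with no global inequivalence constraint between coordinates to contend with.

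With this reduction in hand the encoding is transparent. I index the coordinates by the non-edges of $\Gamma$ and assign to each vertex $v$ a tuple $g_v\in G$ so that $g_v$ and $g_w$ generate $G$ exactly when $v\sim w$. Since generation fails precisely when some coordinate fails, it suffices to arrange that in the coordinate $e=\{x,y\}$ attached to a non-edge the designated pair $g_x,g_y$ generates a \emph{proper} subgroup of $T_e$, while every other pair of vertices still generates $T_e$ in that coordinate. Then the pair $\{v,w\}$ fails to generate $G$ if and only if it fails in its own coordinate, that is, if and only if $\{v,w\}$ is a non-edge; edges generate because they never fail in any coordinate.

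The whole construction therefore rests on a single-coordinate lemma: for a suitable non-abelian finite simple group $T$, the generating graph $\Gen(T)$ contains, as an induced subgraph on $n$ vertices, the complete graph $K_n$ with one edge deleted. Concretely, I need $n-2$ elements of $T$ that pairwise generate $T$, together with two further elements $a,b$ each of which generates $T$ with all of the others but with $\langle a,b\rangle\ne T$; taking $a$ and $b$ inside a common maximal subgroup secures the last requirement. I would supply the remaining elements by an avoidance argument: for $T$ large the maximal subgroups are few compared with $|T|$, so one can successively choose elements lying outside all the maximal subgroups that would spoil a required generating pair. This is where the known abundance-of-generation results for simple groups (the strong spread bounds underlying Theorem~\ref{t:bgh}) do the real work, and it is the step I expect to be the main obstacle, since one must control all of the roughly $n^2$ required generating pairs simultaneously while keeping the single forbidden pair inside a maximal subgroup.

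Finally, having chosen for each non-edge $e$ a simple group $T_e$ whose generating graph contains the appropriate $K_n$-minus-an-edge (pairwise non-isomorphic $T_e$ exist, because all sufficiently large simple groups qualify and there are infinitely many of them), I assemble the tuples coordinatewise and verify that the induced subgraph of $\Gen(G)$ on $\{g_v:v\in V(\Gamma)\}$ is exactly $\Gamma$.
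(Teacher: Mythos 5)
Your reduction is correct and genuinely different from the paper's argument: with pairwise non-isomorphic non-abelian simple factors $T_e$, Goursat's lemma does show that a subgroup of $G=\prod_e T_e$ projecting onto every factor is the whole of $G$, so generation becomes a coordinatewise condition, and your encoding (one coordinate per non-edge, with the two endpoints of that non-edge placed inside a common maximal subgroup of $T_e$) correctly reduces the theorem to your single-coordinate lemma: for arbitrary $n$ there is a finite simple group $T$ whose generating graph contains an induced $K_n$ minus one edge. The gap is that this lemma -- which you yourself flag as the main obstacle -- is never proved, and the avoidance argument you sketch for it fails as stated. The relevant quantity is not whether ``the maximal subgroups are few compared with $|T|$''; what the greedy choice needs is that, for each already-chosen element $y$, the \emph{union} of the maximal subgroups containing $y$ is a small proportion of $T$, and this is false for arbitrary $y$. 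For instance, a $3$-cycle $y$ in $A_m$ lies in the point stabiliser $A_{m-1}$ of each of the $m-3$ points outside its support, so the union of its maximal overgroups already contains every even permutation fixing such a point (a proportion tending to $1-\mathrm{e}^{-1}$), and in fact the proportion of $z$ with $\langle y,z\rangle=A_m$ tends to $0$. So the greedy argument only works if $a$, $b$ and all the $x_i$ are drawn from a carefully chosen class of elements whose maximal overgroups are provably few \emph{and small}; establishing that requires real input -- either an explicit choice such as $m$-cycles in $A_m$ with $m$ prime (where every proper overgroup is a primitive group of prime degree, controlled by Jordan-type theorems), or generators of a nonsplit torus in $\mathrm{PSL}(2,q)$ via Dickson's subgroup classification, or the CFSG-based fixed-point-ratio machinery behind Theorem~\ref{t:bgh}. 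None of this is supplied, so as it stands the proof is an architecture plus an unproven lemma.

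It is worth noting what the paper does instead, since it sidesteps this entirely: it complements $\Gamma$, writes the complement as the intersection graph of a linear hypergraph, pads the sets so that all have size $k$ and $n-k$ is an odd prime, replaces each padded set by the cycle on its complementary set, and applies Jordan's theorem once: two such prime-length cycles generate $A_n$ exactly when their supports cover all $n$ points. That embeds $\Gamma$ in the generating graph of a single alternating group, with no product construction, no Goursat, and no avoidance argument -- the ``hard generation'' step is handled by the same Jordan-type theorem that your missing lemma would ultimately need. If you do complete your route, two small repairs are also needed: the case where $\Gamma$ is complete (your product of factors is then empty), and a remark that the map $v\mapsto g_v$ is injective (automatic once $G$ is non-cyclic, i.e.\ once there is at least one non-edge).
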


\begin{proof} Let $\Gamma$ be a finite graph. We proceed in a number of
steps.

\subparagraph{Step 1} Replace $\Gamma$ by its complement.

\subparagraph{Step 2} Every graph can be represented as the intersection
graph of a \emph{linear hypergraph}, a family of sets which intersect in at
most one point (where intersection~$1$ corresponds to adjacency). The ground
set $E$ is the set of edges of the graph; the vertex $v$ is represented by
the set $S(v)$ of edges incident with $v$. Then for distinct vertices $v,w$,
\[S(v)\cap S(w)=\cases{e, & if $\{v,w\}$ is an edge $e$,\cr
\emptyset & if $v$ and $w$ are nonadjacent.\cr}\]

\subparagraph{Step 3} Add some dummy points, each lying in just one of the
sets, so that they all have the same cardinality $k$, with $k\ge3$. Now add
some dummy points in none of the sets so that the cardinality $n$ of the set
$\Omega$ of points satisfies the conditions that $n>2k$ and $n-k$ is prime.

\subparagraph{Step 4} Now replace each set by its complement. The complements
of two subsets of $\Omega$ have union $\Omega$ if and only if the two sets
are disjoint. Thus, each original vertex is now represented by an $(n-k)$-set
where two such sets have union $\Omega$ if and only if the corresponding
vertices are adjacent in $\Gamma$.

\subparagraph{Step 5} Replace each set by a cyclic permutation on that set,
fixing the remaining points. Each of these cycles has odd prime length, so each
is an even permutation, and so lies in the alternating group $A_n$. Let
$g_v$ be the permutation corresponding to the vertex $v$ of $\Gamma$.
\begin{itemize}\itemsep0pt
\item If $v$ and $w$ are nonadjacent, then the supports of $g_v$ and $g_w$ have
union strictly smaller than $\Omega$, so $\langle g_v,g_w\rangle\ne A_n$.
\item Suppose $v$ and $w$ are adjacent. Then the supports of $g_v$ and $g_w$
have union $\Omega$, so $H=\langle g_v,g_w\rangle$ is transitive on $\Omega$.
It is primitive: for each of $g_v$ and $g_w$ is a cycle of prime length 
$n-k>n/2$, and a block of imprimitivity either contains the cycle (and so
has length greater than $n/2$, hence $n$) or meets it in one point (and so
there are more than $n/2$ blocks, hence $n$ blocks). Hence $H$ is a primitive
group of degree $n$ containing a cycle of prime length $p$ with $n/2<p<n-2$, By
Jordan's theorem~\cite[Theorem~13.9]{wielandt}, $H$ contains the alternating
group $A_n$. Since it is generated by even permutations, $H=A_n$.
\end{itemize}

Thus we have embedded $\Gamma$ as an induced subgraph in the generating graph
of $A_n$, as required. \qed
\end{proof}

\subsection{Differences}

We can also ask which graphs can be embedded in the graph whose edge set is
the difference of the edge sets of two graphs in the hierarchy.

The proof that enhanced power graphs are universal uses abelian groups for the
embedding. So, by embedding the complement, it shows:

\begin{corollary}
Let $\Gamma$ be a finite graph. Then there is a group $G$ such that $\Gamma$
is isomorphic to an induced subgraph of $\Com{-}\EPow(G)$.
\end{corollary}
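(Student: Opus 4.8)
The plan is to exploit the fact that in an abelian group every pair of elements commutes, so that the commuting graph is complete and the difference $\Com{-}\EPow$ becomes simply the complement of the enhanced power graph. First I would observe that if $G$ is abelian, then $\Com(G)$ is the complete graph on $G$, and hence $(\Com{-}\EPow)(G)$ is exactly the complement of $\EPow(G)$ (on the full vertex set $G$). This reduces the problem to embedding $\Gamma$ into the complement of the enhanced power graph of some abelian group.

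Next I would pass to the complement. Let $\bar\Gamma$ denote the complement of $\Gamma$. By the theorem just proved---that every finite graph embeds as an induced subgraph of the enhanced power graph of some group, which may be taken abelian---there is an abelian group $G$ and an injective map $\iota$ from the vertex set of $\bar\Gamma$ onto a subset $A\subseteq G$ such that $\iota$ is an isomorphism from $\bar\Gamma$ to the induced subgraph $\EPow(G)[A]$.

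I would then restrict attention to the vertex set $A$ and use the elementary fact that complementation commutes with taking induced subgraphs: the induced subgraph of $(\Com{-}\EPow)(G)$ on $A$ is the complement, within $A$, of $\EPow(G)[A]$. Since $G$ is abelian this induced subgraph is the complement of $\EPow(G)[A]\cong\bar\Gamma$, which is $\Gamma$ itself. Thus $\iota$ realises $\Gamma$ as an induced subgraph of $(\Com{-}\EPow)(G)$, as required.

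No step here is a genuine obstacle, since the entire argument is a formal dualisation of the already-established universality of the enhanced power graph. The only point requiring a word of care is the first reduction: one must check that for abelian $G$ the difference graph really is the full complement of $\EPow(G)$, and that restricting to $A$ does not lose or gain edges---both of which follow immediately from the definition of the edge-difference and of induced subgraphs. The construction inherits the abelian witnessing group (a product of copies of $C_p\times C_p$ for distinct primes $p$) from the enhanced power graph theorem, so one also records that the group $G$ may be taken abelian.
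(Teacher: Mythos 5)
Your proof is correct and is essentially the paper's own argument: the paper likewise notes that the enhanced power graph universality theorem uses abelian groups, for which $\Com(G)$ is complete, so embedding the complement $\bar\Gamma$ in $\EPow(G)$ yields $\Gamma$ as an induced subgraph of $(\Com{-}\EPow)(G)$. Your write-up just spells out the details (complementation commuting with induced subgraphs) that the paper leaves implicit.
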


However, a much stronger result is true:

\begin{theorem}
Let $\Gamma$ be a finite complete graph, whose edges are coloured red, green
and blue in any manner. Then there is an embedding of $\Gamma$ into a finite
group $G$ so that 
\begin{enumerate}\itemsep0pt
\item vertices joined by red edges are adjacent in the enhanced power graph;
\item vertices joined by green edges are adjacent in the commuting graph but
not in the enhanced power graph;
\item vertices joined by blue edges are non-adjacent in the commuting graph.
\end{enumerate}
\end{theorem}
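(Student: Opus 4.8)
The plan is to decouple the three colour classes completely by building $G$ as a direct product of one small ``gadget'' group per edge, with pairwise coprime orders, arranged so that any two vertices interact nontrivially in exactly one factor. First I would record two elementary facts about a direct product $G=\prod_{e}H_e$ whose factors have pairwise coprime orders: (i) two elements of $G$ commute if and only if their projections to each $H_e$ commute; and (ii) writing $\pi_e$ for the projection onto $H_e$, one has $\langle x,y\rangle=\prod_e\langle \pi_e(x),\pi_e(y)\rangle$ for all $x,y\in G$, so that $\langle x,y\rangle$ is cyclic if and only if each $\langle\pi_e(x),\pi_e(y)\rangle$ is cyclic. Fact (ii) is the only nontrivial ingredient: it rests on the standard observation that, because the $|H_e|$ are pairwise coprime, every subgroup of $G$ equals the direct product of its projections, together with the fact that a direct product of cyclic groups of pairwise coprime orders is cyclic.

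Next, for each edge $e=\{v,w\}$ of the complete graph I would choose a distinct prime $p_e$ and a gadget $H_e$ generated by two distinguished elements $g_{e,v},g_{e,w}$ according to the colour of $e$. If $e$ is red, take $H_e=C_{p_e}$ (with $p_e$ odd) and let $g_{e,v},g_{e,w}$ be two distinct generators, so that $\langle g_{e,v},g_{e,w}\rangle$ is cyclic. If $e$ is green, take $H_e=C_{p_e}\times C_{p_e}$ with $g_{e,v},g_{e,w}$ the two standard generators, so they commute but generate $C_{p_e}\times C_{p_e}$, which is not cyclic. If $e$ is blue, take $H_e$ to be the non-abelian group of order $p_e^{3}$ with $g_{e,v},g_{e,w}$ a non-commuting pair. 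Since each $H_e$ involves only the single prime $p_e$ and the primes are distinct, the orders are pairwise coprime, as required.

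I would then set $G=\prod_e H_e$ and define $v\mapsto x_v$, where the $H_e$-component of $x_v$ is $g_{e,v}$ if $v\in e$ and the identity otherwise. The key point is that for a fixed pair $\{v,w\}$ the only factor in which both $x_v$ and $x_w$ have nontrivial component is $H_{\{v,w\}}$; in every other factor at most one of the two components is nontrivial, so there the two projections commute and generate a cyclic group. Hence facts (i) and (ii) reduce the behaviour of the pair $\{v,w\}$ to the single gadget $H_{\{v,w\}}$: the elements $x_v$ and $x_w$ commute precisely when $e$ is not blue, and $\langle x_v,x_w\rangle$ is cyclic precisely when $e$ is red. This gives exactly conditions (a)--(c): red edges produce adjacency in the enhanced power graph, green edges produce pairs commuting but non-adjacent in $\EPow(G)$, and blue edges produce non-commuting pairs.

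Finally I would check that $v\mapsto x_v$ is injective, which is immediate once the graph has at least three vertices (distinct $v,w$ are separated in any edge incident to exactly one of them), the cases of one or two vertices being handled directly. The main obstacle here is not any single computation but getting fact (ii) cleanly stated and proved, since it is what licenses reading off both commutativity and cyclicity edge-by-edge; once that coprime-direct-product lemma is in hand, the verification of (a)--(c) is routine gadget bookkeeping.
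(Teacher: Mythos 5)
Your proof is correct, but it takes a genuinely different route from the paper's. The paper argues by induction on the number of vertices, introducing one new prime per vertex and using a single gadget: the non-abelian group $P=\langle a,b\mid a^{p^2}=b^p=1,\ [a,b]=a^p\rangle$ of order $p^3$ and exponent $p^2$. The new vertex is embedded as $(b,1)\in P\times G$, and each old vertex $v_i$ gets its $P$-coordinate set to $1$, $a^p$ or $a$ according to the colour of $\{v_i,v_n\}$, since $b$ together with $1$, $a^p$, $a$ generates a cyclic, abelian-but-not-cyclic, or non-abelian group respectively. You instead build the group in one shot, with one coprime gadget per \emph{edge} (cyclic, $C_{p}\times C_{p}$, or non-abelian of order $p^3$ according to colour), and you hang the whole verification on the lemma that a subgroup of a direct product of groups of pairwise coprime orders is the direct product of its projections; this is your genuinely new ingredient, and it is a standard fact (for two factors, if $(a,b)$ lies in the subgroup then so do $(a,1)$ and $(1,b)$, by raising to a power that is $1$ modulo $|a|$ and $0$ modulo $|b|$; then induct on the number of factors), so your fact (ii) is sound and the edge-by-edge reading of both commutativity and cyclicity is legitimate. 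What the paper's induction buys is economy: it needs only about $n$ primes and only the elementary observation that coprime products of cyclic (resp.\ abelian) groups are cyclic (resp.\ abelian), so it avoids the subgroup-decomposition lemma entirely and produces a smaller group; your construction needs $n(n-1)/2$ distinct primes, one per edge. What your version buys is modularity and symmetry: there is no induction, every pair of vertices is verified locally in a single factor, and the same template extends immediately to more colour classes defined by any properties realizable in a two-generated gadget. Your handling of injectivity (separating distinct vertices in a factor indexed by an edge containing exactly one of them, with the one- and two-vertex cases checked directly) is also fine.
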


\begin{proof}
We begin with two observations. First, the direct product of cyclic (resp.\ 
abelian) groups of coprime orders is cyclic (resp.\ abelian).

Second, consider the non-abelian group of order $p^3$ and exponent $p^2$,
where $p$ is an odd prime:
\[P=\langle a,b\mid a^{p^2}=b^p=1,[a,b]=a^p\rangle.\]
Any two elements of $\langle a\rangle$ generate a cyclic group; and the
group generated by $b$ and $x$ is cyclic if $x=1$, abelian but not cyclic
if $x=a^p$, and non-abelian if $x=a$.

The proof is by induction on the number $n$ of vertices. The result is clearly
true if $n=1$. So let $\{v_1,\ldots,v_n\}$ be the vertex set of $\Gamma$,
and suppose that we have an embedding of $\{v_1,\ldots,v_{n-1}\}$ into a
group $G$ satisfying (a)--(c).

Choose an odd prime $p$ not dividing $|G|$, and consider the group $P\times G$,
where $P$ is as above. Modify the embedding of the first $n-1$ vertices by
replacing $v_i$ by $(1,v_i)$ if $\{v_i,v_n\}$ is red, by $(a^p,v_i)$ if
$\{v_i,v_n\}$ is green, and by $(a,v_i)$ if $\{v_i,v_n\}$ is blue. It is
easily checked that we still have an embedding of $\{v_1,\ldots,v_{n-1}\}$
satisfying (a)--(c). Moreover, if we now embed $v_n$ as $(b,1)$, we find that
the conditions hold for the remaining pairs as well. \qed
\end{proof}

Clearly there are plenty of problems along similar lines to investigate here.

\section{Products}

There are a number of graph products. Here I will be chiefly concerned with
the strong product, defined as follows.

Let $\Gamma$ and $\Delta$ be graphs with vertex sets $V$ and $W$ respectively.
The \emph{strong product} $\Gamma\boxtimes\Delta$ has vertex set the Cartesian
product $V\times W$; vertices $(v_1,w_1)$ and $(v_2,w_2)$ are joined whenever
$v_1$ is equal or adjacent to $v_2$ and $w_1$ is equal or adjacent to $w_2$,
but not equality in both places. (All of the graphs in the hierarchy naturally
have loops at each vertex, which we have discarded; the strong product is the
natural categorical product in the category of graphs with a loop at each 
vertex.)

I note that the strong product, along with the Cartesian and categorical
products, is denoted by a symbol representing the corresponding product of
two edges: the Cartesian product is $\Gamma\mathbin{\Box}\Delta$, while the
categorical product is $\Gamma\times\Delta$.

The question of the perfectness of strong products of graphs has been
studied by Ravindra and Parthasarathy~\cite{rp}.

The only group product that concerns us here is the direct product.

\begin{prop}
Let $G$ and $H$ be finite groups.
\begin{enumerate}
\item $\Com(G\times H)=\Com(G)\boxtimes\Com(H)$.
\item If $G$ and $H$ have coprime orders, then
$\EPow(G\times H)=\EPow(G)\\boxtimes\EPow(H)$.
\item If $G/G'$ and $H/H'$ have coprime orders, and in particular if $G$ and
$H$ are perfect groups, then $\DCom(G\times H)=\DCom(G)\boxtimes\DCom(H)$.
\end{enumerate}
\end{prop}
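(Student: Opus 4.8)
The plan is to prove the three parts by establishing, in each case, that two vertices are adjacent in the graph on $G\times H$ exactly when their components are adjacent-or-equal in the corresponding graphs on $G$ and $H$ (with not-both-equal), which is precisely the definition of the strong product. Throughout I write a typical element of $G\times H$ as $(g,h)$.

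First I would dispose of part (a), which is purely a commutativity computation: $(g_1,h_1)$ and $(g_2,h_2)$ commute in $G\times H$ if and only if $g_1g_2=g_2g_1$ in $G$ \emph{and} $h_1h_2=h_2h_1$ in $H$, since multiplication is componentwise. Translating ``commute'' into ``equal or adjacent in $\Com$'' on each coordinate (recalling that loops have been discarded, so equality is the degenerate case), and excluding the case $(g_1,h_1)=(g_2,h_2)$, gives exactly the strong product. No coprimality hypothesis is needed here.

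For part (b), the key fact I would invoke is the first observation flagged in the proof of the coloured-edges theorem: a direct product of cyclic groups of coprime orders is cyclic. Given the coprimality of $|G|$ and $|H|$, I would show $\langle(g_1,h_1),(g_2,h_2)\rangle$ is cyclic iff $\langle g_1,g_2\rangle$ and $\langle h_1,h_2\rangle$ are both cyclic. The forward direction follows because a subgroup of a cyclic group is cyclic and projection onto each factor sends the subgroup generated by the pair onto the subgroup generated by the components. For the converse, if each projection is cyclic then $\langle(g_1,h_1),(g_2,h_2)\rangle$ is contained in $\langle g_1,g_2\rangle\times\langle h_1,h_2\rangle$, a product of two cyclic groups; coprimality of the factor orders makes this product cyclic, and a subgroup of a cyclic group is cyclic. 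Again excluding equality in both coordinates yields the strong product.

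Part (c) is where I expect the main obstacle, and the natural strategy is to reduce it to part (b) (or rather to the deep-commuting analogue) by passing to Schur covers. Recall $\DCom$ is defined via the commuting graph of a Schur cover. The crux is that the coprimality hypothesis $\gcd(|G/G'|,|H/H'|)=1$ should force the Schur multiplier of $G\times H$ to split as the direct product of those of $G$ and $H$, so that a Schur cover of $G\times H$ can be taken to be the direct product of Schur covers $\widetilde G$ and $\widetilde H$. This is exactly the situation governed by the K\"unneth-type formula for the Schur multiplier of a direct product, $M(G\times H)\cong M(G)\times M(H)\times(G/G'\otimes H/H')$, where the troublesome tensor term vanishes precisely under the stated coprimality. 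Granting $\widetilde{G\times H}=\widetilde G\times\widetilde H$, I would then apply part (a) to this cover: $\Com(\widetilde G\times\widetilde H)=\Com(\widetilde G)\boxtimes\Com(\widetilde H)$, restrict to a transversal that factors as a product of transversals for the two central subgroups, and observe that taking induced subgraphs on a product set is compatible with the strong product. The perfect-group case is immediate since then $G/G'=H/H'=1$, killing the tensor term outright. The delicate point to get right is that the induced-subgraph-on-transversal construction defining $\DCom$ respects the direct-product decomposition of the cover; I would need to check that a transversal to $Z(\widetilde G)\times Z(\widetilde H)$ can be chosen as a Cartesian product of transversals, which follows from the splitting of the central subgroup.
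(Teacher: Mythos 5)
Your proposal is correct and follows essentially the same route as the paper: componentwise commutation for (a), the coprime-cyclic-product argument for (b), and Schur's formula $M(G\times H)\cong M(G)\times M(H)\times(G/G'\otimes H/H')$ with the tensor term killed by coprimality for (c), so that a Schur cover of $G\times H$ is a product of Schur covers. The only differences are cosmetic: in (b) your converse uses projections (images of cyclic groups are cyclic) where the paper extracts $(g_1,1)$, $(1,h_1)$, etc.\ as powers via coprimality, and in (c) you spell out the transversal compatibility that the paper compresses into ``The result follows.''
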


\begin{proof}
(a) Distinct elements $(g_1,h_1)$ and $(g_2,h_2)$ in $G\times H$ commute if
and only if $g_1$ and $g_2$ are equal or commute, and $h_1,h_2$ are equal or
commute.

\medskip

(b) Suppose that $|G|$ and $H$ are coprime. If $\langle g_1,g_2\rangle$ and
$\langle h_1,h_2\rangle$ are cyclic, then (as their orders are coprime) their
direct product is also cyclic and contains $(g_1,h_1)$ and $(g_2,h_2)$.
Conversely, again using coprimeness, if $\langle(g_1,h_1),(g_2,h_2)\rangle$
is cyclic, then it contains $(g_1,1)$, $(g_2,1)$, $(1,h_1)$ and $(1,h_2)$.

\medskip

(c) A formula of Schur gives the Schur multiplier of $G$ and $H$ to be
$M(G)\times M(H)\times(G\otimes H)$, where $M(G)$ is the Schur multiplier of
$G$. If $|G/G'|$ and $|H/H'|$ are coprime, the third term is absent. It
follows that a Schur cover of $G\times H$ is the direct product of Schur
covers of $G$ and $H$. The result follows. \qed
\end{proof}

Thus, questions about the commuting graph or enhanced power graph of a 
nilpotent group can be reduced to questions about the corresponding graphs
for their Sylow subgroups.

The corresponding result fails for the power graph and the non-generating
graph. The power graphs of $C_2$ and $C_3$ are complete but the power graph
of $C_2\times C_3$ are not. For the non-generating graph, we note that for
any non-abelian finite simple group $G$, there is an integer $m$ such that
$G^n$ fails to be $2$-generated if $n>m$.

\section{Cographs and twin reduction}

Two vertices in a graph are called twins if they have the same neighbours
(possibly excluding one another). Equivalently, $v$ and $w$ are twins if
the transposition $(v,w)$ (fixing the other vertices) is an automorphism
of $\Gamma$. If $G$ is a non-trivial group, then any of the graphs in our
hierarchy based on $G$ will contain many pairs of twins. Thus, twins will play
an important part when we come to look at automorphism groups.

If a graph has twins, then we can make a new graph by merging the twins to a
single vertex. The process can be continued until no pairs of twins remain.
If the resulting graph has just a single vertex, the original graph is called
a cograph.

Cographs also play an important part in the story, and make another link with
the Gruenberg--Kegel graph. So we make a detour to look at twin reduction 
and cographs.

\subsection{Cographs}

A graph $\Gamma$ is a \emph{cograph} if either of the following equivalent
conditions holds for it:
\begin{itemize}\itemsep0pt
\item $\Gamma$ does not contain the four-vertex path $P_4$ as an induced
subgraph;
\item $\Gamma$ can be constructed from the $1$-vertex graph by the operations
of complement and disjoint union.
\end{itemize}
In particular, a cograph is connected if and only if its complement is
disconnected. This leads to a tree representation of cographs and to very
efficient algorithms for determining their properties.

I remark here that a connected component of a cograph has diameter at
most~$2$, since two vertices at distance~$3$ would be joined by an induced
$4$-vertex path.

Here is a curious fact which may (or may not) have a connection with
the following material. The \emph{$P_4$-structure} of a graph $\Gamma$ is
the hypergraph whose vertex set is the same as that of $\Gamma$, the 
hyperedges being the $4$-element sets which induce a copy of $P_4$ in
$\Gamma$. We have the following easy observations:
\begin{itemize}\itemsep0pt
\item a graph and its complement have the same $P_4$-structure, since the
graph $P_4$ is self-complementary;
\item a graph is a cograph if and only if its $P_4$-structure is the null
hypergraph.
\end{itemize}
Bruce Reed~\cite{reed} proved the \emph{semi-strong perfect graph
theorem}, which had been conjectured by Vasek Chv\'atal, asserting that if two
graphs have isomorphic $P_4$-structures and one is perfect, then so is the
other.

Cographs have been rediscovered a number of times, and as a result appear in
the literature with very different names, such as ``complement-reducible
graphs'', ``hereditary Dacey graphs'' and ``N-free graphs''. See
\cite{seinsche,sumner,jung} for information about cographs.

\subsection{Twins and twin reduction}

In a graph $\Gamma$, we can define two kinds of ``twin relations'' on vertices.
The \emph{open neighbourhood} $\Gamma(v)$ of $v$ in $\Gamma$ is the set of
vertices in $\Gamma$ joined to $v$; the \emph{closed neighbourhood} is
$\Gamma(v)\cup\{v\}$.
Two vertices $v,w$ are \emph{open twins} if they have the same open
neighbourhoods; they are \emph{closed twins} if they have the same closed
neighbourhoods. Both of these relations are obviously equivalence relations.
Two vertices are open twins in $\Gamma$ if and only if they are closed twins
in the complement of $\Gamma$. Note that open twins are not joined, while
closed twins are joined.

If $x$ and $y$ are twins, then we may collapse them to a single vertex; this
process is called \emph{twin reduction}. Some, though clearly not all,
properties of a graph are preserved by twin reduction. One such property is
that of being a cograph, since twin reduction can neither create nor destroy
an induced $4$-vertex path (since no two of its vertices can be twins). We
will see a more general result shortly.

\subsection{The cokernel of a graph}

\begin{theorem}
Given a graph $\Gamma$, the result of performing a sequence of twin reductions
until the graph is twin-free is unique up to isomorphism, independent of the
chosen sequence of reductions.
\end{theorem}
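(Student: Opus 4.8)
The plan is to show that twin reduction is confluent, so that the order of reductions does not affect the final result. The key observation is that both open-twin and closed-twin relations are equivalence relations, and the whole process terminates (each reduction strictly decreases the number of vertices). By standard rewriting theory (the diamond lemma / Newman's lemma), confluence will follow from \emph{local confluence}: if from a graph $\Gamma$ we can perform two different single-pair twin reductions, producing graphs $\Gamma_1$ and $\Gamma_2$, then $\Gamma_1$ and $\Gamma_2$ can each be reduced (in finitely many steps) to a common graph. Since the process terminates, local confluence upgrades to global confluence, and the unique normal form is exactly the cokernel.

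First I would set up local confluence by analysing how a twin pair behaves under collapsing of another twin pair. Suppose $\{a,b\}$ and $\{c,d\}$ are two pairs of twins in $\Gamma$ (each pair open or closed). The easy case is when the pairs are disjoint as sets and the collapsing of one does not destroy the twin relation of the other; then the two reductions commute, and collapsing both in either order yields the same graph. The main work is in handling overlapping or interacting pairs. If $\{a,b\}$ and $\{c,d\}$ share a vertex, say $b=c$, then $a,b,d$ all lie in a single twin equivalence class (one must check that ``twin'' is transitive enough here, using that each relation is a genuine equivalence relation as stated in the excerpt), and collapsing any two of them first still leads, after one more reduction, to the single merged vertex representing $\{a,b,d\}$. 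The subtler point is that collapsing $\{a,b\}$ might \emph{create} new twins, or that a pair which were open twins might interact with a pair of closed twins; I would argue that in every such configuration the two resulting graphs $\Gamma_1,\Gamma_2$ share a further common reduct, by explicitly exhibiting the merged vertex classes that both must eventually reach.

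\textbf{The hard part} will be the case analysis when one pair is open and the other closed, and especially when collapsing one pair changes the neighbourhood structure so that the other pair's twin status is altered. Here I would use the characterisation of twins via the transposition automorphism: $v,w$ are twins exactly when the transposition $(v,w)$ is a graph automorphism. This reframes the claim group-theoretically: the twin relations generate a subgroup of $\Aut(\Gamma)$ whose orbits are the equivalence classes, and reduction quotients by these orbits. Tracking how these automorphisms survive or combine across a reduction step gives a cleaner bookkeeping than direct neighbourhood chasing, and lets me verify that no matter which pair is collapsed first, the generated equivalence on the original vertex set (the transitive closure of both twin relations, propagated through the reductions) is the same. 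Once local confluence is established, Newman's lemma (applicable because the number of vertices strictly decreases, guaranteeing termination) yields the unique normal form up to isomorphism, completing the proof.
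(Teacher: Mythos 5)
Your overall skeleton---termination plus local confluence, upgraded to global confluence by Newman's lemma---is not really a different proof from the paper's: the paper argues by induction on the number of vertices with a case analysis on how the two first-collapsed twin pairs intersect, which is exactly the local-confluence check packaged inside a proof of Newman's lemma. So the approach is sound. The problem is that your proposal defers, and never actually supplies, the single fact on which the whole case analysis rests, and it mislocates where the difficulty lies.

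The fact you need is: open twin classes and closed twin classes of size greater than one are disjoint; equivalently, a configuration in which $a,b$ are open twins and $b,d$ are closed twins (all distinct) cannot occur. This is precisely what justifies your assertion that if $\{a,b\}$ and $\{b,d\}$ are twin pairs then ``$a,b,d$ all lie in a single twin equivalence class'': the union of the open and closed twin relations is not transitive a priori, and your parenthetical ``one must check that twin is transitive enough'' is the crux of the proof, not a routine verification to be waved at. The argument is two lines: in such a configuration $a\not\sim b$ (open twins) and $b\sim d$ (closed twins); since $b,d$ are twins, $a\not\sim b$ forces $a\not\sim d$, while since $a,b$ are twins, $d\sim b$ forces $d\sim a$, a contradiction. (Your transposition-automorphism device does deliver exactly this---apply the automorphisms $(b,d)$ and $(a,b)$ to the two twin statements---so the tool you propose suffices; you just never fire it. Indeed the paper records this automorphism argument as an alternative proof of the same lemma.) Once this is in place, your anticipated ``hard part''---overlapping pairs of different types, and one collapse altering the twin status of another pair---evaporates: mixed overlapping pairs simply do not exist, so overlapping pairs have the same type and transitivity of that single relation finishes the overlap case; and for disjoint pairs $\{a,b\}$, $\{c,d\}$, a one-line check (since $c$ and $d$ agree on $a$, they agree on the merged vertex) shows that collapsing $a,b$ \emph{always} preserves the twin relation, and its type, on $\{c,d\}$, so the two reductions genuinely commute. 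Your hedge ``when \dots the collapsing of one does not destroy the twin relation of the other'' suggests you expect failing subcases that cannot in fact occur. With these two verifications made explicit, your argument closes up and coincides with the paper's induction.
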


\begin{proof}
Open and closed twin classes of sizes greater than $1$ are disjoint. For
suppose that $x$ and $y$ are open twins and $y$ and $z$ are closed twins. Then
$xy$ is a non-edge while $yz$ is an edge. Since $y$ and $z$ are twins, $x$
is not joined to $z$; but since $y$ and $x$ are twins, $z$ is joined to $x$.
These conclusions are contradictory. (Alternatively, since $y$ and $z$ are
twins, $(y,z)$ is an automorphism, and so $x$ and $z$ are open twins; and
similarly using the automorphism $(x,y)$, $x$ and $z$ are closed twins.)

We are going to prove the theorem by induction on the number of vertices. There
is nothing to do for graphs with a single vertex, so let $\Gamma$ have $n$
vertices, with $n>1$, and assume that the result is true for any graph with
fewer than $n$ vertices. Take two twin reduction sequences on $\Gamma$. Suppose
that the first begins by identifying $x$ and $y$, and the second by identifying
$u$ and $v$. 

If $\{x,y\}$ = $\{u,v\}$, then the two sequences result in the same graph,
and induction finishes the job.

If $|\{x,y\}\cap\{u,v\}|=1$, then our initial remark shows that the two pairs
of twins have the same type, so the graphs obtained after one step are
isomorphic, and again induction finishes the job.

Suppose that $\{x,y\}\cap\{u,v\}=\emptyset$. Then the two reductions commute.
Let $\Delta$ be the graph obtained by applying the two reductions. Then
$\Delta$ occurs after two steps in reduction sequences for $\Gamma$ beginning
by identifying $x$ and $y$, or by identifying $u$ and $v$. By induction the
end result of either given sequence is the same as the result of reducing
$\Delta$ (up to isomorphism).

The theorem is proved. \qed
\end{proof}

The next result gives the connection between cographs and twin reduction.

\begin{prop}
A graph $\Gamma$ is a cograph if and only if the cokernel of $\Gamma$ is the
graph with a single vertex.
\end{prop}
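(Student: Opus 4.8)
The plan is to prove both directions by connecting the cograph condition (no induced $P_4$) to the twin-reduction process via the structural characterisation of cographs recalled earlier in the paper, namely that a cograph is built from the one-vertex graph by complement and disjoint union. The key observation to exploit is that these two operations correspond exactly to the existence of twins: in a disjoint union $\Gamma_1 \sqcup \Gamma_2$ with each $\Gamma_i$ nonempty, any two vertices in the same part that are already twins stay twins, but more usefully, the complement operation swaps open and closed twins, so the class of cographs is manifestly closed under both reductions.

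First I would prove the easier direction: if the cokernel of $\Gamma$ is a single vertex, then $\Gamma$ is a cograph. The idea is to run the argument in reverse. Starting from the one-vertex graph (trivially a cograph) and undoing twin reductions one step at a time, I claim each reverse step preserves the property of being a cograph. Undoing a closed-twin identification replaces a vertex $v$ by two closed twins; this amounts to taking a vertex and duplicating it as an adjacent twin, which is a standard cograph-preserving operation (it corresponds to forming a join with an extra vertex locally). Undoing an open-twin identification is the complementary operation. Since $P_4$ is self-complementary, it suffices to verify that duplicating a vertex as a closed twin cannot create an induced $P_4$: any purported $P_4$ using both copies would force the two copies to have distinct neighbourhoods among the other three vertices, contradicting that they are twins, and any $P_4$ using only one copy already existed before. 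Thus the cograph property is an invariant of the reverse process, so $\Gamma$ is a cograph.

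For the converse, suppose $\Gamma$ is a cograph with more than one vertex; I must show that some twin pair exists, so that the reduction does not terminate until a single vertex remains. Here I would invoke the build-up characterisation: $\Gamma$ is either a disjoint union or a join (complement of a disjoint union) of two smaller cographs $\Gamma_1, \Gamma_2$. Arguing by induction, either $\Gamma_1$ or $\Gamma_2$ already contains a twin pair (which remains a twin pair in $\Gamma$ since adjoining the other part by disjoint union or join treats both members of the pair identically), or else one of the parts is a single vertex. In the latter case, repeated application down the cotree produces, at the bottom, two leaves that are siblings under the same node, and these are open twins if the node is a disjoint union and closed twins if it is a join. Either way $\Gamma$ has a twin pair; after identifying it we obtain a smaller cograph (the class being closed under twin reduction), and induction yields that the cokernel is a single vertex.

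The main obstacle I anticipate is making the inductive reverse-step in the first direction fully rigorous: specifically, verifying cleanly that vertex-duplication (closed twins) and its complement (open twins) cannot introduce an induced $P_4$. The case analysis is small but must be done carefully, checking every four-vertex subset that includes one or both of the duplicated vertices against the twin condition. Everything else reduces to the cotree decomposition already available from the stated characterisation of cographs, together with the fact that the cograph class is closed under twin reduction, which itself follows immediately from that decomposition.
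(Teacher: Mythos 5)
Your proof is correct, and half of it diverges from the paper in an interesting way. Your second direction (cograph implies single-vertex cokernel) is essentially the paper's argument: both use the disjoint-union/join structure of cographs, inductively, to exhibit a twin pair, together with the observation that twin reduction produces an induced subgraph and hence preserves the cograph property, so the reduction cannot stop before one vertex remains. Your first direction is where you genuinely differ. The paper proves the contrapositive: if $\Gamma$ contains an induced $P_4$, then since no two vertices of an induced $P_4$ are twins, those four vertices are never identified with one another, and identifying one of them with a twin outside the path still leaves an induced $P_4$ in the reduced graph; hence every reduction sequence terminates in a graph with at least four vertices. You instead reverse the reduction sequence, growing $\Gamma$ back up from the one-vertex cokernel by twin duplications and checking that duplicating a vertex (as an open or closed twin, the two cases being exchanged by complementation since $P_4$ is self-complementary) can never create an induced $P_4$. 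Both arguments rest on exactly the same combinatorial fact---no two vertices of an induced $P_4$ are twins---but run it in opposite directions. The paper's version is a little more economical (no reversed sequence needed) and yields slightly more: the cokernel of a non-cograph actually still contains an induced $P_4$, not merely more than one vertex. Your version has the constructive by-product of establishing the standard characterisation of cographs as precisely the graphs obtainable from $K_1$ by repeated twin duplication. One small slip to fix in writing it up: a $P_4$ containing both copies of a duplicated vertex has two, not three, other vertices; the verification you outline (no pair of vertices of an induced $P_4$ satisfies either twin condition) is the right one and goes through.
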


\begin{proof}
For the necessity, we show by induction that a cograph with more than
one vertex contains twins. Let
$\Gamma$ be a cograph with more than one vertex, and suppose that any smaller
cograph with more than one vertex contains twins.
If $\Gamma$ is disconnected, then if it has a component with more than one
vertex, then this component contains twins; otherwise $\Gamma$ is a null graph
and all pairs of vertices are open twins. If $\Gamma$ is connected, then its
complement is disconnected, and we argue in the complement instead.

Now the result of twin reduction is an induced subgraph of $\Gamma$, and so
also a cograph; so so the reduction continues until only one vertex remains.

Conversely, suppose that $\Gamma$ is not a cograph. Then $\Gamma$ contains
a $4$-vertex path, say $(w,x,y,z)$. Then any pair of these vertices are not
twins, and so are not identified in any twin reduction; so the result of the
reduction still contains a $4$-vertex path. So no sequence of reductions can
terminate in a single vertex. \qed
\end{proof}

This gives another test for a cograph: apply twin reductions until the process
terminates, and see whether just one vertex remains.

\subsection{Cographs as comparability graphs}

A partially ordered set is \emph{N-free} if it does not contain as an induced
substructure the poset whose Hasse diagram is shown in Figure~\ref{f:n}.
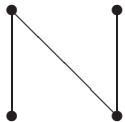
\begin{figure}[htbp]
\begin{center}
\setlength{\unitlength}{0.7mm}
\begin{picture}(20,20)
\multiput(0,0)(20,0){2}{\circle*{2}}
\multiput(0,20)(20,0){2}{\circle*{2}}
\multiput(0,0)(20,0){2}{\line(0,1){20}}
\put(20,0){\line(-1,1){20}}
\end{picture}
\caption{\label{f:n}The poset N}
\end{center}
\end{figure}

N-free posets form the smallest class of posets containing the $1$-element
poset and closed under disjoint union and ordered sum. (In the disjoint
union of two posets, elements of different posets are incompatible; in the
ordered sum, every element of the first is smaller than every element of
the second.)

N-free posets arise in statistical design, where the operations of
\emph{crossing} and \emph{nesting} correspond to disjoint union and ordered
sum. An orthogonal block structure is \emph{special} if it can be built by
crossing and nesting. These constructions also apply to association schemes.
Crossing and nesting were introduced by Nelder~\cite{nelder} and the more
general poset constructions by Speed and Bailey~\cite{sb}; see \cite{bailey}
for more detail and historical comments.

\begin{prop}
\begin{enumerate}
\item The comparability graph of an N-free poset is a cograph.
\item Conversely, any cograph is the comparability graph of an N-free
poset.
\end{enumerate}
\end{prop}

\begin{proof}
The first statement is immediate, and the second is easily proved by
induction. \qed
\end{proof}

\subsection{Twin reduction in the hierarchy}

The relevance of twin reduction to our problem is:

\begin{prop}\label{p:twins}
Let $\Gamma$ be the power graph, enhanced power graph, deep commuting graph,
commuting graph, or non-generating graph of a non-trivial group $G$.
Then the twin relation on $\Gamma$ is not the relation of equality.
\end{prop}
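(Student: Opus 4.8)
The plan is to show that in each of these five graphs there exist two distinct vertices that are twins, which suffices to prove the twin relation is not equality. The natural candidates are the identity element $1$ together with a generator of a cyclic subgroup, or more promisingly, two generators of a common cyclic subgroup. I would focus on the following observation: if $x$ is any non-identity element of $G$, then $x$ and its inverse $x^{-1}$ (or more generally any $x^k$ with $\langle x^k\rangle=\langle x\rangle$) tend to behave identically with respect to all the power-type relations, since they generate the same cyclic subgroup and hence have the same powers.

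First I would dispose of the trivial structural point: since $G$ is non-trivial, it contains an element of prime order $p$, hence a subgroup $C_p$. If $p>2$, then $C_p$ has at least two generators, say $x$ and $x^{-1}$ (distinct since $p>2$), and I claim these are twins in every graph of the hierarchy. The key is that $\langle x\rangle=\langle x^{-1}\rangle$, so for any $z\in G$, the subgroup $\langle z,x\rangle$ equals $\langle z,x^{-1}\rangle$; this immediately handles the enhanced power graph (adjacency depends only on whether $\langle z,x\rangle$ is cyclic) and the commuting graph (since $z$ commutes with $x$ iff it commutes with $x^{-1}$). For the power graph, $z$ is a power of $x$ iff $z$ is a power of $x^{-1}$, and $x$ is a power of $z$ iff $x^{-1}$ is; so the power-graph neighbourhoods of $x$ and $x^{-1}$ coincide. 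The non-generating graph adjacency depends on $\langle z,x\rangle\neq G$, which again is unchanged under replacing $x$ by $x^{-1}$. The deep commuting graph I would treat by lifting: replacing $x$ by $x^{-1}$ lifts to replacing a chosen preimage $\tilde x$ by $\tilde x^{-1}$ in the Schur cover, and commuting with $\tilde z$ is preserved under inversion.

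The remaining case is $p=2$, where $C_2=\{1,t\}$ has a unique involution and the inverse trick gives nothing. Here I would instead use the identity element: I claim $1$ is a twin of $t$ whenever $\langle t\rangle=\{1,t\}$ is a maximal cyclic subgroup, or more robustly, I would pick any element $g$ of even order and compare $g$ with a suitable power. The cleanest route is to note that the identity $1$ is joined to every other vertex in the power graph, enhanced power graph, deep commuting graph, and commuting graph (it is a power of everything and commutes with everything), so $1$ is an open twin candidate only against vertices also joined to all others; these are exactly the vertices in the relevant ``dominating'' set. For a group of even order I would exhibit a concrete pair: if $G$ has order divisible by $4$ or contains $C_4$ or $C_2\times C_2$, one finds two generators of a common $C_4$, or two involutions inside a $C_2\times C_2$ that are commuting-graph twins.

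The main obstacle will be giving a uniform argument covering the case where every non-identity element is an involution, i.e.\ $G$ is an elementary abelian $2$-group (and the degenerate small cases). In that case all elements commute and no inversion helps, so I would handle it separately: in $C_2\times C_2=\{1,a,b,ab\}$ the three involutions $a,b,ab$ are mutually commuting and each commutes with everything, so any two of them are closed twins in $\Com(G)$; meanwhile in $\Pow(G)$ and $\EPow(G)$ they are pairwise non-adjacent but share the common neighbour $1$ and no others, making them open twins. Thus the hard part is verifying that this elementary-abelian-$2$-group case, together with the trivial-looking edge cases, still yields a genuine twin pair in all five graphs simultaneously; once the inversion argument and this exceptional case are both in hand, the proposition follows by checking that in every non-trivial $G$ at least one of the two constructions applies.
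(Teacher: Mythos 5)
Your central construction is the right one and is essentially the paper's: for an element $x$ of order greater than $2$, the two generators $x$ and $x^{-1}$ of $\langle x\rangle$ are twins in all five graphs, and your verifications for $\Pow$, $\EPow$, $\Com$, $\NGen$, and the lifting argument for $\DCom$, are sound (the paper runs the same argument with an arbitrary $h\ne g$ satisfying $\langle h\rangle=\langle g\rangle$). However, the argument only needs an element of order greater than $2$, not one of odd prime order. By splitting on the existence of an element of odd prime order you place every $2$-group into your exceptional case, which is what forces the detour through ``contains $C_4$'' and ``contains $C_2\times C_2$''. With the right split, the exceptional case is exactly the elementary abelian $2$-groups, since a group of exponent at most $2$ is abelian.

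The genuine gap is in that exceptional case, and you flag it yourself: for $G\cong C_2^n$ you verify only $\Pow$, $\EPow$ and $\Com$, and only for $n=2$; the deep commuting graph and the non-generating graph are never treated, and $\DCom$ is the one graph where something must actually be proved. For $\NGen$ the repair is immediate: if $n\geqslant3$ then $G$ is not $2$-generated and $\NGen(G)$ is complete; for $n=2$ it is the star $K_{1,3}$; for $n=1$ it is the edgeless graph on two vertices---in every case twins exist. For $\DCom$ one needs a fact about central extensions: in a Schur cover of $C_2^n$, preimages of distinct involutions never commute (for $V_4$ the covers are $D_8$ and $Q_8$, see Figure~\ref{f:dq}; in general the commutators of lifts realise the exterior square, and distinct nonzero vectors over $\mathbb{F}_2$ are linearly independent), so $\DCom(C_2^n)$ is a star and its leaves are open twins. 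Alternatively, since $\DCom(G)$ is $\Aut(G)$-invariant and $\mathrm{GL}(n,2)$ acts $2$-transitively on the $2^n-1$ non-identity elements, the induced subgraph on them is complete or null, and either way twins exist. Without some such argument the proposition remains unproved for $\DCom$. Two smaller slips: two involutions spanning a Klein subgroup of a general group $G$ need not be commuting-graph twins (their centralizers in $G$ may differ, e.g.\ $a^2$ and $b$ in $D_8$); and two vertices each joined to all others are closed twins, not ``open twin candidates''.
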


\begin{proof}
Suppose that $G$ contains an element $g$ of order greater than~$2$. Let $h$
be an element such that $g\ne h$ but $\langle g\rangle=\langle h\rangle$. Then
any element joined to one of $g$ and $h$ in one of the graphs listed is also
joined to the other. The arguments are all easy; let us look at the least
trivial, the deep commuting graph. Let $H$ be a Schur cover of $G$ with kernel
$Z$, and $x$ and $y$ elements of $H$ covering $g$ and $h$ respectively. Then
$\langle Z,x\rangle$ is abelian and contains $y$, so $x$ and $y$ commute.

The groups not covered by this are elementary abelian $2$-groups. In these
cases, everything is clear: the power graph, enhanced power graph and deep
commuting graph are stars; the commuting graph is complete; the non-generating
graph is complete if the group has order greater than~$4$. All these graphs
are cographs. \qed
\end{proof}

So, for any of our classes of graphs, say $\mathrm{X}$,  the question ``What
is the cokernel of $\mathrm{X}(G)$?'' is a generalisation of ``Is
$\mathrm{X}(G)$ a cograph?''

Finally for this section, I note that the class of cographs is not preserved
by strong product. Figure~\ref{f:p4p3} shows $P_4$ as an induced subgraph of
$P_3\boxtimes P_3$.

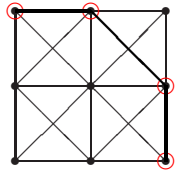
\begin{figure}[htb]
\begin{center}
\setlength{\unitlength}{1mm}
\begin{picture}(20,20)
\multiput(0,0)(10,0){3}{\circle*{1}}
\multiput(0,10)(10,0){3}{\circle*{1}}
\multiput(0,20)(10,0){3}{\circle*{1}}
\multiput(0,0)(10,0){3}{\line(0,1){20}}
\multiput(0,0)(0,10){3}{\line(1,0){20}}
\put(0,0){\line(1,1){20}}
\multiput(0,10)(10,-10){2}{\line(1,1){10}}
\put(0,20){\line(1,-1){20}}
\multiput(0,10)(10,10){2}{\line(1,-1){10}}
\color{red}
\multiput(0,20)(10,0){2}{\circle{2}}
\multiput(20,0)(0,10){2}{\circle{2}}
\color{black}\thicklines
\put(0,20){\line(1,0){10}}
\put(20,0){\line(0,1){10}}
\put(10,20){\line(1,-1){10}}
\end{picture}
\end{center}
\caption{\label{f:p4p3}Cographs are not closed under $\boxtimes$}
\end{figure}

\subsection{Forbidden subgraphs and twin reduction}

For a graph $F$, let $\Forb{F}$ denote the class of graphs containing no
induced subgraph isomorphic to $F$.

If $F$ is twin-free, then it is clear that identifying a pair of twins in
a graph $\Gamma$ can neither create nor destroy an induced copy of $F$. So
the class $\Forb(F)$ and its complement are both preserved by twin reduction,
and in particular:

\begin{theorem}
If $F$ is twin-free, then a graph belongs to $\Forb{F}$ if and only if its
cokernel belongs to $\Forb{F}$.
\end{theorem}

The theorem immediately extends from a single graph $F$ to an arbitrary class
$\mathcal{F}$ of graphs.

The Strong Perfect Graph Theorem shows that perfect graphs form the class
$\Forb(\mathcal{F})$, where $\mathcal{F}$ consists of all cycles of odd
length greater than $3$ and their complements. All these graphs are twin-free.
So as a corollary of the theorem, we see:

\begin{corollary}
A graph is perfect if and only if its cokernel is perfect.
\end{corollary}

This has the immediate consequence that cographs are perfect, although this
is easily proved directly.

However, there are interesting graph classes defined by forbidden subgraphs
which are not twin-free, including chordal graphs, split graphs, and
threshold graphs. We now turn to the general question.

\section{Forbidden subgraphs}

In this section we consider various classes of graphs defined by forbidden
induced subgraphs, and ask: for which finite groups $G$ can one of the graphs
in the hierarchy defined on $G$ belong to this class? Not much is known.

We saw that a graph is perfect if and only if it has no induced subgraph
isomorphic to an odd cycle of length greater than $3$ or the complement of one.
In particular, a cograph is perfect.

We have seen that power graphs are comparability graphs of partial orders,
and noted that these graphs are perfect. For our other types of graph, no
proper subclass defined by forbidden induced subgraphs contains them all; so
we have to ask a  different question: which groups have the property that one
of these graphs belongs to a graph class defined in this way?

\subsection{Power graphs}

We begin with the question: When is the power graph of the group $G$ a
cograph? This question was considered in the paper \cite{cmm}; here the
Gruenberg--Kegel graph makes another appearance. The question is answered
completely for nilpotent groups, but a necessary and a sufficient condition
are known for general groups in terms of the GK graph; these conditions do
not coincide, and we will see that no condition just in terms of the GK
graph can be both necessary and sufficient.

I begin with a simple observation.

\begin{prop}
Let $G$ be a group of prime power order. Then $\Pow(G)$ is a cograph.
\end{prop}

\begin{proof}
We use the fact that, if $G$ is cyclic of prime power order, then $\Pow(G)$
is the complete graph. Suppose that $(a,b,c)$ is a $3$-vertex induced path. In
$\DPow(G)$, we cannot have $a\to b\to c$, since transitivity would imply
$a\to c$, contrary to assumption; similarly, $c\to b\to a$ is impossible.
If $b\to a$ and $b\to c$, then $a,c\in\langle b\rangle$, a cyclic $p$-group,
and so $a\sim b$ in the power graph, also a contradiction. So the only 
possibility is $a\to b$ and $c\to b$.

Now suppose that $(a,b,c,d)$ is a $4$-vertex path. Then considering the 
subpath $(b,c,d)$ shows that $b\to c$ and $d\to c$. But $a\to b\to c$ is
impossible; so no such path can exist. \qed
\end{proof}

\begin{theorem}
Let $G$ be a finite nilpotent group. Then $\Pow(G)$ is a cograph if and only
if either
\begin{enumerate}\itemsep0pt
\item $G$ has prime power order; or
\item $G$ is cyclic of order $pq$, where $p$ and $q$ are distinct primes.
\end{enumerate}
\end{theorem}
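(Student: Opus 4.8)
The plan is to prove the theorem by analysing when the power graph of a finite nilpotent group $G$ contains an induced $P_4$, using the fact (established earlier) that for nilpotent $G$ we may decompose $G$ as a direct product of its Sylow subgroups. Since a cograph is precisely a graph with no induced $P_4$, the strategy is: first dispose of the two claimed cases by direct verification, and then show that every nilpotent group outside these two cases contains an induced $P_4$ in its power graph. The starting observation is that if $G$ has prime power order $p^n$, then any two elements $x,y$ generate subgroups $\langle x\rangle,\langle y\rangle$ which are both cyclic $p$-groups; two such cyclic subgroups are always comparable in the divisibility ordering on a single generator only when one contains the other, but more usefully, in a $p$-group the relevant structure forces the power graph to be a cograph, which I would verify by showing directly that an induced $P_4$ would require two incomparable elements joined to a common element, contradicting the linear chain structure of subgroups generated along a $p$-power path.

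First I would handle case (a): when $G=P$ is a $p$-group, I claim $\Pow(P)$ is a cograph. The cleanest route is to suppose $(w,x,y,z)$ is an induced $P_4$ and derive a contradiction. Since $w\sim x$ means one is a power of the other, and similarly along the path, while $w\not\sim y$, $w\not\sim z$, $x\not\sim z$, I would track which elements are powers of which. In a $p$-group the key fact is that if $a$ and $b$ are both powers of some common element $c$ (equivalently $\langle a\rangle,\langle b\rangle\subseteq\langle c\rangle$), then since cyclic $p$-groups have a unique subgroup of each order, $\langle a\rangle$ and $\langle b\rangle$ are comparable, so $a\sim b$. Applying this to the middle edge $x\sim y$ together with the end vertices should force an unwanted adjacency, contradicting that the path is induced. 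For case (b), $G$ cyclic of order $pq$: here I would simply note that $\Pow(C_{pq})$ can be computed directly — the generators and identity are joined to everything, and the elements of order $p$ (respectively $q$) form cliques — and check it contains no induced $P_4$, which is a small finite verification.

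Next, for the converse, I would suppose $G$ is nilpotent and not of either listed form, and produce an induced $P_4$. Writing $G$ as the direct product of its Sylow subgroups $P_1,\ldots,P_r$, the hypothesis that $G$ is not of prime power order means $r\ge 2$. If $r\ge 2$ and $G$ is not cyclic of order $pq$, then either two of the Sylow subgroups are nontrivial with at least one non-cyclic or of order greater than the prime, or $r\ge 3$, or one Sylow subgroup has order $\ge p^2$. The cleanest reduction is: since $G$ has elements of two distinct prime orders $p$ and $q$, and since $\Pow(G)=\Pow(C_{pq})$ only in the excluded case, I would locate four elements realising the path. For instance, taking an element $g$ whose order is not prime (or a non-generating element interacting with prime-order elements from two different Sylow factors) and combining it with prime-order elements from the two factors, I would exhibit $w,x,y,z$ where the adjacencies are controlled by the coprime direct-product structure (an element $(a,b)$ with $a\in P_i$, $b\in P_j$ is joined to $(a,1)$ and $(1,b)$ but these latter two are non-adjacent since neither is a power of the other).

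The main obstacle will be the converse case analysis: nilpotence reduces us to Sylow subgroups via the direct product, but I must carefully enumerate the ways a nilpotent group can fail both conditions (two primes with a large Sylow, three or more primes, or a single prime dividing the order to a power but with a second prime present) and construct the $P_4$ uniformly, or by a short list of cases. The delicate point is ensuring the four chosen elements are pairwise non-adjacent exactly where required; this is where the coprimeness of orders across Sylow factors does the work, since in a direct product of coprime-order groups, $(a_1,b_1)$ is a power of $(a_2,b_2)$ if and only if $a_1$ is a power of $a_2$ \emph{and} $b_1$ is a power of $b_2$, so non-adjacency in one coordinate propagates to the whole. I expect the $p$-group case (a) and this coordinate-tracking in the converse to be the two pieces requiring genuine care, whereas case (b) is a routine finite check.
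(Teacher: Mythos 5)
Your overall strategy (cograph $=$ no induced $P_4$; verify the two listed cases directly; for the converse, use the Sylow decomposition of a nilpotent group and coprimality to build an induced $P_4$) is sound, and it is essentially the standard route; note that the paper itself states this theorem without proof, deferring to \cite{cmm}, so the comparison here is with the paper's related material. Your case (a) coincides with the paper's own proof of its later theorem that groups of prime power order have $P_4$-free power graphs, and your case (b) is a correct routine verification: $\Pow(C_{pq})$ is two disjoint cliques $K_{p-1}$ and $K_{q-1}$ together with $1+\phi(pq)$ dominating vertices (the identity and the generators), which is a cograph. One correction to your first paragraph, though: an induced $P_4$ does \emph{not} reduce to ``two incomparable elements joined to a common element'' yielding a contradiction --- that configuration is only a $P_3$, and it genuinely occurs in $p$-groups (in $Q_8$ the triple $(i,-1,j)$ induces a path, since $i^2=j^2=-1$ but $i\not\sim j$). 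The correct key fact is the one in your second paragraph: two elements which are both \emph{powers of} a common element of a $p$-group are comparable, whereas two elements which merely \emph{have} a common power need not be. With that fact plus transitivity of the power relation, the case analysis on the orientation of the middle edge of an alleged $P_4$ does close up (if $x\in\langle y\rangle$, then either $z\in\langle y\rangle$ or $y\in\langle z\rangle$ forces $x\sim z$; if $y\in\langle x\rangle$, then either $w\in\langle x\rangle$ or $x\in\langle w\rangle$ forces $w\sim y$), so this is a wording slip rather than a fatal flaw.

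The genuine shortfall is in the converse, which you correctly flag as the delicate part but leave as an outline: you state the right lemma (for coprime orders, $(a_1,b_1)\in\langle(a_2,b_2)\rangle$ if and only if $a_1\in\langle a_2\rangle$ and $b_1\in\langle b_2\rangle$, by the Chinese Remainder Theorem), but you never exhibit the four vertices, and your enumeration of cases is redundant and not pushed to a conclusion. To complete it: since the power graph of a subgroup is an induced subgraph of $\Pow(G)$, it suffices to observe that a nilpotent $G$ outside cases (a) and (b) contains one of $C_{p^2}\times C_q$, $C_p\times C_p\times C_q$, or $C_p\times C_q\times C_s$ (if exactly two primes divide $|G|$, some Sylow subgroup has order greater than $p$, hence contains a subgroup of order $p^2$, which is $C_{p^2}$ or $C_p\times C_p$; otherwise at least three primes divide $|G|$), and then to produce an induced $P_4$ in each: in $\langle a\rangle\times\langle b\rangle\cong C_{p^2}\times C_q$ take $(a,\,a^p,\,a^pb,\,b)$; in $\langle a_1\rangle\times\langle a_2\rangle\times\langle b\rangle\cong C_p\times C_p\times C_q$ take $(a_1,\,a_1b,\,b,\,a_2b)$; in $\langle a\rangle\times\langle b\rangle\times\langle c\rangle\cong C_p\times C_q\times C_s$ take $(a,\,ab,\,b,\,bc)$. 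In each case your coprimality lemma gives the three edges, and comparing coordinates gives the three non-edges. So your plan does terminate in a correct proof, but as submitted the converse half is a description of a proof rather than a proof; supplying these explicit paths (or an equivalent construction) is exactly what is missing.
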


\begin{theorem}\label{t:power_nilp}
\begin{enumerate}\itemsep0pt
\item Let $G$ be a finite group whose Gruenberg--Kegel graph is a null graph.
Then the power graph of $G$ is a cograph.
\item Let $G$ be a finite group whose power graph is a cograph. Then, with
possibly one exception, a connected component of the Gruenberg--Kegel graph
has at most two vertices, the exception being the component containing the
prime~$2$. If $\{p,q\}$ is a connected component of the GK graph, with $p$
and $q$ odd primes, then $p$ and $q$ divide $|G|$ to the first power only.
\end{enumerate}
\end{theorem}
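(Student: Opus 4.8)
The plan is to handle the two parts by different methods: part (a) by a direct ``no induced $P_4$'' argument, and part (b) by combining a subgroup-restriction principle with explicit $P_4$-constructions.

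For part~(a), I first record that the Gruenberg--Kegel graph of $G$ is null exactly when $G$ has no element of order $pq$ for distinct primes $p,q$, i.e. every element has prime-power order; equivalently every cyclic subgroup of $G$ is a $p$-group, so its subgroups form a chain under inclusion. To prove $\Pow(G)$ is a cograph I show it contains no induced $P_4$. Suppose a $P_4$ with vertices $w,x,y,z$ (in this order) existed. The identity is joined to every vertex, so it cannot occur in a $P_4$ (it would have degree $3$); hence $w,x,y,z$ are non-identity, and I set $A=\langle w\rangle$, $B=\langle x\rangle$, $C=\langle y\rangle$, $D=\langle z\rangle$, noting that two non-identity vertices are adjacent precisely when their cyclic subgroups are comparable. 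Using the symmetry $(w,x,y,z)\mapsto(z,y,x,w)$ of $P_4$ I may assume $B\subseteq C$. Then $A\subseteq B$ would force $A\subseteq C$, i.e. $w\sim y$; so instead $B\subseteq A$. Likewise $C\subseteq D$ would give $B\subseteq D$, i.e. $x\sim z$; so $D\subseteq C$. But now $B$ and $D$ are both subgroups of the prime-power cyclic group $C$, whose subgroups form a chain, so $B,D$ are comparable and $x\sim z$, a contradiction. Hence no induced $P_4$ exists and $\Pow(G)$ is a cograph.

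For part~(b), the engine is that for any subgroup $H\le G$ the induced subgraph of $\Pow(G)$ on $H$ is exactly $\Pow(H)$; so if $\Pow(G)$ is a cograph then so is $\Pow(H)$ for every $H$, in particular every cyclic $H$. The nilpotent classification above, applied to cyclic groups, shows $\Pow(C_n)$ is a cograph only when $n$ is $1$, a prime power, or a product of two distinct primes (the obstructions being the induced $P_4$'s visible in $C_{pqr}$ and in $C_{p^2q}$). Thus, under the hypothesis, every element of $G$ has order $p^a$ or $pq$; there is no element of order $pqr$ or of order $p^2q$. This constrains the local structure but does not by itself bound the GK components, so the real work is to convert a forbidden GK configuration into an induced $P_4$ on a (possibly non-cyclic) subgroup. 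The two configurations to rule out are: (i) a path on odd primes $p,q,r$ inside one component of size at least $3$ not containing $2$; and (ii) an edge $pq$ of odd primes together with $p^2\mid|G|$. In case~(i) the target is the $C_{pqr}$-type path $v^q,v,w,u$, where $u$ has order $pq$, $v$ has order $qr$, and $w$ generates a common subgroup $\langle u^p\rangle=\langle v^r\rangle$ of order $q$; in case~(ii) the target is the analogous $C_{p^2q}$-type path built from an element of order $pq$ and one of order $p^2$ (or two independent elements of order $p$) sharing a common $C_p$. I would seek these by analysing the centralizer of a generator $x$ of the relevant subgroup of order $q$ (resp. $p$): ``$C_q$ is $p$-extendable'' means $p\mid|C_G(x)|$, and the goal is a single subgroup that is simultaneously extendable in both directions.

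The main obstacle is exactly this sharing step: the order-$pq$ and order-$qr$ elements supplied by the two GK edges need not meet in a common subgroup of order $q$, and forcing them to do so (or producing a different $P_4$ when they do not) requires genuine group theory, namely Sylow analysis together with a study of which prime-order subgroups extend in which directions, very likely invoking the structure theorem for groups with disconnected GK graph (Theorem~\ref{t:gk}). This is also where the prime $2$ appears as a genuine exception: the extension and parity arguments that let one merge the two cyclic subgroups for odd primes break down at $2$ (an element of order $2$ has no companion of equal order inside a cyclic group, and inversion behaves specially), so the component containing $2$ cannot be controlled this way, and the ``first power'' conclusion is asserted only for odd primes. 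I expect the statement that $p$ and $q$ divide $|G|$ only to the first power to fall out once the non-existence of order-$p^2q$ elements is combined with the cograph restriction inside a Sylow-related subgroup.
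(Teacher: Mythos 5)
Your part (a) is a correct and complete proof. The reduction to $P_4$-freeness, the observation that adjacency of non-identity vertices is exactly comparability of the cyclic subgroups they generate, the exclusion of the identity from any induced $P_4$, and the final chain argument (forcing $B\subseteq A$ and $D\subseteq C$, then using that the subgroups of the cyclic $p$-group $C$ form a chain) are all watertight. The paper itself states this theorem without proof, citing \cite{cmm}, but your argument is the natural one and nothing is missing.

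Part (b), by contrast, is not a proof, and the gap is exactly where you say it is. The inheritance principle, the classification of cyclic groups with cograph power graph, and your two target $P_4$ configurations are all correct; but the entire mathematical content of part (b) is the ``sharing'' step, and for that you offer only intentions (``I would seek\dots'', ``I expect\dots to fall out''). Moreover, the local analysis you envisage provably cannot deliver the first-power claim. Consider $G=C_3\times(C_7\rtimes C_3)$ of order $63=3^2\cdot 7$, where $C_7\rtimes C_3$ is the Frobenius group of order $21$. Its Gruenberg--Kegel graph is the single edge $\{3,7\}$, since the direct factor $C_3$ commutes with the $C_7$. Its power graph \emph{is} a cograph: the unique cyclic subgroup of order $21$ is $C_3\times C_7$ (direct factor times Frobenius kernel), and every other non-trivial cyclic subgroup is a $C_3$ lying in no larger cyclic subgroup and meeting no other non-trivial cyclic subgroup, so all comparabilities live inside the single $C_{21}$ and no induced $P_4$ exists. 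Yet $3^2$ divides $|G|$. So a GK edge between odd primes plus the cograph condition, even combined with the absence of elements of order $p^2q$, forces neither a shared subgroup, nor cyclic Sylow subgroups, nor first-power divisibility; your expectation that the first-power statement ``falls out'' of such local considerations is false. (This example also shows that the first-power sentence must be read with the implicit assumption that the GK graph is disconnected, which is automatic when $|G|$ is even since then the component of $2$ is distinct from $\{p,q\}$; for this odd-order group the sentence fails verbatim.)

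This tells you what a correct proof has to look like: it is global, not local. Once the GK graph is disconnected, Williams' structure theorem (Theorem~\ref{t:gk}) applies. In the Frobenius case, the kernel is nilpotent, so the nilpotent classification stated just before this theorem forces a kernel of order divisible by $pq$ to be $C_{pq}$; Frobenius complements have cyclic Sylow subgroups at odd primes, so all subgroups of order $p$ in the complement are conjugate, hence the $q$-extendable one sits inside the cyclic Sylow $p$-subgroup, and your configuration (ii) yields a $P_4$ unless that Sylow subgroup has order exactly $p$. The $2$-Frobenius and simple cases need similar but separate arguments, the last using facts about odd components of GK graphs of simple groups. This case analysis is also the true source of the exceptional role of $2$: Sylow $2$-subgroups of Frobenius complements may be generalized quaternion rather than cyclic, and the component of $2$ is the distinguished one in Theorem~\ref{t:gk} --- not the parity heuristics you give. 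That the exception is genuine is witnessed by $S_5$: its power graph is a cograph, its GK component containing $2$ is $\{2,3\}$, and $2^3$ divides $|S_5|$. Since none of this appears in your proposal beyond the remark that Theorem~\ref{t:gk} is ``very likely'' needed, part (b) remains unproved.
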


For the final claim, note that the groups $\mathrm{PSL}(2,11)$ and $M_{11}$
have the same GK-graph (an edge $\{2,3\}$ and isolated vertices $\{5\}$ and
$\{11\}$), but the power graph of the first is a cograph, that of the second
is not.

\begin{question}
Classify the groups whose power graph is a cograph.
\end{question}

The next question is maybe not so important in itself but might be an indication
of how well we understand groups whose power graph is a cograph.

\begin{question}
Is the following true? Let $\Gamma$ be a cograph. Then there is a finite group
$G$ such that $\Pow(G)$ is a cograph and contains $\Gamma$ as an induced
subgraph.
\end{question}

There are two reasons why this might be tricky:
\begin{itemize}
\item The inductive scheme for cographs requires complementation. But this
does not work nicely for power graphs.
\item Theorem~\ref{t:power_nilp} shows that there will be no simple
construction using direct products of groups of coprime order.
\end{itemize}

Probably this question will involve groups of prime power order.

\medskip

The remainder of this subsection is based on the paper~\cite{cmm}.

We saw that the power graph of a finite group is a comparability graph of a
partial order, and so in particular is a perfect graph. Various interesting
subclasses of the perfect graphs are defined by forbidding certain induced
subgraphs. Cographs form an example: they forbid the $4$-vertex path $P_4$.
Here are some other graph classes.

\begin{itemize}\itemsep0pt
\item A graph is \emph{chordal} if it contains no induced cycles of length
greater than~$3$ (that is, every cycle of length greater than~$3$ has a
chord).
\item A graph is \emph{split} if the vertex set is the disjoint union of two
subsets, one inducing a complete graph and the other a null graph (with 
possibly some edges between them). A graph is split if it contains no induced
subgraph isomorphic to $C_4$, $C_5$ or $2K_2$.
\item A graph is \emph{threshold} if it can be constructed from the $1$-vertex
graph by adding vertices joined either to all or to no existing vertices. A
graph is threshold if and only if it contains no induced subgraph isomorphic
to $P_4$, $C_4$, or $2K_2$.
\end{itemize}

\begin{theorem}
For a finite nilpotent group $G$, the power graph of $G$ is chordal if and only
if one of the following conditions holds:
\begin{enumerate}\itemsep0pt
\item $G$ has prime power order;
\item $G$ is the direct product of a cyclic group of $p$-power order and a
group of exponent $q$, where $p$ and $q$ are distinct primes.
\end{enumerate}
\end{theorem}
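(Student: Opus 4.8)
The plan is to pass from the power graph to the comparability graph of a poset, and then either forbid or exhibit long induced cycles there. First I would use that $G$ nilpotent means $G=P_1\times\cdots\times P_r$ is the direct product of its Sylow subgroups, and that by coprimality every cyclic subgroup of $G$ is the direct product of cyclic subgroups of the $P_i$. Hence the poset of cyclic subgroups of $G$ is the direct product $\mathcal{C}(P_1)\times\cdots\times\mathcal{C}(P_r)$, where $\mathcal{C}(P_i)$ is the poset of cyclic subgroups of $P_i$. Since $x,y$ are joined in $\Pow(G)$ exactly when $\langle x\rangle,\langle y\rangle$ are comparable, and all generators of a fixed cyclic subgroup are mutual twins, $\Pow(G)$ is obtained from the comparability graph of this product poset by substituting a clique for each vertex; as clique substitution preserves chordality in both directions, it suffices to decide chordality of the comparability graph of $\prod_i\mathcal{C}(P_i)$. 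I would record the dictionary that $\mathcal{C}(P)$ is a chain iff $P$ is cyclic, and $\mathcal{C}(P)$ has height at most $1$ iff $P$ has exponent $p$. In these terms conditions (a) and (b) say precisely that $r=1$, or that $r=2$ with one factor a chain and the other of height at most $1$.

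For sufficiency I would show directly that no induced cycle of length $\ge 4$ exists. In case (a) ($r=1$), in any induced cycle choose a cyclic subgroup $U_i$ maximal among the cycle's vertices; its two cycle-neighbours lie inside $U_i$, and since $U_i$ is a cyclic $p$-group its subgroups form a chain, so those two neighbours are comparable, producing a chord — a contradiction. In case (b), write the cyclic $p$-factor as $A$ and the exponent-$q$ factor as $H$, and partition the vertices into the clique $S$ of elements of $p$-power order (a clique because $A$ is cyclic) and, for each subgroup $Y=C_q$ of $H$, the clique $B_Y$ of elements whose $q$-part generates $Y$. Distinct $B_Y,B_{Y'}$ are completely non-adjacent, while edges from $S$ into a $B_Y$ are governed only by the $A$-level. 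An induced cycle can use at most two vertices of $S$, and these must be consecutive; and because distinct $B_Y$ are mutually non-adjacent, the rest of the cycle lies in a single $B_Y$. A short case check then shows that the cycle must contain a chord (or force a contradictory pair of level inequalities), so none exists.

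For necessity I would argue the contrapositive, exhibiting a forbidden induced cycle whenever $G$ is of neither type, splitting on the prime structure. If $r\ge 3$, choosing commuting elements of orders $p,q,s$ from three Sylow subgroups yields a subgroup $C_{pqs}$, whose cyclic-subgroup poset is the Boolean lattice $B_3$ and contains the explicit induced hexagon $\langle p\rangle,\langle pq\rangle,\langle q\rangle,\langle qs\rangle,\langle s\rangle,\langle ps\rangle$. If $r=2$ and both Sylows contain an element of order a prime square, then $G$ contains $C_{p^2}\times C_{q^2}$, whose poset is the $3\times 3$ grid and contains a crown, giving an induced $C_4$. If both Sylows are non-cyclic, I would take incomparable cyclic subgroups $X_1,X_2\le P$ and $Y_1,Y_2\le Q$ and verify that $X_1,\,X_1Y_1,\,Y_1,\,X_2Y_1,\,X_2,\,X_2Y_2,\,Y_2,\,X_1Y_2$ form an induced $C_8$ (the one-subdivision of the $4$-cycle in $K_{2,2}$). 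The remaining case, where one Sylow is cyclic and the other non-cyclic of exponent at least a square, I would attack through a ``shared-socle'' crown: when $Q$ has incomparable cyclic subgroups $Y_1,Y_2$ with $Y_1\cap Y_2\ne 1$, then with $X=C_p\le P$ the subgroups $(Y_1\cap Y_2)\times 1$ and $1\times X$ lie below the incomparable $Y_1\times X$ and $Y_2\times X$, giving an induced $C_4$.

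The main obstacle is exactly this last case: one must show that a non-cyclic Sylow $q$-subgroup of exponent $\ge q^2$ reliably supplies the configuration needed to build a forbidden cycle. Here I would invoke the Burnside theorem quoted earlier, that a $p$-group with no $C_q\times C_q$ is cyclic or generalized quaternion. Generalized quaternion groups have a unique involution, so all their nontrivial cyclic subgroups already meet nontrivially and the shared-socle crown applies at once; otherwise $Q$ contains a $C_q\times C_q$, and the interaction of this with an element of order $q^2$ must be analysed to locate either a shared-socle pair or an explicit longer induced cycle. The delicate point is that incomparable cyclic subgroups of a non-cyclic $q$-group need \emph{not} intersect nontrivially, so the naive crown is not always available; controlling this interaction uniformly across all such $q$-groups — so that some forbidden induced cycle is always produced — is the step I expect to require the most care.
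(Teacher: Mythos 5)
A caveat before anything else: the paper contains no proof of this theorem. It is quoted from \cite{cmm} without argument, so your proposal can only be measured against the statement itself, not against a proof in the text. With that said, your framework is the right one and most of it is correct: the passage to the comparability graph of the poset of cyclic subgroups (closed twins cannot lie on an induced cycle of length at least $4$), the decomposition of that poset as a product over the Sylow subgroups, the observation that an induced cycle of length at least $4$ in a comparability graph must be alternating (a ``crown''), both sufficiency arguments, and the necessity cases of three or more primes (the hexagon in $B_3$), two Sylow subgroups containing elements of order $p^2$ and $q^2$ (the grid $C_4$), and two non-cyclic Sylow subgroups (your induced $C_8$) are all sound.

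However, the case you single out as delicate is not merely a gap in your proof: it is a hole in the theorem as stated, so no argument can close it. Take $G=C_p\times D_{2^n}$ with $p$ an odd prime and $D_{2^n}$ dihedral of order $2^n\geqslant 8$. Since $D_{2^n}$ is non-cyclic, of exponent $2^{n-1}\geqslant 4$, and directly indecomposable, neither (a) nor (b) holds for $G$; yet $\Pow(G)$ \emph{is} chordal. The maximal cyclic subgroups of $D_{2^n}$ (the rotation subgroup and the order-$2$ reflection subgroups) intersect pairwise trivially, and this kills all induced $C_4$'s: such a cycle is a crown $A_1,A_2<B_1,B_2$ in which $A_1,A_2$ are incomparable subgroups of the cyclic group $B_1\cap B_2$, so $2p$ would have to divide $|B_1\cap B_2|$, forcing the dihedral parts of $B_1,B_2$ to be incomparable with non-trivial intersection --- impossible. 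Longer induced cycles are also crowns; iteratively deleting the identity (joined to everything) and vertices of degree at most $1$ removes every vertex whose dihedral component is a reflection subgroup, leaving the comparability graph of a product of two chains (a $2\times n$ grid), whose antichains have size at most $2$, so no crown survives. (A perfect elimination ordering of $\Pow(C_3\times D_8)$ confirms this concretely.) By contrast, your programme does complete when $q$ is odd: a non-cyclic $q$-group of exponent at least $q^2$ then always has a shared-socle pair. Indeed, take $x$ of maximal order $q^k\geqslant q^2$: either $C_Q(x)>\langle x\rangle$, and for a suitable element $w'$ of order $q$ commuting with $x$ but outside $\langle x\rangle$ the subgroups $\langle x\rangle$ and $\langle xw'\rangle$ are incomparable and share $x^q=(xw')^q$; or $\langle x\rangle$ is self-centralising, and $N_Q(\langle x\rangle)$ contains a modular group $M_{q^{k+1}}$, where the same computation works. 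For $q=2$ this dichotomy also lets dihedral groups through (quaternion, semidihedral and modular $2$-groups all have shared socles; dihedral ones do not), and they genuinely escape. So the theorem as stated --- here and in its source --- needs either the extra family $C_p\times D_{2^n}$ added to its list of chordal groups, or a restriction to groups of odd order; your ``most delicate'' case is exactly where the counterexamples live, and your own trivial-intersection analysis, pushed one step further, proves they are counterexamples rather than instances.
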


\begin{question}
Which non-nilpotent groups have the property that the power graph is chordal?
\end{question}

\begin{theorem}
The following conditions for a finite group are equivalent:
\begin{enumerate}\itemsep0pt
\item $\Pow(G)$ is a threshold graph;
\item $\Pow(G)$ is a split graph;
\item $\Pow(G)$ contains no induced subgraph isomorphic to $2K_2$;
\item $G$ is cyclic of prime power order, or an elementary abelian or dihedral
$2$-group, or cyclic or order $2p$, or dihedral of order $2p^n$ or $4p$, where
$p$ is an odd prime.
\end{enumerate}
\end{theorem}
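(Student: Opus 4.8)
The plan is to prove the four conditions equivalent by establishing the cycle (a)$\Rightarrow$(b)$\Rightarrow$(c)$\Rightarrow$(d)$\Rightarrow$(a). The first two implications are immediate from the forbidden-subgraph descriptions recalled just before the statement: a threshold graph is $\{2K_2,C_4,P_4\}$-free, and since $C_5$ contains an induced $P_4$ it is also $\{2K_2,C_4,C_5\}$-free, hence split; and a split graph is in particular $2K_2$-free. So all the real work lies in (c)$\Rightarrow$(d) and (d)$\Rightarrow$(a).

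The crucial step, and the one that makes the group theory manageable, is to reinterpret condition (c) internally: I claim that $\Pow(G)$ is $2K_2$-free if and only if the cyclic subgroups of $G$ of order at least $3$ form a chain under inclusion. For the forward direction, note that the identity is joined to every vertex, so it lies in no induced $2K_2$; thus every edge of an induced $2K_2$ joins two non-identity elements, and such an edge lies in the cyclic group they generate, which has order $\geq 3$. Now if $C=\langle x\rangle$ and $C'=\langle y\rangle$ are cyclic of order $\geq 3$ with neither containing the other, I choose two distinct generators $x,x'$ of $C$ and two distinct generators $y,y'$ of $C'$ (possible since $\phi(m)\geq 2$ for $m\geq 3$); a generator of $C$ lies in $C'$ only if $C\subseteq C'$, so incomparability means there are no edges between $\{x,x'\}$ and $\{y,y'\}$, exhibiting an induced $2K_2$. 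Conversely, if the cyclic subgroups of order $\geq 3$ form a chain and $\{a,b\},\{c,d\}$ are two disjoint non-identity edges with (say) $b\in\langle a\rangle$ and $d\in\langle c\rangle$, then $\langle a\rangle$ and $\langle c\rangle$ are comparable; if $\langle a\rangle\subseteq\langle c\rangle$ then $a$ is a power of $c$, giving a cross edge and contradicting the $2K_2$.

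With this reformulation, (c)$\Rightarrow$(d) becomes a classification. If $G$ has no cyclic subgroup of order $\geq 3$ then it has exponent $2$ and is elementary abelian. Otherwise the chain has a top element $M=\langle g\rangle$, and every element of order $\geq 3$ lies in $M$; the subgroups of $M$ of order $\geq 3$ then form a chain, and a short divisor argument forces $|M|=p^k$ (a prime power) or $|M|=2p$ ($p$ an odd prime), with all remaining elements of $G$ of order $\leq 2$. I would then analyse these outside involutions. Since $M$ is normal (any conjugate is a cyclic subgroup of the same order, hence equals $M$) and every element of $G\setminus M$ is an involution, expanding $(m\sigma)^2=1$ for $m\in M$, $\sigma\notin M$ shows each such $\sigma$ inverts $M$ (or its relevant cyclic part). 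Comparing two such involutions shows their product centralises, forcing the $2$-part to be tiny and the action to be inversion. This pins $G$ down: $M=C_{p^k}$ with $p$ odd gives $C_{p^k}$ or $D_{2p^k}$; $M=C_{2^k}$ forces $G$ to be a $2$-group, namely $C_{2^k}$ or the dihedral group $D_{2^{k+1}}$; and $M=C_{2p}$ gives $C_{2p}$ or $D_{4p}$. This is exactly list~(d).

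Finally, (d)$\Rightarrow$(a) is a routine verification: in each listed group every involution outside the distinguished cyclic subgroup $M$ is joined in $\Pow(G)$ only to the identity, so $\Pow(G)$ is the power graph of $M$ (a complete graph when $|M|$ is a prime power, and an easily described graph when $|M|=2p$) with pendant vertices attached at the identity; deleting the universal identity vertex leaves a disjoint union of a clique (or the $C_{2p}$-graph) with isolated vertices, which is threshold, so $\Pow(G)$ is threshold. The main obstacle is the middle step: once the chain reformulation is in hand, the hard part is the group-theoretic bookkeeping that converts ``every element outside $M$ is an involution'' into the precise statement that the Sylow $2$-subgroup has order at most $4$ and acts by inversion, thereby forcing the dihedral structure and excluding near-misses such as the generalized quaternion groups.
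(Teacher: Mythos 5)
Your proof is correct, but there is an important caveat about the comparison: the paper does not actually prove this theorem. It is stated as a result quoted from Cameron, Manna and Mehatari \cite{cmm} (the text says the subsection ``is based on'' that paper), so there is no in-paper argument to measure yours against; your proposal must stand alone, and it does. The implications (a)$\Rightarrow$(b)$\Rightarrow$(c) follow from the forbidden-subgraph characterisations quoted just before the theorem, exactly as you say. Your central lemma --- that $\Pow(G)$ is $2K_2$-free if and only if the cyclic subgroups of $G$ of order at least $3$ form a chain under inclusion --- is right in both directions: the identity lies in no induced $2K_2$, and two incomparable cyclic subgroups of order $\geqslant 3$ have disjoint sets of generators, which yield an induced $2K_2$. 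The compressed steps of your classification also check out: the ``divisor argument'' does force $|M|$ to be a prime power or $2p$ (any other order produces two incomparable subgroups of order $\geqslant 3$, e.g.\ of orders $p$ and $q$, or $4$ and $p$, or $2p$ and $p^2$); and for two involutions $\sigma,\tau$ outside $M$ the product $\sigma\tau$ centralises $M$ and hence lies in $M$ (an involution outside $M$ would simultaneously invert and centralise $M$, forcing $M$ to have exponent $2$), so $G=M$ or $G=M\cup\sigma M$, a dihedral group over $M$ --- precisely list (d), with the quaternion and semidihedral near-misses excluded automatically because for them not every element outside $M$ is an involution. For (d)$\Rightarrow$(a) the verification is as routine as you claim, though in a final write-up you should spell out the one non-obvious case: $\Pow(C_{2p})$ minus the identity is threshold because deleting the $p-1$ dominating generators leaves a clique plus one isolated involution; everything else is a clique or a star with isolated vertices attached, and thresholdness is preserved by adding isolated or dominating vertices.
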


Note that this theorem does not assume that $G$ is nilpotent.

\subsection{Other graphs}

For our other classes of graphs, the problem of deciding for which groups
the graph in question forbids a certain induced subgraph has not been much
worked on, and a number of interesting questions are open. One difference is
that, as we have seen, every finite graph occurs as an induced subgraph of
each type of graph on some finite group.

In line with the preceding subsection we could ask a multipart question:

\begin{question}\label{q:forb}
For which finite groups is the enhanced power graph/deep commuting
graph/commuting graph/nongenerating graph a perfect graph, or a cograph,
or a chordal graph, or a split graph, or a threshold graph?
\end{question}

We can give a partial answer for groups of prime power order.

\begin{theorem}
Let $G$ be a group of prime power order.
\begin{enumerate}\itemsep0pt
\item The power graph of $G$ is equal to the enhanced power graph, and
contains no induced $P_4$ or $C_4$.
\item If $G$ is $2$-generated of order $p^n$, then the non-generating graph of
$G$ consists of $p+1$ complete subgraphs of order $p^{n-1}$, any two 
intersecting in the same subset of size $p^{n-2}$.
\end{enumerate}
\end{theorem}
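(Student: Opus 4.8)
The plan is to treat the two parts separately, using the comparability-graph description of $\Pow(G)$ for part (a) and the Burnside basis theorem for part (b). For part (a), the equality $\Pow(G)=\EPow(G)$ is immediate from the earlier proposition: a group of order $p^n$ has only the prime $p$ dividing its order, so its Gruenberg--Kegel graph is a single vertex (hence a null graph), and the stated criterion gives $\Pow(G)=\EPow(G)$. The absence of an induced $P_4$ is likewise essentially free: a group of prime power order is nilpotent, so by the cograph classification for nilpotent groups stated earlier, $\Pow(G)$ is a cograph, i.e.\ $P_4$-free. (Alternatively both $P_4$ and $C_4$ can be excluded by the single poset argument below.) So the real work is to rule out an induced $C_4$.

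For the $C_4$ I would argue in terms of the poset of cyclic subgroups, recalling that $x$ and $y$ are adjacent in $\Pow(G)$ iff $\langle x\rangle$ and $\langle y\rangle$ are comparable under inclusion. First note that the identity is joined to every vertex, so it cannot lie in an induced $C_4$; hence all four vertices $a,b,c,d$ are nontrivial. Suppose $a,b,c,d$ induce a $C_4$ with edges $ab,bc,cd,da$ and non-edges $ac,bd$, and write $A,B,C,D$ for the corresponding cyclic subgroups, so $A,C$ are incomparable and $B,D$ are incomparable. Since $B$ is comparable to both $A$ and $C$ while $A,C$ are incomparable, $B$ cannot contain both (subgroups of the cyclic $p$-group $B$ are totally ordered, which would force $A,C$ comparable) and cannot lie strictly between them (that would also make $A,C$ comparable); the only remaining option is $B\subseteq A$ and $B\subseteq C$. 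The same reasoning applied to $D$ gives $D\subseteq A$ and $D\subseteq C$. Then $B,D\subseteq A\cap C$, which is a subgroup of the cyclic group $A$, hence cyclic of prime-power order, and its subgroups form a chain; so $B$ and $D$ are comparable, contradicting $B,D$ incomparable. The essential $p$-group input is exactly that subgroups of a cyclic group of prime-power order form a chain, which is what fails for, say, $C_6$.

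For part (b) the plan is to invoke the Burnside basis theorem. Since $G$ is $2$-generated of order $p^n$, the Frattini quotient $G/\Phi(G)$ is elementary abelian of rank $2$, hence isomorphic to $C_p\times C_p$, so $|\Phi(G)|=p^{n-2}$. By Burnside's theorem, $\langle x,y\rangle=G$ iff the images $\bar x,\bar y$ generate $G/\Phi(G)$, i.e.\ are linearly independent over $\mathbb{F}_p$. Hence $x$ and $y$ are adjacent in $\NGen(G)$ iff $\bar x,\bar y$ lie in a common $1$-dimensional subspace of $C_p\times C_p$, equivalently iff $x,y$ lie in a common maximal subgroup of $G$. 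The $p+1$ one-dimensional subspaces of $C_p\times C_p$ pull back to the $p+1$ maximal subgroups $M_0,\dots,M_p$, each of order $p^{n-1}$; each $M_i$ induces a complete subgraph of $\NGen(G)$ (any two of its elements generate a proper subgroup), and every edge lies in some $M_i$. Finally, distinct one-dimensional subspaces meet only in $0$, so $M_i\cap M_j=\Phi(G)$ for all $i\ne j$, a single common set of size $p^{n-2}$, which yields exactly the stated structure.

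Both arguments are short once set up, and I do not expect a serious obstacle. The one point needing care is the case analysis in the $C_4$ step: one must check that every way in which $B$ (and likewise $D$) can be comparable to both $A$ and $C$ is accounted for, and that the incomparability of $A,C$ and of $B,D$ is used precisely where claimed, so that the reduction to $B,D\subseteq A\cap C$ is forced.
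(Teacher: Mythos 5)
Your proposal is correct. On part (b) it is essentially the paper's proof: both rest on the Burnside Basis Theorem, and yours is if anything the more precise write-up (the paper loosely describes the generating pairs as lying in ``distinct non-trivial cosets of $\Phi(G)$'', whereas the right criterion, which you state, is that the images span $G/\Phi(G)\cong C_p\times C_p$, i.e.\ lie in distinct lines). On part (a) the key fact is identical in both arguments --- subgroups of a cyclic $p$-group form a chain --- but the organisation differs. The paper handles $P_4$ and $C_4$ uniformly and directly: for any induced path $(x,y,z)$ in $\Pow(G)$, exactly your case analysis (in element rather than subgroup language) gives $y\in\langle x\rangle\cap\langle z\rangle$; in either forbidden subgraph there is a fourth vertex $w$ joined to $x$ but not to $y$, and applying the same observation to $(w,x,y)$ yields $x\in\langle y\rangle\subseteq\langle z\rangle$, contradicting the non-edge $xz$. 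Your $C_4$ argument is the same mathematics with a different finish (both $B$ and $D$ land inside the cyclic group $A\cap C$, hence are comparable). For $P_4$, however, you invoke the nilpotent cograph classification quoted earlier from \cite{cmm}; that is legitimate within the paper's structure, but it is a far heavier tool than needed, since that classification is itself a nontrivial theorem subsuming exactly this fact. Your parenthetical claim that the poset argument ``alternatively'' excludes $P_4$ is true but left unproved as written; to make the proof self-contained, record the missing step: for an induced path $(a,b,c,d)$, two applications of the middle-vertex analysis give $B\subseteq A\cap C$ and $C\subseteq B\cap D$, hence $B=C$, and then the edge $ab$ makes $A$ comparable with $C$, contradicting the non-edge $ac$. (Your derivation of $\Pow(G)=\EPow(G)$ from the Gruenberg--Kegel criterion and the paper's direct observation that in a $p$-group two elements generating a cyclic subgroup are powers of one another are interchangeable; the paper itself makes your remark elsewhere.)
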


\begin{proof}
(a) In a group of prime power order, if two elements generate a cyclic
subgroup, then one is a power of the other; this shows that the power graph
and enhanced power graph coincide. Suppose that $(x,y,z)$ is an induced path of
length~$2$. If $x,z\in\langle y\rangle$, or if $y\in\langle x\rangle$ and
$z\in\langle y\rangle$, or if $x\in\langle y\rangle$, $y\in\langle z\rangle$,
then $x$ and $z$ are joined, a contradiction. So
$y\in\langle x\rangle\cap\langle z\rangle$. Now if $w$ is joined to $x$ but
not to $y$, then $(w,x,y)$ is an induced path of length $2$, so
$x\in\langle y\rangle$ by the same argument. But then $y\in\langle z\rangle$,
so $x\in\langle z\rangle$, a contradiction.

\medskip

(b) Let $\Phi(G)$ be the Frattini subgroup of $G$. By the Burnside Basis
Theorem, $\Phi(G)$ consists of vertices lying in no generating pair and has
index $p^2$ in $G$; moreover, the generating pairs are all pairs of elements
lying in distinct non-trivial cosets of $\Phi(G)$. \qed
\end{proof}

No such result holds for the commuting graph; Theorem~\ref{t:ind_com} shows
that the commuting graphs of $2$-groups form a universal class. 
Computation shows that the smallest group whose commuting graph is not a
cograph is the symmetric group $S_4$: the elements $(1,2,3,4)$, $(1,3)(2,4)$,
$(1,2)(3,4)$ and $(1,3,2,4)$ induce a $4$-vertex path. In fact, seven groups
of order~$32$ have commuting graphs which are not cographs.

\medskip

Question~\ref{q:forb} has been considered for the commuting graph by Britnell
and Gill~\cite{bg}, who obtained a partial description of groups for which the
commuting graph is a perfect graph. Assuming that $G$ has a component (a
subnormal quasisimple subgroup), they determine all possible components of
such groups.

Also, the paper \cite{zbm} classifies nilpotent groups whose enhanced power
graphs are perfect (but I have not been able to access this paper).

\begin{question}
What about other graph classes, for example planar graphs?
\end{question}

For planarity, this may not be too hard for graphs in the hierarchy. Since the
complete graph on $5$ vertices is not planar, it follows that $G$ has no
elements of order greater than $4$. (For all cases except the power graph, a
cyclic subgroup induces a complete graph, so the claim is clear. In the power
graph, the power graph of a cyclic group of prime power order is complete, and
the power graph of $C_6$ contains a $K_5$.) So $G$ is solvable.

Indeed, bounding the genus of a graph bounds its clique number, and so (for
graphs in the hierarchy) bounds the orders of elements.

See~\cite{afk} for the commuting graph and its complement.

\subsection{Results for simple groups}

\begin{prop}
Let $G=\mathrm{PSL}(2,q)$, with $q$ a prime power and $q\ge4$.
\begin{enumerate}\itemsep0pt
\item If $q$ is even, then $\EPow(G)$, $\DCom(G)$ and $\Com(G)$ are cographs;
$\Pow(G)$ is a cograph if and only if $q-1$ and $q+1$ are either prime
powers or products of two distinct primes.
\item If $q$ is odd, then $\EPow(G)$ and $\DCom(G)$ are cographs;
$\Pow(G)$ is a cograph if and only if $(q-1)/2$ and $(q+1)/2$ are either
prime powers or products of two distinct primes.
\end{enumerate}
\end{prop}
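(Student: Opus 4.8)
The plan is to exploit the very restricted subgroup structure of $G=\mathrm{PSL}(2,q)$, which forces each of these graphs, after deletion of the dominating identity vertex, to be a disjoint union of complete graphs; since a disjoint union of cliques is automatically $P_4$-free, it is a cograph, and restoring a vertex joined to all others preserves this. The only graph escaping this clean description is the power graph, where the cliques coming from the cyclic tori are themselves nontrivial power graphs, and this is exactly where the arithmetic condition enters. First I would record that, writing $q=p^f$ and $d=\gcd(2,q-1)$, the maximal cyclic subgroups of $G$ are the $C_p$ lying in the elementary abelian Sylow $p$-subgroups, the split tori of order $(q-1)/d$, and the non-split tori of order $(q+1)/d$; moreover any two distinct maximal cyclic subgroups meet trivially, since the three families have pairwise coprime orders ($(q-1)/d$ and $(q+1)/d$ are coprime, and both are coprime to $p$) and distinct subgroups within a family intersect trivially. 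Consequently two non-identity elements are adjacent in $\EPow(G)$ precisely when they share a maximal cyclic subgroup, so $\EPow^-(G)$ is a disjoint union of cliques; this argument is uniform in $q$ and settles $\EPow$ in both (a) and (b).

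For the commuting graph with $q$ even I would use that $p=2$, so involutions are unipotent with abelian (Sylow-$2$) centralisers; hence $G$ is a CA-group, its maximal abelian subgroups pairwise meet trivially, and $\Com^-(G)$ is again a disjoint union of cliques. (No claim is made, or needed, about $\Com$ when $q$ is odd, where the dihedral involution centralisers destroy the CA property.) For the deep commuting graph I would pass to a Schur cover $H$ with $H/Z\cong G$ and use that $\DCom(G)$ is the induced subgraph of $\Com(H)$ on a transversal of $Z$. For $q$ odd the cover is $H=\mathrm{SL}(2,q)$, whose only involution is the central $-I$; thus every non-central element of $H$ has abelian centraliser, so $H$ is a CA-group, its maximal abelian subgroups intersect exactly in $Z$, and each non-central element lies in a unique one. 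Projecting to the transversal then yields a disjoint union of cliques plus the dominating identity. For $q$ even with $q\geq 8$ the Schur multiplier is trivial, so $\DCom(G)=\Com(G)$ is already handled, and $q=4$ is covered because its cover is $\mathrm{SL}(2,5)$, again a CA-group.

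For the power graph I would reduce exactly as for $\EPow$: adjacency in $\Pow$ forces membership in a common cyclic subgroup, so $\Pow^-(G)$ is the disjoint union of the graphs $\Pow^-(C)$ over maximal cyclic $C$, and this is a cograph if and only if $\Pow(C)$ is a cograph for every maximal cyclic $C$. The subgroups $C_p$ contribute complete graphs and impose no condition, so everything reduces to the arithmetic lemma that \emph{$\Pow(C_n)$ is a cograph if and only if $n$ is $1$, a prime power, or a product of two distinct primes}. To prove the lemma I would observe that in $C_n$ two elements are adjacent precisely when their orders are comparable under divisibility, so $\Pow(C_n)$ is obtained from the comparability graph of the divisor lattice of $n$ by substituting a clique of size $\phi(d)$ for each divisor $d$; as cographs are closed under such substitutions, it suffices to decide when the divisor-lattice comparability graph is $P_4$-free. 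If $p^2\mid n$ and $q\mid n$ for distinct primes $p,q$, then $\{p^2,p,pq,q\}$ induces the path $p^2-p-pq-q$; if $n$ has three distinct prime divisors $p,q,r$, then $\{p,pq,q,qr\}$ induces $p-pq-q-qr$; and in the three remaining cases the comparability graph is readily seen to be a cograph. Applying the lemma to the torus orders $(q-1)/d$ and $(q+1)/d$ gives exactly the stated conditions on $q\pm1$ (for $q$ even) and on $(q\pm1)/2$ (for $q$ odd).

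I expect the main obstacle to be the deep commuting graph in the exceptional case $q=9$, where $G=A_6$ has Schur multiplier $C_6$ and the full cover $6.A_6$ is no longer of the form $\mathrm{SL}(2,q)$, so the CA-group shortcut does not apply directly; here I would either check that $6.A_6$ still commutes the relevant preimages in the required way, or simply verify computationally that $\DCom(A_6)$ is a cograph, in line with the \textsf{GAP} computations used elsewhere in the paper. A secondary point needing care is the claim that distinct maximal tori of $G$ (and distinct maximal abelian subgroups of the cover) meet only in the identity (resp.\ in $Z$); this is standard for $\mathrm{PSL}(2,q)$ and $\mathrm{SL}(2,q)$ but must be cited or proved, since the entire disjoint-union-of-cliques reduction rests on it.
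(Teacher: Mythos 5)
Your proposal is correct and follows essentially the same route as the paper: delete the dominating identity, use the pairwise-trivial intersections of maximal cyclic (resp.\ abelian) subgroups of $\mathrm{PSL}(2,q)$ to decompose each reduced graph into disjoint cliques or power graphs of cyclic tori, treat $\DCom$ via the Schur cover $\mathrm{SL}(2,q)$ for odd $q\ne9$ (with $q=4$ folded into $\mathrm{PSL}(2,5)$ and $q=9$ deferred to computation), and finish with the cograph criterion for power graphs of cyclic groups. The only substantive difference is that you prove that criterion yourself, via the divisor-lattice blow-up and the two $P_4$ configurations, whereas the paper simply cites \cite[Theorem 3.2]{cmm}.
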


\begin{proof}
We consider the graphs obtained by removing the identity. We note that a
graph $\Gamma$ is a cograph if and only if the graph obtained by adding a
vertex joined to all others is a cograph.

We begin with the case when $q$ is a power of $2$, noting that in this case
$\mathrm{PSL}(2,q)$ has trivial Schur multiplier except when $q=4$. The
elements of the group have order $2$ or prime divisors of $q-1$ or $q+1$. The
centralisers of involutions are elementary abelian of order $q$, while the
centralisers of other elements are cyclic of order $q-1$ and $q+1$. In other
words, centralisers are abelian and intersect only in the identity. So the
commuting graph (which is the deep commuting graph if $q\ne4$) is a disjoint
union of complete graphs; the enhanced power graph is a disjoint union of
complete graphs and isolated vertices (corresponding to the involutions);
and the power graph is a disjoint union of the power graphs of cyclic groups
of orders $q\pm1$ and isolated vertices. Using the fact that the power graph
of the cyclic group $C_m$ is a cograph if and only if $m$ is either a prime
power or the product of two primes \cite[Theorem 3.2]{cmm}, the result
follows. The result for the deep commuting graph for $\mathrm{PSL}(2,4)$ can
be proved by computation, or by identifying this group with
$\mathrm{PSL}(2,5)$ (which is dealt with in the next paragraph).

Now consider the case when $q$ is a power of an odd prime $p$. The
centralisers of elements of order $p$ are elementary abelian of order $q$;
centralisers of other elements are cyclic of orders $(q\pm1)/2$ except for
involutions, which are centralised by dihedral groups of order $q\pm1$,
whichever is divisible by $4$. So, although centralisers may not be disjoint,
the maximal cyclic subgroups are, so the statements about the enhanced power
graph and the commuting graph are true.

If $q$ is odd and $q\ne9$, the Schur multiplier of $\mathrm{PSL}(2,q)$ is
cyclic of order $2$, and so a Schur cover of this group is $\mathrm{SL}(2,q)$.
The unique involution in this group is $-I$, the kernel of the extension; so
involutions in $\mathrm{PSL}(2,q)$ lift to elements of order $4$, which lie
in unique maximal abelian subgroups.

The group $\mathrm{PSL}(2,9)$ has Schur multiplier of order~$6$. The deep
commuting graph of $\mathrm{PSL}(2,9)$ was analysed using \textsf{GAP}, with
generators for the Schur cover from the on-line Atlas of Finite Group
Representations~\cite{atlas}. \qed
\end{proof}

\begin{question}
Are there infinitely many prime powers $q$ for which the power graph of
$\mathrm{PSL}(2,q)$ is a cograph?
\end{question}

Here is a preliminary analysis of this question.

\paragraph{Case $q$ even} Then $q=2^d$, say, and each of $q+1$ and $q-1$ is
either a prime power or the product of two primes. By Catalan's conjecture (now
Mih\u{a}ilescu's Theorem: see \cite[Section 6.11]{cohn}), the only two proper
powers differing by $1$ are $8$ and $9$. So, unless $q=8$, we conclude that
each of $q+1$ and $q-1$ is either prime or the product of two primes. Moreover,
one of these numbers is divisible by $3$, so we can say further that one of
$q+1$ and $q-1$ is three times a prime, while the other is a prime or the
product of two primes. For example, $2^{11}-1=23\cdot89$ while
$2^{11}+1=3\cdot683$. The values of $d$ up to $200$ for which $q=2^d$ satisfies
the condition are $1$, $2$, $3$, $4$, $5$, $7$, $11$, $13$, $17$, $19$, $23$,
$31$, $61$, $101$, $127$, $167$, $199$.

\paragraph{Case $q$ odd} If $q$ is congruent to $\pm1$ (mod~$8$), then one
of $(q+1)/2$ and $(q-1)/2$ is divisible by $4$, and so must be a power of $2$;
so either $q=9$, or $q$ is a Fermat or Mersenne prime. So, apart from this
case, $q$ is congruent to $\pm3$ (mod~$8$). Now either $q$ is an odd power 
of $3$, or one of $(q+1)/2$ and $(q-1)/2$ is twice a prime, while the other is
three times a prime or a power of $3$ (unless $q=11$ or $q=13$). The odd prime
powers up to $500$ satisfying the condition are $3$, $5$, $7$, $9$, $11$, $13$,
$17$, $19$, $27$, $29$, $31$, $43$, $53$, $67$, $163$, $173$, $243$, $257$,
$283$, $317$.

\medskip

Table~\ref{t:fsg} gives the numbers of vertices in cokernels for small finite
simple groups. Note that a graph is a cograph if and only if its cokernel has
one vertex. The last column of the table gives the number of cyclic subgroups
of $G$, that is, equivalence classes under the relation $\equiv$ where
$x\equiv y$ if $\langle x\rangle=\langle y\rangle$. Equivalent vertices are
closed twins in all our graphs, so this number is an upper bound for the 
number of vertices in each cokernel. (I have replaced $\mathrm{PSL}$ and
$\mathrm{PSU}$ by $L$ and $U$ in the table to save space.)


\begin{table}[htb]
\[\begin{array}{c|c|cccccc}
G & |G| & \Pow(G) & \EPow(G) & \DCom(G) & \Com(G) & \NGen(G) & \hbox{Cyc} \\
\hline
A_5 & 60 & 1 & 1 & 1 & 1 & 32 & 32 \\
L_2(7) & 168 & 1 & 1 & 1 & 44 & 79 & 79 \\
A_6 & 360 & 1 & 1 & 1 & 92 & 167 & 167 \\
L_2(8) & 504 & 1 & 1 & 1 & 1 & 128 & 156 \\
L_2(11) & 660 & 1 & 1 & 1 & 112 & 244 & 244 \\
L_2(13) & 1092 & 1 & 1 & 1 & 184 & 366 & 366 \\
L_2(17) & 2448 & 1 & 1 & 1 & 308 & 750 & 750 \\
A_7 & 2520 & 352 & 352 & 352 & 352 & 842 & 947 \\
L_2(19) & 3420 & 1 & 1 & 1 & 344 & 914 & 914 \\
L_2(16) & 4080 & 1 & 1 & 1 & 1 & 784 & 784 \\
L_3(3) & 5616 & 756 & 756 & 808 & 808 & 1562 & 1796 \\
U_3(3) & 6048 & 786 & 534 & 499 & 499 & 1346 & 1850 \\
L_2(23) & 6072 & 1267 & 1 & 1 & 508 & 1313 & 1566 \\
L_2(25) & 7800 & 1627 & 1 & 1 & 652 & 1757 & 2082 \\
M_{11} & 7920 & 1212 & 1212 & 1212 & 1212 & 2444 & 2576 \\
\end{array}\]
\caption{\label{t:fsg}Sizes of cokernels of graphs on small simple groups}
\end{table}

The table suggests various conjectures, some of which can be proved. For
example:

\begin{theorem}
Let $G$ be a non-abelian finite simple group. Then $\NGen(G)$ is not a
cograph.
\end{theorem}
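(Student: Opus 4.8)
The goal is to produce an induced four-vertex path $P_4$ in $\NGen(G)$; since cographs are exactly the $P_4$-free graphs, this suffices (equivalently, because $P_4$ is self-complementary, one may instead look for an induced $P_4$ in the generating graph $\Gen(G)$). The plan is to build this path around the \emph{clique of involutions}. Indeed, $G$ has even order by the Feit--Thompson theorem, and its Sylow $2$-subgroup is neither cyclic nor generalized quaternion --- else $G$ would have a normal $2$-complement or fail to be simple by Brauer--Suzuki --- so $G$ has at least three involutions. Any two involutions $s,t$ generate a dihedral group $\langle s,t\rangle$, which is solvable and hence proper in the non-abelian simple group $G$; thus all involutions are pairwise adjacent in $\NGen(G)$. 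I therefore aim for a path $w - x - y - z$ in which $x$ and $y$ are involutions, so that the middle edge $xy$ is automatic, and I must arrange $\langle w,x\rangle\ne G$ and $\langle y,z\rangle\ne G$ (the outer edges) together with $\langle w,y\rangle=G$, $\langle x,z\rangle=G$ and $\langle w,z\rangle=G$ (the three non-edges).

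First I construct the sub-path $w - x - y$. Fix any involution $y$. By Theorem~\ref{t:bgh} the generating graph of $G$ has spread $1$, so $y$ lies in some generating pair: there is $w$ with $\langle w,y\rangle=G$, giving the required non-edge $wy$. Next I choose an involution $x$ with $\langle w,x\rangle\ne G$, which forces $x\ne y$: if $w$ has even order I take $x$ to be the involution in $\langle w\rangle$ (so $\langle w,x\rangle=\langle w\rangle\ne G$), and in general I take $x$ to be an involution inside a proper overgroup of $w$ of even order (so that $\langle w,x\rangle$ is proper). The guiding picture is $G=A_5$: with $w$ a $5$-cycle, $x$ a reflection of its pentagon (so $\langle w,x\rangle=D_{10}$) and $y$ an involution that is \emph{not} such a reflection (so $\langle w,y\rangle=A_5$), this is exactly the configuration.

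It remains to attach $z$, and this is the heart of the argument. I need $z$ to generate $G$ together with \emph{both} $x$ and $w$, yet to fail to generate with the involution $y$. A common generating partner of $x$ and $w$ exists because, again by Theorem~\ref{t:bgh}, $G$ has spread $2$: every pair of non-identity elements has a common neighbour in $\Gen(G)$. The difficulty is to choose this common partner so as also to satisfy $\langle z,y\rangle\ne G$. The natural device is to seek $z$ inside a proper subgroup containing $y$, for instance $z\in C_G(y)$, since then automatically $\langle z,y\rangle\le C_G(y)\ne G$; the task becomes showing that such a proper overgroup of $y$ contains an element generating $G$ with each of $x$ and $w$.

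This last existence statement is where the real work lies, and where bare spread is not enough: one must know that generating partners are \emph{abundant} --- that for a fixed element the non-generating partners form a small proportion of $G$, as in the probabilistic generation results of Liebeck--Shalev and Guralnick--Kantor type --- and combine this with a lower bound on $|C_G(y)|$ (or on a suitable maximal overgroup of $y$) to conclude that $C_G(y)$ meets the common generating-partner set of $x$ and $w$. I expect this density argument, made uniform across the families of simple groups via the classification, to be the main obstacle; the remaining points (existence of an even-order proper overgroup of $w$, and the distinctness and adjacency bookkeeping among $w,x,y,z$) are routine. Granting the density step, the four elements $w,x,y,z$ induce a $P_4$ in $\NGen(G)$, so $\NGen(G)$ is not a cograph.
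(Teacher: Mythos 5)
Your construction is incomplete at exactly the point you flag, and the gap is not a routine one that probabilistic generation will fill. To attach $z$ you must find, inside the proper subgroup $C_G(y)$, an element generating $G$ with both $w$ and $x$; equivalently, you must show that $C_G(y)$ is not covered by the union of the maximal subgroups of $G$ containing $w$ with those containing $x$. The results you invoke (spread at least $2$, Liebeck--Shalev and Guralnick--Kantor type bounds) measure bad sets as proportions of $|G|$, and such bounds cannot localize to $C_G(y)$: the set of non-generating partners of a fixed element contains whole maximal subgroups, so its density is typically far larger than $|C_G(y)|/|G|$ (for an involution $y$ in an alternating or Lie-type group the latter ratio is extremely small), and hence the bad sets could in principle swallow $C_G(y)$ entirely. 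What you need is a ``localized spread'' statement, which is essentially the hard content of the work of Freedman and of Burness--Guralnick--Harper on $(\NGen{-}\Com)(G)$ and related domination questions, not a corollary of spread. There is also a second, concrete gap earlier in the argument: when $w$ has odd order you take an involution ``inside a proper overgroup of $w$ of even order'', but such an overgroup need not exist. In $G=\mathrm{PSL}(2,7)$, an element $w$ of order $7$ has the Frobenius group $7{:}3$ as its unique maximal overgroup, so every proper subgroup containing $w$ has odd order and every involution generates $G$ with $w$; since order-$7$ elements do arise as generating partners of involutions, your choice of $w$ via spread alone can land on exactly this case, and the sub-path on $\{w,x,y\}$ cannot then be built.

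For contrast, the paper proves the theorem without exhibiting any $P_4$, by a connectivity argument: the reduced non-generating graph $\NGen^-(G)$ of a finite simple group is connected (via its dual pair with the intersection graph of proper subgroups, whose connectedness and bounded diameter are due to Shen, Herzog--Longobardi--Maj, Ma and Freedman), and its complement, the reduced generating graph, is also connected (indeed of diameter $2$, by Burness--Guralnick--Harper). Since a connected cograph must have a disconnected complement, $\NGen^-(G)$ is not a cograph, and therefore neither is $\NGen(G)$. So the two approaches are genuinely different; yours, if completed, would yield an explicit induced $P_4$, but completing it requires the localized generation statement above, whereas the paper's route needs only connectivity facts already in the literature.
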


\begin{proof}
We consider the reduced graph obtained by deleting the identity vertex.
The reduced non-generating graph of a simple group is connected, and has
diameter at most~$5$ (this follows from the results of Ma, Herzog \emph{et al.},
Shen and Freedman discussed in Section~\ref{s:intersection} below). Also,
the generating graph is connected (see \cite{bgk}), and indeed has diameter~$2$
(see~\cite{bgh}). But the complement of a connected cograph is disconnected.
\qed
\end{proof}

\begin{question}
Find, or estimate, the number of vertices in the cokernel of the non-generating
graph of a finite simple group. In particular, classify simple groups for which
the number of vertices in the cokernel of the non-generating graph is equal to
the number of cyclic subgroups.
\end{question}

The hypothesis of simplicity is essential here. For example, the non-generating
graph of any $2$-generator $p$-group is a cograph (it consists of a star
$K_{1,p+1}$ with the central vertex blown up to a clique of size $p^{n-2}$ 
and the remaining vertices to cliques of size $p^{n-2}(p-1)$, where the group
has order $p^n$). This graph is a cograph: its complement is complete 
multipartite with some isolated vertices.

\section{Connectedness}
\label{s:conn}

In this section, we examine the question of connectedness of these graphs.
All those in the hierarchy (except the null graph) are connected, since the
identity is joined to all other vertices. The question is non-trivial, however,
if we remove vertices joined to all others. The first job is to characterise
these vertices.

\subsection{Centres}

In the commuting graph of $G$, the set of vertices joined to all others is
simply the centre $Z(G)$ of $G$. So we adapt the terminology by defining
analogues of the centre for other graphs in the list. So, if $\mathrm{X}$
denotes $\Pow$, $\EPow$, $\DCom$, $\Com$ or $\NGen$, we define the 
$\mathrm{X}$-centre of $G$, denoted $Z_\mathrm{X}(G)$, to be the set of
vertices joined to all others in $\mathrm{X}(G)$. It turns out that (aside
from the non-generating graph), in almost all cases, $Z_\mathrm{X}(G)$ is a
normal subgroup of $G$; the only exception is for the power graph of a cyclic
group of non-prime-power order. We note that $Z_{\EPow}(G)$ has been studied
by Patrick and Wepsic~\cite{pw}, who called it the
\emph{cyclicizer} of $G$: it is the set
\[\{x\in G:(\forall y\in G)\langle x,y\rangle\hbox{ is cyclic}\}.\]
The main results on the cyclicizer are also given in \cite{ah1}.

\begin{theorem}\label{t:zzzz}
\begin{enumerate}\itemsep0pt
\item $Z_{\Pow}(G)$ is equal to $G$ if $G$ is cyclic of prime power order; or
the set consisting of the identity and the generators if $G$ is cyclic of 
non-prime-power order; or
$Z(G)$ if $G$ is a generalized quaternion group; or $\{1\}$ otherwise.
\item $Z_{\EPow}(G)$ is the product of the Sylow $p$-subgroups of $Z(G)$ for
$p\in\pi$, where $\pi$ is the set of primes $p$ for which the Sylow
$p$-subgroup of $G$ is cyclic or generalized quaternion; in particular,
$Z_{\EPow}(G)$ is cyclic.
\item $Z_{\DCom}(G)$ is the projection into $G$ of $Z(H)$, where $H$ is a Schur
cover of $G$.
\item $Z_{\Com(G)}=Z(G)$.
\end{enumerate}
\end{theorem}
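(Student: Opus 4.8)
The four parts are of quite different character, so the plan is to dispatch (d), (c) and (b) quickly and then concentrate the effort on (a). Part (d) is immediate: by definition $x\in Z_{\Com}(G)$ iff $x$ commutes with every $y\in G$, which is exactly $Z(G)$. For (c) I would work through the Schur cover. Fix a Schur cover $H$ with projection $\pi\colon H\to G$ and central kernel $Z$, and recall that $x$ and $y$ are joined in $\DCom(G)$ iff some (equivalently every) pair of lifts commutes in $H$. Hence $x\in Z_{\DCom}(G)$ iff a lift $\tilde x$ commutes with a lift of every element of $G$; since any $h\in H$ is such a lift up to the central factor $Z$ (with which $\tilde x$ commutes automatically), this says precisely $\tilde x\in Z(H)$, so $Z_{\DCom}(G)=\pi(Z(H))$.

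For (b), note that $x\in Z_{\EPow}(G)$ means $\langle x,y\rangle$ is cyclic for all $y$; as cyclic groups are abelian this already forces $x\in Z(G)$. Writing $x$ as the product of its $p$-parts $x_p$ (all central), I would argue prime by prime. A central $p$-element lies in every Sylow $p$-subgroup, so for any $p$-element $z$ the group $\langle x_p,z\rangle$ is an abelian subgroup of a Sylow $p$-subgroup $P$. By the theorem of Burnside quoted earlier (a $p$-group with no $C_p\times C_p$ is cyclic or generalized quaternion), every abelian subgroup of $P$ is cyclic exactly when $P$ is cyclic or generalized quaternion; this makes ``$\langle x_p,z\rangle$ cyclic for all $z$'' equivalent to $p\in\pi$. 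Since $\langle x,y\rangle$ is cyclic iff each of its Sylow subgroups $\langle x_p,y_p\rangle$ is, assembling over the primes gives $Z_{\EPow}(G)=\prod_{p\in\pi}(Z(G))_p$, which is cyclic because each factor sits in the centre of a cyclic or generalized-quaternion group.

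The real work is (a), and the organizing observation is that $x\in Z_{\Pow}(G)$ iff $\langle x\rangle$ is comparable under inclusion with every cyclic subgroup $\langle y\rangle$. The cyclic cases come first: the subgroups of a cyclic group form a chain exactly when its order is a prime power (giving $Z_{\Pow}(G)=G$), and for cyclic $G$ of non-prime-power order a short argument — pitting an element of one prime order against one of another — shows the only universally comparable elements are $1$ and the generators. For non-cyclic $G$ I would first show that any maximal cyclic subgroup $M$ satisfies $\langle x\rangle\subseteq M$ or $M=\langle x\rangle$, and that the latter would make $\langle x\rangle$ the unique maximal cyclic subgroup, forcing $G$ cyclic; hence every nontrivial $x\in Z_{\Pow}(G)$ lies in \emph{every} maximal cyclic subgroup.

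From this I would extract two consequences of comparability: (i) every prime dividing $|G|$ divides $\mathrm{ord}(x)$, since otherwise an element of a missing prime order is incomparable with $\langle x\rangle$; and (ii) comparing $\langle x\rangle$ with a generator of a cyclic Sylow $q$-subgroup forces that whole Sylow subgroup into $\langle x\rangle$. If all Sylow subgroups are cyclic, (ii) across all primes gives $\langle x\rangle=G$, contradicting non-cyclicity; the only way out is a generalized-quaternion Sylow $2$-subgroup $Q$. If moreover $|G|>|Q|$, then $Q$ has several maximal cyclic subgroups, whereas $\langle x\rangle$ — having nontrivial odd part by (i) — cannot be contained in any of them and can contain at most the single subgroup of order $2^{c-1}$ it already meets, contradicting comparability. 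This pins $G=Q$ generalized quaternion, where a direct check (every nontrivial element has even order, so the unique involution is a power of it) gives $Z_{\Pow}(G)=\{1,z\}=Z(G)$, with nothing else qualifying since $Z_{\Pow}(G)\subseteq Z_{\EPow}(G)=Z(G)$ by (b). I expect this generalized-quaternion dichotomy to be the main obstacle: the precise bookkeeping of which maximal cyclic subgroups of a quaternion Sylow a single cyclic $\langle x\rangle$ can be comparable with, once an odd prime is present, is where care is needed; the other three parts are essentially definitional once Burnside's theorem is available.
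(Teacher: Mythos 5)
Your proposal is correct, and it takes a genuinely different --- and considerably more self-contained --- route than the paper. The paper proves (a) by citation only (Proposition~4 of \cite{cameron}), and proves (b) by first showing that $Z_{\EPow}(G)$ is a \emph{subgroup}, using the quoted fact (Lemma~32 of \cite{aetal}) that three elements which pairwise generate cyclic groups together generate a cyclic group, and then analysing elements of prime order via Burnside's theorem; its (c) and (d) are one-line observations, essentially identical to yours. Your (b) avoids the subgroup lemma altogether: centrality of any candidate $x$ makes $\langle x,y\rangle$ abelian, so cyclicity can be tested Sylow-by-Sylow on $p$-parts, and Burnside's theorem converts the absence of $C_p\times C_p$ into ``cyclic or generalized quaternion''. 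What your approach buys is a direct, element-wise derivation of the product formula, plus a complete proof of (a), which the paper does not supply at all; what the paper's approach buys is the intermediate structural fact that $Z_{\EPow}(G)$ is a subgroup, obtained before the group is identified.

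Two steps in your (a) need tightening, though neither is fatal. First, in the cyclic non-prime-power case, prime-order elements do not always suffice: if every prime dividing $|G|$ divides $\mathrm{ord}(x)$, then every prime-order element of $G$ is a power of $x$. You instead need an element of order $q^{m+1}$, where $q^m$ is the $q$-part of $\mathrm{ord}(x)$ and $q^{m+1}$ divides $|G|$; such a prime $q$ exists because $x$ is not a generator, and incomparability then uses a second prime dividing $\mathrm{ord}(x)$. Second, ``the only way out is a generalized-quaternion Sylow $2$-subgroup'' is asserted rather than proved --- a non-cyclic Sylow subgroup need not be generalized quaternion in general. The bridge is available from your own ingredients and should be stated: by (i) every prime dividing $|G|$ divides $\mathrm{ord}(x)$, and $x\in Z_{\Pow}(G)\subseteq Z_{\EPow}(G)=\prod_{p\in\pi}(Z(G))_p$ by (b), so every prime dividing $|G|$ lies in $\pi$; hence every Sylow subgroup of $G$ is cyclic or generalized quaternion, and since not all are cyclic, the Sylow $2$-subgroup is generalized quaternion. (This use of (b) inside (a) is legitimate, since your proof of (b) nowhere uses (a).)
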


\begin{proof}
(a) This is \cite[Proposition 4]{cameron}.

\medskip

(b) Suppose that $x$ is joined to all other vertices in $\EPow(G)$. Then
$\langle x,y\rangle$ is cyclic for all $y\in G$; so certainly $x\in Z(G)$.

If three elements of a group have the property that any two of them generate
a cyclic group, then all three generate a cyclic group:
see~\cite[Lemma 32]{aetal}. So $Z_{\EPow}(G)$ is a subgroup, since if
$x,y\in Z_{\EPow}(G)$ then, for all $w\in G$, 
$\langle x,w\rangle$ and $\langle y,w\rangle$ are cyclic, and so if 
$\langle x,y\rangle=\langle z\rangle$ then $\langle z,w\rangle$ is cyclic
for all $w\in G$, so that $z\in Z_{\EPow}(G)$.

Now let $x$ be an element of prime order $p$ in $Z_{\EPow}(G)$. If $G$ 
contains a subgroup $C_p\times C_p$ then there is an element of order $p$
not in $\langle x\rangle$, so not adjacent to $x$, a contradiction. So the
Sylow $p$ subgroup of $G$ is cyclic or generalised quaternion, by Burnside's
theorem. But now $x$ lies in every Sylow $p$-subgroup of $G$, so is joined
to every element of $p$-power order, and hence to every element of $G$.

\medskip

(c) and (d) Part (d) is clear, and (c) follows since $\DCom(G)$ is a
projection of the commuting graph of a Schur cover of $G$. \qed
\end{proof}

By contrast, $Z_{\NGen}(G)$ is not necessarily a subgroup of the $2$-generated
group $G$. If $G$ is non-abelian, then $Z_{\NGen}(G)$ must contain $Z(G)$,
since the non-generating graph contains the commuting graph. Also, it contains
the Frattini subgroup $\Phi(G)$ of $G$, since $2$-element generating sets are
minimal and so their elements do not lie in the Frattini subgroup
(which consists of the elements which can be dropped from any generating set). 

If the order of $G$ is a prime power, then the Burnside basis theorem shows
that $Z_{\NGen}(G)=\Phi(G)$, since a set of elements generates $G$ if and only
if its projection onto $G/\Phi(G)$ generates this quotient.

Also, by the result of \cite{bgk} (see Theorem~\ref{t:bgh}), if all proper
quotients of $G$ are cyclic, then $Z_{\NGen}(G)=\{1\}$.

In general, however, $Z_{\NGen}(G)$ may be a subgroup different from both
$Z(G)$ and $\Phi(G)$. For example, let $G$ be the symmetric group $S_4$. Then
both $Z(G)$ and $\Phi(G)$ are trivial, but $Z_{\NGen}(G)$ is the Klein group
$V_4$ (the minimal normal subgroup of $G$).

Moreover, it may not be a subgroup at all. For example, if $G=C_6\times C_6$,
then $Z_{\NGen}(G)$ consists of the elements not of order~$6$, since both
elements in any generating pair must have order $6$.

\begin{question}
Characterise the $2$-generated groups in which $Z_{\NGen}(G)$ is a subgroup
of $G$.
\end{question}

\subsection{Connectedness}

Each of our types of graph is connected, since the corresponding ``centre''
is non-empty and its vertices are joined to all others. So the question
becomes interesting if we ask whether the induced subgraph on the elements
outside this centre is connected.

The situation for the commuting graph is well-understood, thanks to the
results of \cite{gp,mp}. But first I mention another link with the
Gruenberg--Kegel graph. This has been known for some time, but the first
mention I know in the literature is \cite[Section~3]{mp}.

\begin{theorem}\label{t:connectedcom}
Let $G$ be a group with trivial centre. Then the induced subgraph of the
commuting graph on $G\setminus\{1\}$ is connected if and only if the
Gruenberg--Kegel graph is connected.
\end{theorem}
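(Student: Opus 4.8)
The plan is to prove the two implications separately, translating connectivity of the reduced commuting graph $\Com^-(G)$ (that is, $\Com(G)$ with the identity removed) into statements about prime orders. The basic dictionary is: two primes $p$ and $q$ are adjacent in the Gruenberg--Kegel graph if and only if $G$ has an element of order $pq$, equivalently $G$ has commuting elements of orders $p$ and $q$ (from powers of an element of order $pq$ in one direction, and because commuting elements of coprime orders $p,q$ multiply to one of order $pq$ in the other). The hypothesis $Z(G)=1$ guarantees that the identity is the only vertex joined to all others, so deleting just the identity is the correct normalisation.

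First I would prove the easier implication in contrapositive form: if the GK graph is disconnected, so is $\Com^-(G)$. Let $C_1,\dots,C_k$ with $k\ge2$ be its components. For any $x\ne1$, all primes dividing $|x|$ lie in a single $C_i$, since if $p,r\mid|x|$ then suitable powers of $x$ are commuting elements of orders $p$ and $r$, forcing $p$ and $r$ to be GK-adjacent. This partitions $G\setminus\{1\}$ into classes $V_1,\dots,V_k$, each nonempty by Cauchy's theorem. Finally, no edge of the commuting graph crosses the partition: commuting $x\in V_i$ and $y\in V_j$ with $i\ne j$ would, via prime-order powers, yield commuting elements of orders $p\in C_i$ and $r\in C_j$, hence a forbidden GK-edge. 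Thus $\Com^-(G)$ is disconnected. (This half in fact uses no hypothesis on the centre.)

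For the converse --- GK connected implies $\Com^-(G)$ connected --- I would first reduce to elements of prime order. Since $\langle x\rangle$ is abelian it induces a clique in the commuting graph, and by Cauchy it contains an element of prime order; hence every non-identity vertex lies in the same component as some prime-order element. It therefore suffices to connect any two prime-order elements $a$ of order $p$ and $b$ of order $q$. Connectivity of the GK graph gives a path $p=p_0,p_1,\dots,p_m=q$; each edge $p_i\sim p_{i+1}$ is witnessed by an element of order $p_ip_{i+1}$, whose cyclic group is a clique containing prime-order elements of orders $p_i$ and $p_{i+1}$. If one could freely splice these witnesses together, the path would descend to a path in $\Com^-(G)$ from $a$ to $b$.

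The hard part is precisely this splicing. The witness for the edge $p_i\sim p_{i+1}$ supplies only \emph{some} element of order $p_{i+1}$, which need not commute with, or even lie in the component of, the order-$p_{i+1}$ element produced by the next witness; and $a$ itself is an arbitrary element of order $p$ rather than a chosen one. So the real content is a lemma to the effect that, when $Z(G)=1$ and $p$ is non-isolated in the GK graph, all elements of order $p$ lie in a single component of $\Com^-(G)$ (note that connectedness of the GK graph, together with $Z(G)=1$ forcing at least two prime divisors, makes every prime non-isolated). This is where the trivial-centre hypothesis is indispensable --- it forces every centraliser $C_G(a)$ to be proper, ruling out the degenerate universal-vertex behaviour --- and where the structural analysis of \cite{gp,mp} is needed. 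I would either invoke such a lemma directly or re-derive it by bridging two elements of order $p$ through commuting elements of coprime order supplied by a GK-neighbour of $p$. Granting the lemma, the chain of witnesses above connects $a$ to $b$, completing the proof.
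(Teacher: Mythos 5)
Your first implication is in good shape: arguing the contrapositive, partitioning $G\setminus\{1\}$ according to the Gruenberg--Kegel component containing the primes dividing each element's order, and checking that no edge of the commuting graph crosses the partition, is correct and complete. It is a tidy alternative to the paper's proof of that direction, which instead runs an induction on the distance between a $p$-element and a $q$-element in the commuting graph; and you are right that this direction needs no hypothesis on the centre.

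The converse is where there is a genuine gap. You correctly reduce everything to the lemma that all elements of a fixed prime order $p$ lie in a single component of $\Com^-(G)$, but you never prove it: you offer either to invoke it from \cite{gp,mp}, or to re-derive it by ``bridging two elements of order $p$ through commuting elements of coprime order supplied by a GK-neighbour of $p$''. The citation is misdirected --- those papers bound the \emph{diameter} of the commuting graph and are not what is needed for connectivity --- and the proposed bridging collapses into exactly the difficulty you yourself flagged: adjacency of $p$ and $q$ in the GK graph only guarantees \emph{some} commuting pair of elements of orders $p$ and $q$, with no control over which element of order $p$ occurs, so it cannot connect two prescribed elements of order $p$. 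What actually closes the gap, and is the heart of the paper's proof, are two elementary observations missing from your proposal. First, a non-trivial $p$-group has non-trivial centre, so the non-identity elements of any \emph{one} Sylow $p$-subgroup already lie in a single component of $\Com^-(G)$. Second, conjugation by any $g\in G$ is a graph automorphism fixing the identity, hence permutes the components of $\Com^-(G)$, and it fixes any component containing $g$ itself; consequently a component $C$ satisfies $\langle C\rangle\leqslant N_G(C)$. Combining these with Sylow conjugacy and GK-connectedness, one pulls a full Sylow subgroup for every prime dividing $|G|$ into $C$, whence $\langle C\rangle=G$, so $C$ is invariant under all conjugations, hence contains every Sylow subgroup and therefore every element of prime order; since each vertex is adjacent to a prime-order power of itself, $C$ is everything.

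A final correction: the trivial-centre hypothesis is not, as you assert, ``indispensable'' for this direction, and it does not function by ``ruling out universal-vertex behaviour'' --- a universal vertex would only help the connectivity you are trying to establish. The implication (GK graph connected $\Rightarrow$ $\Com^-(G)$ connected) holds for every finite group, and the paper's argument nowhere uses $Z(G)=1$; the hypothesis serves only to make $G\setminus\{1\}$ coincide with $G\setminus Z(G)$, the natural vertex set for connectivity questions about the commuting graph.
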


\begin{proof}
Suppose first that $Z(G)=1$ and the commuting graph is connected. Let $p$ and
$q$ be primes dividing $|G|$. Choose elements $g$ and $h$ of orders $p$ and $q$
respectively, and suppose their distance in the commuting graph is $d$. We
show by induction on $d$ that there is a path from $p$ to $q$ in the GK
graph.

If $d=1$, then $g$ and $h$ commute, so $gh$ has order $pq$, and $p$ is
joined to $q$.  So assume the result for distances less than $d$, and let
$g=g_0,\ldots,g_d=h$ be a path from $g$ to $h$.

Let $i$ be mimimal such that
$p$ does not divide the order of $g_i$ (so $i>0$). Now some power of $g_{i-1}$,
say $g_{i-1}^a$, has order $p$, while a power $g_i^b$ of $g_i$ has prime
order $r\ne p$.

The distance from $g_i^b$ to $g_d$ is at most $d-i<d$, so
there is a path from $r$ to $q$ in the GK graph. But $g_{i-1}^a$ and
$g_i^b$ commute, so $p$ is joined to $r$.

\medskip

For the converse, assume that the GK graph is connected.

Note first that for every non-identity element $g$, some power of $g$ has prime
order, so it suffices to show that all elements of prime order lie in the same
connected component of the commuting graph. Also, since a non-trivial
$p$-group has non-trivial centre, the non-identity elements of any Sylow
subgroup lie in a single connected component.

Let $C$ be a connected component. Connectedness of the GK graph shows that
$C$ contains a Sylow $p$-subgroup for every prime $p$ dividing $|G|$. Also,
every element of $C$, acting by conjugation, fixes $C$. It follows that the
normaliser of $C$ is $G$, and hence that $C$ contains every Sylow subgroup
of $G$, and thus contains all elements of prime order, as required. \qed
\end{proof}

After a lot of preliminary work, much of it on specific groups, summarised
in the introduction to \cite{gp}, Iranmanesh and
Jafarzadeh~\cite{ij} conjectured that there is an absolute upper bound on the
diameter of any connected component of the induced subgraph on
$G\setminus Z(G)$ of the commuting graph of $G$. This conjecture was refuted
by Giudici and Parker~\cite{gp}. However, it was proved for groups with
trivial centre by Morgan and Parker~\cite{mp}. In these results, I use the
term ``reduced commuting graph'' to mean the induced subgraph of the commuting
graph of $G$ on $G\setminus Z(G)$, in which no vertex is joined to all others.

\begin{theorem}
There is no upper bound for the diameter of the reduced commuting graph of a
finite group; for any given $d$ there is a $2$-group whose reduced commuting
graph is connected with diameter greater than $d$.
\end{theorem}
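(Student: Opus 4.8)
The plan is to produce, for each $d$, a finite $2$-group whose reduced commuting graph is connected of diameter exceeding $d$. Since any $2$-group has non-trivial centre, and the reduced graph deletes the vertices joined to all others, the task is to control distances among the remaining (non-central) elements. A natural first attempt is to work inside the class-$2$, exponent-$4$ groups of Theorem~\ref{t:ind_com}, but with the one-dimensional ``form space'' $F$ replaced by a larger one. Concretely, take $V$ and $W$ vector spaces over the field $F$ of two elements and a bilinear map $B\colon V\times V\to W$, and impose the multiplication $(v_1,w_1)(v_2,w_2)=(v_1+v_2,\,w_1+w_2+B(v_1,v_2))$ on $V\times W$. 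Then $\{0\}\times W$ is central, the exponent divides $4$, and two elements commute precisely when the alternating part $\beta=B+B^{\top}$ vanishes on their $V$-components. The centre is exactly $\{0\}\times W$ iff $\beta$ is non-degenerate, and in that case the reduced commuting graph is (after deleting twins) the \emph{orthogonality graph} of $\beta$: vertices are the non-zero vectors of $V$, adjacent when $\beta(u,v)=0$. So the whole problem reduces to constructing non-degenerate $W$-valued alternating forms whose orthogonality graphs are connected with unbounded diameter.

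To force large diameter I would use a banded, or ``staircase'', form. Fix a basis $x_0,\dots,x_n$ of $V$, set $\beta(x_i,x_{i+1})=0$, and let the values $\beta(x_i,x_j)$ for $|i-j|\ge 2$ be \emph{linearly independent} in $W$. Then $x_0,x_1,\dots,x_n$ is a path in the orthogonality graph, and the key lemma is a Lipschitz estimate: writing $\max(v)$ for the largest index in the support of $v$, one checks from independence of the values that adjacent vectors satisfy $|\max(u)-\max(v)|\le 1$. Indeed, if $M=\max(v)$ exceeded $\max(u)+1$, the coefficient of the independent value $\beta(x_{\max(u)},x_M)$ in $\beta(u,v)$ would be forced to be $1$, contradicting $\beta(u,v)=0$. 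Hence $x_0$ and $x_n$ lie at distance at least $n$, giving diameter $\ge n$.

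The main obstacle is connectedness, and here the tension in the construction becomes visible. If the values are fully independent as above, then ``long-range'' combinations such as $x_0+x_k$ become orthogonal to nothing at all, so the orthogonality graph acquires isolated vertices and is disconnected; if instead one collapses $W$ down to $F$ (the symplectic case), the graph reconnects but its diameter drops to $2$. The heart of the argument is therefore to engineer a form that is simultaneously rigid enough for the Lipschitz bound and flexible enough for connectivity --- for instance by letting some far-apart values coincide so that the troublesome combinations fall into the radical (and so are deleted with the centre), while enough independence survives to keep the position invariant Lipschitz. Equivalently, one may abandon class $2$ and build the group as an iterated central extension, gluing local ``gadgets'' along their centres so that consecutive gadgets automatically commute (giving connectivity) while commutator calculus in the lower central series shows that elements of distant gadgets cannot commute (ruling out shortcuts). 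I expect this balancing act --- securing connectivity without destroying the diameter lower bound --- to be the crux, and it is presumably what makes refuting the bounded-diameter conjecture genuinely non-trivial.
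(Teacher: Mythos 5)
Your framework and the claims you actually verify are all correct: special $2$-groups of class $2$ and exponent dividing $4$ do translate commuting into orthogonality for an $\mathbb{F}_2$-alternating form; for the fully independent staircase form the Lipschitz estimate $|\max(u)-\max(v)|\le 1$ for orthogonal $u,v$ is right (so $x_0$ and $x_n$ lie at distance exactly $n$); and that form's orthogonality graph is indeed disconnected (one checks that $x_0+x_5$ is orthogonal to nothing but itself). But the theorem asserts the existence of a $2$-group whose reduced commuting graph is \emph{connected} with diameter greater than $d$, and you never exhibit one. What you have refutes only the weaker ``every connected component has bounded diameter'' form of the Iranmanesh--Jafarzadeh conjecture~\cite{ij}; the connected example, which you yourself call the crux, is precisely the content of the theorem, and it is missing. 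Worse, your first proposed repair is self-defeating: to put $x_0+x_k$ into the radical you need $\beta(x_0,v)=\beta(x_k,v)$ for all $v$, and taking $v=x_{k+1}$ forces $\beta(x_0,x_{k+1})=\beta(x_k,x_{k+1})=0$, which creates exactly the long edge $x_0\sim x_{k+1}$ that your Lipschitz lemma must exclude. Cheap fixes fail as well: passing to $G\times G$, where $\Com(G\times G)=\Com(G)\boxtimes\Com(G)$, restores connectivity only by collapsing the diameter of the reduced graph to an absolute constant.

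For comparison, the paper itself gives no proof of this theorem; it quotes it from Giudici and Parker~\cite{gp}, whose short paper consists exactly of the balancing act you defer. The workable route, in the spirit of what they do, is to identify commutators coherently rather than keep them independent: for instance, set $\beta(x_i,x_{i+1})=0$ and $\beta(x_i,x_j)=c_{i+j}$ for $|i-j|\ge 2$, with the $c_m$ linearly independent. Your Lipschitz argument survives this identification, since for $\max(u)=m$ and $\max(v)=M\ge m+2$ the only pair $(i,j)$ with $i<j$, $i+j=m+M$ and $u_iv_j+u_jv_i\ne 0$ is $(m,M)$ itself, so the coefficient of $c_{m+M}$ in $\beta(u,v)$ is $1$; meanwhile formerly isolated vertices regain neighbours, e.g.\ $\beta(x_0+x_5,\,x_1+x_6)=c_6+c_6=0$. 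What then remains --- proving that the resulting orthogonality graph really is connected, and handling the ends of the staircase --- is the substantive work, and none of it appears in your proposal. As it stands, you have a correct reduction and an honest description of the difficulty, not a proof.
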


On the other hand:

\begin{theorem}
Suppose that the finite group $G$ has trivial centre. Then every connected
component of its reduced commuting graph has diameter at most~$10$.
\end{theorem}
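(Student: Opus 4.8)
The plan is to reduce arbitrary distances to distances between prime-order elements, and then to control the latter using the structure of a trivial-centre group, with the classification of finite simple groups entering only at the almost simple stage. First I would note that every non-identity $g$ has a power $g^\ast$ of prime order lying in $\langle g\rangle$, so $g$ and $g^\ast$ commute and are at distance at most $1$ in the commuting graph; hence if any two prime-order vertices of a connected component are at distance at most $D$, then any two vertices of that component are at distance at most $D+2$, and it suffices to establish $D\le 8$ for prime-order elements.

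Next, exactly as in the proof of Theorem~\ref{t:connectedcom}, the hypothesis $Z(G)=1$ forces a connected component $C$ of the reduced commuting graph to be invariant under conjugation by each of its own elements (conjugation preserves commuting, hence carries paths to paths). A Sylow argument then shows that, for each prime $p$ dividing $|G|$, all elements of order $p$ lie in a single component, so the primes supporting $C$ form a connected component of the Gruenberg--Kegel graph, and two prime-order elements are joined by a path exactly when their orders lie in that GK-component. This lets me work one component at a time and reformulate the goal as: the prime-order elements whose orders lie in a single GK-component are mutually close.

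The structural heart of the argument is to analyse the generalised Fitting subgroup $F^\ast(G)=F(G)E(G)$, using that $Z(G)=1$ gives $C_G(F^\ast(G))\le F^\ast(G)$, so that centralisers are controlled by $F^\ast(G)$. Decomposing $F^\ast(G)$ into the Fitting subgroup and the components, I would show that any prime-order element lies at bounded distance from one commuting with a fixed component or with $F(G)$; this reduces matters to the commuting graph inside a nilpotent normal subgroup (whose diameter is easily bounded, since nilpotent groups have non-trivial centres) together with the commuting graph of an almost simple section $S\trianglelefteq A\le\Aut(S)$ sitting above a single component.

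The decisive and hardest step is this almost simple case: bounding by an absolute constant the diameter of the reduced commuting graph of a group $S\le A\le\Aut(S)$ with $S$ simple. Here the classification is unavoidable; one must pass through the families of simple groups, using what is known about centralisers of semisimple and unipotent elements (and explicit data for the sporadic groups) and about how maximal tori or Sylow subgroups meet their conjugates, to link any two prime-order elements by a short commuting chain. Assembling the global constant --- the $+2$ from the prime-order reduction, the cost of moving an element into $F^\ast(G)$, and the internal almost simple diameter --- is what produces the value $10$; obtaining this sharp constant, rather than merely some finite absolute bound, is the delicate part, and it is exactly what the detailed almost simple estimates are needed for.
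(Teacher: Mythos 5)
First, note that the paper itself does not prove this theorem: it is quoted from Morgan and Parker \cite{mp}, so your attempt must be judged against that proof. Your proposal has a fatal structural error in its second step. From $Z(G)=1$ you conclude that, for each prime $p$, all elements of order $p$ lie in a single component, and hence that two prime-order elements are joined by a path exactly when their orders lie in a common component of the Gruenberg--Kegel graph. This is false. In $S_3$ (trivial centre) each of the three involutions is an isolated vertex of the reduced commuting graph, so the elements of order $2$ lie in three distinct components even though their orders all lie in the GK-component $\{2\}$; the same happens in $A_5\cong\mathrm{PSL}(2,4)$, where the fifteen involutions split among five components, one for each Sylow $2$-subgroup. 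The argument you are importing from the proof of Theorem~\ref{t:connectedcom} (a component is normalised by its own elements and therefore swallows whole Sylow subgroups for every prime) uses connectedness of the GK graph as a \emph{hypothesis}; it cannot be run without it. Consequently your reformulated goal --- that prime-order elements whose orders lie in one GK-component are mutually close --- is a false statement, and nothing built on it can yield the theorem. The genuine obstruction, which your phrase ``this reduces matters to\dots'' passes over, is exactly the Frobenius-type configuration in which a prime-order element acts fixed-point-freely on $F^*(G)$: such elements are the reason the graph can disconnect, and any correct proof must isolate and handle this case.

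The second gap is that, even ignoring the faulty reduction, the proposal proves nothing quantitative: the almost simple case, which you yourself identify as the decisive step, is only asserted to follow from the classification plus centraliser data, and the assembly of the constant $10$ is explicitly deferred (``obtaining this sharp constant \dots is the delicate part''). A proof of the stated theorem must actually produce the bound $10$; as written, yours produces no bound at all. It is also worth recording that the route you sketch differs from the actual one: the Morgan--Parker argument does not pass through a case analysis of almost simple groups and does not invoke the classification of finite simple groups. It works with $F^*(G)$, using Thompson's theorem that a finite group admitting a fixed-point-free automorphism of prime order is nilpotent (which forces a prime-order element normalising a component, or acting non-freely on $F(G)$, to centralise a non-trivial element of $F^*(G)$), together with the structure theory of Frobenius groups to control the exceptional fixed-point-free case. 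So your claim that ``the classification is unavoidable'' is incorrect, and the hard work lies not where you place it but in the Frobenius configurations your outline omits.
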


For the power graph and enhanced power graph, we note that, if the group
$G$ is not cyclic or generalized quaternion, then the corresponding 
``centre'' is just the identity. So the natural question is: if $G$ is not
cyclic or generalized quaternion, is the induced subgraph of the power
graph on non-identity elements connected? This question has been considered
in several papers, for example \cite{cj,zbm}.

The next result shows that we have only one rather than two problems to
consider.

\begin{prop}
Let $G$ be a group with $Z(G)=\{1\}$. Then the reduced power graph of $G$
is connected if and only if the reduced enhanced power graph of $G$ is
connected.
\end{prop}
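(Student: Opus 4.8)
The statement asserts that for a centreless group $G$, the reduced power graph $\Pow^-(G)$ is connected if and only if the reduced enhanced power graph $\EPow^-(G)$ is. Since $Z(G)=\{1\}$, the hypotheses guarantee that neither ``centre'' (the power-centre $Z_{\Pow}(G)$ nor the cyclicizer $Z_{\EPow}(G)$) is larger than $\{1\}$: indeed $Z_{\EPow}(G)\le Z(G)=\{1\}$, and $Z_{\Pow}(G)=\{1\}$ as well since $G$ is neither cyclic nor generalized quaternion (a centreless group is neither). So both reduced graphs have the same vertex set $G\setminus\{1\}$, and ``reduced'' simply means we removed the identity. One direction is immediate: by Proposition~(a) of the hierarchy, $E(\Pow^-(G))\subseteq E(\EPow^-(G))$, so any path in the power graph is a path in the enhanced power graph, and hence connectedness of $\Pow^-(G)$ forces connectedness of $\EPow^-(G)$.

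\textbf{The substantive direction.}
The work is in showing that connectedness of $\EPow^-(G)$ implies connectedness of $\Pow^-(G)$. The plan is to show that every $\EPow$-edge can be ``simulated'' by a short $\Pow$-path, so that an enhanced-power path can be rewritten as a power-graph walk. Suppose $x$ and $y$ are adjacent in $\EPow^-(G)$, so $\langle x,y\rangle$ is cyclic, say equal to $\langle z\rangle$. Then both $x$ and $y$ are powers of $z$, so in the power graph $x$ is joined to $z$ and $z$ is joined to $y$; thus $x$ and $y$ are at distance at most $2$ in $\Pow^-(G)$, provided $z\ne 1$. Since $\langle x,y\rangle$ is nontrivial (it contains the nonidentity elements $x,y$), we may take $z\ne 1$, so $z$ is a genuine vertex of $\Pow^-(G)$. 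Concatenating these length-$2$ detours along any $\EPow^-$-path between two vertices produces a walk in $\Pow^-(G)$, so the two vertices lie in the same $\Pow^-$-component.

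\textbf{Identifying the obstacle.}
The main subtlety to check is that the intermediate vertex $z$ we insert is never the identity and is a legitimate vertex of the reduced graph; this is exactly where the observation $\langle x,y\rangle\ne\{1\}$ (forced because $x,y$ are nonidentity vertices) is used. There is no issue with $z$ being equal to $x$ or $y$ — if it is, the detour shortens but remains a valid power-graph edge. A tidy way to phrase the whole argument is: the two reduced graphs have identical vertex sets and identical connected components, because adjacency in $\EPow^-(G)$ coincides with ``distance at most $2$ in $\Pow^-(G)$ via a common generating element.'' Hence the partitions into connected components coincide, and in particular one graph is connected exactly when the other is. I expect no serious difficulty here; the only thing to be careful about is confirming at the outset that the centreless hypothesis makes both graphs have the same vertex set $G\setminus\{1\}$, so that ``connected'' refers to the same underlying set in both cases.
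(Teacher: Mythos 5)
Your proof is correct and follows essentially the same route as the paper: one direction is the containment $E(\Pow^-(G))\subseteq E(\EPow^-(G))$, and for the other direction an $\EPow$-edge $\{x,y\}$ is replaced by the length-$2$ power-graph path through a generator $z$ of the cyclic group $\langle x,y\rangle$, noting $z\ne 1$ since $x,y\ne 1$. The paper's proof is just a compressed version of this argument (your extra remarks about the vertex sets being $G\setminus\{1\}$ are harmless bookkeeping, as ``reduced'' by definition means the identity is deleted).
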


\begin{proof}
If $g$ and $h$ are joined in the power graph, they are joined in the enhanced
power graph; if they are joined in the enhanced power graph, then they lie at
distance at most~$2$ in the power graph; both $g$ and $h$ are powers of the
intermediate vertex, which is thus not the identity. \qed
\end{proof}

The argument shows that, if these graphs are connected, the diameter of the
power graph is at least as great as, and at most twice, the diameter of the
enhanced power graph. Can these bounds be improved?

I have already quoted the result of Burness~\emph{et~al.}\ on the generating
graph. For the non-generating graph, the results of Freedman and others on
the difference between the non-generating graph and the commuting graph
(that is, the graph $(\NGen{-}\Com)(G)$) have been mentioned also.

As promised, here is a weak result on the graph $(\Com{-}\Pow)(G)$.

\begin{theorem}
Suppose that the finite group $G$ satisfies the following conditions:
\begin{enumerate}\itemsep0pt
\item The Gruenberg--Kegel graph of $G$ is connected.
\item If $P$ is any Sylow subgroup of $G$, then $Z(P)$ is non-cyclic.
\end{enumerate}
Then the induced subgraph of $(\Com{-}\Pow)(G)$ on $G\setminus\{1\}$ either
has an isolated vertex or is connected.
\end{theorem}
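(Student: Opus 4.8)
The plan is to adapt the strategy of Theorem~\ref{t:connectedcom}, working throughout in the graph $\Delta=(\Com{-}\Pow)(G)$ restricted to $G\setminus\{1\}$, whose edges join commuting pairs neither of which is a power of the other. Note first that conjugation by any element of $G$ preserves both commuting and the power relation, so $G$ acts on $\Delta$ by automorphisms and permutes its connected components. I would prove the sharper statement that \emph{every non-isolated vertex lies in one distinguished component $C_0$}; the theorem then follows at once, since if $\Delta$ is disconnected, any vertex lying outside $C_0$ must be isolated.

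\textbf{First}, I would use hypothesis (b) to handle elements of prime-power order. If $w\neq1$ has $p$-power order it lies in a Sylow $p$-subgroup $P$; since $Z(P)$ is non-cyclic, its elements of order dividing $p$ form a subgroup of order at least $p^2$, which is therefore not contained in the cyclic group $\langle w\rangle$. So there is $z\in Z(P)$ of order $p$ with $z\notin\langle w\rangle$. Then $z$ commutes with $w$ (being central in $P$) and neither is a power of the other, so $zw\in E(\Delta)$. The same device shows that any two elements of order $p$ in $Z(P)$ are joined by a path, so $P\setminus\{1\}$ lies in a single component and, crucially, \emph{no prime-power element is isolated}.

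\textbf{Next} I would merge these Sylow components using hypothesis (a). Each edge $\{p,q\}$ of the Gruenberg--Kegel graph supplies an element $g$ of order $pq$; then $a=g^{q}$ and $b=g^{p}$ have orders $p$ and $q$, commute, and generate incomparable subgroups, so $ab\in E(\Delta)$. This joins a Sylow $p$-component to a Sylow $q$-component. Exactly as in Theorem~\ref{t:connectedcom}, connectedness of the Gruenberg--Kegel graph now places a Sylow $p$-subgroup in one fixed component $C_0$ for every prime $p$; and since conjugation by any element of $C_0$ fixes the component containing it, every element of $C_0$ normalises $C_0$, whence $N_G(C_0)=G$. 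Thus $C_0$ is conjugation-invariant and so contains \emph{all} Sylow subgroups, hence all elements of prime-power order.

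\textbf{Finally}, let $x$ be a non-isolated vertex of non-prime-power order, with $\Delta$-neighbour $y$. As $y$ is not a power of $x$ we have $y\notin\langle x\rangle$, so the abelian group $A=\langle x,y\rangle$ strictly contains $\langle x\rangle$; hence some Sylow subgroup $A_p$ of $A$ is not contained in $\langle x\rangle$, and any $w\in A_p\setminus\langle x\rangle$ has prime-power order, commutes with $x$, and (because $x$ has non-prime-power order) is neither a power of $x$ nor has $x$ as a power. Thus $xw\in E(\Delta)$ with $w\in C_0$, giving $x\in C_0$. Combining the three steps, every non-isolated vertex lies in $C_0$, as required. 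The main obstacle is the second step: as in the model theorem, the delicate point is not producing a single $\Delta$-edge across each prime, but globally amalgamating the components attached to the various \emph{conjugate} Sylow subgroups, which is precisely what the normaliser argument delivers.
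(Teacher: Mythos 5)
Your proof is correct, and its core coincides with the paper's own argument: Step~1 (every Sylow subgroup $P$ lies in a single component, because the non-cyclic centre $Z(P)$ supplies a $C_p\times C_p$ of order-$p$ elements, to which every element of $P\setminus\{1\}$ attaches) and Step~2 (chaining along Gruenberg--Kegel edges, then the normaliser argument $\langle C_0\rangle\leqslant N_G(C_0)$, whose order is divisible by every prime power dividing $|G|$, so $N_G(C_0)=G$ and $C_0$ swallows \emph{all} Sylow subgroups) are exactly the paper's first two paragraphs. One caveat on Step~2: an edge $\{p,q\}$ of the GK graph links the components of \emph{some} Sylow $p$- and $q$-subgroups only, so the cross-prime chaining already requires a Sylow-conjugacy adjustment --- replace the element of order $pq$ by a conjugate so that its $p$-part lies in the Sylow $p$-subgroup already placed in $C_0$ (this is the paper's ``without loss of generality $g^s\in C$'') --- and this must happen \emph{before} the normaliser argument can be invoked; your closing remark credits that amalgamation to the normaliser argument, which in fact only collapses the conjugates for each single prime once every prime is represented in $C_0$, so this is an expository wrinkle (covered by your appeal to Theorem~\ref{t:connectedcom}, exactly as in the paper) rather than a gap. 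Where you genuinely diverge is the finish, and to your advantage: the paper attaches an arbitrary $g\neq 1$ via a maximal cyclic subgroup $K\ni g$, splitting into the cases $C_G(K)=K$ (whose generator is an isolated vertex) and $C_G(K)\neq K$ (which yields a prime-order neighbour of $g$ through a decomposition $K\times C_m$, implicitly using that a maximal cyclic subgroup of a finite abelian group is a direct factor), whereas you start from a non-isolated vertex $x$ with neighbour $y$ and extract a prime-power element from a Sylow factor of the abelian group $\langle x,y\rangle$ not contained in $\langle x\rangle$. Your ending is cleaner (no direct-factor fact needed) and proves the sharper structural statement that the reduced graph is one component together with isolated vertices; the paper's ending, in exchange, pinpoints exactly where isolated vertices come from, namely generators of self-centralising maximal cyclic subgroups.
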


\begin{proof}
Let $\Gamma(G)$ denote the induced subgraph of $(\Com{-}\Pow)(G)$ on
$G\setminus\{1\}$. Note that, if $H$ is a subgroup of $G$, then the induced
subgraph of $\Gamma(G)$ on $H\setminus\{1\}$ is $\Gamma(H)$.

First we show that, if $P$ is a $p$-group, then $\Gamma(P)$ is connected. Let
$Q\leqslant Z(P)$ with $Q\cong C_p\times C_p$. Then the induced subgraph on
$Q\setminus\{1\}$ is complete multipartite with $p+1$ blocks of size $p-1$,
corresponding to the cyclic subgroups of $Q$. So it suffices to show that
any element $z$ of $P\setminus\{1\}$ has a neighbour in $Q\setminus\{1\}$.
We see that $z$ commutes with $Q$ since $Q\leqslant Z(P)$; and
$\langle z\rangle\cap Q$ is cyclic so there is some element of $Q$ not in this
set.

Now let $C$ be a connected component of $\Gamma(G)$ containing an element $z$
of prime order $p$. Since $\Gamma(G)$ is invariant under $\Aut(G)$, in
particular it is normalized by all its elements, so
$\langle C\rangle\leqslant N_G(C)$. In particular, $C$ contains a Sylow
$p$-subgroup of $G$ (one containing the given element of order $p$ in $C$).

If $C$ contains an element of prime order $r$, and $\{r,s\}$ is an edge of the
GK graph, then $G$ contains an element $g$ of order $rs$, then without loss
of generality $g^s\in C$, and $g^s$ is joined to $g^r$ in $\Gamma(G)$, so
also $g^r\in C$. Now connectedness of the GK graph shows that $C$ contains
a Sylow $q$-subgroup of $G$ for every prime divisor of $|G|$. Hence
$|N_G(C)|$ is divisible by every prime power divisor of $|G|$, whence
$N_G(C)=G$.

Finally, let $g$ be any non-identity element of $G$. Choose a maximal cyclic
subgroup $K$ containing $g$. If $C_G(K)=K$, then the generator of $K$
commutes only with its powers, and is isolated in $\Gamma(G)$. If not, then
there is an element of prime order in $C_G(K)\setminus K$. (If
$h\in C_G(K)\setminus K$, then $\langle g,h\rangle$ is abelian but not cyclic,
so contains a subgroup $\langle g\rangle\times C_m$ for some $m$; choose an
element of prime order in the second factor.) This element is joined to
$g$ in the commuting graph but not in the power graph; so $g\in C$. We
conclude that $C=G\setminus\{1\}$, and the proof is done.\qed
\end{proof}

\paragraph{Remarks} The theorem is probably not best possible. Let us consider
the hypotheses.

We saw in Theorem \ref{t:connectedcom} that, for groups with trivial centre
(our main interest here), connectedness of the GK graph is equivalent to
connectedness of the commuting graph, and so is clearly necessary for
connectedness of $(\Com{-}\Pow)(G)$.

The second condition (which is necessary for the above proof) is
probably much too strong. Perhaps it can be weakened to say that the Sylow
subgroups of $G$ are not cyclic or generalized quaternion groups (or,
subgroups of $G$ are not cyclic or generalized quaternion groups (or,
equivalently, that $G$ has no subgroup $C_p\times C_p$ for prime $p$).
Perhaps it is only necessary to assume this for one prime. More work
needed.

\subsection{Connectedness of the complement}

As well as asking whether our graphs, after reduction (removing vertices joined
to all others) are connected, we can ask the same question for the complementary
graphs. For the commuting graph, there is a simple elegant argument, depending
on the following result.

\begin{prop}\label{p:subgp}
Let $\Gamma$ be a graph whose vertex set is a group $G$, and suppose that for
any vertex $g\in G$, the closed neighbourhood of $g$ is a subgroup of $G$.
Then the complementary graph has just one connected component of size larger
than~$1$; this component has diameter at most~$2$.
\end{prop}

\begin{proof}
The isolated vertices in the complement of $\Gamma$ are the vertices whose
closed neighbourhood in $\Gamma$ is the whole of $G$. Let $g_1,g_2$ be two
elements of $G$ which are not isolated in the complement of $\Gamma$. Then
$H_1=\{g_1\}\cup\Gamma(g_1)$ and $H_2=\{g_2\}\cup\Gamma(g_2)$ are subgroups
of $G$. Since a finite group cannot be written as the union of two proper
subgroups (a simple consequence of Lagrange's Theorem), there is a vertex $h$
outside these two subgroups, hence joined to $g_1$ and $g_2$ in the
complement of $\Gamma$. \qed
\end{proof}

\begin{corollary}\label{c:comcon}
If $G$ is a non-abelian finite group, then the complement of the reduced
commuting graph of $G$ is connected with diameter at most~$2$.
\end{corollary}

\begin{proof}
The closed neighbourhood of a vertex $g$ in the commuting graph is its
centraliser. \qed
\end{proof}

What about our other graphs? The deep commuting graph of $G$ is obtained from
the commuting graph of a Schur cover of $G$ by twin reduction; so its
complement, restricted to the non-isolated vertices, is connected with
diameter at most $2$. 

\begin{theorem}
Let $G$ be a finite group which is not a cyclic $p$-group. Then the complement
of the power graph of $G$ has just one connected component, apart from
isolated vertices.
\end{theorem}

\begin{proof}
Let $\Gamma$ be the complement of the power graph of $G$. We begin with a few
remarks.
\begin{enumerate}\itemsep0pt
\item By Theorem~\ref{t:zzzz}, we know set of the isolated vertices in $\Gamma$
(we have excluded the case where every vertex is isolated):
\begin{itemize}\itemsep0pt
\item if $G$ is cyclic but not of prime power order, the identity and the
generators of $G$;
\item if $G$ is generalised quaternion, $Z(G)$;
\item otherwise, just the identity.
\end{itemize}
\item Since every edge of the noncommuting graph is an edge of $\Gamma$, if
$G$ is nonabelian then $G\setminus Z(G)$ is contained in a single component,
by Corollary~\ref{c:comcon}.
So any other component is contained in $Z(G)\setminus\{1\}$.
\item Suppose that $G$ has more than one subgroup of prime order, and let $S$
be the set of elements of prime order. Then the induced subgraph on $S$ is
connected (it is complete multipartite, with the cyclic subgroups as parts).
Moreover, if $g$ is an element whose order is not divisible by some prime
divisor of $|G|$, then $g$ has a neighbour in $S$.
\end{enumerate}

Suppose first that $G$ is not of prime power order. If $Z(G)=1$, then the
result follows from the second remark, so suppose not. Then $Z(G)\cap S$ is
nonempty, so every element whose order fails to be divisible by some prime
divisor of $|G|$ lies in $Z(G)$. In particular, the Sylow subgroups of $G$
all lie in $Z(G)$, so $G$ is abelian. Now we separate into two subcases:
\begin{enumerate}\itemsep0pt
\item Suppose that $G$ is cyclic. If $g\in G$ and $g$ is not a generator, then
there is a prime $p$ such that the power of $p$ dividing the order of $g$ is
smaller than the power $p^m$ which divides $|G|$. If the order of $g$ is a
$p$-power, it is joined to a vertex in $S$. Otherwise, there is an element
of order $p^m$, joined both to $g$ and to an element of $S$.
\item Suppose that $G$ is not cyclic. There is a prime $p$ such that $G$
contains $C_{p^a}\times C_{p^b}$, where $p^a$ is the $p$-part of the
exponent of $G$. For any $h\in G$, $h$ is joined to the generators of one
or other of these two cyclic groups; and each of them is joined to an
element of $S$.
\end{enumerate}

So now suppose that $G$ is a group of prime power order. If $G$ has more than
one subgroup of order $p$, then every element of $G$ has a neighbour in $S$.
Otherwise, by the theorem of Burnside \cite[Theorem 12.5.2]{hall}, $G$ is
either cyclic (which we have excluded) or generalised quaternion (in which
case the vertices of $Z(G)$ are isolated, and the remainder is connected by
the second opening remark). \qed
\end{proof}

I conclude with two questions which should not be difficult.

\begin{question}
What is the best possible upper bound for the diameter of non-trivial 
connected component of the complement of the power graph, and which groups
attain the bound?
\end{question}

\begin{question}
Is it true that the complement of the enhanced power graph has just one
connected component, apart from isolated vertices?
\end{question}

\section{Automorphisms}

Each type of graph in the hierarchy on a group $G$ is preserved by the
automorphism group of $G$. But in almost all cases, the automorphism group
of the graph is much larger. This question has been considered in \cite{agm}.

We saw in Proposition~\ref{p:twins} that any of our hierarchy of graphs
has non-trivial twin relation. So the first thing we need to do is to take
a look at automorphisms of such graphs.

Let $\Gamma$ be a graph. It is clear that its automorphism group $\Aut(\Gamma)$
preserves twin relations on $\Gamma$, and that vertices in a twin equivalence
class can be permuted arbitrarily. It follows by induction that the group
induces an automorphism group on the cokernel $\Gamma^*$ of $\Gamma$, say
$\Aut^-(\Gamma^*)$. We say that the twin reduction on $\Gamma$ is 
\emph{faithful} if $\Aut^-(\Gamma^*)=\Aut(\Gamma^*)$.

Trivially, if $\Gamma$ is a cograph (so that its cokernel is the $1$-vertex
graph), the twin reduction is faithful; we ignore this case.

The reduction process is not always faithful. For a simple example, consider
Figure~\ref{f:tr}.

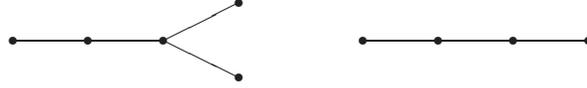
\begin{figure}[htb]
\[\setlength{\unitlength}{1mm}
\begin{picture}(30,10)
\multiput(0,5)(10,0){3}{\circle*{1}}
\multiput(30,0)(0,10){2}{\circle*{1}}
\put(0,5){\line(1,0){20}}
\put(20,5){\line(2,1){10}}
\put(20,5){\line(2,-1){10}}
\end{picture}
\qquad\qquad
\begin{picture}(30,10)
\multiput(0,5)(10,0){4}{\circle*{1}}
\put(0,5){\line(1,0){30}}
\end{picture}\]
\caption{\label{f:tr}Non-faithful twin reduction}
\end{figure}

In the left-hand graph, the two leaves on the right are twins, and twin
reduction gives the right-hand graph as the cokernel. But the cokernel has
an automorphism (reflection in the vertical axis of symmetry) not induced
from an automorphism of the original.

\begin{question}
Given a finite group $G$ and one of our types of graph (say $\mathrm{X}$),
\begin{enumerate}\itemsep0pt
\item When is twin reduction on $\mathrm{X}(G)$ faithful?
\item What is the automorphism group of the cokernel of $\mathrm{X}(G)$?
\end{enumerate}
\end{question}

Very little seems to be known about this question. I first discuss cographs,
then give a couple of examples.

\begin{prop}
Let $\Gamma$ be a cograph. Then the automorphism group of $\Gamma$ can be 
built from the trivial group by the operations of direct product and
wreath product with a symmetric group.
\end{prop}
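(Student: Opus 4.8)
The plan is to argue by induction on the number of vertices of $\Gamma$, exploiting the recursive ``union/join'' structure of cographs together with the fact noted above that a cograph is connected if and only if its complement is disconnected. The base case is the one-vertex graph, whose automorphism group is trivial, and this trivial group is the starting point from which everything is to be built.

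The key ingredient I would record first is the standard decomposition of the automorphism group of a \emph{disconnected} graph. If $\Delta$ is disconnected, group its connected components into isomorphism classes; say there are $m_j$ components isomorphic to a connected graph $\Delta_j$, as $j$ ranges over the distinct isomorphism types. Since any automorphism of $\Delta$ must carry components to isomorphic components and acts independently within each isomorphism class, one gets
\[\Aut(\Delta)\cong\prod_j\bigl(\Aut(\Delta_j)\wr S_{m_j}\bigr),\]
a direct product over the isomorphism types, where $\Aut(\Delta_j)\wr S_{m_j}=\Aut(\Delta_j)^{m_j}\rtimes S_{m_j}$ encodes that each of the $m_j$ copies of $\Delta_j$ may be acted on by its own automorphism and that the copies may be permuted freely among themselves.

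With this in hand, let $\Gamma$ be a cograph on $n>1$ vertices. If $\Gamma$ is disconnected, I would apply the decomposition with $\Delta=\Gamma$: each component $\Delta_j$ is a connected cograph on fewer than $n$ vertices (there being at least two components), so by the inductive hypothesis each $\Aut(\Delta_j)$ is already built from the trivial group by direct products and wreath products with symmetric groups; the displayed formula then presents $\Aut(\Gamma)$ as one further direct product of groups, each of which is a wreath product of an allowed group with $S_{m_j}$. If instead $\Gamma$ is connected, then its complement $\bar\Gamma$ is disconnected (this is exactly where the cograph hypothesis enters), and $\Aut(\Gamma)=\Aut(\bar\Gamma)$ because a permutation preserves adjacency if and only if it preserves non-adjacency; I would then run the previous case with $\Delta=\bar\Gamma$, whose components are cographs on fewer than $n$ vertices. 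The recursion is guaranteed to terminate since in both cases we pass to cographs with strictly fewer vertices.

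I do not expect a genuine obstacle here; the argument is a clean structural induction. The only point needing care is the bookkeeping in the decomposition formula for $\Aut(\Delta)$ --- specifically, verifying that automorphisms cannot mix non-isomorphic component types, and that within a single isomorphism class the induced action is \emph{exactly} the full wreath product $\Aut(\Delta_j)\wr S_{m_j}$: no larger, because edges are preserved only inside components, and no smaller, because isomorphic components can be relabelled arbitrarily while each independently realises its own automorphism group.
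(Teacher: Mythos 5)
Your proof is correct and takes essentially the same route as the paper: both arguments run the cograph recursion (complementation preserves the automorphism group; a disconnected graph has automorphism group $\prod_j\bigl(\Aut(\Delta_j)\wr S_{m_j}\bigr)$ over component isomorphism types) with induction on the number of vertices. The only cosmetic difference is that you phrase the induction top-down via the ``connected iff complement disconnected'' dichotomy, while the paper phrases it bottom-up via the construction of cographs from the one-vertex graph by complement and disjoint union; these are the same decomposition.
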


\begin{proof}
Recall that a cograph can be built from the $1$-vertex graph by the operations
of complement and disjoint union. Now complementation does not change the
automorphism group. If $\Gamma$ is the disjoint union of $m_1$ copies of
$\Delta_1$, \dots, $m_r$ copies of $\Delta_r$, then 
\[\Aut(\Gamma)=\left(\Aut(\Delta_1)\wr S_{m_1}\right)\times\cdots\times
\left(\Aut(\Delta_r)\wr S_{m_r}\right).\]
Assuming inductively that each of $\Aut(\Delta_1)$, \dots, $\Aut(\Delta_r)$
can be built by direct producs and wreath products with symmetric groups;
then the same is true for $\Aut(\Gamma)$. \qed
\end{proof}

\paragraph{Example} Let $G$ be the alternating group $A_5$, and consider the
power graph of $G$. The identity is joined to all other vertices; after removing
it we have six cliques of size $4$ (corresponding to cyclic subgroups of
order~$5$), ten of size~$2$ (corresponding to cyclic subgroups of order~$3$),
and fifteen isolated vertices (corresponding to elements of order~$2$). This
graph is easily seen to be a cograph, so its cokernel has a single vertex. In
fact, closed twin reduction contracts the cliques of sizes $2$ and $4$ to
single vertices, giving a star on $32$ vertices; then open twin reduction
produces a single edge, and closed twin reduction reduces this to a single
vertex.

\paragraph{Example} Let $G$ be the Mathieu group $M_{11}$. The power graph
of $G$ has $7920$ vertices. On removing the identity, we are left with a graph
consisting of 
\begin{itemize}\itemsep0pt
\item $144$ complete graphs of size $10$, corresponding to elements of
order~$11$;
\item $396$ complete graphs of size $4$, corresponding to elements of
order $5$;
\item a single connected component $\Delta$ on the remaining $4895$ vertices.
\end{itemize}
Two steps of twin reduction remove all the components which are complete.
If we take $\Delta$, and first factor out the relation ``same closed
neighbourhood'', and then factor out from the result the relation ``same open
neighbourhood'', we obtain a connected graph on $1210$ vertices whose
automorphism group is $M_{11}$. (This is shown by a \textsf{GAP} computation.)
This group is induced by the automorphism group of the original power graph;
so the reduction is faithful.

\subparagraph{Exercise} Why is the number $1210$ given above two less than
the number of vertices of the cokernel of the power graph of $M_{11}$ given
in Table~\ref{t:fsg}?

\begin{question}
For which non-abelian finite simple group $G$ is it the case that the twin
reduction on the power graph/enhanced power graph/deep commuting
graph/commuting graph/generating graph of $G$ is faithful?
\end{question}

\paragraph{Example}
A curious example showing that this is not true for all such groups
is described in~\cite{clrd}.
Let $G$ be the simple group $\mathrm{PSL}(2,16)$. The automorphism group
of $G$ is the group $\mathrm{P}\Gamma\mathrm{L}(2,16)$, four times as large
as $G$; but the cokernel of the generating graph of $G$ has an extra
automorphism of order~$2$, interchanging the sets of vertices coming from
elements of orders~$3$ and~$5$ in $G$. Twin reduction of this graph is thus
not faithful. The cokernel is a graph on $784$ vertices with automorphism
group $C_2\times\mathrm{P}\Gamma\mathrm{L}(2,16)$.

\medskip

Non-faithfulness means, as in this example, that extra automorphisms are
introduced by twin reduction.

\section{Above and beyond the hierarchy}

This section contains some very brief comments on similar graphs.

\subsection{Nilpontence, solvability and Engel graphs}

Given a subgroup-closed class of graphs $\mathcal{C}$, we can define a graph
on $G$ in which $x$ and $y$ are joined if $\langle x,y\rangle$ belongs to
$\mathcal{C}$.

For $\mathcal{C}$ the class of cyclic groups, we obtain the enhanced power
graph; and for the class of abelian groups, we obtain the commuting graph.

After these, the most natural classes to consider are those of nilpotent and
solvable groups; let us denote the corresponding graphs by $\Nilp(G)$ and
$\Sol(G)$ respectively. 

A \emph{Schmidt group} is a non-nilpotent group all of whose proper subgroups
are nilpotent. These groups were characterised by Schmidt~\cite{schmidt};
see~\cite{bberr} for an accessible account. All are $2$-generated.

I do not know of a similar characterisation of the non-solvable groups all
of whose proper subgroups are solvable. However, we can conclude that they
are $2$-generated, as follows. Let $G$ be such a group, and $S$ the solvable
radical of $G$ (the largest solvable normal subgroup). Then $G/S$ is a
non-abelian simple group. (For if $H/S$ is a minimal normal subgroup of
$G/S$, then $H/S$ is a product of isomorphic simple groups, so $H$ is 
not solvable, and by minimality $H=G$.) Now every finite simple group is
$2$-generated. If we take two cosets $Sg,Sh$ which generate $G/S$, then 
$\langle g,h\rangle$ is a subgroup of $G$ which projects onto $G/S$, and so
is non-solvable; by minimality it is equal to $G$. (In fact we do not need
the Classification of Finite Simple Groups here. For clearly $G/S$ is a
minimal simple group, and so is covered by Thompson's classification of
N-groups~\cite{ngroups}.)

It follows that a group $G$ is nilpotent (resp.\ solvable) if and only if every
$2$-generated subgroup of $G$ is nilpotent (resp.\ solvable). For if every
$2$-generated subgroup of $G$ is nilpotent, then $G$ cannot contain a minimal
non-nilpotent subgroup, and so $G$ is nilpotent; similarly for solvability.

\begin{prop}
\begin{enumerate}
\item For any finite group $G$, we have $E(\Com(G))\subseteq E(\Nilp(G))
\subseteq E(\Sol(G))$.
\item $E(\Com(G))=E(\Nilp(G))$ if and only if all the Sylow subgroups of $G$
are abelian.
\item $E(\Nilp(G))=E(\Sol(G))$ if and only if $G$ is nilpotent.
\item $E(\Com(G))=E(\Sol(G))$ if and only if $G$ is abelian.
\item If $G$ is non-nilpotent, then $E(\Nilp(G))\subseteq E(\NGen(G))$;
equality holds if and only if $G$ is a Schmidt group.
\item If $G$ is non-solvable, then $E(\Sol(G))\subseteq E(\NGen(G))$; equality
holds if and only if $G$ is a minimal non-solvable group.
\end{enumerate}
\end{prop}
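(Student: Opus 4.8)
The plan is to build everything on two ingredients: the class inclusions (every abelian group is nilpotent, every nilpotent group is solvable), and the local criterion established just above the statement, namely that a finite group is nilpotent (resp.\ solvable) if and only if every one of its $2$-generated subgroups is nilpotent (resp.\ solvable). Part (a) is then immediate: if $\langle x,y\rangle$ is abelian it is nilpotent, and if nilpotent then solvable, so the edge sets nest. For (b), the forward direction notes that any two elements of a Sylow $p$-subgroup $P$ generate a $p$-group, which is nilpotent; under the assumed equality they must then commute, so $P$ is abelian. For the converse, a nilpotent $\langle x,y\rangle$ is the direct product of its Sylow subgroups, each of which embeds in a Sylow subgroup of $G$ and is therefore abelian, so $\langle x,y\rangle$ is abelian.

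For (c) I would argue by contraposition in one direction: if $G$ is not nilpotent it contains a minimal non-nilpotent subgroup, that is, a \emph{Schmidt group}, which is $2$-generated and (by Schmidt's theorem) solvable but not nilpotent; its generating pair is then an edge of $\Sol(G)$ but not of $\Nilp(G)$. Conversely, a nilpotent $G$ has every subgroup nilpotent, so both $\Nilp(G)$ and $\Sol(G)$ are complete and coincide. Part (d) follows formally from the chain in (a): the equality $E(\Com(G))=E(\Sol(G))$ forces equality at both ends, hence all Sylow subgroups abelian (by (b)) and $G$ nilpotent (by (c)); a nilpotent group with abelian Sylow subgroups is their direct product and so is abelian, while the converse is trivial.

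For (e) and (f) the inclusions into $\NGen(G)$ are direct: a nilpotent (resp.\ solvable) subgroup of a non-nilpotent (resp.\ non-solvable) group is proper, so any pair generating it is a non-generating pair. The substance lies in the equality clauses. Equality $E(\Nilp(G))=E(\NGen(G))$ says precisely that every proper $2$-generated subgroup of $G$ is nilpotent. To upgrade this to ``every proper subgroup is nilpotent'' I would take an arbitrary proper subgroup $H$: each of its $2$-generated subgroups is proper in $G$, hence nilpotent, so by the local criterion $H$ itself is nilpotent; thus $G$ is a Schmidt group. The converse is immediate, since for a Schmidt group the properties ``proper'' and ``nilpotent'' coincide on subgroups. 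Part (f) runs identically, with ``solvable'' replacing ``nilpotent'' and minimal non-solvable groups replacing Schmidt groups.

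The main obstacle is exactly this upgrade step in (e) and (f): passing from a statement about $2$-generated subgroups---which is all that the edge-set equality directly records---to one about all proper subgroups. This is the point at which the local criterion is indispensable, and that criterion itself rests on the classification of minimal non-nilpotent groups (Schmidt) and on the structure of minimal non-solvable N-groups (Thompson). Everything else in the proposition is bookkeeping with the class inclusions and with the Sylow decomposition of finite nilpotent groups.
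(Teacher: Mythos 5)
Your proof is correct and takes essentially the same route as the paper: parts (a)--(d) via the class inclusions, the Sylow decomposition of finite nilpotent groups, and the $2$-generated Schmidt subgroup argument, and parts (e)--(f) via the $2$-generation of Schmidt and minimal non-solvable groups together with the local criterion established just before the proposition. The only difference is cosmetic: you spell out the upgrade from ``every proper $2$-generated subgroup is nilpotent (resp.\ solvable)'' to ``every proper subgroup is nilpotent (resp.\ solvable)'', a step the paper leaves implicit in its appeal to the fact that these minimal groups are $2$-generated.
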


\begin{proof}
(a) The first point is clear from the definition.

\medskip

(b) Suppose that $E(\Com(G))=E(\Nilp(G))$. Then two elements from the same
Sylow subgroup of $G$ generate a nilpotent group; hence they commute.
Conversely, if the Sylow subgroups are abelian, then a nilpotent subgroup is
the product of its Sylow subgroups and hence is abelian.

\medskip

(c) Suppose that $E(\Nilp(G))=E(\Sol(G))$. If $G$ is not nilpotent, it 
contains a minimal non-snlpotent subgroup, a Schmidt group, which is
$2$-generated and solvable, hence nilpotent, a contradiction. Conversely, if
$G$ is nilpotent, then $\Nilp(G)$ is complete.

\medskip

(d) If $\Com(G)$ and $\Sol(G)$ coincide, then $G$ is nilpotent with abelian
Sylow subgroups, hence is abelian. The converse is clear.

\medskip

(e), (f) The forward direction in the last two points uses the fact that these
groups are $2$-generated, as remarked above. For if $\Nilp(G)$ and $\NGen(G)$
have the same edges, then two elements which do not generate $G$ must generate
a nilpotent group, and similarly for solvability.\qed
\end{proof}

Regarding (b), groups with all Sylow subgroups abelian are known, since
Walter~\cite{walter} classified the groups with abelian Sylow $2$-subgroups.
The simple groups arising here are $\mathrm{PSL}(2,q)$ with $q$ even or
congruent to $\pm3$~(mod~$8$) and the first Janko group $\mathrm{J}_1$.

The sets of vertices joined to all others in the nilpotency and solvability
graphs have been characterized. The first part of this proposition is due
to Abdollahi and Zarrin, the second part to Guralnick \emph{et al.}

\begin{theorem}
For any finite group $G$,
\begin{enumerate}\itemsep0pt
\item $Z_{\Nilp}(G)$ is the hypercentre of $G$;
\item $Z_{\Sol}(G)$ is the solvable radical of $G$.
\end{enumerate}
\end{theorem}

I will give a proof of the first statement after discssing the Engel graph
below. I refer to the cited paper for the second.
Note that $Z_{\Nilp}(G)$ and $Z_{\Sol}(G)$ are both subgroups of $G$.

This question has been recently studied in greater generality by Lucchini
and Nemmi~\cite{ln}, who investigated the question of when the set of
vertices in the $\mathfrak{F}$-graph of $G$ which are joined to all other
vertices is necessarily a subgroup, iin the case where $\mathfrak{F}$ is a
saturated formation. I will not give details, but refer to their paper.

\begin{question}
Investigate analogues of the earlier results in this paper in the extended
hierarchy of graphs containing $\Nilp(G)$ and $\Sol(G)$.
\end{question}

Universality is relatively straightforward, and both cases can be handled
together. Recall from the proof of Theorem~\ref{t:gen_univ}
that any graph $\Gamma$ can be represented as the intersection graph of a
linear hypergraph, a family of sets with the property that two sets intersect
in $1$ point if the corresponding vertices are adjacent, and are disjoiont
otherwise. We can add some dummy points to ensure that all the sets in the
collection have the same (prime) cardinality $p$ at least~$3$. Now if we take
cycles whose supports are these sets, then we see (as there) that the cycles
corresponding to adjacent vertices generate the alternating group of degree
$2p-1$, while those corresponding to non-adjacent vertices generate
$C_p\times C_p$. So both nilpotence and solvability graphs of finite groups
embed all finite graphs.

Connectedness of the complement of the nilpotency graph has been
investigated by Abdollahi and Zarrin~\cite{az}. There is a generalisation to
$\mathfrak{F}$-groups, for saturated formations $\mathfrak{F}$, by
Lucchini and Nemmi~\cite{ln}.

\medskip

Each of these cases can be stratified: we can define the level-$k$ nilpotence
or solvability graph to have edges $\{x,y\}$ if $\langle x,y\rangle$ is
nilpotent of class at most $k$ (resp.\ solvable of derived length at most
$k$).

\medskip

Other classes of groups for which the corresponding graphs could be studied,
for which the minimal groups not in the class have been considered, include
the supersolvable groups (those for which every chief factor is cyclic)
and the $p$-nilpotent groups (groups with normal $p$-complements). The
groups minimal with respect to not lying in these classes are considered in
\cite{bber} and \cite{bberr} respectively.

\medskip

A closely related graph is the \emph{Engel graph} of a group, defined by
Abdollahi~\cite{a_engel}. Here is a brief account. We define, for each positive
integer $k$, and all $x,y\in G$, the element $[x,{}_ky]$ of $G$ to be the
left-normed commutator of $x$ and $k$ copies of $y$; more formally,
\begin{itemize}\itemsep0pt
\item $[x,{}_1y]=[x,y]=x^{-1}y^{-1}xy$,
\item for $k>1$, $[x,{}_ky]=[[x,{}_{k-1}y],y]$.
\end{itemize}
Abdollahi defined $x$ and $y$ to be adjacent if $[x,{}_k]y\ne1$ and
$[y,{}_kx]\ne1$ for all $k$. To fit with the philosophy of this paper, and
at Abdollahi's suggestion, I will redefine it to be the complement of this
graph. If we do this then we have a similar situation to that arising with
the power graph. If we define the \emph{directed Engel graph} to have
an arc from $x$ to $y$ if $[y,{}_kx]=1$ for some $k$, then the Engel graph
(that is, the complement of the graph as defined in~\cite{a_engel}) is the
graph in which $x$ and $y$ are joined if there is an arc from one to the
other. The directed graph may also have a role to play here.

A similar stratification to that for nilpotence and solvability graphs can
also be defined in this case. In particular, the level~$1$ Engel graph is
just the commuting graph.

Zorn~\cite{zorn} showed that, if a finite group
$G$ satisfies an Engel identity $[x,{}_ky]=1$ for all $x,y$ (for some $k$),
then $G$ is nilpotent; so the finite groups for which the directed Engel graph
is complete are the same as those for which the nilpotency graph is complete.
(For infinite groups, this is not true, though the result has been shown
in a number of special cases.)

So there is a close connection between the Engel graph and the nilpotency
graph. But they are not equal in general. For example, in the group $S_3$,
there is an arc of the directed Engel graph from each element of order~$3$
to each element of order~$2$, but not in the reverse direction.

\begin{question}
What can be said about the relation between the Engel and nilpotency graphs?
In particular, in which groups are they equal?
\end{question}

Armed with this knowledge, we return briefly to the nilpotence graph. First,
another definition. The \emph{upper central series} of a group $G$ is the
sequence of subgroups defined by
\[Z^0(G)=\{1\},\qquad Z^{k+1}(G)/Z^k(G)=Z(G/Z^k(G))\hbox{ for $k\geqslant1$}.\]
The \emph{hypercentre} of $G$ is the union of these subgroups. Thus, if $G$ is
finite, then the hypercentre is $Z^k(G)$, where $k$ is the smallest value
such that $Z^k(G)=Z^{k+1}(G)$.  It is clear that we have $Z^1(G)=Z(G)$, and
subsequent terms are the subgroups of $G$ that project onto the upper central
series of $G/Z(G)$. Hence we can give an alternative definition:
\[Z^{k+1}(G)/Z(G)=Z^k(G/Z(G)).\]

\begin{theorem}
Let $G$ be a finite group. Then the set $Z_{\Nilp}(G)$ of elements of $G$
which are joined to all other elements in the nilpotence graph is equal to
the hypercentre of $G$.
\end{theorem}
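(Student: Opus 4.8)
that $Z_{\Nilp}(G)$ contains the hypercentre, and that it is contained in it. Let me write $Z_\infty(G)$ for the hypercentre. The statement asserts that an element $g$ lies in $Z_\infty(G)$ if and only if $\langle g, y\rangle$ is nilpotent for every $y \in G$.

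The plan is to prove the two inclusions $Z^\infty(G)\subseteq Z_{\Nilp}(G)$ and $Z_{\Nilp}(G)\subseteq Z^\infty(G)$ separately, where $Z^\infty(G)$ denotes the hypercentre (so that $Z^\infty(G)=Z^k(G)$ for all large $k$). Throughout I use the reformulation $Z_{\Nilp}(G)=\{g\in G:\langle g,y\rangle\text{ is nilpotent for all }y\in G\}$.

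For the inclusion $Z^\infty(G)\subseteq Z_{\Nilp}(G)$ I would argue directly and elementarily. Fix $g\in Z^n(G)$ and an arbitrary $y\in G$, and put $H=\langle g,y\rangle$. The key observation is that $[H,\,H\cap Z^i(G)]\subseteq H\cap Z^{i-1}(G)$ for every $i$, simply because $Z^i(G)/Z^{i-1}(G)$ is central in $G/Z^{i-1}(G)$. An easy induction on $i$ then upgrades this to $H\cap Z^i(G)\subseteq Z^i(H)$, the $i$th term of the upper central series of $H$ itself. In particular $g\in Z^n(H)$, so the quotient $H/Z^n(H)$ is generated by the single image of $y$ and is therefore cyclic, hence nilpotent. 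Using the standard fact that $Z^{n+j}(H)/Z^n(H)=Z^j(H/Z^n(H))$, nilpotence of $H/Z^n(H)$ lifts to nilpotence of $H$. Thus $\langle g,y\rangle$ is nilpotent for every $y$, i.e.\ $g\in Z_{\Nilp}(G)$.

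The reverse inclusion $Z_{\Nilp}(G)\subseteq Z^\infty(G)$ is the substantive half, and is where I expect the real work to lie. The clean route is to recognise an Engel condition: if $g\in Z_{\Nilp}(G)$ then for each $y$ the group $\langle g,y\rangle$ is nilpotent of some class $c$, so the weight-$(c+1)$ commutator $[y,{}_{c}g]$ vanishes; hence $g$ is a \emph{right Engel element} of $G$. By Baer's theorem on Engel elements, the right Engel elements of a finite group are exactly the elements of the hypercentre, and we are done. If one prefers a self-contained argument, I would note that $Z_{\Nilp}$ passes to quotients (the image of $\langle g,y\rangle$ in any quotient is again nilpotent), so the image of $g$ lies in $Z_{\Nilp}(\bar G)$ for $\bar G=G/Z^\infty(G)$; as $\bar G$ has trivial hypercentre and hence trivial centre, everything reduces to the base case $Z(G)=1\Rightarrow Z_{\Nilp}(G)=1$.

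That base case is the main obstacle. Here one may assume $g$ has prime order $p$ (a power of any element of $Z_{\Nilp}(G)$ again lies in $Z_{\Nilp}(G)$, since subgroups of nilpotent groups are nilpotent). For every $p'$-element $y$, the nilpotent group $\langle g,y\rangle$ splits as the direct product of its Sylow subgroups, so the $p$-element $g$ commutes with $y$; hence $g$ centralises $O^p(G)$. If $O^p(G)=G$ this already gives $g\in Z(G)=1$; the remaining case, where $G$ has a non-trivial $p$-quotient, is the delicate point and is precisely the content that Baer's theorem packages. For the paper I would state the elementary inclusion in full and invoke Baer's theorem for the converse, recording the reduction above as the conceptual core.
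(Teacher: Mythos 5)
Your proof is correct, and on the substantive inclusion $Z_{\Nilp}(G)\subseteq Z^\infty(G)$ it coincides with the paper's: the paper likewise extracts an Engel condition from the nilpotence of every $\langle x,y\rangle$ and then invokes Baer's theorem for groups with the maximal condition on subgroups. One caution here: with the paper's convention that $[x,{}_ky]$ denotes the left-normed commutator of $x$ by $k$ copies of $y$, the identity you exhibit, $[y,{}_cg]=1$ for all $y$, says that $g$ is a \emph{left} Engel element, and Baer's theorem for left Engel elements yields only membership in the Fitting subgroup, which is not enough. What you need, and what the paper writes, is $[g,{}_cy]=1$ for all $y$ (the right Engel condition); this also follows from nilpotence of class $c$, since \emph{every} commutator of weight $c+1$ in $g$ and $y$ vanishes, so your argument survives the slip but should be stated with the correct commutator. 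On the easy inclusion $Z^\infty(G)\subseteq Z_{\Nilp}(G)$ you genuinely diverge from the paper: the paper inducts on the central length $k$, using $Z(G)x\in Z^{k-1}(G/Z(G))$ together with the fact that an extension of a central subgroup by a nilpotent group is nilpotent, whereas you prove directly that $H\cap Z^i(G)\leqslant Z^i(H)$ for $H=\langle g,y\rangle$, conclude $g\in Z^n(H)$, and finish by noting that $H/Z^n(H)$ is cyclic; both routes are routine and correct, yours trading the paper's induction through central quotients for an intersection lemma inside the subgroup. Finally, your sketched ``self-contained'' alternative (reducing modulo the hypercentre to the case of trivial centre) is a valid reduction but, as you concede, it does not eliminate the appeal to Baer, so it stands as an aside rather than a second proof.
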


\begin{proof}
Suppose that $x$ is joined to all other elements of $G$, so that
$\langle x,y\rangle$ is nilpotent for all $y\in G$. In particular, for all $y$,
there exists $k$ such that $[x,{}_ky]=1$. Baer~\cite{baer} showed that, in a
group with the maximum condition on subgroups (and in particular a finite
group), this implies that $x$ is in the hypercentre of $G$.

For the reverse implication, we have to show that, if $x\in Z^k(G)$, where
$k$ is the least value such that $Z^k(G)=Z^{k+1}(G)$, then for any $y\in G$
we have $\langle x,y\rangle$ nilpotent. We prove this by induction on $k$.
So assume that it is true with $k-1$ replacing $k$ in any group. Then for
$x\in Z^k(G)$ we have $Z(G)x\in Z^{k-1}(G/Z(G))$, so
$Z(G)\langle x,y\rangle/Z(G)$ is nilpotent. Thus $Z(G)\langle x,y\rangle$ is
an extension of a central subgroup by a nilpotent group, and so is nilpotent;
and so $\langle x,y\rangle$ is nilpotent, as required. \qed
\end{proof}

\begin{question}
What, if anything, can be said for infinite groups?
\end{question}

\subsection{Other graphs}

All the graphs studied so far have the property that two group elements 
which generate the same cyclic subgroup are closed twins. So it would be very
natural to collapse them by factoring out this equivalence relation.
Alternatively, one could simply remove edges between such pairs, so that they
become open twins. Note that the original, the quotient, and the graph with
edges removed all have the same cokernel; so, if one of them is a cograph,
then they all are.

We could put a graph at the bottom of the hierarchy, in
which $x\sim y$ if $\langle x\rangle=\langle y\rangle$; then the second
possibility suggested above fits into our scheme as the difference between
this graph and one of the others.

I end this section with a general question.

\begin{question}
For which types of graph, and which groups, is the relation $\equiv$ given
by $x\equiv y$ if $\langle x\rangle=\langle y\rangle$ definable directly
from the graph without reference to the group?

In particular, if the cokernel of the graph is equal to the quotient by
the equivalence relation $\equiv$, this will be true. For which groups is
this the case?
\end{question}

\section{Intersection graphs}
\label{s:intersection}

There turns out to be a close connection between certain intersection graphs
defined on $G$, and some of the graphs in our hierarchy. First I look briefly
at the connection in the abstract, then discuss some particular cases.

\subsection{Dual pairs}

Let $\mathrm{B}$ be a bipartite graph. If it is connected, it has a unique
bipartition: take a vertex $v$; then the bipartite blocks are the sets of
vertices at even (resp.~odd) distance from $v$. If $\mathrm{B}$ is not
connected, the bipartition is not unique; in fact, there are $2^{\kappa-1}$
bipartitions, where $\kappa$ is the number of connected components, since we
can make a bipartite block by choosing a bipartite block in each component
and taking their union. However, I will always assume that the bipartition of
$\mathrm{B}$ is given, and is part of its structure.

The \emph{halved graphs} arising from $\mathrm{B}$ are the graphs $\Gamma_1$
and $\Gamma_2$ whose vertex set is a bipartite block, two vertices adjacent
in the relevant graph if and only if they lie at distance $2$ in $\mathrm{B}$.

We call a pair of graphs $\Gamma_1$ and $\Gamma_2$ a \emph{dual pair} if
there is a bipartite graph $\mathrm{B}$ without isolated vertices such that
$\Gamma_1$ and $\Gamma_2$ are the halved graphs of $\mathrm{B}$.

I warn that this concept is not the same as the vague notion of duality which
informed the name ``dual enhanced power graph''. It is however closely
connected with duality in design theory and geometry, or between a graph and
the linear hypergraph (or partial linear space) that we used in
Section~\ref{s:indgg}.

\begin{prop}\label{p:dualpair}
Let $\Gamma_1$ and $\Gamma_2$ be a dual pair of graphs. Then $\Gamma_1$ is
connected if and only if $\Gamma_2$ is connected. More generally, there is a
natural bijection between connected components of $\Gamma_1$ and connected
components of $\Gamma_2$ with the property that corresponding components have
diameters which are either equal or differ by~$1$.
\end{prop}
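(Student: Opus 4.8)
The plan is to reduce the whole proposition to a single distance-halving observation. Fix the bipartition $V(\mathrm{B})=V_1\cup V_2$, so that $\Gamma_1$ lives on $V_1$ and $\Gamma_2$ on $V_2$. Since $\mathrm{B}$ is bipartite, any two vertices of the same block are at even distance in $\mathrm{B}$. I claim that for $u,v$ in the same block, $d_{\Gamma_i}(u,v)=\frac{1}{2}d_{\mathrm{B}}(u,v)$, with both sides infinite if $u,v$ lie in different components of $\mathrm{B}$. One inequality is immediate: an edge of $\Gamma_i$ is a distance-$2$ step in $\mathrm{B}$, so a $\Gamma_i$-path of length $k$ lifts to a $\mathrm{B}$-walk of length $2k$, giving $d_{\mathrm{B}}\leqslant 2d_{\Gamma_i}$. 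For the reverse, take a geodesic $u=x_0,x_1,\ldots,x_{2k}=v$ in $\mathrm{B}$; its even-indexed vertices all lie in the block of $u$, and consecutive ones are at distance $2$, hence adjacent in $\Gamma_i$, so $d_{\Gamma_i}\leqslant k=\frac{1}{2}d_{\mathrm{B}}$.

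From this the component statement is essentially free. Two vertices of $V_1$ lie in the same component of $\Gamma_1$ if and only if their $\mathrm{B}$-distance is finite, that is, if and only if they lie in the same component of $\mathrm{B}$; so the components of $\Gamma_1$ are exactly the nonempty sets $C\cap V_1$ as $C$ ranges over the components of $\mathrm{B}$, and likewise for $\Gamma_2$. Here the hypothesis that $\mathrm{B}$ has no isolated vertices is exactly what is needed: every component of $\mathrm{B}$ contains an edge and therefore meets both $V_1$ and $V_2$, so $C\mapsto(C\cap V_1,C\cap V_2)$ is the desired bijection between components of $\Gamma_1$ and of $\Gamma_2$. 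The connectedness equivalence is then the special case in which $\mathrm{B}$ itself is connected.

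It remains to compare diameters of corresponding components. Fix a component $C$ of $\mathrm{B}$ and set $d_{11}=\max\{d_{\mathrm{B}}(u,v):u,v\in C\cap V_1\}$ and $d_{22}$ analogously; both are even, and by the halving observation the $\Gamma_1$- and $\Gamma_2$-diameters of the two halves are $d_{11}/2$ and $d_{22}/2$, so it suffices to prove $|d_{11}-d_{22}|\leqslant 2$. Suppose $d_{11}\geqslant 2$ and pick $u,v\in C\cap V_1$ with $d_{\mathrm{B}}(u,v)=d_{11}$ together with a geodesic between them. Its second and second-to-last vertices both lie in $V_2$ and, being interior vertices of a geodesic, are themselves at distance $d_{11}-2$; hence $d_{22}\geqslant d_{11}-2$. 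By symmetry $d_{11}\geqslant d_{22}-2$ whenever $d_{22}\geqslant 2$, so the two inequalities give $|d_{11}-d_{22}|\leqslant 2$ as soon as both quantities are at least $2$.

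The only point needing separate care, and the nearest thing to an obstacle, is the degenerate case where a block of $C$ is a single vertex, say $d_{11}=0$, so $C\cap V_1=\{u\}$. Then every vertex of $C\cap V_2$ is adjacent in $\mathrm{B}$ to $u$ (as $C$ is connected and $u$ is its only $V_1$-vertex, each $V_2$-vertex has $u$ as its unique possible neighbour), so any two of them are at $\mathrm{B}$-distance at most $2$, forcing $d_{22}\leqslant 2$ and again $|d_{11}-d_{22}|\leqslant 2$. This is precisely where the stated slack of $1$ is realised: a star in $\mathrm{B}$ gives a single vertex on one side and a clique on the other, so the bound cannot be improved. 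Assembling the three earlier paragraphs then yields both the component bijection and the diameter comparison, completing the proof.
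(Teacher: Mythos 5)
Your proof is correct, and it reaches the diameter comparison by a genuinely different mechanism from the paper's. You first establish the exact distance-halving identity $d_{\Gamma_i}(u,v)=\frac{1}{2}d_{\mathrm{B}}(u,v)$ for vertices in the same block, reduce both the component bijection and the diameters to statements about distances in $\mathrm{B}$, and then compare the two within-block maxima $d_{11},d_{22}$ by trimming the endpoints of a $\mathrm{B}$-geodesic (giving the lower bound $d_{22}\geqslant d_{11}-2$), with a separate degenerate case when one block of a component is a single vertex. The paper never states an exact identity: it uses the no-isolated-vertices hypothesis to project each vertex of $\Gamma_2$ to a $\mathrm{B}$-neighbour in $\Gamma_1$, runs a path there of length $r\leqslant d$, lifts it to $\mathrm{B}$ and appends the two projection edges, obtaining $d_{\Gamma_2}(v_1,v_2)\leqslant r+1\leqslant d+1$ directly; that upper-bound argument is uniform and needs no case analysis. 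What your route buys is a stronger, reusable statement (exact distances rather than just diameter inequalities), a transparent component bijection, and the observation that the slack of $1$ is sharp (the star $K_{1,n}$); what it costs is the singleton-block case, which the paper's projection argument absorbs automatically. Both proofs invoke the hypothesis that $\mathrm{B}$ has no isolated vertices at the analogous crucial point: yours to ensure every component of $\mathrm{B}$ meets both blocks, the paper's to ensure the projection exists.
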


\begin{proof}
Any vertex of $\Gamma_1$ is joined (by an edge of $\mathrm{B}$) to a vertex
of $\Gamma_2$, and \emph{vice versa}, since $\mathrm{B}$ has no isolated
vertices. Now suppose that two vertices of $\Gamma_1$ are joined by a path of
length $d$. Then there is a path of length $2d$ in $\mathrm{B}$ joining them.
So a connected component of $\mathrm{B}$ is the union of corresponding
connected components in $\Gamma_1$ and $\Gamma_2$. Suppose that a component
of $\Gamma_1$ has diameter $d$. Take two vertices $v_1$, $v_2$ in the
corresponding component of $\Gamma_2$. Choose vertices $u_1$ and $u_2$ of
$\Gamma_1$ joined in $\mathrm{B}$ to $v_1$ and $v_2$ respectively. These
two vertices lie at distance $r\le d$, say; so there is a path of length
at most $2r$ in $\mathrm{B}$ joining them. Thus $v_1$ and $v_2$ have distance
at most $2r+2$ in $\mathrm{B}$, whence their distance in $\Gamma_2$ is at most
$r+1$, hence at most $d+1$. So the diameter of a component of $\Gamma_2$ has
diameter at most one more than the corresponding component of $\Gamma_1$.
Interchanging the roles of the dual pair completes the proof. \qed
\end{proof}

\begin{question}
What other relations hold between properties of a dual pair of graphs?
\end{question}

If the bipartite graph is \emph{semiregular}, then a number of properties
transfer between the corresponding dual pair, especially spectral properties,
and (related to this) optimality properties of statistical designs~\cite{bc}.

\subsection{Graphs on groups and intersection graphs}

In order to apply this result, I give a general construction showing that
certain graphs defined on the non-identity elements of a group form dual
pairs with certain intersection graphs of families of subgroups.

\begin{prop}
Let $G$ be a finite non-cyclic group, and let $\mathcal{F}$ be a family of
non-trivial proper subgroups of $G$ with the property that its union is $G$.
Let $\Gamma$ be the graph defined on the non-identity elements of $G$ by
the rule that $x$ is joined to $y$ if and only if there is a subgroup
$H\in\mathcal{F}$ with $x,y\in H$. Then $\Gamma$ and the intersection graph
of $\mathcal{F}$ form a dual pair.
\end{prop}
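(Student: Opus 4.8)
The plan is to exhibit the witnessing bipartite graph explicitly, as the incidence graph between non-identity group elements and the subgroups of $\mathcal{F}$. Concretely, I would let $\mathrm{B}$ be the bipartite graph with bipartite blocks $X=G\setminus\{1\}$ and $Y=\mathcal{F}$, in which $x\in X$ is joined to $H\in Y$ precisely when $x\in H$. The bipartition $(X,Y)$ is taken as part of the structure of $\mathrm{B}$, as the definition of dual pair requires. With this choice, only two things remain to check: that $\mathrm{B}$ has no isolated vertices, and that its two halved graphs are exactly the two graphs named in the statement.

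First I would verify the no-isolated-vertex condition, which is precisely where the two hypotheses on $\mathcal{F}$ are used. A vertex $x\in X$ is non-isolated because $\bigcup\mathcal{F}=G$ guarantees that some $H\in\mathcal{F}$ contains $x$; a vertex $H\in Y$ is non-isolated because $H$ is non-trivial, hence contains a non-identity element, which is a neighbour in $\mathrm{B}$. Then I would identify the halved graphs by unwinding the distance-$2$ condition. For distinct $x,y\in X$, lying at distance $2$ in $\mathrm{B}$ means having a common neighbour $H\in Y$, that is, a subgroup $H\in\mathcal{F}$ with $x,y\in H$; this is exactly the adjacency rule defining $\Gamma$, so the halved graph on $X$ is $\Gamma$. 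Dually, for distinct $H_1,H_2\in Y$, distance $2$ means a common neighbour $x\in X$, i.e.\ a non-identity element lying in $H_1\cap H_2$, which says precisely $H_1\cap H_2\ne\{1\}$ --- the adjacency of the intersection graph of $\mathcal{F}$. Hence the halved graph on $Y$ is the intersection graph, and the two graphs form a dual pair.

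There is no genuine obstacle here: once the incidence bipartite graph is fixed, the argument is a direct translation of definitions. The only point requiring a moment's care is that the halving operation compares \emph{distinct} vertices only, so no spurious loops arise from the reflexive containment $x\in\langle x\rangle\le H$; and that the two hypotheses on $\mathcal{F}$ (covering $G$, and non-triviality of each member) are exactly what guarantee that $\mathrm{B}$ is free of isolated vertices, which is the single place the specific assumptions enter.
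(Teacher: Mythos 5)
Your proposal is correct and follows essentially the same route as the paper: both construct the incidence bipartite graph between $G\setminus\{1\}$ and $\mathcal{F}$, use the non-triviality of the members of $\mathcal{F}$ and the covering hypothesis to rule out isolated vertices, and identify the two halved graphs with $\Gamma$ and the intersection graph by unwinding the distance-$2$ condition. The only difference is cosmetic: the paper adds the remark that non-cyclicity of $G$ actually follows from the covering hypothesis, which your argument does not need.
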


\begin{proof}
We form the bipartite graph $\mathrm{B}$ whose vertex set is
$(G\setminus\{1\})\cup\mathcal{F}$, where a group element $x\ne1$ is joined
to a subgroup $H\in\mathcal{F}$ if and only if $x\in H$. We verify the
conditions for a dual pair.

First, $\mathrm{B}$ has no isolated vertices: for each subgroup in 
$\mathcal{F}$ is non-trivial, so contains an element of $G\setminus\{1\}$,
and every such element is contained in a subgroup in $\mathcal{F}$, since
the union of this family is $G$.

Next, two subgroups are joined in the intersection graph if and only if their
intersection is non-trivial (that is, contains an element of $G\setminus\{1\}$;
and, by assumption, two non-trivial elements are adjacent if and only if some
element of $\mathcal{F}$ contains both.

Note that we have assumed that $G$ is non-cyclic; this in fact follows from
the fact that it is a union of proper subgroups, since a generator would lie
in no proper subgroup. \qed
\end{proof}

\subsection{Applications}

I will consider several cases. I begin with the ``classical'' case, where
the vertices are all the non-trivial proper subgroups of $G$, joined if two
vertices are adjacent. These were first investigated by Cs\'ak\'any and
Poll\'ak, who considered non-simple groups; they determined the groups
for which the intersection graph is connected and showed that, in these
cases, its diameter is at most~$4$. For simple groups, Shen~\cite{shen}
showed that the graph is connected and asked for an upper bound; Herzog
\emph{et al.}~\cite{hlm} gave a bound of $64$, which was improved to $28$
by Ma~\cite{ma}, and to the best possible $5$ by Freedman~\cite{freedman},
who showed that the upper bound is attained only by the Baby Monster and
some unitary groups (it is not currently known exactly which).

\begin{prop}
Let $G$ be a non-cyclic finite group. Then the induced subgraph of the
non-generating graph of $G$ on non-identity elements and the intersection graph
of $G$ form a dual pair.
\end{prop}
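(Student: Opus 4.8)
The plan is to apply the general construction in the preceding proposition, taking $\mathcal{F}$ to be the family of \emph{all} non-trivial proper subgroups of $G$. With this choice the intersection graph of $\mathcal{F}$ is, by definition, exactly the intersection graph of $G$, so it remains only to check the two hypotheses of that proposition and to identify the resulting graph $\Gamma$ with the induced subgraph of $\NGen(G)$ on the non-identity elements.

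First I would verify that the union of $\mathcal{F}$ is $G$. The identity lies in every subgroup, and each non-identity element $g$ lies in the cyclic subgroup $\langle g\rangle$, which is non-trivial (since $g\neq1$) and proper (since $G$ is non-cyclic, so no single element generates $G$). Hence $\langle g\rangle\in\mathcal{F}$, and the union of $\mathcal{F}$ covers $G$. This is the only place the non-cyclic hypothesis is used.

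The main step is to show that the graph $\Gamma$ produced by the construction coincides with the reduced non-generating graph. Two distinct non-identity elements $x,y$ are adjacent in $\Gamma$ precisely when some $H\in\mathcal{F}$ contains both. If such an $H$ exists, then $\langle x,y\rangle\leqslant H<G$, so $\langle x,y\rangle\neq G$ and $x,y$ are adjacent in $\NGen(G)$. Conversely, if $\langle x,y\rangle\neq G$, then $H=\langle x,y\rangle$ is itself a proper subgroup containing both, and it is non-trivial since it contains $x\neq1$; thus $H\in\mathcal{F}$ witnesses adjacency in $\Gamma$. So the two adjacency relations agree, and $\Gamma$ is exactly the induced subgraph of the non-generating graph on non-identity elements.

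There is no serious obstacle here: the statement is a direct specialisation of the dual-pair construction, and the witnessing subgroup $\langle x,y\rangle$ makes the identification of the two graphs immediate. The only points needing care are the degenerate ones --- that $\langle g\rangle$ and $\langle x,y\rangle$ are genuinely proper (using non-cyclicity) and non-trivial (using $x,y\neq1$) --- which are precisely what place these subgroups in $\mathcal{F}$. With these checks in place, the preceding proposition yields that $\Gamma$ and the intersection graph of $G$ form a dual pair, as required.
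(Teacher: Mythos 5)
Your proposal is correct and is exactly the paper's argument: the paper's proof consists of the single line ``Take $\mathcal{F}$ to be the family of all non-trivial proper subgroups of $G$,'' leaving to the reader the checks you spell out (that non-cyclicity makes each $\langle g\rangle$ proper so the union of $\mathcal{F}$ is $G$, and that adjacency via some $H\in\mathcal{F}$ coincides with $\langle x,y\rangle\neq G$ by taking $H=\langle x,y\rangle$ as witness). Your write-up simply makes these implicit verifications explicit.
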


\begin{proof}
Take $\mathcal{F}$ to be the family of all non-trivial proper subgroups of $G$.
\qed
\end{proof}

So the reduced non-generating graph of a non-abelian finite simple group has
diameter at most~$6$; this bound can be reduced to $5$, and possibly to $4$,
perhaps with specified exceptions (Saul Freedman, personal communication).

\medskip

Now we turn to the commuting graph.

\begin{prop}
Let $G$ be a finite group with $Z(G)=1$. Then the reduced commuting graph of
$G$ (on the vertex set $G\setminus\{1\}$) and the intersection graph of
non-trivial abelian subgroups of $G$ form a dual pair.
\end{prop}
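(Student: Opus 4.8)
The plan is to apply the general dual-pair construction established just above (the proposition asserting that, for a finite non-cyclic group $G$ and a family $\mathcal{F}$ of non-trivial proper subgroups whose union is $G$, the associated graph $\Gamma$ and the intersection graph of $\mathcal{F}$ form a dual pair), taking $\mathcal{F}$ to be the family of all non-trivial abelian subgroups of $G$. First I would record that, since $Z(G)=1$ and $G$ is non-trivial, $G$ is non-abelian (an abelian group equals its own centre), hence in particular non-cyclic; this supplies the non-cyclicity hypothesis. The same observation shows that no abelian subgroup can be all of $G$, so every member of $\mathcal{F}$ is a proper subgroup, as the construction requires.

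Next I would verify that $\bigcup\mathcal{F}=G$: every non-identity element $x$ lies in the non-trivial cyclic, hence abelian, subgroup $\langle x\rangle$, and the identity lies in every subgroup, so the union is indeed $G$. With all hypotheses of the construction in place, the crux is to identify the graph $\Gamma$ it produces with the reduced commuting graph. By definition $\Gamma$ has vertex set $G\setminus\{1\}$, with distinct $x,y$ adjacent if and only if some $H\in\mathcal{F}$ contains both. If $x$ and $y$ commute then $\langle x,y\rangle$ is a non-trivial abelian (and, by the above, proper) subgroup containing both, so $\{x,y\}$ is an edge; conversely, two elements lying in a common abelian subgroup commute. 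Thus $\Gamma$ is exactly the induced subgraph of $\Com(G)$ on $G\setminus\{1\}$. Finally, since $Z(G)=1$, the only vertex of $\Com(G)$ joined to all others is the identity, so deleting $Z(G)$ coincides with deleting $\{1\}$; hence $\Gamma$ is precisely the reduced commuting graph, and the construction yields that it and the intersection graph of $\mathcal{F}$ — the intersection graph of the non-trivial abelian subgroups of $G$ — form a dual pair.

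Because this is a direct specialisation of the preceding general result, I do not expect a genuine obstacle. The only points requiring care are the reduction to \emph{proper} subgroups, which is exactly what non-abelianness provides, and the identification of the vertex set: one must note that the hypothesis $Z(G)=1$ is precisely what makes ``reduced'' (removal of the vertices joined to all others) agree with the removal of the identity alone.
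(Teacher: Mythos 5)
Your proposal is correct and follows exactly the paper's own route: the paper likewise invokes the general dual-pair construction with $\mathcal{F}$ taken to be all non-trivial abelian subgroups, observing that $Z(G)=1$ forces $G$ to be non-abelian (hence non-cyclic, with every abelian subgroup proper) and that adjacency in the reduced commuting graph is precisely containment in a common non-trivial abelian subgroup. Your write-up simply spells out the verification of the hypotheses (union covering $G$, identification of the vertex set) in more detail than the paper does.
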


\begin{proof}
The condition $Z(G)=1$ ensures that the reduced commuting graph does have
vertex set $G\setminus\{1\}$, and also implies that $G$ is not cyclic. Take
$\mathcal{F}$ to be the family of all non-trivial abelian subgroups of $G$
(all are proper subgroups since $G$ is not abelian). Two elements are joined
in the reduced commuting graph if and only if the group they generate is
abelian. \qed
\end{proof}

\begin{corollary}
For a finite group $G$ with $Z(G)=\{1\}$, the following four conditions are
equivalent:
\begin{enumerate}\itemsep0pt
\item the Gruenberg--Kegel graph of $G$ is connected;
\item the reduced commuting graph of $G$ is connected;
\item the intersection graph of non-trivial abelian subgroups of $G$ is
connected;
\item the intersection graph of maximal abelian subgroups of $G$ is
connected.
\end{enumerate}
\end{corollary}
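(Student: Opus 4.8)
The plan is to establish all four equivalences by routing them through condition~(b), the connectedness of the reduced commuting graph. The equivalence (a)~$\Leftrightarrow$~(b) is precisely Theorem~\ref{t:connectedcom}, which I may invoke directly since $Z(G)=1$ is assumed. For (b)~$\Leftrightarrow$~(c), I would appeal to the proposition immediately preceding this corollary, which asserts that, when $Z(G)=1$, the reduced commuting graph of $G$ and the intersection graph of non-trivial abelian subgroups form a dual pair; connectedness then transfers between the two by Proposition~\ref{p:dualpair}. This already settles three of the four conditions.

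The remaining work is (b)~$\Leftrightarrow$~(d), which I would obtain by a second application of the general dual-pair construction, this time taking $\mathcal{F}$ to be the family of \emph{maximal} abelian subgroups of $G$. First I would check the hypotheses of that construction: since $Z(G)=1$ and $G$ is non-trivial, $G$ is non-abelian and hence non-cyclic, every maximal abelian subgroup is a proper non-trivial subgroup, and the union of $\mathcal{F}$ is all of $G$ because each $\langle x\rangle$ is abelian and so lies in some maximal abelian subgroup. The key verification is that the graph produced by the construction --- in which $x$ and $y$ are joined exactly when some member of $\mathcal{F}$ contains both --- coincides with the reduced commuting graph. This holds because two elements commute if and only if $\langle x,y\rangle$ is abelian, if and only if they lie in a common maximal abelian subgroup. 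Proposition~\ref{p:dualpair} then transfers connectedness between the reduced commuting graph and the intersection graph of maximal abelian subgroups, giving (b)~$\Leftrightarrow$~(d).

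I expect the only point requiring care to be this last identification of the construction's output graph with the reduced commuting graph, which rests on the standard fact (implicit in Proposition~\ref{p:maxclcom}) that every abelian subgroup is contained in a maximal one. Everything else is bookkeeping: the chain (a)~$\Leftrightarrow$~(b)~$\Leftrightarrow$~(c) together with (b)~$\Leftrightarrow$~(d) yields the full equivalence, and in particular (c)~$\Leftrightarrow$~(d) comes out transitively (though it could also be seen directly from the containment of each abelian subgroup in a maximal one). The degenerate case $|G|=1$ is vacuous and may be set aside.
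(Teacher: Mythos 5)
Your proof is correct, and for the first three conditions it matches the paper exactly: (a)~$\Leftrightarrow$~(b) is Theorem~\ref{t:connectedcom}, and (b)~$\Leftrightarrow$~(c) is the dual pair from the proposition preceding the corollary together with Proposition~\ref{p:dualpair}. Where you genuinely diverge is condition~(d). The paper proves (c)~$\Leftrightarrow$~(d) directly: (d) implies (c) because every non-trivial abelian subgroup is joined, in the intersection graph, to any maximal abelian subgroup containing it; and (c) implies (d) because a path of non-trivial abelian subgroups lifts to a path of maximal abelian subgroups (replace each $A_i$ by some maximal abelian $M_i\supseteq A_i$; consecutive terms still intersect non-trivially). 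You instead run the general dual-pair construction a second time, with $\mathcal{F}$ the family of maximal abelian subgroups, and check that the graph it produces on $G\setminus\{1\}$ is again the reduced commuting graph, since two non-identity elements commute if and only if $\langle x,y\rangle$ is abelian, if and only if they lie in a common maximal abelian subgroup; Proposition~\ref{p:dualpair} then gives (b)~$\Leftrightarrow$~(d). Your hypothesis-checking for that construction ($G$ non-cyclic since non-abelian, members of $\mathcal{F}$ proper and non-trivial, union equal to $G$) is complete, and both routes rest on the same underlying fact that every abelian subgroup of a finite group lies in a maximal one. The paper's lifting argument is the more elementary way to handle this step; your route is more uniform (both intersection graphs fall to the same machinery) and buys something extra for free: by Proposition~\ref{p:dualpair}, corresponding connected components of the reduced commuting graph and of the intersection graph of maximal abelian subgroups have diameters differing by at most one, a quantitative conclusion the paper's path-lifting argument does not immediately provide.
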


\begin{proof}
The equivalence of (a) and (b) comes from Theorem~\ref{t:connectedcom}, and
that of (b) and (c) from Proposition~\ref{p:dualpair}. For the equivalence of
(c) and (d), note that any non-trivial abelian subgroup is contained in a 
maximal abelian subgroup, to which it is joined, so (d) implies (c). The
converse holds because any path in the intersection graph of non-trivial
abelian subgroups can be lifted to a path in the intersection graph of
maximal abelian subgroups. \qed
\end{proof}

\begin{prop}
Let $G$ be a group which is not cyclic or generalised quaternion. Then the
induced subgraph of the enhanced power graph of $G$ on the set of non-identity
elements and the intersection graph of non-trivial cyclic subgroups of $G$
form a dual pair.
\end{prop}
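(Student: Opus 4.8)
The plan is to read this off from the general dual-pair construction proved above, by choosing $\mathcal{F}$ to be the family of all non-trivial cyclic subgroups of $G$. First I would check the three hypotheses of that construction for this $\mathcal{F}$. Its members are non-trivial by definition; each is a \emph{proper} subgroup because $G$ is not cyclic, so no cyclic subgroup can be all of $G$ (this is exactly where the non-cyclicity part of the hypothesis enters); and the union of the members is $G$, since every non-identity $g$ lies in the non-trivial cyclic subgroup $\langle g\rangle$ while the identity lies in every subgroup. Hence the construction applies and yields a dual pair whose two halved graphs are the intersection graph of $\mathcal{F}$ --- that is, the intersection graph of non-trivial cyclic subgroups --- and the graph $\Gamma$ on $G\setminus\{1\}$ in which $x$ and $y$ are joined precisely when some non-trivial cyclic subgroup contains both.

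The one remaining step is to identify $\Gamma$ with the induced subgraph of $\EPow(G)$ on the non-identity elements. For distinct $x,y\neq 1$, the two are contained in a common cyclic subgroup if and only if $\langle x,y\rangle$ is cyclic: if both lie in a cyclic $H$ then $\langle x,y\rangle\leqslant H$ is cyclic, and conversely $\langle x,y\rangle$ is itself a non-trivial cyclic subgroup (non-trivial as $x\neq 1$) containing both. Since ``$\langle x,y\rangle$ cyclic'' is exactly the adjacency rule of the enhanced power graph, $\Gamma$ and $\EPow(G)$ agree on $G\setminus\{1\}$, completing the identification.

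I expect no genuine obstacle: the argument is purely a verification of the hypotheses of the earlier construction, the only delicate point being the appeal to non-cyclicity to guarantee that the subgroups in $\mathcal{F}$ are proper. It is worth remarking that the \emph{generalised quaternion} half of the hypothesis is not actually used in establishing the dual pair --- non-cyclicity alone suffices. Its purpose, matching the analogous statements for the commuting graph and for the intersection graph of all subgroups, is to ensure $Z_{\EPow}(G)=\{1\}$, so that the graph on $G\setminus\{1\}$ has no vertex adjacent to all others and thus coincides with the \emph{reduced} enhanced power graph; this is what makes the result interchangeable with the connectedness corollaries that follow.
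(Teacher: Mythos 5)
Your proposal is correct and is essentially the paper's own proof: the paper simply takes $\mathcal{F}$ to be the family of all non-trivial cyclic subgroups of $G$ and invokes the general dual-pair construction, leaving implicit the verifications you spell out (properness via non-cyclicity, union equal to $G$, and the identification of the resulting graph with the reduced enhanced power graph via ``$x,y$ lie in a common cyclic subgroup iff $\langle x,y\rangle$ is cyclic''). Your closing remark also matches the paper's own note that the result extends to generalised quaternion groups, where the exclusion only serves to make the subsequent connectedness statements non-trivial --- though strictly speaking, excluding cyclic and generalised quaternion groups is necessary but not sufficient for $Z_{\EPow}(G)=\{1\}$ (consider $G=C_3\times Q_8$, which is neither, yet has $Z_{\EPow}(G)\cong C_6$).
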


In fact the theorem applies also to generalised quaternion groups; but for
these, both the reduced enhanced power graph and the intersection graph are
connected for the trivial reason that they contain a vertex joined to all 
others.

\begin{proof}
We take $\mathcal{F}$ to be the family of non-trivial cyclic subgroups of
$G$. \qed
\end{proof}

Note that, if we take the graph $\DEP(G)$ and collapse the equivalence classes
of the relation $\equiv$, where $x\equiv y$ if
$\langle x\rangle=\langle y\rangle$, we obtain the intersection graph of
non-trivial cyclic subgroups of $G$. (This is probably why it was called the
``intersection graph'' in \cite{cs}.)

A study of intersection graphs of cyclic subgroups has been published by
Rajkumar and Devi~\cite{rd}.

\section{More general graphs}

When we think about graphs on groups, we want there to be some connection
between the graph and the group. This connection is mostly expressed in
terms of invariance of the graph under something, either right translations
or automorphisms of the group. The first gives rise to Cayley graphs, as
discussed briefly in Section~\ref{s:cayley}.

So the focus here is on graphs on a group $G$ invariant under the automorphism
group $\Aut(G)$ of $G$. We have seen that all graphs in the hierarchy do
satisfy this condition.

There are several ways we could approach the general case.
\begin{itemize}\itemsep0pt
\item Any graph invariant under $\Aut(G)$ is a union of orbital graphs for
$\Aut(G)$.
\item We could define the adjacency in the graph by a first-order formula
with two free variables.
\item We could define adjacency by some more recondite group-theoretic property.
\end{itemize}
We will see examples below.

However, it matters whether we are defining the graph on a single group, or
defining it on the class of all groups.

\subsection{On a specific group}

If we are given a group $G$, and can compute $\Aut(G)$, then the first
procedure (taking unions of orbital graphs) obviously gives all orbital
(di)graphs for $G$.

\begin{theorem}
Given a group $G$, for every $\Aut(G)$-invariant graph, there is a formula
$\phi$ in the first-order language of groups such that $x\sim y$ if and only
if $G\models\phi(x,y)$.
\end{theorem}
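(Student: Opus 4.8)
The plan is to exploit the finiteness of $G$ together with the standing hypothesis that adjacency in the graph depends only on the $\Aut(G)$-orbit of a pair of vertices. Write $E$ for the edge set, regarded as a set of \emph{ordered} pairs (so that it is symmetric); then $E$ is a union of orbits of $\Aut(G)$ on $G\times G$. It therefore suffices to produce, for each such orbit $O$, a first-order formula $\phi_O(x,y)$ with $(x,y)\in O$ if and only if $G\models\phi_O(x,y)$; the desired formula is then the finite disjunction $\phi=\bigvee_{O\subseteq E}\phi_O$, and $x\sim y$ iff $G\models\phi(x,y)$.

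First I would fix an enumeration $g_1=1,g_2,\dots,g_n$ of the elements of $G$, where $n=|G|$, and write down the \emph{diagram formula}
\[
\Delta(x_1,\dots,x_n)\ \equiv\ \bigwedge_{1\le i<j\le n}x_i\neq x_j\ \wedge\ \bigwedge_{g_i g_j=g_k}x_ix_j=x_k,
\]
a quantifier-free formula in the language of groups whose free variables are indexed by the elements of $G$. The key observation, which I would verify next, is that an $n$-tuple $(a_1,\dots,a_n)$ from $G$ satisfies $\Delta$ if and only if the map $g_i\mapsto a_i$ is an automorphism of $G$: the distinctness clauses force the $a_i$ to be $n$ distinct elements, hence all of $G$, so the map is a bijection, and the multiplication-table clauses force it to be a homomorphism. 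Finiteness is used crucially here, both to make $\Delta$ a finite formula and to turn injectivity of $n$ distinct values into surjectivity.

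Granting this, for an orbit $O$ I would choose a representative $(g_i,g_j)\in O$ and set
\[
\phi_O(x,y)\ \equiv\ \exists x_1\cdots\exists x_n\bigl(\Delta(x_1,\dots,x_n)\wedge x=x_i\wedge y=x_j\bigr).
\]
Then $G\models\phi_O(a,b)$ precisely when some automorphism carries $(g_i,g_j)$ to $(a,b)$, that is, when $(a,b)\in O$. Since $E$ is $\Aut(G)$-invariant, the formula $\phi=\bigvee_{O\subseteq E}\phi_O$ (equivalently, the single formula obtained by letting $(g_i,g_j)$ range over \emph{all} edges of $\Gamma$) defines exactly the adjacency relation. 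Symmetry of $\phi$ in $x$ and $y$, and the absence of loops, are inherited directly from $E$.

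I do not anticipate a serious obstacle, as the construction is completely explicit; the one point demanding care is the verification that the tuples satisfying $\Delta$ are exactly the images of the fixed enumeration under $\Aut(G)$, and in particular that $n$ distinct elements of a group of order $n$ already exhaust the group. It is worth flagging that this is the finite case of the general principle (a Galois correspondence for finite structures) that every $\Aut(M)$-invariant relation on a finite structure $M$ is first-order definable without parameters, and that finiteness cannot be dropped: for infinite $G$ there are $\Aut(G)$-invariant graphs that are not first-order definable.
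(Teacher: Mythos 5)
Your proof is correct, but it takes a genuinely different route from the paper's. The paper disposes of the theorem in one line of model theory: it invokes the Ryll--Nardzewski theorem (in the form that orbits of the automorphism group on $n$-tuples coincide with $n$-types, all of which are principal), so each orbit on pairs is isolated by a single formula, and the edge set, being a finite union of orbits, is defined by a finite disjunction. You share the same first step --- decomposing $E$ into $\Aut(G)$-orbits on ordered pairs --- but you replace the appeal to types with an explicit syntactic construction: the diagram formula $\Delta$, whose satisfying tuples are exactly the automorphic images of a fixed enumeration of $G$ (your verification of this is sound, including the point that $n$ distinct values in a group of order $n$ force surjectivity, and the multiplication-table clauses then force a bijective homomorphism), followed by a block of existential quantifiers to define each orbit. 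What your approach buys is that it is elementary and self-contained (no types, no categoricity), it makes the role of finiteness completely transparent, and it yields explicit syntactic information: the defining formula can always be taken to be existential, with $|G|$ quantifiers and a quantifier-free matrix. That last point gives a partial answer to the Question the paper poses immediately after this theorem, about bounding the complexity (e.g.\ quantifier alternation) of formulae defining orbital graphs. What the paper's approach buys is brevity and the placement of the result in the general oligomorphic/$\omega$-categorical framework, where orbit-definability persists for suitable infinite structures --- though, as you correctly flag, for arbitrary infinite groups the theorem fails.
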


\begin{proof} By the so-called Ryll-Nardzewski Theorem, proved also by Engeler
and by Svenonius (see~\cite{hodges}), $G$ is oligomorphic, so the $G$-orbits
on $n$-tuples are $n$-types over $G$, that is, maximal sets of $n$-variable
formulae consistent with the theory of $G$; but all types are principal, so
each is given by a single formula. \qed
\end{proof}

\begin{question}
Given $G$, is there a bound for the complexity of the
formulae defining orbital graphs for $\Aut(G)$ acting on $G$ (for example,
for the alternation of quantifiers)?
\end{question}

Clearly the commuting graph can be defined by the quantifier-free formula
$xy=yx$. If $G$ is an elementary abelian $2$-group, there are only three
(non-diagonal) orbital graphs, defined by the formulae $(x=1)\wedge(y\ne1)$,
$(x\ne 1)\wedge(y=1)$, and $(x\ne1)\wedge(y\ne1)\wedge(x\ne y)$ respectively.

\subsection{For classes of groups}

As we have seen, the commuting graph is defined uniformly for all groups by
the quantifier-free formula $xy=yx$. 

It seems unlikely that the other graphs listed earlier have uniform
first-order definitions. The statement $\langle x,y\rangle=G$ seems to
require quantification either over words in $x,y$ or over subsets of $G$, and
so to need some version of higher-order logic for its definition.

For example, suppose that there is a formula $\phi(x,y)$ which, in any
finite group, specifies that $x$ and $y$ are joined in the power graph.
Taking $C_n$ with $n$ even, with $x$ a generator and $y$ of order $2$,
the formula is always satisfied. So it should hold in an ultraproduct of
such groups (see~\cite{bs}). But in the ultraproduct, $x$ has infinite order
and $y$ has order~$2$, so $y$ cannot be a power of $x$.

\subsection{Applications}

If we are given a specific group $G$ and know its automorphism group, then
constructing all the orbital graphs is a simple polynomial-time procedure.

If we are given $G$ and don't know (and maybe are trying to find out about)
its automorphism group, then clearly some indication of which first-order
formulae need to be considered would be helpful. Maybe, given $g,h\in G$,
the type of $(g,h)$ (the set of $\phi(x,y)$ such that $G\models\phi(g,h)$)
could be described, and a formula generating the type found.

Another situation that might arise would be that we are given one or more
graphs defined on general groups and are interested to know for which groups
they have some property, e.g. two graphs equal. If we had first-order
descriptions of the graphs, we would just be looking for models of some
first-order sentence.

\section{Infinite groups}

The definitions of the graphs in the hierarchy (with the exception of the deep
commuting graph), and the inclusions among them, work without change for
infinite groups. I will simply mention a few highlights here, as there is
little in the way of general theory.

\subsection{Power graph and directed power graph}

Let $p$ be a prime number. The \emph{Pr\"ufer group} $G=C_{p^\infty}$ is the
group of rational numbers with $p$-power denominators mod~$1$, or the
multiplicative group of $p$-power roots of unity. Every element of the group
has $p$-power order, and the group has a unique subgroup of order $p^n$ for
any $n$. It follows that the power graph of $G$ is a countable complete graph,
independent of the choice of prime.

The directed power graph does determine the prime, since the class of
elements immediately above the identity has size $p-1$. This shows that
the power graph does not determine the directed power graph for infinite
groups in general.

However, the implication does hold for torsion-free groups. This was shown
by Zahirovi\'c~\cite{zahir}; a preliminary result appears in~\cite{cgj}.
In fact, the hypotheses in Zahirovi\'c's result are weaker; I refer to the
paper for details, which show the important role played by the Pr\"ufer groups
in this problem.

\subsection{Independence number}

Perhaps the most striking result on the commuting graph of an infinite group is
the following, due to Bernhard Neumann (answering a question of Paul
Erd\H{o}s):

\begin{theorem}\label{t:neumann}
Let $G$ be an infinite group. Then the following are equivalent:
\begin{enumerate}\itemsep0pt
\item $\Com(G)$ has no infinite coclique;
\item there is a finite upper bound on the size of cocliques in $\Com(G)$;
\item $Z(G)$ has finite index in $G$.
\end{enumerate}
\end{theorem}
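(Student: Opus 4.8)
The plan is to run the equivalence around the cycle (c)$\Rightarrow$(b)$\Rightarrow$(a)$\Rightarrow$(c), where the first two implications are routine and essentially all the content lies in (a)$\Rightarrow$(c), which is Neumann's theorem. For (c)$\Rightarrow$(b): if $x$ and $y$ lie in the same coset of $Z(G)$, say $y=xz$ with $z\in Z(G)$, then $xy=x^2z=yx$, so they commute; hence a coclique meets each coset of $Z(G)$ at most once, and its size is bounded by $[G:Z(G)]$, which is finite by hypothesis. The implication (b)$\Rightarrow$(a) is immediate, since a finite bound on coclique sizes forbids an infinite coclique.

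For the substantive direction (a)$\Rightarrow$(c), I would first note that a coclique in $\Com(G)$ is exactly a set of pairwise non-commuting elements. Since $G$ has no infinite such set, I would choose a \emph{maximal} set $\{a_1,\dots,a_n\}$ of pairwise non-commuting elements, which is finite. Maximality gives that every $g\in G$ commutes with some $a_i$, so
\[G=\bigcup_{i=1}^n C_G(a_i),\]
a covering of $G$ by finitely many centralizers (each a subgroup). The key tool is then B.\,H.\ Neumann's covering lemma: in any covering of a group by finitely many cosets of subgroups, those cosets whose subgroup has infinite index may be discarded while still covering the group. Applying this, after reindexing I may assume each surviving $C_G(a_i)$ has finite index in $G$; in particular each such $a_i$ has a finite conjugacy class, and $C:=\bigcap_i C_G(a_i)=C_G(A)$ has finite index, where $A=\langle a_1,\dots,a_n\rangle$.

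The main obstacle is upgrading ``$G$ is covered by finitely many finite-index centralizers'' to ``$Z(G)$ has finite index''; the covering lemma alone does \emph{not} suffice, as the infinite dihedral group (virtually cyclic, yet with trivial centre) shows, so the commuting structure must be used genuinely. My plan here is to exploit the subgroup $A$: each generator $a_i$ has a finite conjugacy class in $G$, hence \emph{a fortiori} in $A$, so all generators of $A$ lie in the FC-centre of $A$; since the FC-centre is a subgroup, $A$ is a finitely generated FC-group, and such groups are known to be centre-by-finite, so $Z(A)$ has finite index in $A$ and $A'$ is finite. It then remains to combine this with $[G:C_G(A)]<\infty$ to conclude $[G:Z(G)]<\infty$, passing if necessary to the finite-index normal core $N\trianglelefteq G$ inside $C_G(A)$, which centralizes $A$. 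I expect this final assembly --- reconciling the centre-by-finiteness of $A$ with the finite-index centralizer $C_G(A)$ to control $Z(G)$ --- to be the delicate step, and it is where the full force of Neumann's argument is needed; the reductions preceding it are comparatively mechanical.
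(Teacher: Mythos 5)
Your two easy implications are correct and coincide with what the paper does: (c)$\Rightarrow$(b) by noting that two elements in the same coset of $Z(G)$ commute (the paper phrases this via the abelian subgroups $\langle Z(G),g\rangle$, which is the same observation), and (b)$\Rightarrow$(a) trivially. Bear in mind, however, that the paper does not prove the hard direction at all: it explicitly attributes (a)$\Rightarrow$(b),(c) to Neumann's article and only proves the two easy implications. Had you likewise invoked Neumann's theorem for (a)$\Rightarrow$(c), your proposal would match the paper. Instead you attempt to reconstruct Neumann's argument, and that reconstruction has a genuine gap at precisely the point you flag yourself: the ``final assembly'' is not a delicate last step to be expected to work out --- it \emph{is} the theorem, and the route you propose cannot close it as stated.

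Concretely, after your reductions you know that $A=\langle a_1,\dots,a_m\rangle$ is a finitely generated FC-group (hence centre-by-finite) and that $[G:C_G(A)]<\infty$; but none of this says anything about commutation \emph{among elements of $C_G(A)$}, which is the original problem all over again, now inside a finite-index subgroup, with no terminating mechanism. The intersection $C=\bigcap_i C_G(a_i)$ need not be abelian and need not be close to $Z(G)$ even when $\{a_i\}$ is maximal: in $G=Q_8\times S_3$ the set $\{(i,1),(j,1),(k,1)\}$ is a maximal pairwise non-commuting set, yet $C=\{\pm1\}\times S_3$ is non-abelian and properly contains $Z(G)=\{\pm1\}\times\{1\}$. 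Three further points. First, you missed a simplification that removes your indexing ambiguity: pairwise non-commutation forces \emph{every} $C_G(a_i)$ to survive the covering lemma (if $C_G(a_j)$ were discarded, then $a_j$ would lie in some surviving $C_G(a_i)$ with $i\ne j$, i.e.\ $a_i$ and $a_j$ would commute), so all the centralizers have finite index. Second, the missing idea in a correct completion is an iteration: if $C$ is non-abelian, take a maximal pairwise non-commuting set inside $C$, intersect centralizers again, and so on; the crux is that if this iteration never reaches an abelian subgroup, one can assemble an \emph{infinite} pairwise non-commuting set by taking, at each level $k$, the product of one element $a^{(k)}_1$ of the $k$-th set with fixed alternative elements $a^{(j)}_2$ from all lower levels --- contradicting (a). Once some term of the iteration is abelian, $G$ has an abelian subgroup of finite index, and combining this with the fact that $G$ is FC (every non-central element lies in some maximal pairwise non-commuting set, so has centralizer of finite index) yields $[G:Z(G)]<\infty$. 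Third, a small repair: a ``maximal set of pairwise non-commuting elements'' can be a single central element (e.g.\ $\{1\}$, which is a maximal coclique of $\Com(G)$ since $1$ is adjacent to everything), so one must assume $G$ non-abelian and grow the set from a non-commuting pair.
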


I have stated Neumann's result like this for comparison with what follows. He
proved that (a) implies (b) and (c). Now (b) implies (a) is trivial, and
(c) implies (b) because if (c) holds, then $G$ is a finite union of abelian
subgroups (since $\langle Z(G),g\rangle$ is abelian for all $g\in G$), and a
coclique in $\Com(G)$ can contain at most one vertex from each subgroup.

What about the power graph or enhanced power graph?

Certainly, if either of these graphs has no infinite coclique, then neither
does $\Com(G)$; so $Z(G)$ has finite index in $G$. But consider the group
$G=C_{p^\infty}\times C_{q^\infty}$, where $p$ and $q$ are distinct primes.
It is easy to show that $\Pow(G)$ has no infinite coclique; but, if
$a_n$ has order $p^n$ and $b_n$ has order $q^n$, then
\[\{a_nb_0,a_{n-1}b_1,\ldots,a_1b_{n-1},a_0b_n\}\]
is a coclique of size $n+1$, for any $n$. 

However, if $\Pow(G)$ has no infinite coclique, then $G$ is the union of
finitely many abelian subgroups. So we first ask, which abelian groups have
no infinite coclique? Such a group must be a torsion group; for, if $a$ were
an element of infinite order, then $\{a^p:p\hbox{ prime}\}$ is an infinite
coclique in the power graph. There can only be finitely many primes such that
$G$ contains element of order~$p$. So $G$ is the direct sum of its finitely many
Sylow subgroups. Moreover, the Sylow subgroups must have finite rank. So we
can conclude:

\begin{theorem}
Let $G$ be an infinite group. Then the following are equivalent:
\begin{enumerate}\itemsep0pt
\item $\Pow(G)$ has no infinite coclique;
\item $Z(G)$ has finite index in $G$ and is a direct sum of finitely many
$p$-torsion subgroups of finite rank, for primes $p$.
\end{enumerate}
\end{theorem}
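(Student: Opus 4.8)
The plan is to prove the two implications separately, and in both directions to reduce from the (possibly non-abelian) group $G$ to its abelian subgroups, exploiting the fact that for any subgroup $H\le G$ the power graph $\Pow(H)$ is exactly the subgraph of $\Pow(G)$ induced on $H$ (the powers of an element stay inside the cyclic subgroup it generates, so adjacency is intrinsic to $H$). The single most useful reformulation is that, for an \emph{abelian} group $A$, two elements $x,y$ are adjacent in $\Pow(A)$ precisely when one of $\langle x\rangle,\langle y\rangle$ contains the other; hence a coclique of $\Pow(A)$ is, up to the choice of one generator per cyclic subgroup, an \emph{antichain of cyclic subgroups} under inclusion. Infinite cocliques thus correspond to infinite antichains of cyclic subgroups, and this is the object I would manipulate throughout.

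For the forward direction (a)$\Rightarrow$(b): since $E(\Pow(G))\subseteq E(\Com(G))$, every coclique of $\Com(G)$ is a coclique of $\Pow(G)$, so (a) forces $\Com(G)$ to have no infinite coclique, and Theorem~\ref{t:neumann} gives that $Z(G)$ has finite index in $G$. Now $Z(G)$ is abelian and $\Pow(Z(G))$ is an induced subgraph of $\Pow(G)$, so I run exactly the abelian analysis sketched before the statement: $Z(G)$ has no element of infinite order (else $\{a^p:p\text{ prime}\}$ is an infinite coclique), only finitely many primes divide element orders (else elements of pairwise distinct prime orders form an infinite coclique), so $Z(G)=\bigoplus_{i=1}^k (Z(G))_{p_i}$. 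Finally each primary component is constrained: if some $(Z(G))_p$ had infinite socle, independent elements of order $p$ generate distinct order-$p$ subgroups and form an infinite coclique, which is the obstruction I would use to force the stated rank restriction.

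For the reverse direction (b)$\Rightarrow$(a): writing $Z=Z(G)$, pick coset representatives $g_1,\dots,g_n$; each $A_j:=\langle Z,g_j\rangle$ is abelian (as $g_j$ centralises $Z$), each $g_j$ has finite order (its image in $G/Z$ does and $Z$ is torsion), and $G=\bigcup_{j=1}^n A_j$. Any coclique $C$ of $\Pow(G)$ then decomposes as $C=\bigcup_j (C\cap A_j)$, with each $C\cap A_j$ a coclique of $\Pow(A_j)$; since a finite union of finite sets is finite, it suffices to show each $\Pow(A_j)$ has no infinite coclique. The whole problem therefore collapses to one abelian lemma, which I would prove prime by prime, controlling how an antichain of cyclic subgroups can distribute across the finitely many primary components.

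The hard part is precisely this last lemma, and it is more delicate than it looks. The tempting analogy with Neumann's theorem—that ``no infinite coclique'' should be equivalent to a uniform \emph{bound} on coclique size—fails here: the group $C_{p^\infty}\times C_{q^\infty}$ has cocliques of every finite size yet none infinite, so no uniform bound is available and one must argue directly that no infinite antichain of cyclic subgroups exists. Moreover the rank hypothesis must be pinned down with genuine care, since an \emph{infinite} primary component already yields an infinite coclique as soon as it is anything larger than quasicyclic (for instance $(C_{p^\infty})^2$, and even $C_{p^\infty}\oplus C_p$, carry infinite antichains of cyclic subgroups). Thus the real content of condition (b) is that each infinite primary component is forced to be minimal, and the crux of the argument is a clean combinatorial description of antichains of cyclic subgroups in a finite direct sum of primary components—establishing that such an antichain stays finite exactly when every summand is finite or quasicyclic, while remaining consistent with the unbounded-but-finite cocliques that inevitably appear once several primes are present. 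I expect the verification that the auxiliary subgroups $A_j$ themselves meet this condition, so that the interaction between $Z(G)$ and the finite quotient $G/Z(G)$ does not secretly introduce extra torsion, to be the most technical point of the reverse implication.
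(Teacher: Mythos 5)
Your plan follows the paper's own route exactly: the forward implication via Neumann's theorem (using $E(\Pow(G))\subseteq E(\Com(G))$, so that every coclique of $\Com(G)$ is a coclique of $\Pow(G)$), and the reverse implication by writing $G=\bigcup_{j=1}^n\langle Z(G),g_j\rangle$ as a finite union of abelian subgroups and reducing to an abelian lemma. Everything you actually prove is correct, and -- more importantly -- the scruple that stops you from finishing is not a defect of your argument: the abelian lemma that the statement would require is false, so the implication (b)$\Rightarrow$(a) fails as printed. Your example $A=C_{p^\infty}\oplus C_p$ settles this. It is a $p$-torsion group of rank $2$, so (b) holds with $Z(A)=A$; yet, writing $A$ additively, if $a_n\in C_{p^\infty}$ has order $p^n$ (with $pa_{n+1}=a_n$) and $b$ generates $C_p$, the elements $x_n=(a_n,b)$, $n\geqslant1$, form an infinite coclique in $\Pow(A)$: the order of $x_n$ is $p^n$, so $x_m$ is not a multiple of $x_n$ for $m>n$, while $x_n=kx_m$ would force $k\equiv p^{m-n}\equiv0\pmod p$ in the first coordinate and $k\equiv1\pmod p$ in the second. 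If you now compare with the paper, you will find no sufficiency argument to measure yourself against: the discussion preceding the theorem only verifies that the conditions in (b) are \emph{necessary} (Neumann's theorem, then torsion, finitely many primes, finite rank) and passes directly to ``So we can conclude''. The gap you sensed is real, and it lies in the paper; the correct abelian characterisation is the one you state, namely torsion with finitely many primary components, each finite or quasicyclic.

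Your closing worry about the subgroups $A_j$ is also substantive, not merely ``the most technical point'': it shows that no condition on $Z(G)$ alone, however corrected, can repair the statement. Let $G=C_{2^\infty}\circ D_8$ be the central product identifying $Z(D_8)$ with the subgroup of order $2$ of $C_{2^\infty}$. Then $Z(G)\cong C_{2^\infty}$ is quasicyclic of index $4$, so $Z(G)$ satisfies (b) even in your strengthened form; but a non-central involution $x$ coming from $D_8$ commutes with $Z(G)$ and satisfies $\langle x\rangle\cap C_{2^\infty}=1$, so $\langle Z(G),x\rangle\cong C_{2^\infty}\oplus C_2$ and $\Pow(G)$ again contains an infinite coclique. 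A correct theorem must therefore constrain $G$ itself: in your language, each $A_j=\langle Z(G),g_j\rangle$ must have all primary components finite or quasicyclic, which (given the conditions on $Z(G)$) amounts to requiring that for every prime $p$ with $(Z(G))_p$ infinite, every $p$-element of $G$ already lies in $Z(G)$. Under that hypothesis your reduction does close: the poset of cyclic subgroups of each $A_j$ embeds in a finite product of chains (one for each quasicyclic component) and finite posets, such a product has no infinite antichain by Dickson's lemma, and a coclique meets each $A_j$ in an antichain of distinct cyclic subgroups; so each $C\cap A_j$ is finite and $C$ is finite. That is the proof the paper should have given, with the hypothesis it should have stated.
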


So $G$ is locally finite, a result of Shitov~\cite{shitov}.

If we make the stronger hypothesis that the size of cocliques is
bounded, then we can strengthen the conclusion to assert that all but one
of the Sylow subgroups of $Z(G)$ is finite.

For the enhanced power graph, Abdollahi and Hassanabadi~\cite{ah1} proved that
the analogue of Neumann's Theorem does hold:

\begin{theorem}
Let $G$ be an infinite group. Then the following are equivalent:
\begin{enumerate}\itemsep0pt
\item $\EPow(G)$ has no infinite coclique;
\item there is a finite upper bound for the size of cocliques in
$\EPow(G)$;
\item $Z_{\EPow}(G)$ has finite index in $G$.
\end{enumerate}
\end{theorem}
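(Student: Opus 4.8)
The plan is to establish the two non-trivial implications $(c)\Rightarrow(b)$ and $(a)\Rightarrow(c)$, since $(b)\Rightarrow(a)$ is immediate (a finite bound on coclique sizes forbids an infinite coclique). For $(c)\Rightarrow(b)$, write $C=Z_{\EPow}(G)$ and suppose $[G:C]=n<\infty$. The key observation is that for every $t\in G$ the subgroup $\langle C,t\rangle$ is \emph{locally cyclic}, i.e.\ each of its finitely generated subgroups is cyclic. Indeed, such a subgroup lies in some $\langle c_1,\dots,c_s,t\rangle$ with $c_i\in C$, and building this group up one generator at a time we repeatedly adjoin an element $c\in C$ to a cyclic group $\langle w\rangle$; here $\langle c,w\rangle$ is cyclic directly from the defining property of $C=Z_{\EPow}(G)$ (this is the three-element phenomenon of \cite[Lemma 32]{aetal} used to show $Z_{\EPow}(G)$ is a subgroup). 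Choosing coset representatives $t_1,\dots,t_n$ we get $G=\bigcup_{j=1}^{n}\langle C,t_j\rangle$, a union of $n$ locally cyclic subgroups. Any two elements of a locally cyclic subgroup generate a cyclic group and so are adjacent in $\EPow(G)$; hence a coclique meets each $\langle C,t_j\rangle$ in at most one vertex, giving $\alpha(\EPow(G))\le n$.

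For $(a)\Rightarrow(c)$ I would argue by contraposition: assuming $[G:Z_{\EPow}(G)]=\infty$, I produce an infinite coclique. First note $Z_{\EPow}(G)\subseteq Z(G)$, since an element generating a cyclic (hence abelian) subgroup with every $g\in G$ is central. Now split on $[G:Z(G)]$. If $Z(G)$ has infinite index, then Neumann's Theorem~\ref{t:neumann} provides an infinite coclique in $\Com(G)$, and since $E(\EPow(G))\subseteq E(\Com(G))$ the same set is a coclique of $\EPow(G)$, so we are done. Hence I may assume $Z(G)$ has finite index, whence $[Z(G):Z_{\EPow}(G)]=\infty$. Writing $A=Z(G)$, the induced subgraph of $\EPow(G)$ on $A$ is exactly $\EPow(A)$, so it would suffice to find an infinite coclique in the enhanced power graph of the abelian group $A$. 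Here one invokes the structure theory of abelian groups: independent elements of order $p$ give a coclique when some $p$-rank is infinite, a family such as $\{a+ib:i\ge0\}$ gives one when the torsion-free rank is at least $2$, and in general $\EPow(A)$ should have an infinite coclique precisely when $Z_{\EPow}(A)$ has infinite index in $A$, the groups with complete enhanced power graph being exactly the locally cyclic ones.

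The main obstacle is the centre-by-finite case, and two features make it delicate. First, in contrast to the commuting graph, the closed neighbourhood $\{y:\langle x,y\rangle\text{ is cyclic}\}$ is \emph{not} a subgroup, so the covering argument through B.H.\ Neumann's coset lemma that drives the commuting-graph theorem is unavailable; this is what forces the descent to abelian subgroups above. Second, and more seriously, the hypothesis bounds the index of $Z_{\EPow}(G)$, not of the cyclicizer $Z_{\EPow}(A)$ computed inside $A=Z(G)$; since $Z_{\EPow}(G)\subseteq Z_{\EPow}(A)$, it is possible that $Z_{\EPow}(A)$ has finite index in $A$ while $Z_{\EPow}(G)$ still has infinite index, so the abelian reduction does not by itself close the argument. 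In this residual situation infinitely many $x\in Z_{\EPow}(A)$ fail to lie in $Z_{\EPow}(G)$, each witnessed by a non-central $g$ with $\langle x,g\rangle$ non-cyclic; because $G/Z(G)$ is finite, I would try to confine these witnesses to a single coset $gZ(G)$ by pigeonhole and then assemble an infinite coclique from the corresponding elements, exploiting the finite action of $G/Z(G)$. Carrying out this construction, together with the supporting abelian analysis, is where the real work of the proof lies.
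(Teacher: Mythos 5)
You cannot be faulted for diverging from the paper's argument, because the paper gives none: it simply quotes this theorem from Abdollahi and Hassanabadi~\cite{ah1}. Judged on its own terms, your proposal is correct and complete for two of the three implications. The step (b)$\Rightarrow$(a) is indeed trivial, and your proof of (c)$\Rightarrow$(b) is a clean, self-contained argument: building $\langle c_1,\ldots,c_s,t\rangle$ up one generator at a time, using only the defining property of the cyclicizer, shows that each $\langle Z_{\EPow}(G),t\rangle$ is locally cyclic, hence a clique in $\EPow(G)$, so $G$ is covered by $[G:Z_{\EPow}(G)]$ cliques and the coclique number is bounded by that index. This neatly circumvents the obstacle you correctly identify, namely that closed neighbourhoods in $\EPow(G)$ are not subgroups.

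The implication (a)$\Rightarrow$(c), however, has a genuine gap, at exactly the spot you flag and then defer. The case $[G:Z(G)]=\infty$ is fine (Neumann's Theorem~\ref{t:neumann} plus $E(\EPow(G))\subseteq E(\Com(G))$). But in the centre-by-finite case the reduction to $A=Z(G)$ provably cannot finish the job, and the ``residual situation'' you describe is not a corner case. Take $p$ an odd prime, $E$ extraspecial of order $p^3$ and exponent $p$, and let $G=E\circ C_{p^\infty}$ be the central product identifying $Z(E)$ with the subgroup of order $p$ of the Pr\"ufer group. Then $Z(G)\cong C_{p^\infty}$ has index $p^2$, and $A=Z(G)$ is locally cyclic, so the induced subgraph of $\EPow(G)$ on $A$ is complete and $Z_{\EPow}(A)=A$: not even a coclique of size $2$ exists inside $A$. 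Yet $Z_{\EPow}(G)=1$, since any nontrivial central $x$ of order $p^k$ together with any $g\in E\setminus Z(E)$ (an element of order $p$ with $\langle x\rangle\cap\langle g\rangle=1$) generates $C_{p^k}\times C_p$, which is not cyclic. So condition (c) fails for this $G$ while your abelian reduction produces nothing; the infinite coclique whose existence the theorem asserts consists of non-central elements lying in a single coset of $Z(G)$ (for instance $\{gz_i:i\geqslant1\}$ works, where $g\in E\setminus Z(E)$ is fixed and $z_i\in Z(G)$ has order $p^i$: the subgroup $\langle gz_i,gz_j\rangle$ contains both $g$ and the order-$p$ subgroup of $Z(G)$, hence contains $C_p\times C_p$). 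Producing such a coclique in general is precisely the pigeonhole-in-a-coset construction you sketch in your last sentences but do not carry out; and the abelian classification you lean on (``$\EPow(A)$ has an infinite coclique iff $[A:Z_{\EPow}(A)]=\infty$'') is itself the theorem for abelian groups, asserted rather than proved. In summary, you have a complete proof of (c)$\Rightarrow$(b)$\Rightarrow$(a), but the converse implication (a)$\Rightarrow$(c) --- the substantive half of any Neumann-type theorem --- remains open in your write-up.
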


(Recall that $Z_{\EPow}(G)$ is the \emph{cyclicizer} of $G$.)

\subsection{Cliques and colourings of the power graph}

Another significant body of work on power graphs concerns the clique parameters.
Here are some striking results, which appear in \cite{aetal,cj,shitov}.

\begin{theorem}
Any infinite group has clique number and chromatic number at most countable.
\end{theorem}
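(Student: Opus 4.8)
The plan is to treat the two parameters separately, since the clique bound is elementary while the chromatic bound carries all the difficulty.

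For the clique number I would use that $x$ and $y$ are adjacent in $\Pow(G)$ precisely when the cyclic subgroups $\langle x\rangle$ and $\langle y\rangle$ are comparable under inclusion (one element is a power of the other iff one cyclic subgroup contains the other). Hence a clique $C$ is a set whose cyclic subgroups form a chain $\mathcal{C}$, and $U:=\bigcup_{x\in C}\langle x\rangle$ is a union of a chain of subgroups, so a subgroup, and it is locally cyclic (any finitely many of its elements lie in the largest of finitely many members of $\mathcal{C}$). A locally cyclic group is either torsion, hence embeds in $\mathbb{Q}/\mathbb{Z}$, or torsion-free, hence embeds in $\mathbb{Q}$; in either case it is countable, and since $C\subseteq U$ the clique is countable. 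To avoid the structure theorem one can argue directly: if all members of $\mathcal{C}$ are finite then $H\mapsto|H|$ injects $\mathcal{C}$ into $\mathbb{N}$ and each member contributes $\phi(|H|)$ elements, so $C$ is a countable union of finite sets; if some member is infinite cyclic then every member is trivial or infinite cyclic, and fixing $x_0\in C$ the indices $[\langle y\rangle:\langle x_0\rangle]$ of the members above $\langle x_0\rangle$ are distinct positive integers, again forcing countability. (The Pr\"ufer-group example above shows $\aleph_0$ is attained.)

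For the chromatic number I would construct an explicit proper colouring with $\aleph_0$ colours. Recall that $\Pow(G)$ is the comparability graph of a partial order, so if the clique number is a finite number $k$ (the longest chain), Mirsky's height function colours it with $k$ colours; the substance is the case $\omega(\Pow(G))=\aleph_0$. I would split $G$ according to element order. The elements of finite order $n$ form a disjoint union of cliques of size $\phi(n)$, since distinct cyclic subgroups of order $n$ share no elements and are pairwise incomparable; so they can be coloured from a finite palette $P_n$, and using pairwise disjoint palettes over all $n$ (whose union is countable) keeps the colouring proper across different orders as well, regardless of $|G|$.

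The main obstacle is colouring the infinite-order elements, where order gives no information and vertices may have uncountable degree. Here the key device is commensurability: call two infinite cyclic subgroups commensurable if their intersection has finite index in both. This is an equivalence relation, comparable subgroups are commensurable, and subgroups in different classes are non-adjacent, so I may colour each class independently and reuse one palette across all classes. Within a class I fix a basepoint $H_0=\langle x_0\rangle$ and, for each member $H=\langle x\rangle$, pick a nontrivial common power $x^a=x_0^b$ and set $\mu(H)=|b/a|\in\mathbb{Q}^{>0}$; one checks $\mu(H)$ is independent of the choices, using that $x_0$ has infinite order. When $H\subseteq K$ one gets $\mu(H)/\mu(K)=[K:H]$, so defining $\rho(H)=\sum_p v_p(\mu(H))\in\mathbb{Z}$ gives $\rho(H)-\rho(K)=\Omega([K:H])\geq 1$ for proper containment. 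Thus comparable distinct subgroups receive different values of $\rho$, the level sets of $\rho$ are cocliques, and colouring $H$ by $\rho(H)$ while splitting each level into two to separate the two generators of each subgroup yields a proper colouring of the infinite-order elements from the countable palette $\mathbb{Z}\times\{0,1\}$. Combining the disjointly-paletted finite-order and infinite-order colourings gives $\chi(\Pow(G))\leq\aleph_0$. The delicate points to verify are the well-definedness of $\mu$ and the independence of the commensurability classes; everything else is bookkeeping.
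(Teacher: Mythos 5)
Your proof is correct, but be aware that the paper does not actually prove this theorem: it states it as a result from the literature, citing \cite{aetal,cj,shitov}, with the clique-number part essentially due to Aalipour \emph{et al.}\ and the chromatic-number part due to Shitov, so the comparison is with those sources rather than with an argument in the text. Your clique-number half is the standard route: the cyclic subgroups of a clique form a chain under inclusion, the union of the chain is a locally cyclic group, and a locally cyclic group embeds in $\mathbb{Q}$ or $\mathbb{Q}/\mathbb{Z}$, hence is countable; your fallback via orders and indices is a nice way to avoid the structure theory. The chromatic-number half is where you genuinely depart: instead of invoking Shitov's theorem \cite{shitov}, which is proved in the more general setting of semigroups, you construct an explicit countable proper colouring --- finite-order elements coloured from disjoint finite palettes indexed by the order, and infinite-order elements split into commensurability classes of their cyclic subgroups, within which the invariant $\rho(H)=\sum_p v_p(\mu(H))\in\mathbb{Z}$ separates properly comparable subgroups because $\mu(H)/\mu(K)=[K:H]$, followed by a factor of two to separate $x$ from $x^{-1}$. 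I checked the points you flag: $\mu$ is well defined (from $x^a=x_0^b$ and $x^{a'}=x_0^{b'}$ one gets $x_0^{a'b}=x_0^{ab'}$, so $a'b=ab'$ since $x_0$ has infinite order, and the absolute value removes the generator ambiguity); commensurability is transitive; and edges between infinite-order elements never leave a class, so reusing the palette $\mathbb{Z}\times\{0,1\}$ across classes is legitimate. What your route buys is a self-contained, elementary proof; what it gives up is generality, since your tools (inverses, finite subgroup index, commensurability) are intrinsically group-theoretic, whereas Shitov's argument covers power graphs of all semigroups. Two cosmetic remarks: distinct cyclic subgroups of order $n$ may share elements of smaller order, so you should say they share no \emph{generators} (which is all you use); and since a proper colouring gives distinct colours to the vertices of any clique, your chromatic-number half formally subsumes the clique-number half.
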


\begin{theorem}
For an infinite group $G$, the following conditions are equivalent:
\begin{enumerate}\itemsep0pt
\item $\Pow(G)$ has finite clique number;
\item $\Pow(G)$ has finite chromatic number;
\item $\EPow(G)$ has finite clique number;
\item $\EPow(G)$ has finite chromatic number;
\item $G$ is a torsion group with finite exponent.
\end{enumerate}
\end{theorem}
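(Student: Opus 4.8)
The plan is to prove the equivalence by closing a single cycle of implications, exploiting the fact that $\Pow(G)$ is a spanning subgraph of $\EPow(G)$ (the inclusions of the hierarchy persist for infinite groups). First I would record the cheap monotonicity facts: since $E(\Pow(G))\subseteq E(\EPow(G))$ we have $\omega(\Pow(G))\le\omega(\EPow(G))$ and $\chi(\Pow(G))\le\chi(\EPow(G))$, and for every graph $\omega\le\chi$. These at once give (d)$\Rightarrow$(b), (d)$\Rightarrow$(c), (b)$\Rightarrow$(a) and (c)$\Rightarrow$(a). Thus it suffices to prove (a)$\Rightarrow$(e) and (e)$\Rightarrow$(d); condition (e) then serves as the hub linking the weakest hypothesis (a) to the strongest conclusion (d).

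For (e)$\Rightarrow$(d), I would carry over the finite-group bound $\chi(\EPow(G))\le\sum_{m\in S}\phi(m)$, together with its proof, to the present setting. A finite exponent $n$ forces the set $S$ of element orders to lie among the divisors of $n$, hence to be finite. The only structural point needing care for infinite $G$ is that, among elements of a fixed order $m$, two are adjacent in $\EPow(G)$ exactly when they generate the same cyclic subgroup: for if $\langle x,y\rangle$ is cyclic and $x,y$ both have order $m$, then they are order-$m$ elements of a cyclic group and so both generate its unique subgroup of order $m$. Consequently the order-$m$ elements span a disjoint union of copies of $K_{\phi(m)}$, each colourable with $\phi(m)$ colours; using disjoint palettes across the finitely many orders in $S$ yields $\chi(\EPow(G))\le\sum_{m\in S}\phi(m)<\infty$.

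For (a)$\Rightarrow$(e) I would argue the contrapositive, producing arbitrarily large cliques in $\Pow(G)$ whenever $G$ fails to be torsion of finite exponent. If $G$ has an element $g$ of infinite order, then $\{g^{2^0},g^{2^1},g^{2^2},\ldots\}$ is pairwise comparable under the power relation (each is a power of every earlier one), hence an infinite clique. If $G$ is torsion but of unbounded exponent, then one of two things occurs: either infinitely many primes divide orders of elements, or some fixed prime divides them to unbounded powers. The engine here is that a cyclic group of prime-power order $p^a$ has $\Pow(C_{p^a})=K_{p^a}$, since its subgroups are totally ordered by inclusion, so any two of its elements are comparable. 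Hence in the first case one finds complete subgraphs $K_p$ for unboundedly large primes $p$, and in the second case complete subgraphs $K_{p^a}$ with $a\to\infty$; either way $\omega(\Pow(G))=\infty$. Assembling the cycle (a)$\Rightarrow$(e)$\Rightarrow$(d)$\Rightarrow$(b),(c)$\Rightarrow$(a) then gives the equivalence of all five conditions.

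The main obstacle is the order-theoretic bookkeeping in (a)$\Rightarrow$(e): one must convert ``unbounded element order'' into ``unbounded prime-power element order'', which rests on the dichotomy between having infinitely many prime divisors of element orders and having a single prime occurring to unbounded height, together with the observation that prime-power cyclic subgroups contribute complete subgraphs to the power graph. By contrast the colouring step in (e)$\Rightarrow$(d) is routine \emph{once} the structural lemma ``elements of equal order are $\EPow$-adjacent only if they generate the same cyclic subgroup'' is in hand; I would emphasise that it is precisely this lemma, proved without any appeal to finiteness, that makes the chromatic argument insensitive to the size of $G$.
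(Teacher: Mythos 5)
Your proof is correct and takes essentially the same approach as the paper: the paper likewise extracts torsion from the infinite clique $\{g^{2^n}:n\geqslant 0\}$ in an infinite cyclic subgroup, and then invokes its earlier finite-group arguments (prime-power cliques in $\Pow(G)$ bounding element orders, and the $\sum_{n\in S}\phi(n)$ colouring bound for $\EPow(G)$). Your write-up just makes explicit the details --- the prime dichotomy and the check that the colouring lemma needs no finiteness --- that the paper compresses into ``proved just as for finite groups''.
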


\begin{proof}
The power graph of an infinite cyclic group $\langle g\rangle$ contains an
infinite clique $\{g^{2^n}:n\ge0\}$. So a group satisfying any of the first
four conditions is a torsion group. Now the results are proved just as for
finite groups in Section~\ref{s:cc}. \qed
\end{proof}

The cited papers contain other miscellaneous results.

\subsection{Cographs}
Cographs work quite differently in the infinite case. The two definitions
earlier (a graph containing no induced $P_4$, and a graph built from the
$1$-vertex graph by disjoint union and complementation) are no longer
equivalent, even if infinite disjoint unions are allowed.
Covington~\cite{covington} constructed a countable $P_4$-free graph which
is isomorphic to its complement (so it and its complement are both connected).
This remarkable object also has a high degree of symmetry.

\section{Beyond groups}

The ideas behind some of these graphs can be extended to other algebraic
structures.

A \emph{magma} is a set with a binary operation. (The term \emph{groupoid} is
sometimes used, but I will avoid this since it is also used for a category in
which every morphism is invertible.) Beyond groups, the two classes of magmas
most studied are \emph{semigroups} (satisfying the associative law) and
\emph{quasigroups} (in which left and right division are well-defined), and
in particular \emph{monoids} and \emph{loops} (semigroups, resp.\ quasigroups,
with identity elements).

Clearly the definition of commuting graph makes sense in any magma. For the
power graph and its relatives, it is necessary to make sense of powers of an
element. One can define \emph{left powers} inductively by $a^1=a$ and
$a^{n+1}=a\circ a^n$ for $n\ge1$. (This is the approach adopted
in~\cite{walker}.) An alternative is to restrict to \emph{power-associative
magmas}, those in which the product of $n$ terms each equal to an element $a$
is independent of the bracketing used to evaluate it. Now, just as for groups,
we have:

\begin{prop}
In a power-associative magma, the directed power graph is a partial preorder,
and so the power graph is the comparability graph of a partial order.
\end{prop}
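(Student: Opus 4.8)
The plan is to reduce everything to the single identity $(a^m)^n = a^{mn}$, valid for all elements $a$ of the magma and all positive integers $m,n$. First I would establish this identity directly from power-associativity. By definition $a^{mn}$ is the common value of all products of $mn$ copies of $a$, independent of the bracketing used. Now $(a^m)^n$ is, by definition, a product of $n$ copies of the single element $b = a^m$; replacing each factor $b$ by its own expansion as a product of $m$ copies of $a$ exhibits $(a^m)^n$ as one particular bracketing of $mn$ copies of $a$. Power-associativity then forces $(a^m)^n = a^{mn}$.

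With this identity in hand, verifying that the directed power graph $\DPow$ is a partial preorder is routine. There is an arc $x \to y$ precisely when $y = x^m$ for some positive integer $m$. Reflexivity is immediate, since $x = x^1$. For transitivity, suppose $x \to y$ and $y \to z$, say $y = x^m$ and $z = y^n$; then $z = (x^m)^n = x^{mn}$ by the identity, so $x \to z$. Hence the arc relation is reflexive and transitive, that is, a partial preorder.

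For the second assertion I would invoke the observations already recorded earlier in the excerpt. The power graph $\Pow$ is, by construction, the comparability graph of $\DPow$: two vertices are joined exactly when one is a power of the other. It was noted that comparability graphs of partial preorders and of partial orders form one and the same class, since imposing a total order on each class of the equivalence $a \equiv b$ (which holds when there are arcs both from $a$ to $b$ and from $b$ to $a$) refines the preorder to a partial order on the same vertex set with an identical comparability graph. Applying this to $\DPow$ exhibits $\Pow$ as the comparability graph of a partial order, as required.

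The only genuine content is the bracketing argument for $(a^m)^n = a^{mn}$; everything else is formal. The mild point to keep in view is that a general power-associative magma has no identity element and no inverses, so only positive exponents are available and the preorder need not be antisymmetric --- which is precisely why one passes through a preorder, and then refines it, rather than asserting a partial order outright.
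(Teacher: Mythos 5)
Your proof is correct and follows essentially the route the paper intends: the paper states this proposition without an explicit proof (asserting it holds ``just as for groups''), and the only new content beyond the group case is the identity $(a^m)^n=a^{mn}$, which your bracketing/substitution argument derives correctly from power-associativity. The remaining steps --- reflexivity via $x=x^1$, transitivity via that identity, and the passage from partial preorders to partial orders by totally ordering each class of the equivalence $a\equiv b$ --- are exactly the arguments the paper records in its earlier discussion of $\DPow(G)$ and $\Pow(G)$ for groups.
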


\begin{question}
For which magmas, or quasigroups, is the power graph (defined using left
powers) a comparability graph of a partial order, or a perfect graph?
\end{question}

The power graph of a semigroup was defined early in their study of power
graphs, see \cite{kq2}).

The commuting graphs of semigroups are considered by Ara\'ujo
\emph{et al.}~\cite{akk}, who pose a number of questions about them.

As for groups, the power graph of a semigroup is a spanning subgraph of
its commuting graph. The enhanced power graph could be defined for any
semigroup; to my knowledge this has not been studied. It is not clear whether
the definition of the deep commuting graph could be adapted for semigroups.

Commuting graphs of semigroups are universal (and power graphs are universal
for comparability graphs of partial orders), since these statements hold for
groups.

The intersection graph of the subsemigroups of a semigroup had been studied
much earlier: Bos\'ak~\cite{bosak} raised the question of its connectedness
in 1963, and the question was soon resolved by Lin~\cite{lin} and
Pond\v{e}li\v{c}ek~\cite{pond}: for any finite semigroup, this graph is
connected with diameter at most~$3$. These results preceded the investigation
of the intersection graph for groups, mentioned earlier. (The intersection
graphs of semigroups are not comparable with the
intersection graphs of groups, since any subgroup of a group contains the
identity, so adjacency requires their intersection to be non-trivial.) 

The commuting graph (and its complement), the intersection graph of cyclic
subgroups, and the power graph have also been studied for quasigroups and
loops, especially for the classes of Moufang and Bol loops: see, for
example, \cite{ahmad,ha,hi,walker}. Moufang loops form a class of loops
which is perhaps closest to groups: a Moufang loop is a loop satisfying
the identity $z(x(zy)) = ((zx)z)y$ (a weakening of the associative law).
In particular, a $2$-generated subloop of a Moufang loop is associative; so
a Moufang loop is power-associative, and its power graph is the comparability
graph of a partial order.

A question raised some time ago but to my knowledge not yet answered is:

\begin{question}
If the power graphs of two finite Moufang loops are isomorphic, are their
directed power graphs isomorphic?
\end{question}

This question has been investigated by Nick Britten, who has found no 
counterexamples among the Moufang loops in the LOOPS package~\cite{loops}
for \textsf{GAP} (Michael Kinyon, personal communication).

Going beyond a single binary operation, we reach the class of rings.
For these, the \emph{zero-divisor graph} of a ring was introduced by
Beck~\cite{beck} in 1988: the vertices are the ring elements, with $a$ and $b$
joined whenever $ab=0$. If the ring is commutative, the graph is undirected.
Another graph associated with a ring is the \emph{unit graph}, in which $a$
and $b$ are joined whenever $a+b$ is a unit~\cite{ampy}.

\end{document}